\let\savelneq\lneq
\let\lneq\relax
\let\lneq\savelneq
\newtheorem{thm}{Theorem}
\newtheorem{theorem}[thm]{Theorem}
\newtheorem{lemma}[thm]{Lemma}
\newtheorem{proposition}[thm]{Proposition}
\theoremstyle{definition}
\newtheorem{definition}[thm]{Definition}
\newtheorem{remark}[thm]{Remark}
\newtheorem{example}[thm]{Example}
\newtheorem{corollary}[thm]{Corollary}
\newtheorem{question}[thm]{Question}
\newtheorem{notation}[thm]{Notation}
\newtheorem*{bijectionExists}{Theorem \ref{thm:bijectionExists}}
\newtheorem*{rjust}{Theorem \ref{thm:rjust}}
\numberwithin{thm}{section}
\newcommand{\Slide}{\mathrm{Slide}}
\newcommand{\slide}{\mathrm{slide}}
\newcommand{\Tour}{\mathrm{Tour}}
\newcommand{\Cat}{\mathrm{Cat}}
\newcommand{\Av}{\mathrm{Av}}
\newcommand{\tree}{\mathrm{Tree}}
\newcommand{\word}{\mathrm{word}}
\newcommand{\z}{\mathrm{z}}
\newcommand{\rep}{\mathrm{BigRep}}
\newcommand{\last}{\mathrm{last}}
\newcommand{\PF}{\mathrm{PF}}
\newcommand{\CPF}{\mathrm{CPF}}
\newcommand{\Br}{\mathrm{Br}}
\newcommand{\rev}{\mathrm{rev}}
\newcommand{\red}{\mathrm{red}}
\newcommand{\PP}{\mathbb{P}}
\newcommand{\Mbar}{\overline{M}}
\newcommand{\kUnd}{\underline{k}}
\newcommand{\lnd}{\mathrm{Lnd}}
\newcommand{\rnd}{\mathrm{Rnd}}
\newcommand{\TRep}{\mathrm{TotalRep}}
\newcommand{\Slwk}{\Slide^\omega(\kUnd)}
\newcommand{\Slpk}{\Slide^\psi(\kUnd)}
\newcommand{\Slwl}{\Slide^\omega(1,1,\ldots,1)}
\newcommand{\maxzero}{\mathrm{maxzero}}
\newcommand{\mintwo}{\mathrm{min_2}}
\newcommand{\sigmaT}{\hat{\sigma}}
\newcommand{\piT}{\hat{\pi}}
\newcommand{\starTree}{\raisebox{-1pt}{\scaleobj{1.5}{\Ydown}}}
\newcommand{\starThree}{\raisebox{-1pt}{\scaleobj{1.5}{\convolution}}}
\newcommand{\mbf}[1]{\mathbf{#1}}
\def\multichoose#1#2{\left<\genfrac{}{}{0pt}{}{#1}{#2}\right>}
\title[Insertion algorithms on trees in $\Mbar_{0,n+3}$]{Insertion algorithms and pattern avoidance on trees arising in the Kapranov embedding of $\Mbar_{0,n+3}$}
\author{Andrew Reimer-Berg}
\address{Andrew Reimer-Berg \\ Department of Mathematics \\Colorado State University \\ Fort Collins, CO 80523 \\ United States of America}
\email{\href{mailto:Andrew.Reimer-Berg@colostate.edu}{Andrew.Reimer-Berg@colostate.edu}}
\thanks{The author was partially supported by NSF DMS award number 2054391.}
\date{\today}
\begin{document}

	\begin{abstract}
		
		We resolve a question of Gillespie, Griffin, and Levinson that asks for a combinatorial bijection between two classes of trivalent trees, \textit{tournament trees} and \textit{slide trees}, that both naturally arise in the intersection theory of the moduli space $\Mbar_{0,n+3}$ of stable genus zero curves with $n+3$ marked points.  Each set of trees enumerates the same intersection product of certain pullbacks of $\psi$ classes under forgetting maps.
		
		We give an explicit combinatorial bijection between these two sets of trees using an insertion algorithm.  We also classify the words that appear on the slide trees of caterpillar shape via pattern avoidance conditions. 
	\end{abstract}
	
	\maketitle
	
	\section{Introduction}
	
	For a positive integer $n$, let $\Mbar_{0,n+3}$ be the Deligne-Mumford moduli space of stable genus 0 curves with $n+3$ marked points labeled by $\{a,b,c,1,2,\ldots,n\}$.   The $i$th cotangent line bundle $\mathbb{L}_i$ is the line bundle whose fiber over $C\in\Mbar_{0,n+3}$ is the cotangent space of $C$ at the marked point $i$. The \textbf{$i$th psi class} $\psi_i$ is the first Chern class of $\mathbb{L}_i$. In other words, $\psi_i=c_1(\mathbb{L}_i)$.  The \textbf{$i$th omega class} $\omega_i$ is defined as the pullback of $\psi_i$ under the composition of the forgetting maps that forget the marked points $i+1,\ldots,n$. 
	
	Let $\kUnd=(k_1,k_2,\ldots,k_n)$ be a $n$-tuple of nonnegative integers and assume it is a composition of $n$, that is, $k_1+k_2+\cdots+k_n=n$. Products in the cohomology ring of $\Mbar_{0,n+3}$ of the form $\psi^{\kUnd}:=\psi_1^{k_1}\cdots\psi_n^{k_n}$ and $\omega^{\kUnd}:=\omega_1^{k_1}\cdots\omega_n^{k_n}$ were studied in \cite{GGL22,GGL23} and shown to be computable by way of enumerating certain classes of trees called \textbf{slide trees} $\Slwk$ and \textbf{tournament trees} $\Tour(\kUnd)$. 
	
	\begin{proposition}[From \cite{GGL22} and \cite{GGL23}]
		Let $\kUnd$ be a composition of $n$. Then,
		$$\int_{\Mbar_{0,n+3}}\psi^{\kUnd}=\binom{n}{k_1,k_2,\ldots,k_n}=|\Slpk|$$ and $$\int_{\Mbar_{0,n+3}}\omega^{\kUnd}=\multichoose{n}{k_1,k_2,\ldots,k_n}=|\Tour(\kUnd)|=|\Slwk|.$$
		\label{prop:combinedResults}
	\end{proposition}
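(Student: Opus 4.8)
The plan is to establish each displayed equality by pairing an intersection-theoretic computation with a matching combinatorial recursion, and to read off the remaining equalities by transitivity. First note that $\dim\Mbar_{0,n+3}=n=k_1+\cdots+k_n$, so both $\psi^{\kUnd}$ and $\omega^{\kUnd}$ are top-degree classes and the two integrals are honest intersection numbers.

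For the $\psi$-integral I would use the genus-zero string equation and induct on the number of marked points. Because the $\psi$-exponents sum to the dimension of the space, there is always a marked point of exponent zero available to forget, and the string equation gives $\int_{\Mbar_{0,m}}\prod_i\psi_i^{a_i}=\sum_{j:\,a_j>0}\int_{\Mbar_{0,m-1}}\psi_j^{a_j-1}\prod_{i\neq j}\psi_i^{a_i}$, with base case $\Mbar_{0,3}=\mathrm{pt}$ contributing $1$. The multinomial coefficient obeys the companion Pascal recursion $\binom{n}{k_1,\ldots,k_n}=\sum_{j}\binom{n-1}{k_1,\ldots,k_j-1,\ldots,k_n}$, so induction yields $\int\psi^{\kUnd}=\binom{n}{k_1,\ldots,k_n}$. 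The equality with $|\Slpk|$ is then purely combinatorial: the $\psi$-slide insertion algorithm builds each tree of $\Slpk$ by inserting the labels one index at a time, and the branching of this construction reproduces the same recursion from the same base case.

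For the $\omega$-integral the factors $\omega_i=\pi_i^*\psi_i$ are pulled back along \emph{different} forgetting maps $\pi_i\colon\Mbar_{0,n+3}\to\Mbar_{0,i+3}$, so they cannot be combined on a single space. I would instead peel off one marked point at a time: writing $f\colon\Mbar_{0,n+3}\to\Mbar_{0,n+2}$ for the map forgetting $n$, every $\omega_i$ with $i<n$ is $f^*$ of the corresponding class downstairs while $\omega_n=\psi_n$, so the projection formula reduces the integral to one on $\Mbar_{0,n+2}$ paired against the pushforward $f_*(\psi_n^{k_n})$ (a $\kappa$-class, and zero when $k_n=0$). Combined with the comparison formula $\pi^*\psi_i=\psi_i-D$ for the relevant boundary divisor $D$, this produces the recursion satisfied by $\multichoose{n}{k_1,\ldots,k_n}$ (including the vanishing of certain compositions), giving $\int\omega^{\kUnd}=\multichoose{n}{k_1,\ldots,k_n}$. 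As in the $\psi$-case, the equalities $\multichoose{n}{k_1,\ldots,k_n}=|\Tour(\kUnd)|$ and $\multichoose{n}{k_1,\ldots,k_n}=|\Slwk|$ are then established combinatorially, by checking that the recursive (tournament, respectively slide) structure of each family realizes this same recursion and base case.

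The main obstacle is the $\omega$-computation: because the $\omega_i$ live on different spaces, the clean one-step string-equation argument of the $\psi$-case is unavailable, and one must carefully bookkeep the boundary corrections $D$ and the pushforwards $f_*(\psi_n^{k_n})$ of higher powers while verifying that they assemble into exactly the multichoose recursion. Finally, the displayed equality $|\Tour(\kUnd)|=|\Slwk|$ is obtained here only transitively, since both counts equal the common integral; producing a \emph{direct} combinatorial bijection realizing it is precisely the problem addressed in the remainder of the paper.
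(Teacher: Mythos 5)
The paper does not actually prove this proposition: it is quoted background, with the $\psi$ integral attributed to the classical literature (see \cite{Cavalieri16}), the $\omega$ integral and asymmetric multinomials to \cite{CGM}, and the tree-count equalities to \cite{GGL22} and \cite{GGL23}. Judged on its own terms, your sketch is solid only in its first step: the string-equation induction giving $\int_{\Mbar_{0,n+3}}\psi^{\kUnd}=\binom{n}{k_1,\ldots,k_n}$ is the standard argument and works exactly as you describe, since $a,b,c$ always carry exponent zero. Your $\omega$ reduction via the projection formula is plausible as a starting point, but you defer precisely the hard part to ``bookkeeping'': the pushforward $f_*(\psi_n^{k_n})$ is a $\kappa$-class, and it is far from clear that the resulting expression assembles into the recursion \eqref{eq:recursion}, whose terms depend on the \emph{position of the rightmost zero} of $\kUnd$ --- an asymmetric feature that does not emerge visibly from your symmetric-looking setup. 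In \cite{CGM} this derivation is the substance of the proof, not an afterthought.

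The genuine gap, however, is your treatment of the two tree-count equalities. You assert that $\binom{n}{\kUnd}=|\Slpk|$ and $\multichoose{n}{\kUnd}=|\Slwk|$ follow ``purely combinatorially'' by checking that the slide constructions realize the same recursions. Neither is a check. In \cite{GGL22} these equalities are proved \emph{geometrically}: the slide trees enumerate the zero-dimensional boundary strata arising when the hyperplanes defining $\deg_{\kUnd}(\Psi_n)$ and $\deg_{\kUnd}(\Omega_n)$ are degenerated, so their count equals the integral by construction, with no recursion verified on the trees at all. Indeed, a combinatorial proof that $|\Slwk|$ satisfies \eqref{eq:recursion} is exactly what was open before this paper --- the paper states that $|\Tour(\kUnd)|=|\Slwk|$ ``was previously only known through geometric techniques'' --- and supplying it requires the full machinery of Section 4 (the insertion maps $\sigmaT_{i,j}$ and $\sigmaT_j$, the analysis of $\last$, and Theorem \ref{thm:bijectionExists}). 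Your own closing paragraph concedes that the bijection $|\Tour(\kUnd)|=|\Slwk|$ is the paper's contribution, but this is in tension with your earlier claim: once one knows combinatorially that $|\Slwk|$ obeys \eqref{eq:recursion}, the bijection follows by unwinding recursions, so the ``check'' you wave through is the theorem itself. Even in the $\psi$ case the claim is not well-posed as stated: the terms $\binom{n-1}{k_1,\ldots,k_j-1,\ldots,k_n}$ involve compositions of $n-1$ into $n$ parts, whose associated slide sets are not trivalent trees in the same family, so it is unclear what recursion the $\psi$-slide construction is supposed to realize. As written, the proof of the rightmost equalities in both displays is missing its central ingredient.
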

	
	The coefficients in the second part of the last equality are the \textbf{asymmetric multinomial coefficients}, which we define in Section \ref{subsec:Asym}. Several recent papers \cite{CGM,GGL22,GGL23} have studied these asymmetric multinomial coefficients from both a geometric and combinatorial perspective.  This forms part of a growing body of work that has been done using combinatorial objects to study the intersection theory of $\Mbar_{0,n}$. In addition to the above works, other work studying the degrees of projective maps on moduli spaces of curves include \cite{silversmith21}, working in terms of cross-ratio degrees, and \cite{kapranovdegrees}, which uses a different class of trees to study more general pullbacks of $\psi$ classes. Work on products of $\psi$ classes in tropical $M_{0,n}$ include \cite{tropical_Hassett,kerber_markwig}.
	
	The sets $\Tour(\kUnd)$ and $\Slwk$ are both sets of trivalent trees whose leaves are labeled by the set $\{a,b,c,1,2,\ldots,n\}$. Despite this similarity, finding a combinatorial bijection between these two sets has until now been an open question. (See Problem 6.1 in \cite{GGL22}).
	Our first main result constructs an explicit bijection between $\Tour(\kUnd)$ and $\Slwk$, thus proving combinatorially that $|\Tour(\kUnd)|=|\Slwk|$, a fact that was previously only known through geometric techniques.
	
	The set $\CPF(\kUnd)$ was the first combinatorial interpretation, in terms of \textit{parking functions} \cite{CGM}.  A bijection between $\Tour(\kUnd)$ and $\CPF(\kUnd)$ is given in \cite{GGL23}.  Both the parking functions and tournaments interpretations were shown to satisfy the asymmetric multinomial recursion \eqref{eq:recursion} defined in Section \ref{subsec:Asym}.
	
	We similarly build our bijection recursively, with the main step being to show that $|\Slide(\kUnd)|$ also satisfies the same recursion. In particular, we build a bijection between $\Slwk$ and a disjoint union of slide sets $\Slide^\omega(\kUnd^{(j)})$ for compositions $\kUnd^{(j)}$ of $n-1$, via an insertion algorithm on $\Slwk$.  Then, we can unwind the recursive algorithms for each of $\Slide(\kUnd)$ and $\Tour(\kUnd)$ to recover a full bijection $F:\Slide(\kUnd)\to \Tour(\kUnd)$.
	
	\begin{theorem}
		The map $F$ is a combinatorial bijection between the sets $\Tour(\kUnd)$ and $\Slwk$.
		\label{thm:bijectionExists}
	\end{theorem}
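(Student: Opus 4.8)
The plan is to prove that $F$ is a bijection by induction on $n$, exploiting the fact that both $\Tour(\kUnd)$ and $\Slwk$ are enumerated by the asymmetric multinomial coefficient $\multichoose{n}{k_1,\ldots,k_n}$ and arranging $F$ so that it commutes with the recursive structure underlying this coefficient. Since Proposition \ref{prop:combinedResults} already records the cardinality equality via geometry, and \cite{GGL23} supplies a bijection $\Tour(\kUnd)\to\CPF(\kUnd)$ together with a verification that $\Tour(\kUnd)$ obeys the recursion \eqref{eq:recursion}, the only genuinely new ingredient is an analogous recursive decomposition on the slide side. I would therefore isolate this as a lemma: the insertion algorithm yields a bijection
$$\Slwk \;\xrightarrow{\;\sim\;}\; \bigsqcup_{j} \Slide^\omega(\kUnd^{(j)}),$$
where the $\kUnd^{(j)}$ range over exactly the compositions of $n-1$ appearing on the right-hand side of \eqref{eq:recursion}.

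The heart of the argument is constructing and inverting this map. First I would define a deletion procedure on $\Slwk$: given a slide tree $T$, locate the leaf carrying the largest label $n$, record the local combinatorial data determining which summand index $j$ it contributes to, and then contract that leaf and smooth the resulting degree-two vertex to produce a trivalent tree on one fewer marked point. The claim is that the output lies in the appropriate $\Slide^\omega(\kUnd^{(j)})$. Conversely, the insertion map takes a tree in $\Slide^\omega(\kUnd^{(j)})$ together with the datum $j$ and grafts the leaf $n$ back in at the uniquely determined location. I would prove these two maps are mutually inverse by checking that deletion followed by insertion, and insertion followed by deletion, both restore the original tree; the key point is that the sliding operations are reversible and that the insertion position is canonically recoverable from the recorded data. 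Summing over $j$ then shows the images partition $\Slwk$, giving both injectivity and surjectivity, and that the resulting cardinality identity is precisely \eqref{eq:recursion}.

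With the lemma in hand, the construction of $F$ is a formal unwinding. Both $\Slide$ and $\Tour$ now carry compatible recursive decompositions indexed by the same set of compositions $\{\kUnd^{(j)}\}$, so by the inductive hypothesis there are bijections $F^{(j)}\colon \Slide^\omega(\kUnd^{(j)})\to\Tour(\kUnd^{(j)})$ for each $j$. I would define $F$ on $T\in\Slwk$ by first applying slide-side deletion to land in a specific summand $\Slide^\omega(\kUnd^{(j)})$, then applying $F^{(j)}$, and finally applying the corresponding tournament-side insertion to return to $\Tour(\kUnd)$. That $F$ is a bijection follows because it is a summand-by-summand composition of bijections, and the base case (where $n$ is small and each set is a single tree) is immediate.

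The step I expect to be the main obstacle is verifying that the slide-side insertion and deletion are genuinely inverse bijections. Slide trees are defined through a sequence of sliding moves rather than by a static local condition, so deleting the leaf $n$ could a priori interact nontrivially with the slide history, and it is not obvious that the output remains a valid slide tree or that the reinsertion position is unique. The careful bookkeeping needed to show that the removed-leaf data records exactly enough information to reconstruct $T$, and that no two trees in distinct summands collide, is where most of the combinatorial work will lie.
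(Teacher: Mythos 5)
Your overall architecture matches the paper's exactly: show that the insertion algorithm realizes the recursion \eqref{eq:recursion} on the slide side, i.e.\ that $\Slwk\cong\bigsqcup_j\Slide^\omega(\kUnd^{(j)})$, and then define $F$ by unwinding this recursion in parallel with the one already known for $\Tour(\kUnd)$ (via $\CPF(\kUnd)$). The gap is in your key lemma: you propose to delete \emph{the leaf labeled $n$}, and that operation does not implement \eqref{eq:recursion}. The summands of \eqref{eq:recursion} are indexed by a position $j$ ranging over $\maxzero(\kUnd)+1,\ldots,n$, where $\kUnd^{(j)}$ decrements $k_j$ and deletes the rightmost zero; correspondingly, the leaf that must be removed varies from tree to tree and is generally \emph{not} $n$. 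Identifying it is precisely the hard content of the paper: the map $\last$ of Definition \ref{def:last}, together with Lemmas \ref{lem:larger_than_zero}, \ref{lem:largeleaf} and Theorem \ref{thm:lastadded}, shows that $\last(T)$ recovers the index $j$ and pins down which leaf was inserted. Moreover the insertion is not a simple graft: $\sigmaT_{i,j}$ shifts every label $\geq i$ and repositions the minimal leaves $m_d,\ldots,m_l$ along the path to $j$ (Definition \ref{def:sigmaIJ}), and a substantial case analysis (Theorem \ref{thm:sigmaIJwelldef}) is needed to see the output is still a slide tree.

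Your map fails concretely on the tree $T\in\Slide^\omega(0,0,2,1,1,2)$ of Example \ref{ex:slides}. Deleting leaf $6$ and smoothing the resulting degree-two vertex yields the caterpillar whose leaves, read from the root, are $1$, $5$, $c$, $4$, $\{3,2\}$; this is not a slide tree for \emph{any} composition of $5$, because the internal edge between the leaves $1$ and $5$ can never be labeled: whichever leaf attempts it compares a minimum equal to $c$ on the far side against the leaf $1$ at the near vertex, and $c<1$, so both the $\omega$ and $\psi$ checks of Definition \ref{def:k-slide} fail. For this $T$ the correct operation is governed by $\last(T)=3$: the paper's $\piT_{2,3}$ removes the leaf $2=\maxzero(\kUnd)$ adjacent to leaf $3$ and relabels, landing in $\Slide^\omega(\kUnd^{(3)})=\Slide^\omega(0,1,1,1,2)$. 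More structurally, deleting leaf $n$ undoes the last step of the generative Definition \ref{def:slideTreesOmega}, whose fibers are indexed by the attachment-and-slide data of leaf $n$ rather than by the indices $j$ of \eqref{eq:recursion} (already for $\kUnd=(1,1)$ both slide trees collapse to the same smaller tree under your deletion); since $\Tour(\kUnd)$ is only known to satisfy \eqref{eq:recursion}, your parallel unwinding would not go through even where the deletion happens to be well defined. A smaller discrepancy: the paper's proof of the theorem obtains surjectivity from injectivity plus the previously known identity $|D^\omega(\kUnd)|=|\Slwk|$, whereas you aim to extract the counting identity from a two-sided inverse; that is a reasonable and more self-contained goal, but it requires the deletion map to be defined on all of $\Slwk$, which again hinges on removing the correct leaf.
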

	
	Our second main result is on caterpillar trees.  We say a trivalent tree is a \textbf{caterpillar tree} if its internal edges form a path (see Example \ref{ex:slides} below).  We can form a word from a caterpillar tree by reading the slide labels defined in Section \ref{sec:slides}, and obtain the following pattern avoidance condition that generalizes results in \cite{GGL22}.
	
	We have a map $\tree$ that, given a word, constructs the slide tree of caterpillar shape whose edge labels, as read off from the root, form the given word, if such a tree exists. Otherwise, it forms some caterpillar tree that is not a valid slide tree. That is, a word $w$ can be read off of the edges of a caterpillar slide tree if and only if $\tree(w)$ is a valid slide tree. Our result then characterizes caterpillar trees by way of characterizing these words $w$.
	
	\begin{theorem}
		Let $\kUnd$ be a right-justified composition of $n$. Then, $\tree(w)$ is a valid slide tree if and only if $w$ avoids the patterns $2{-}1{-}2$ and $23{-}\overline{2}{-}1$.  (See Section \ref{sec:patterns}.)
		\label{thm:rjust}
	\end{theorem}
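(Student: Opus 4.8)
The plan is to first extract from the construction of $\tree$ an explicit, position-by-position criterion for when $\tree(w)$ is a valid slide tree, and then to show that this criterion is equivalent to the two stated pattern-avoidance conditions. Because caterpillar trees are essentially linear---their internal edges form a single path---building $\tree(w)$ amounts to placing the letters of $w$ along that path from the root outward, and each placement is governed by a local legality condition coming from the rule defining slide trees in Section \ref{sec:slides}. I would make this precise by inducting on the length of $w$ (equivalently on $n$), reading $w$ from left to right and maintaining the partially built caterpillar; the induction hypothesis records exactly which slide labels are still available at the current internal vertex, and the right-justified hypothesis on $\kUnd$ controls how these available labels evolve. The first main step, then, is the lemma that $\tree(w)$ is valid if and only if each letter $w_i$ is legal given the data accumulated from $w_1,\ldots,w_{i-1}$.

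With this local criterion in hand, I would prove the biconditional by its contrapositive in each direction, treating the two patterns as two distinct failure modes. For necessity, I would show that any occurrence of $2{-}1{-}2$ or of $23{-}\overline 2{-}1$ in $w$ forces the legality condition to fail at a specific letter, so that the slide operation cannot be completed and $\tree(w)$ is not a valid slide tree. For sufficiency, I would take the first index at which the construction of $\tree(w)$ breaks down and read off, from the accumulated data at that point, an explicit occurrence of one of the two patterns. I expect the unbarred pattern $2{-}1{-}2$ to capture the basic obstruction---a label that is overtaken, drops strictly below, and is overtaken again, contradicting a monotonicity that the caterpillar slide structure demands---while the subtler barred pattern $23{-}\overline 2{-}1$ captures the residual failures in which an intervening equal label would otherwise repair the configuration.

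The hardest part will be pinning down the barred pattern exactly. Barred and vincular patterns are delicate: the adjacency inside the block ``$23$'' and the placement of the bar over the second ``$2$'' must correspond precisely to the structure of the slide rule and to the way repeated labels interact with it. The bar encodes the condition that no intermediate letter equal to the relevant value occurs, which is exactly the situation in which an otherwise-legal drop becomes illegal; proving that this is the correct---and complete---description of the second failure mode is where the right-justified hypothesis on $\kUnd$ does its essential work, since it is this hypothesis that determines which multiplicities, and hence which equal-value configurations, can arise. I would verify the correspondence on small caterpillars to fix the conventions and to confirm that no third pattern is needed, and then argue in general that these two failure modes are exhaustive, which completes both directions of the equivalence.
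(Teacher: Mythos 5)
There is a genuine gap, and it sits at the center of your plan. The ``position-by-position legality criterion'' you want as your first lemma does not exist in the form you describe, so the left-to-right induction cannot be set up. In the construction of $\tree(w)$ (Definition \ref{def:tr}), a nondescent leaf receives the smallest label not consumed by \emph{any} descent leaf, and descent leaves arbitrarily far to the right of the current position consume labels; thus the pool of ``available labels at the current internal vertex'' is not a function of $w_1,\ldots,w_{i-1}$, and the construction is inherently two-pass. Legality of a single slide is not local either: when the edge $\mbf{w_i}$ is labeled, the slide algorithm compares minimal leaf labels over entire branches, i.e., over the whole suffix of the caterpillar. (The induction ``on the length of $w$, equivalently on $n$'' also founders on the fact that a prefix of $w$ does not have content a composition of the appropriate size, so the inductive objects are not slide trees for any smaller composition.)

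The deeper problem is in your sufficiency direction, which is asserted rather than proved: you propose to check small caterpillars ``to confirm that no third pattern is needed'' and then ``argue in general that these two failure modes are exhaustive,'' but exhaustiveness is exactly the content of the theorem, and it is \emph{false} without right-justification. For general reverse-Catalan $\kUnd$, Theorem \ref{thm:psiAvoidance} shows that avoiding $2{-}1{-}2$ and $23{-}\overline{2}{-}1$ is not sufficient; one additionally needs $\TRep_w(i)+\ell_w(i)\geq\z(i)$ for all $i$. So no amount of convention-fixing on small cases can make the two failure modes exhaustive; a correct proof must invoke right-justification quantitatively, and your description of its role (controlling ``which multiplicities, and hence which equal-value configurations, can arise'') misses this, since the multiplicities are fixed by $\kUnd$ in any case. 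What the hypothesis actually buys, and what the paper's proof uses, is a counting argument: with all $m$ zeros of $\kUnd$ in positions $1,\ldots,m$, the labels $c,1,\ldots,m$ together with the entries $w_{i_1},\ldots,w_{i_{l-1}}$ that are smaller than everything to their right are precisely the nondescent leaf labels, and the bound of Corollary \ref{cor:MAXm+j-1} (at most $m+j-1$ nondescent leaves left of $\mbf{w_{i_j}}$) shows the greedy labeling in Step 7 of Definition \ref{def:tr} never has to skip a label; hence the nondescent labels increase from left to right, every comparison in the slide algorithm pits a larger right-hand leaf against a smaller left-hand nondescent leaf, and every slide succeeds. Your proposal contains no analogue of this argument. (The other direction, valid $\Rightarrow$ avoids, is the easier Lemma \ref{lem:avoids}, and your contrapositive formulation of it would be fine.)
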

	Above, a \textit{right-justified} composition is a composition where all non-zero entries appear right of any 0 entries.	
	Otherwise, if $\kUnd$ is not right-justified, the set of caterpillar trees can not be characterized solely by a pattern avoidance criterion.  We state the characterization result below and define the terms precisely in Section \ref{sec:cats}.
	
	\begin{theorem}
		Let $\kUnd$ be a reverse-Catalan composition of $n$, and let $w$ be a word of content $\kUnd$.  Then the caterpillar tree $\tree(w)$ is in $\Slpk$ (resp.,  $\tree(w)$ is in $\Slwk$) if and only if $w$ avoids the patterns $2{-}1{-}2$ and $23{-}\overline{2}{-}1$, and the inequality $$\TRep_w(i)+\ell_w(i)\geq \z(i)$$ holds for all $i$ (respectively, $\rep_w(i)\geq \z(i)$ holds for all $i$).  
		
		Here, $\ell_w(i)$ is the number of $i$s in the rightmost consecutive group of $i$s in $w$, $\TRep_w(i)$ is the total number of repeated letters to the right of the rightmost $i$ in $w$, and $\rep_w(i)$ is the total number of repeated letters larger than $i$ to the right of the rightmost $i$ in $w$.   
		\label{thm:psiAvoidance}
	\end{theorem}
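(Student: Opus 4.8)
The plan is to separate the hypothesis into a \emph{pattern} layer and an \emph{inequality} layer, to reduce the pattern layer to Theorem \ref{thm:rjust}, and to isolate the inequalities as the new content coming from the interior zeros of $\kUnd$. I fix a reverse-Catalan composition $\kUnd$ and a word $w$ of content $\kUnd$, and I run the slide-insertion algorithm of Section \ref{sec:slides} that $\tree$ inverts, recording where it can fail. I treat the $\psi$ and $\omega$ cases together: the two slide rules differ only in which previously inserted leaves a new leaf may slide past, and it is exactly this difference that will produce the budget $\rep_w(i)$ in the $\omega$ case and the larger budget $\TRep_w(i)+\ell_w(i)$ in the $\psi$ case. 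Keeping this pairing straight---$\Slpk$ with $\TRep_w(i)+\ell_w(i)$ and $\Slwk$ with $\rep_w(i)$---is essential.

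First I would show that avoiding $2{-}1{-}2$ and $23{-}\overline{2}{-}1$ is necessary in both cases, independent of right-justification. These forbidden patterns encode purely local incompatibilities among three slide labels along the caterpillar spine, and the argument in Theorem \ref{thm:rjust}---that a word containing either pattern forces $\tree(w)$ to violate slide-label compatibility along the spine---uses only the relative order of the three offending letters and not the locations of the zero entries of $\kUnd$. Hence a word containing either pattern yields an invalid $\tree(w)$ in both the $\psi$ and the $\omega$ senses. This reduces the problem to pattern-avoiding $w$, for which $\tree(w)$ is a correctly shaped caterpillar and the only remaining question is whether its zero-leaves can be attached consistently.

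The heart is the inequality layer. When $k_i=0$, the leaf $i$ is attached by a single minimal slide; if $\kUnd$ is right-justified all such leaves sit at the left end and impose no constraint, which is why Theorem \ref{thm:rjust} needs no inequalities. For general reverse-Catalan $\kUnd$, a zero entry $k_i=0$ lying to the right of a nonzero entry must be slid across part of the already-built spine, and each such slide consumes one landing slot, namely an edge whose label is a \emph{repeated} letter (a letter occurring more than once frees a trivalent vertex to receive the incoming leaf). I would count these slots reading outward from the rightmost $i$. In the $\omega$ rule a zero-leaf $i$ may pass only repeated letters larger than $i$, so the number of slots is exactly $\rep_w(i)$ and consistency of all zero-slides through value $i$ forces $\rep_w(i)\ge\z(i)$. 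In the $\psi$ rule the slide may additionally pass repeated smaller letters and be absorbed into the rightmost block of copies of $i$, enlarging the count to $\TRep_w(i)+\ell_w(i)$ and giving $\TRep_w(i)+\ell_w(i)\ge\z(i)$. Conversely, when the relevant inequality holds for all $i$, I would attach the zero-leaves greedily from the right to the available slots and verify that the resulting caterpillar meets every slide-label condition, placing it in $\Slwk$ or $\Slpk$ respectively.

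The step I expect to be most delicate is pinning down the $\psi$ slot count, in particular the $\ell_w(i)$ term: I must show the rightmost block of $i$'s supplies exactly $\ell_w(i)$ additional landing positions and that slots used by value $i$ do not collide with those used by other values. I would establish this by processing letter values from largest to smallest while maintaining the invariant that the slots consumed at value $i$ are disjoint from those consumed at larger values, so the local counts add rather than overlap. A secondary point is to confirm that the reverse-Catalan hypothesis is exactly what makes $\tree(w)$ well-defined---so that the equivalence concerns \emph{membership} in the slide sets rather than definedness of $\tree(w)$---which I would do by checking that the global slot supply meets the total number of zeros precisely when $\kUnd$ is reverse-Catalan. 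As a consistency check, since $\TRep_w(i)+\ell_w(i)\ge\rep_w(i)$ always, the $\omega$ inequalities imply the $\psi$ ones, and when $\kUnd$ is right-justified each $\z(i)$ vanishes, so both conditions collapse to the pure pattern-avoidance statement of Theorem \ref{thm:rjust}.
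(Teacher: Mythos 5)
Your high-level architecture matches the paper in several places (necessity of the two patterns holds for all $\kUnd$; the $\omega$ inequality implies the $\psi$ one; the right-justified case collapses to Theorem \ref{thm:rjust}), but the core mechanism you propose for the inequalities is not how $\tree(w)$ works, and this creates a genuine gap in both directions. A leaf $i$ with $k_i=0$ never slides at all, and Definition \ref{def:tr} determines \emph{every} leaf label of $\tree(w)$ deterministically --- there are no ``landing slots'' to fill and no freedom to ``attach the zero-leaves greedily from the right.'' So your sufficiency argument has nothing to construct: what must actually be shown is that the one fixed labeling produced by Definition \ref{def:tr} survives the $\kUnd$-slide algorithm. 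The paper does this by induction on $\#\{(i<j)\,:\,k_i\neq 0,\ k_j=0\}$, with base case Theorem \ref{thm:rjust}; the inductive step compares $w$ with the word $w'$ obtained by replacing all $(i-1)$'s by $i$'s (moving a zero one step left in $\kUnd$), notes that \eqref{eq:psiCat} becomes \emph{strict} for $w'$ at the chosen index, and then runs a three-case analysis of whether the leaves $i-1$ and $i$ must be transposed when passing from $\tree(w')$ to $\tree(w)$. Your ``process values largest-to-smallest with a disjoint-slot invariant'' is precisely the step you flag as delicate, and it is left entirely undeveloped; in particular it cannot see the ``second smallest unused label'' rule in Step 7 of Definition \ref{def:tr}, which is exactly the wrinkle that separates $\psi$-trees from $\omega$-trees (Lemma \ref{lem:2nd_smallest_not_omega}) and that any correct argument must engage.

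The necessity direction likewise needs machinery your slot count does not supply. The paper first reduces to an index whose rightmost occurrence has nothing smaller to its right (Lemmas \ref{lem:rightmost_i_Psi} and \ref{lem:rightmost_i}); then it counts nondescent leaves in two ways (Lemma \ref{lem:2leaves}, $\lnd+\rnd+\ell_w(i)-1=z_++z_-+p_++p_-+1$), uses $23{-}\overline{2}{-}1$-avoidance to force the $p_+$ labels to sit right of $\mbf{i_r}$ (Lemma \ref{lem:largerToRight}), and exploits the one-more-leaf-than-edge identity $\rnd-1=\TRep_w(i)+p_+$ to convert the failed inequality into a nondescent leaf $d\geq i$ left of $\mbf{i_l}$, after which a case analysis breaks the slide algorithm; in the $\omega$ case the analogous count (Lemma \ref{lem:label_out_of_branch}) plus the monotonicity of nondescent labels shows no leaf $m<j$ can sit right of $\mbf{j_r}$. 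None of these $+1$ bookkeeping identities --- which are what actually drive the contradiction --- appear in your sketch, and your heuristic that ``each zero-slide consumes a repeated-letter slot'' asserts rather than proves the bound. Two smaller corrections: pattern necessity is Lemma \ref{lem:avoids} (valid for every $\kUnd$), not an extract of Theorem \ref{thm:rjust}, which is the sufficiency direction for right-justified compositions; and the reverse-Catalan hypothesis is not what makes $\tree(w)$ well-defined (that needs only $2{-}1{-}2$-avoidance), it is the nonemptiness condition for the slide sets (Proposition \ref{prop:revCatalan}).
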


	Our work is structured as follows.	In Section 2 we recall the necessary geometry and combinatorics background and some known results about asymmetric multinomial coefficients and slide trees.  In Section 3 we explore the combinatorics of slide trees and introduce notions that we will use throughout the rest of the paper. In Section 4 we define our insertion algorithms for $F$ and prove Theorem \ref{thm:bijectionExists}. In Section 5, we prove Theorems \ref{thm:rjust} and \ref{thm:psiAvoidance}. In Section 6, we explore the case $\kUnd=(1,1,\ldots,1)$, where the bijection can be given more directly.
	
	\subsection{Acknowledgments}
	We thank Maria Gillespie for her mentorship, and thank
	Vance Blankers, Renzo Cavalieri, Sean Griffin, Matt Larson, and Jake Levinson for their helpful conversations.

	\section{Background}
	
	We start by defining the relevant geometric objects of study. 
	
	\subsection{The moduli space of curves $\Mbar_{0,n}$}
	\label{sec:Mbar}
	
	Broadly speaking, a moduli space is a geometric space whose points parameterize some class of geometric objects. In our case, these objects are complex curves with marked points. 
	
	\begin{definition}
		Let $M_{0,n}$ be the moduli space of (complex) genus 0 curves with $n$ marked points. In particular, a curve $C\in M_{0,n}$ is isomorphic to $\PP^1$ and consists of the data of $n$ distinct points $p_1,p_2,\ldots,p_n\in\PP^1$.
	\end{definition}
	
	\begin{definition}
		Then, let $\Mbar_{0,n}$ be the \textbf{Deligne-Mumford compactification} of $M_{0,n}$, the moduli space of \textit{stable} genus 0 curves with $n$ marked points. A boundary curve $C\in\Mbar_{0,n}\setminus M_{0,n}$ will instead consist of two or more components, each individually isomorphic to $\PP^1$, joined together at nodes in a tree structure (to keep the total genus 0), with the $n$ marked points distributed between them.  (See Figure \ref{fig:MbarEx}.)
	\end{definition}
	
	We will often refer to $M_{0,n}$ as the \textit{interior} of $\Mbar_{0,n}$, and $\Mbar_{0,n}\setminus M_{0,n}$ as the \textit{boundary}. 
	We use the term \textbf{special point} to mean either a marked point or a node. The word \textit{stable} in the above definition means that each component has at least three special points.
	
	For more details on the construction and properties of $\Mbar_{0,n}$, we refer the reader to \cite{Cavalieri16} or \cite[Chapter~1]{green_book}.
	
	\begin{remark}
		Throughout this work, as in \cite{GGL22}, we label our marked points by the alphabet $\{a,b,c,1,2,\ldots,n\}$, and thus will henceforth refer to $\Mbar_{0,n}$ as $\Mbar_{0,n+3}$ instead.
		
		When we wish to apply an ordering to this alphabet, we will impose that $a<b<c<1<2<\cdots<n$.
	\end{remark}
	
	Given a stable curve $C\in\Mbar_{0,n+3}$, we may also consider its \textbf{dual tree}. To form the dual tree to a curve, create a vertex for each component of $C$, as well as one for every marked point. Then, for each marked point, connect its vertex to the vertex corresponding to the component it is contained in. For each node, connect the vertices corresponding to the two components that intersect at that node.
	
	\begin{example}
		On the left of Figure \ref{fig:MbarEx} is an example of a boundary curve in $\Mbar_{0,7}$ (so $n=4$). To its right, we construct its dual tree.
	\end{example}
	
	\begin{figure}[bt]
		\centering
		\includegraphics{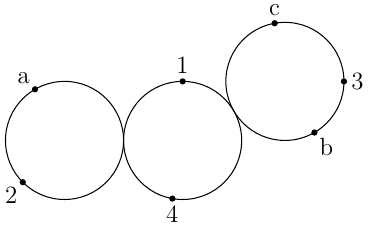}
		\hspace{2cm}
		\includegraphics{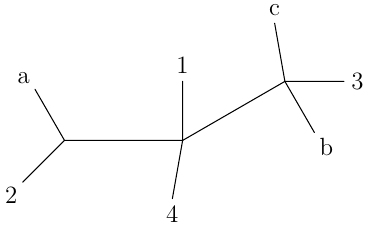}
		\caption{An element of $\Mbar_{0,n+3}$ for $n=4$, along with its dual tree.}
		\label{fig:MbarEx}
	\end{figure}
	
	The boundary $\Mbar_{0,n+3}\setminus M_{0,n+3}$ can be divided into different \textbf{boundary strata}. A particular stratum consists of all curves in $\Mbar_{0,n+3}$ that share the same dual tree.
	The interior $M_{0,n+3}$ is a single stratum, as all interior curves have the star graph as their dual tree.
	
	We now define forgetting maps on $\Mbar_{0,n+3}$. 
	
	\begin{definition}
		Define the \textbf{$n$th forgetting map} $\pi_n: \Mbar_{0,n+3}\to\Mbar_{0,n+2}$ as the map that forgets the marked point $n$. If this results in a component having less than three special points, we must \textit{stabilize} the curve. To do this, if forgetting $n$ results in a component with two nodes and no marked points, replace that component with just a node. If this results in a component with one node and one marked point, replace it with that marked point.
	\end{definition}

	Note that we may compose these maps as desired: $\pi_{n-2}\circ\pi_{n-1}\circ\pi_n:\Mbar_{0,n+3}\to\Mbar_{0,n},$ and so forth.
	
	\subsection{Psi and omega classes, the Kapranov embedding, and multidegrees}
	
	We next define certain classes in the cohomology of $\Mbar_{0,n+3}$, as well as the corresponding map to projective space. For full details, see \cite{Kapranov}, where Kapranov first defined this map.
	
	The $i$th cotangent line bundle $\mathbb{L}_i$ is the line bundle whose fiber over $C\in\Mbar_{0,n+3}$ is the cotangent space of $C$ at the marked point $i$. The \textbf{$i$th psi class} $\psi_i$ is the first Chern class of $\mathbb{L}_i$. In other words, $\psi_i=c_1(\mathbb{L}_i)$.
	
	Next, the \textbf{$i$th omega class} $\omega_i$ is defined as the pullback of $\psi_i$ under the forgetting maps that forget the marked points $i+1,\ldots,n$. 
	
	We also consider the corresponding maps to projective space: The Kapranov morphism $|\psi_i|:\Mbar_{0,n+3}\to\PP^n$, and $|\omega_i|:\Mbar_{0,n+3}\to\PP^i$, given by $|\omega_i|=|\psi_i|\circ\pi_{i+1}\circ\cdots\circ\pi_n$.
	
	\begin{example}
		Let $C$ be the curve in $\Mbar_{0,7}$ seen in Figure \ref{fig:MbarEx}. Suppose that on the central component $C'$, the right node has coordinate $s$, and the point 4 has coordinate $t$. Then, $|\psi_1|(C)=[s:s:0:s:t]$ and $|\omega_1|(C)=[s:s]$.
	\end{example}
	
	We will now combine these Kapranov morphisms together to form two different maps from $\Mbar_{0,n+3}$ into products of projective spaces. The first is the \textbf{total Kapranov map} $\Psi_n:\Mbar_{0,n+3}\to \PP^n\times\PP^n\times\cdots\times\PP^n$ given by $$\Psi_n(C)=(|\psi_1|(C),|\psi_2|(C),\ldots,|\psi_n|(C)).$$
	
	However, $\Psi_n$ is not an embedding, as it hides information about any components of $C$ that contain no marked points. 
	To embed $\Mbar_{0,n+3}$ into a product of projective spaces, we instead use the \textbf{iterated Kapranov map} $\Omega_n$. This map $\Omega_n: \Mbar_{0,n+3}\hookrightarrow\PP^1\times\PP^2\times\cdots\times\PP^n$ is defined by $$\Omega_n(C)=(|\omega_1|(C),|\omega_2|(C),\ldots, |\omega_n|(C)).$$ The iterated Kapranov map wasstudied in more detail in \cite{KeelTevelev,MoninRana17}.

	\begin{definition}
		Let $\kUnd=(k_1,k_2,\ldots,k_n)$ be a $n$-tuple of nonnegative integers. Then $\kUnd$ is a \textbf{composition of $n$} if $k_1+k_2+\cdots+k_n=n$.
	\end{definition}
	
	\begin{definition}
		Let $\kUnd$ be a composition of $n$.
		Then, consider the intersection between the image of $\Psi_n$ (resp. $\Omega_n$) with $n$ hyperplanes, where we choose $k_i$ general hyperplanes from the $i$th component of the product. Then, the \textbf{multidegree} of $\Psi_n$ (resp. $\Omega_n$) is the number of points in this intersection. This is denoted as $\deg_{\kUnd}(\Psi_n)$ (resp. $\deg_{\kUnd}(\Omega_n)$).
	\end{definition}
	
	It is known (see \cite{Cavalieri16}, for example) that when $\sum k_i=n$, $$\deg_{\kUnd}(\Psi_n)=\int_{\Mbar_{0,n+3}}\psi^{\kUnd}=\binom{n}{k_1,k_2,\ldots,k_n},$$ where $\psi^{\kUnd}$ denotes the product $\prod_i \psi_i^{k_i}$. 
	
	A similar result for omega classes also exists. It is shown in \cite{CGM} that when $\sum k_i=n$, $$\deg_{\kUnd}(\Omega_n)=\int_{\Mbar_{0,n+3}}\omega^{\kUnd}=\multichoose{n}{k_1,k_2,\ldots,k_n}.$$ The coefficients in the third part of the above equality are the \textit{asymmetric multinomial coefficients}, which we define in the next subsection.
	
	\subsection{Asymmetric multinomial coefficients}
	\label{subsec:Asym}
	
	The asymmetric multinomial coefficients were originally defined in \cite{CGM}, whose definition we restate here, using modified notation that matches the indexing used in \cite{GGL23}.
	
	\begin{definition}
		Let $\kUnd$ be a composition of $n$. Let $k_i$ be the rightmost 0 in $\kUnd$ (we set $i=0$ if there are no zeroes in $\kUnd$), and let $j>i$ be a positive integer. Define $\kUnd^{(j)}$ to be the composition of $n-1$ formed by decreasing $k_j$ by 1 and then removing the rightmost 0 (which is either in position $j$ or $i$) from the resulting tuple. Then, the \textbf{asymmetric multinomial coefficients $\multichoose{n}{\kUnd}$} are defined by $\multichoose11=1$ and the recursion \begin{equation}\multichoose{n}{\kUnd}=\sum_{j=i+1}^{n}\multichoose{n-1}{\kUnd^{(j)}}.\label{eq:recursion}\end{equation}
		\label{def:asymb}
	\end{definition}
	
	Notably, if $\kUnd=(1,1,\ldots,1)$, then $\multichoose{n}{\kUnd}=n!$, and if $\kUnd=(0,\ldots,0,n)$, then $\multichoose{n}{\kUnd}=1$.
	
	\begin{example}
		We compute the coefficient $\multichoose{4}{1,0,2,1}$: \begin{align*} \multichoose{4}{1,0,2,1} &= \multichoose{3}{1,1,1}+\multichoose{3}{1,0,2}\\ &= 3!+\multichoose{2}{1,1}\\ &= 3!+2!=8. \end{align*}
	\end{example}
	
	\begin{example}
		We compute the coefficient $\multichoose{4}{0,1,2,1}$: \begin{align*} \multichoose{4}{0,1,2,1} &= \multichoose{3}{0,2,1}+\multichoose{3}{1,1,1}+\multichoose{3}{0,1,2}\\ &= \multichoose{2}{1,1}+\multichoose{2}{0,2}+3!+\multichoose{2}{0,2}+\multichoose{2}{1,1}\\ &= 2!+1+3!+1+2!=12. \end{align*}
		Note that in this case, the asymmetric multinomial $\multichoose{4}{0,1,2,1}$ is equal to the corresponding symmetric multinomial coefficient $\binom{n}{0,1,2,1}$. (In fact, this will happen any time that $\kUnd$ is \textit{right-justified}, that is, where all entries that are zero occur before all non-zero entries.)
	\end{example}

	\begin{definition}
		Let $\kUnd=(k_1,k_2,\ldots,k_n)$ be a composition of $n$. We say that $\kUnd$ is \textbf{reverse-Catalan} if for all $i$, $k_{n-i+1}+\cdots+k_{n-1}+k_n\geq i$.
	\end{definition}
	
	\begin{proposition}[Corollary 4.14 from \cite{CGM}]
		The coefficient $\multichoose{n}{\kUnd}\neq0$ if and only if $\kUnd$ is reverse Catalan.
		\label{prop:revCatalan}
	\end{proposition}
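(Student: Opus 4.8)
The plan is to induct on $n$ using the defining recursion \eqref{eq:recursion}, after first recording a clean reformulation of the reverse-Catalan condition. I would rewrite the condition in terms of the shifted sequence $b_t = k_t - 1$: since $\kUnd$ is a composition of $n$ into $n$ parts we have $\sum_t b_t = 0$ and each $b_t \geq -1$, and the inequalities $k_{n-i+1}+\cdots+k_n \geq i$ become precisely the statement that every suffix sum $T_s := b_s + b_{s+1} + \cdots + b_n$ is nonnegative. In this language the rightmost zero of $\kUnd$ is the rightmost index $i$ with $b_i = -1$, and every index to its right satisfies $b_t \geq 0$. Because the coefficients are manifestly nonnegative integers (by induction on the recursion, each summand being a nonnegative integer), $\multichoose{n}{\kUnd} \neq 0$ is equivalent to at least one summand $\multichoose{n-1}{\kUnd^{(j)}}$ being nonzero.

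Via the inductive hypothesis, the whole statement then reduces to the key lemma: $\kUnd$ is reverse-Catalan if and only if $\kUnd^{(j)}$ is reverse-Catalan for at least one $j \in \{i+1,\ldots,n\}$. To prove this I would track exactly how the suffix sums $T_s$ transform under the two branches of the operation defining $\kUnd^{(j)}$. When $b_j = 0$ the operation simply deletes position $j$, and one checks that the collection of suffix sums is unchanged except for dropping the redundant value $T_j = T_{j+1}$; hence $\kUnd^{(j)}$ is reverse-Catalan exactly when $\kUnd$ is. When $b_j \geq 1$ the operation decrements $b_j$ and deletes the $-1$ at position $i$; a short computation shows the new suffix sums equal $T_s$ for $s < i$ or $s > j$ and $T_s - 1$ for $i < s \leq j$, so that $\kUnd^{(j)}$ is reverse-Catalan iff $T_s \geq 0$ off the window $(i,j]$ and $T_s \geq 1$ on it.

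For the necessity direction of the lemma (some $\kUnd^{(j)}$ reverse-Catalan implies $\kUnd$ reverse-Catalan), both branches hand me $T_s \geq 0$ for all $s \neq i$ directly; the only subtle value is $s = i$, which I recover from $T_i = T_{i+1} - 1$ together with the strengthened inequality $T_{i+1} \geq 1$ coming from the window. For the existence direction I would exhibit the single choice $j = i+1$: if $\kUnd$ is reverse-Catalan then $T_i \geq 0$ gives $T_{i+1} = T_i + 1 \geq 1$, which is exactly the window condition needed when $b_{i+1} \geq 1$, while if $b_{i+1} = 0$ the deletion branch applies and preserves reverse-Catalan automatically. One also notes that reverse-Catalan forces $i < n$ (otherwise $T_n = b_n = -1$), so $j = i+1$ is always legal, and the degenerate case $i = 0$, where $\kUnd = (1,\ldots,1)$, is covered by the deletion branch as well.

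The main obstacle, and the step needing the most care, is the index bookkeeping in the second branch: deleting position $i$ shifts every later entry, so I must verify that the stated transformation of the $T_s$ is correct and, crucially, that the boundary inequality at $s = i$ is genuinely \emph{implied} by the window conditions rather than silently assumed. Once that bookkeeping is pinned down, the induction closes immediately: the base case $n = 1$ holds since $\multichoose{1}{1} = 1$ and $(1)$ is reverse-Catalan, and the inductive step is the chain $\multichoose{n}{\kUnd} \neq 0 \iff$ some $\multichoose{n-1}{\kUnd^{(j)}} \neq 0 \iff$ some $\kUnd^{(j)}$ is reverse-Catalan $\iff \kUnd$ is reverse-Catalan.
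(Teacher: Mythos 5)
Your proof is correct, and I verified the bookkeeping that you flagged as the delicate point: writing $b_t=k_t-1$ and $T_s=b_s+\cdots+b_n$, the deletion branch ($k_j=1$) really does leave the multiset of suffix sums unchanged up to dropping the redundant $T_j=T_{j+1}$, and the decrement-and-delete branch ($k_j>1$) really does send $T_s\mapsto T_s-1$ exactly for $i<s\leq j$ and fix the rest, so that the inequality at $s=i$ is recovered from $T_i=T_{i+1}-1$ and $T_{i+1}\geq1$ rather than assumed. The choice $j=i+1$ in the existence direction, together with the observation that reverse-Catalan forces $i<n$ and that $i=0$ forces $\kUnd=(1,\ldots,1)$, closes the induction. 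Note, however, that the paper itself does not prove this statement at all: Proposition \ref{prop:revCatalan} is imported verbatim as Corollary 4.14 of \cite{CGM}, so there is no internal argument to compare yours against. In \cite{CGM} the nonvanishing criterion emerges from their combinatorial model, where $\multichoose{n}{\kUnd}$ counts column-restricted parking functions $\CPF(\kUnd)$ and such objects exist precisely when the underlying Dyck path with $k_t$ down-steps in column $t$ exists, which is the reverse-Catalan condition. Your argument is different in character and arguably more elementary: it works directly from the defining recursion \eqref{eq:recursion} of Definition \ref{def:asymb}, needs no auxiliary combinatorial objects, and as a byproduct isolates a clean structural fact (the key lemma that the recursion's children $\kUnd^{(j)}$ include a reverse-Catalan composition if and only if $\kUnd$ is reverse-Catalan) that is invisible in the citation-based treatment.
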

	
	Several different combinatorial objects are counted by the asymmetric multinomial coefficients. The first one that we discuss are a variant of parking functions called \textit{column-restricted parking functions}. First, we define parking functions.
	
	\begin{definition}
		A \textbf{Dyck path} is a lattice path from $(0,n)$ to $(n,0)$ consisting of $n$ unit-length right steps and $n$ unit-length down steps, that stays weakly above the diagonal $y=n-x$. A \textbf{parking function} is a labeling of the down steps of a Dyck path with the numbers $1,2,\ldots,n$, such that in each column, the labels increase from bottom to top. We call $n$ the \textbf{semilength} of a parking function, and denote the set of semilength $n$ parking functions by $\PF(n)$.
	\end{definition}
	
	We label the columns of a parking function $P$ from left to right. We now restate the definitions of \textit{dominance} and of \textit{column-restricted parking functions} found in \cite{CGM} and \cite{GGL23}.
	
	\begin{definition}
		Let $P\in\PF(n)$ be a parking function. For a label $x\in[n]$ in column $j$ of $P$, we say $x$ \textbf{dominates} a column $i>j$ if $x$ is larger than every entry of $i$. (This includes both empty columns and columns with only entries smaller than $x$.) We define the \textbf{dominance index} $d_P(x)$ of $x$ to be the number of columns to the right of $x$ dominated by $x$.
	\end{definition}
	
	\begin{definition}
		Let $P$ be a parking function where $d_P(x)<x$ for all $x$. Then $P$ is a \textbf{column-restricted parking function}. We denote the set of such parking functions by $\CPF(n)\subseteq\PF(n)$.
	\end{definition}
	
	We may also write $\PF(\kUnd)$ or $\CPF(\kUnd)$ to denote the set of parking functions or column-restricted parking functions with $k_i$ down-steps in column $i$, respectively.
	
	\begin{example}
		Below is an example of a parking function $P\in\PF(0,1,1,0,1,3)$. The dominance indices of $P$ are $d_P(1)=0$, $d_P(2)=2$, $d_P(3)=0$, $d_P(4)=3$, $d_P(5)=0$, and $d_P(6)=0$. Since $d_P(2)\geq2$, $P$ is not a column-restricted parking function.

		\begin{center}
			\begin{tikzpicture}[scale=0.6]
			\draw (0,0)--(6,-6);
			\draw[thick] (0,0)--(2,0)--(2,-1)--(3,-1)--(3,-2)--(5,-2)--(5, -3)--(6,-3)--(6,-6);
			\node() at (1.75,-0.5) {$\mbf4$};
			\node() at (2.75,-1.5) {$\mbf2$};
			\node() at (4.75,-2.5) {$\mbf1$};
			\node() at (5.75,-3.5) {$\mbf6$};
			\node() at (5.75,-4.5) {$\mbf5$};
			\node() at (5.75,-5.5) {$\mbf3$};
			\end{tikzpicture}
		\end{center}
	\end{example}
	
	Secondly, \cite{GGL23} defines a set of trivalent trees $\Tour(\kUnd)$, called \textbf{tournament trees}, which are defined via a process called \textit{lazy tournaments} and are also counted by the asymmetric multinomial coefficients.
	Finally, there is a second class of trivalent trees that are also counted by the asymmetric multinomial coefficients. These are called \textbf{slide trees}, denoted $\Slwk$. These were originally defined in \cite{GGL22}, and we describe them in detail in the next subsection.
	Putting this all together, we have the following proposition, where the last three equalities come from \cite{CGM}, \cite{GGL23}, and \cite{GGL22}, respectively.
	
	\begin{proposition}
		Let $\kUnd$ be a composition of $n$. Then, $$\deg_{\kUnd}(\Omega_n)=\multichoose{n}{\kUnd}= |\CPF(\kUnd)|=|\Tour(\kUnd)|=|\Slwk|.$$
	\end{proposition}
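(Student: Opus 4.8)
The plan is to read the statement as a chain of four equalities and verify each one, organizing the argument around the fact that $\multichoose{n}{\kUnd}$ is determined by the base case $\multichoose{1}{1}=1$ together with the recursion \eqref{eq:recursion}. Thus for each quantity in the chain it suffices to check the same base case and to show it satisfies \eqref{eq:recursion}; the equalities then follow by induction on $n$. Three of the four equalities are established in prior work, so the proof is mainly a matter of citing \cite{CGM}, \cite{GGL23}, and \cite{GGL22} and verifying that their recursive structures align.

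First I would address $\deg_{\kUnd}(\Omega_n)=\multichoose{n}{\kUnd}$, the geometric input from \cite{CGM}. Here one intersects the image of $\Omega_n$ with $k_i$ general hyperplanes in each factor $\PP^i$ and counts points. Pulling back the last hyperplane condition along the forgetting map $\pi_n$ and degenerating expresses this count as a sum over the indices $j$ appearing in \eqref{eq:recursion}; the stabilization step in the definition of $\pi_n$ is what forces the removal of the rightmost zero of $\kUnd$, so the geometric recursion matches the combinatorial one exactly.

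Next, for $\multichoose{n}{\kUnd}=|\CPF(\kUnd)|$, which is also from \cite{CGM}, I would exhibit a deletion map on column-restricted parking functions: remove a suitably chosen down-step (governed by the rightmost-zero position $i$ and the chosen column $j>i$) so that the result lies in $\CPF(\kUnd^{(j)})$, and check that this is a bijection onto $\bigsqcup_{j}\CPF(\kUnd^{(j)})$. The dominance condition $d_P(x)<x$ is precisely what makes this bookkeeping close up and reproduce \eqref{eq:recursion}. The equality $|\CPF(\kUnd)|=|\Tour(\kUnd)|$ is then the explicit bijection of \cite{GGL23}, which again respects the recursion through the final round of the lazy tournament, and $|\Tour(\kUnd)|=|\Slwk|$ is the equality proved in \cite{GGL22}, where both families were shown to compute $\int_{\Mbar_{0,n+3}}\omega^{\kUnd}$.

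The main obstacle is the first equality. The combinatorial equalities reduce, once the correct deletion map or bijection is identified, to routine verification that each object satisfies \eqref{eq:recursion}. By contrast, translating the intersection-theoretic count $\deg_{\kUnd}(\Omega_n)$ into that same recursion requires a genuine degeneration argument on $\Mbar_{0,n+3}$ and careful tracking of how stabilization under $\pi_n$ deletes the rightmost zero of $\kUnd$; this is where the real work lies, and it is precisely the part that this proposition imports wholesale from \cite{CGM}.
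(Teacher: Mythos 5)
Your proposal is correct and matches the paper's treatment: the paper gives no independent proof of this proposition, but simply assembles it from prior work, attributing $\deg_{\kUnd}(\Omega_n)=\multichoose{n}{\kUnd}=|\CPF(\kUnd)|$ to \cite{CGM}, $|\CPF(\kUnd)|=|\Tour(\kUnd)|$ to \cite{GGL23}, and $|\Tour(\kUnd)|=|\Slwk|$ to \cite{GGL22}, exactly as you do. Your added sketches of how each cited result works (the degeneration argument and the recursion \eqref{eq:recursion}) are consistent with the paper's surrounding discussion but are not part of its proof, which is citation only.
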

	
	The third and fourth objects above are two different sets of trivalent trees. This leads naturally to the following combinatorial question:
	
	\begin{question}[Problem 6.1 from \cite{GGL22}]
		Find a combinatorial bijection between the sets $\Tour(\kUnd)$ and $\Slwk$.
		\label{q:mainQuestion}
	\end{question}

	\subsection{Slide trees}
	\label{sec:slides}
	We now recall the definitions of $\Slpk$ and $\Slwk$ given in \cite{GGL22}. 
	
	\begin{definition}
		A tree $T$ is \textbf{at least trivalent} or \textbf{stable} if every non-leaf vertex has degree at least 3.
	\end{definition}
	
	Comparing this to our definition of stable curves, we see that a curve $C$ is stable precisely when its dual tree is stable. Let $T$ be a stable tree with leaves labeled by $a<b<c<1<2<\ldots< n$. For $i\in[n]$, let $v_i$ be the vertex adjacent to the leaf $i$.
	
	\begin{definition}
		For a fixed $i\in[n]$, let $\Br_a$ be the branch at $v_i$ containing $a$, and $e_a$ the edge connecting $\Br_a$ to $v_i$. Let $m$ be the minimal leaf label of $T\setminus(\Br_a\cup\{i\})$, and $\Br_m$ be the branch at $i$ containing $m$.
		\label{def:pre-i-slide}
	\end{definition}

	\begin{definition}
		We define an \textbf{$i$-slide} on a stable tree $T$ as follows. Add a vertex $v'$ in the middle of edge $e_a$. Reattach the branch $\Br_m$ to be rooted at $v'$. Leave $\Br_a$ and the branch that is just $i$ as is. For all other branches of $T$ at $v_i$, either leave them rooted at $v_i$, or reattach them to be rooted at $v'$.
		Denote the set of trees obtained this way as $\slide_i(T)$.
		\label{def:i-slide}
	\end{definition}
	
	Note that in order for the result of an $i$-slide to be stable, at least one branch (besides $i$) must remain at $v_i$. In particular, if $\deg(v_i)=3$, then $\slide_i(T)=\emptyset$.
	
	\begin{definition}[$\psi$ slide rule]
		Define $\Slpk$ as the set of all stable trees obtained by the following process.
		\begin{enumerate}
			\item For step $i=0$, start with $\starTree$.
			\item For steps $i=1,\ldots,n$, perform $k_i$ $i$-slides to all trees obtained in step $i-1$ in all possible ways.
		\end{enumerate}
		
		\label{def:slideTreesPsi}
	\end{definition}
	
	\begin{definition}[$\omega$ slide rule]
		Define $\Slwk$ as the set of all stable trees obtained by the following process.
		\begin{enumerate}
			\item For step $i=0$, start with $\starThree$.
			\item For steps $i=1,\ldots,n$:
			\begin{enumerate}
				\item Consider the set of all trees formed by attaching the leaf $i$ to a non-leaf vertex of a tree obtained in step $i-1$ in all possible ways.
				\item Perform $k_i$ $i$-slides to all trees obtained in (a) in all possible ways.
			\end{enumerate}
		\end{enumerate}
		\label{def:slideTreesOmega}
	\end{definition}
	
	This definition is originally motivated by the work in \cite{GGL22}. In particular, consider the image of $\Mbar_{0,n+3}$ under the Kapranov maps in Section \ref{sec:Mbar} and intersect it with a particular class of hyperplanes in the product of projective space. If we then degenerate those hyperplanes in the right way and pull back through the Kapranov map, we get a union of 0-dimnensional boundary strata whose dual trees are precisely the trees in $\Slwk$.
	
	Alternatively, we define a procedure known as the $\kUnd$-labeling algorithm, which is a method for determining whether a given tree is in a given slide set. Theorem 3.14 from \cite{GGL22} states that a tree $T$ is in $\Slwk$ (resp. $\Slpk$) precisely if it admits an $\omega$ (resp. $\psi$) $\kUnd$-slide labeling. So, instead of computing an entire slide set using Definition \ref{def:slideTreesPsi} or \ref{def:slideTreesOmega}, we can instead use \ref{def:k-slide} to test whether a particular tree is in a particular slide set.
	
	\begin{definition}
		Define the $\omega$ (resp. $\psi$) $\kUnd$-slide labeling of a stable tree $T$ as the result of the following procedure, if it finishes. (Otherwise, the $\kUnd$-slide labeling does not exist.)
		\begin{enumerate}\setcounter{enumi}{-1}
			\item Start with $\ell=n$.
			\item \textbf{Contract labeled edges:} Let $T'$ be the tree formed from $T$ by contracting all internal labeled edges.
			\item \textbf{Identify the next edge to label:} Let $e$ be the first unlabeled internal edge on the path from the leaf $\ell$ to $a$. (If no such edge exists, then the process terminates and no $\kUnd$-slide labeling exists.) Let $v_\ell$ be the vertex adjacent to $\ell$, and $v_a$ be the other vertex of $e$.
			\item \textbf{Verify that label is valid:} Let $m_\ell$ be the smallest leaf label among all branches of $v_\ell$ not containing $a$ or $\ell$, and $m_a$ as the smallest leaf label among all branches of $v_a$ not containing $a$ or $\ell$.
			In the $\omega$ case (resp. $\psi$ case), if $\ell\geq m_\ell\geq m_a$ (resp. $m_\ell\geq m_a$), then label $e$ with $\ell$. Otherwise, the process terminates.
			\item \textbf{Iterate:}  If $\ell$ has labeled fewer than $k_\ell$ edges, repeat this process with the same $\ell$. Otherwise, decrement $\ell$. If $\ell=0$, then we have successfully constructed the $\kUnd$-slide labeling of $T$.
		\end{enumerate}
		\label{def:k-slide}
	\end{definition}
	
	\begin{example}
		\label{ex:slides}
		Consider the tree below on the left. As we perform the first round of the $\omega$ $(0,0,2,1,1,2)$-slide algorithm, we have $\ell=6$, and try to label the edge above the leaf 6. We have $m_\ell=1$ and $m_a=c$, so since $c<1<6$, we label this edge $\mbf 6$, as shown below on the left. Continuing this process, we end up with the labeling on the right.
		\begin{center}
			\begin{tikzpicture}[scale=1]
			\draw (0,0)--(5,0);
			\draw (0,0)--(-0.5,0.5);
			\draw (0,0)--(-0.5,-0.5);
			\draw (5,0)--(5.5,0.5);
			\draw (5,0)--(5.5,-0.5);
			\draw (1,0)--(1,-1);
			\draw (2,0)--(2,-0.5);
			\draw (3,0)--(3,-0.5);
			\draw (4,0)--(4,-0.5);
			\draw (1,-1)--(0.5,-1.5);
			\draw (1,-1)--(1.5,-1.5);
			
			\node()[above left] at (-0.4,0.4) {$a$};
			\node()[below left] at (-0.4,-0.4) {$b$};
			\node()[above right] at (5.4,0.4) {$3$};
			\node()[below right] at (5.4,-0.4) {$2$};
			\node()[below left] at (0.6,-1.4) {$6$};
			\node()[below right] at (1.4,-1.4) {$1$};
			\node()[below] at (2,-0.5) {$5$};
			\node()[below] at (3,-0.5) {$c$};
			\node()[below] at (4,-0.5) {$4$};
			
			\node()[left] at (1,-0.5) {$\mbf 6$};
			
			\end{tikzpicture}
			\hspace{0.6cm}
			\begin{tikzpicture}[scale=1]
			\draw (0,0)--(5,0);
			\draw (0,0)--(-0.5,0.5);
			\draw (0,0)--(-0.5,-0.5);
			\draw (5,0)--(5.5,0.5);
			\draw (5,0)--(5.5,-0.5);
			\draw (1,0)--(1,-1);
			\draw (2,0)--(2,-0.5);
			\draw (3,0)--(3,-0.5);
			\draw (4,0)--(4,-0.5);
			\draw (1,-1)--(0.5,-1.5);
			\draw (1,-1)--(1.5,-1.5);
			
			\node()[above left] at (-0.4,0.4) {$a$};
			\node()[below left] at (-0.4,-0.4) {$b$};
			\node()[above right] at (5.4,0.4) {$3$};
			\node()[below right] at (5.4,-0.4) {$2$};
			\node()[below left] at (0.6,-1.4) {$6$};
			\node()[below right] at (1.4,-1.4) {$1$};
			\node()[below] at (2,-0.5) {$5$};
			\node()[below] at (3,-0.5) {$c$};
			\node()[below] at (4,-0.5) {$4$};
			
			\node()[left] at (1.0,-0.5) {$\mbf 6$};
			\node()[above] at (0.5,0) {$\mbf 6$};
			\node()[above] at (1.5,0) {$\mbf 5$};
			\node()[above] at (2.5,0) {$\mbf 3$};
			\node()[above] at (3.5,0) {$\mbf 4$};
			\node()[above] at (4.5,0) {$\mbf 3$};
			\end{tikzpicture}
		\end{center}
	\end{example}
	
	As a consequence of Proposition \ref{prop:revCatalan}, we known that $\Slwk$ is nonempty if and only if $\kUnd$ is reverse Catalan. This can easily be shown combinatorially, as there is a tree $T$ that is in every nonempty slide set for a particular $n$, which we illustrate in the next example.
	
	\begin{example}
		The tree below lies in $\Slwk$ for all reverse-Catalan $\kUnd$. This is because the reverse-Catalan condition ensures that the edges are labeled in order from right to left.
		\begin{center}
			\includegraphics[scale=0.8]{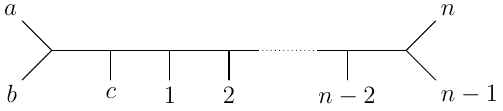}
		\end{center}
	\end{example}

	Another known fact about caterpillar trees is the characterization of the their edge labels in the case where $\kUnd=(1,1,\ldots,1)$, which we restate below. This result is Proposition 6.2 from \cite{GGL22}, which we restate here with modified notation.
	
	\begin{proposition}[Proposition 6.2 from \cite{GGL22}]
		Let $\Cat^\omega(1,1,\ldots,1)\subseteq\Slwl$ be the subset of trivalent trees that correspond to caterpillar curves. For each tree $T\in\Cat^\omega(1,1,\ldots,1)$, define the word $w(T)$ by reading the labels in the slide labeling of $T$ from left to right. The set of words $\{w(T) : T\in\Cat^\omega(1,1,\ldots,1)\}$ are precisely the $23{-}1$-avoiding permutations of length $n$, and in fact the words $w(T)$ are all distinct.
		\label{prop:catAvoid23-1}
	\end{proposition}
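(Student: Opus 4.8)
The plan is to prove the proposition by setting up a recursive bijection between caterpillar trees in $\Slwl$ and $23{-}1$-avoiding permutations, organized by the largest leaf $n$. Since $\kUnd=(1,1,\ldots,1)$ forces each label $1,\ldots,n$ to appear on exactly one internal edge of the slide labeling, the word $w(T)$ is automatically a permutation of $[n]$; the content of the proposition is therefore the identification of its image together with injectivity. First I would show, using the $\omega$ slide rule (Definition \ref{def:slideTreesOmega}) and the labeling algorithm (Definition \ref{def:k-slide}, which processes $\ell=n$ first), that every caterpillar $T\in\Slwl$ is obtained from a unique caterpillar $T'$ on the leaves $\{a,b,c,1,\ldots,n-1\}$ by attaching leaf $n$ to a spine vertex and performing its single $n$-slide, and that contracting the edge labeled $n$ and deleting leaf $n$ recovers $T'$. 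This makes the construction reversible and reduces everything to understanding one insertion step.

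Next I would track the effect of this step on the word. The edge created by the $n$-slide carries the label $n$, and because $n$ is the largest leaf it is labeled first and then contracted away, so $w(T')$ is obtained from $w(T)$ simply by deleting the entry $n$; conversely $w(T)$ is $w(T')$ with the value $n$ inserted at the position of that edge along the spine. Thus the whole problem becomes: for $\pi=w(T')$, a $23{-}1$-avoider of length $n-1$, which insertion positions for the value $n$ are realized by a valid caterpillar slide tree?

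On the purely combinatorial side this is easy to settle. Since $n$ is the global maximum, any new occurrence of $23{-}1$ must use $n$ as the ``$3$'', hence $n$ must immediately follow an ascent top; inserting $n$ at position $p$ creates a pattern exactly when $p\ge 2$ and some entry to the right of the insertion point is smaller than its left neighbor $\pi_{p-1}$. Therefore the pattern-preserving slots are precisely the front of the word together with every slot immediately after a right-to-left minimum of $\pi$; summing the resulting count $1+r(\pi)$ (with $r(\pi)$ the number of right-to-left minima) over all $23{-}1$-avoiders of length $n-1$ recovers the Bell-number enumeration, a useful consistency check on this side alone. It then remains to prove that these are exactly the slots achievable by a legal $n$-slide, after which induction on $n$ yields that $w$ is a bijection onto the $23{-}1$-avoiding permutations; injectivity (``the words are all distinct'') follows immediately from the reversibility established in the first step.

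The main obstacle is this last matching: showing that the geometric constraints—that attaching leaf $n$ and sliding produces a stable caterpillar, and that the $\omega$ condition $\ell\ge m_\ell\ge m_a$ holds at every step of Definition \ref{def:k-slide}—translate exactly into the right-to-left-minimum condition on $\pi$. The delicate point is bookkeeping the minimal leaf labels $m_\ell,m_a$ in the branches flanking each spine vertex as the labeled edges are contracted, and in particular checking that inserting the large leaf $n$ neither disturbs the validity conditions at the smaller steps $\ell<n$ nor introduces spurious non-caterpillar trees, so that the achievable insertion slots coincide with the pattern-avoiding ones and no others. As a sanity check, this is the $\kUnd=(1,\ldots,1)$ (hence right-justified) specialization of Theorem \ref{thm:rjust}: a repetition-free word cannot contain $2{-}1{-}2$, and on permutations the barred pattern $23{-}\overline{2}{-}1$ collapses to $23{-}1$.
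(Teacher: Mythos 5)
Your reduction framework is reasonable, and its purely combinatorial half is correct: since $n$ is the maximum value, any new vincular $23{-}1$ occurrence created by inserting $n$ must use $n$ as the ``$3$'' with the ``$2$'' being the letter immediately left of $n$, so the insertion slots preserving avoidance are exactly the front slot together with the slots immediately after the right-to-left minima of $w(T')$ (your count $1+r(\pi)$ and the Bell-number check are consistent with this). The structural reduction is also plausible and can be verified: if $T$ is a caterpillar, contracting the edge labeled $n$ and deleting leaf $n$ yields a caterpillar $T'\in\Slide^\omega(1,\ldots,1)$, and this inverts the attach-and-slide step, so words of size-$n$ caterpillars are insertions of $n$ into words of size-$(n-1)$ caterpillars. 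But there is a genuine gap exactly where you flag ``the main obstacle'': you never prove that the slots realizable by a legal attach-and-$n$-slide (legal meaning the result is again a caterpillar, i.e.\ the moved branch $\Br_m$ of Definition \ref{def:i-slide} is a single leaf, \emph{and} all steps $\ell\le n$ of Definition \ref{def:k-slide} satisfy $\ell\ge m_\ell\ge m_a$) coincide with the front-or-after-a-right-to-left-minimum slots. That equivalence is the entire geometric content of the proposition; everything else in your write-up is bookkeeping around it. Announcing that it ``remains to prove'' this matching, and that the difficulty is ``bookkeeping the minimal leaf labels $m_\ell,m_a$,'' names the theorem rather than proving it. Concretely, you would still need to show (i) attaching $n$ at a spine vertex whose configuration fails the right-to-left-minimum condition either breaks the caterpillar shape or invalidates some comparison in the slide labeling, and (ii) at the allowed vertices, inserting $n$ does not disturb any comparison made at the later steps $\ell<n$; neither direction is carried out.

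For comparison: this paper does not actually prove the proposition (it is quoted from Proposition 6.2 of \cite{GGL22}); what the paper does prove is the strict generalization Theorem \ref{thm:rjust}, by a non-recursive route built on the explicit reconstruction map $\tree(w)$ of Definition \ref{def:tr}, the descent/nondescent leaf analysis, the uniqueness theorem for caterpillar words, and Lemma \ref{lem:avoids}. As you note, the $\kUnd=(1,\ldots,1)$ case follows from that theorem because a permutation cannot contain $2{-}1{-}2$ and avoidance of $23{-}\overline{2}{-}1$ collapses to avoidance of $23{-}1$ when there are no repeated letters. Your insert-the-largest-letter recursion is a genuinely different and potentially workable strategy, but as submitted it is a plan whose central lemma is missing.
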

	
	We clarify the notion of pattern avoidance and the notation $23{-}1$ in the next subsection. In particular, see the definition of vincular patterns in Definition \ref{def:vincular}. 
	
	\subsection{Pattern Avoidance}
	\label{sec:patterns}
	
	We start by defining the main ideas of classical pattern avoidance, and then define two particular variants that will show up in this paper. For a more thorough summary of classical pattern avoidance, we refer the reader to \cite{bevanPPdefs}.
	
	\begin{definition}
		A permutation $\sigma$ \textbf{contains} another permutation $\tau$ if $\sigma$ contains a subword with the same relative order as $\tau$. Conversely, a permutation $\sigma$ \textbf{avoids} a pattern $\tau$ if $\sigma$ contains no subword with the same relative order as $\tau$. 
	\end{definition}
	
	Another way to formulate this concept is via \textit{reductions}.
	
	\begin{definition}
		The \textbf{reduction} $\red(w)$ of a permutation or word $w$ is the word obtained by replacing the $i$th smallest entry of $w$ by $i$ for all $i$. If $\red(w)=v$, we say $w$ \textbf{reduces} to $v$.
		\label{def:red}
	\end{definition}
	
	For example, $\red(2574)=1342$. Then, pattern containment can be rephrased as a question of containing a subword that reduces to the desired pattern.
	
	It is common practice to visualize words or patterns by graphing them. This is done by plotting the points $(i,\sigma_i)$ for $i=1,2,\ldots,n$ on an $n\times n$ grid. See the next example for an example of this.
	
	\begin{example}
		The word $14352$ contains the pattern $\tau=123$, since the subword $135$ has the same relative order as $\tau=123$. See the circled entries in the diagram below on the left. 
		
		On the other hand, the word $\sigma=35214$ avoids $\tau=123$, since none of its length 3 subwords are in increasing order. See the diagram below on the right.	
		\begin{center}
			\begin{tikzpicture}[scale=0.2, baseline=-\the\dimexpr\fontdimen22\textfont2\relax]
			\draw (0,0)--(7,0)--(7,7)--(0,7)--(0,0);
			\fill[black] (1.5,1.5) circle (0.25cm);
			\fill[black] (2.5,4.5) circle (0.25cm);
			\fill[black] (3.5,3.5) circle (0.25cm);
			\fill[black] (4.5,5.5) circle (0.25cm);
			\fill[black] (5.5,2.5) circle (0.25cm);
			\draw (1.5,1.5) circle (0.5cm);
			\draw (3.5,3.5) circle (0.5cm);
			\draw (4.5,5.5) circle (0.5cm);
			\end{tikzpicture}
			\hspace{2cm}
			\begin{tikzpicture}[scale=0.2, baseline=-\the\dimexpr\fontdimen22\textfont2\relax]
			\draw (0,0)--(7,0)--(7,7)--(0,7)--(0,0);
			\fill[black] (1.5,3.5) circle (0.25cm);
			\fill[black] (2.5,5.5) circle (0.25cm);
			\fill[black] (3.5,2.5) circle (0.25cm);
			\fill[black] (4.5,1.5) circle (0.25cm);
			\fill[black] (5.5,4.5) circle (0.25cm);
			\end{tikzpicture}
		\end{center}
	\end{example}
	
	\begin{definition}
		For a pattern $\tau$, we denote the set of permutations of length $n$ that avoid $\tau$ by $$\Av_n(\tau)=\{\sigma\in S_n: \sigma\text{ avoids }\tau\}.$$
		Similarly, for a collection of patterns $\tau^{(1)},\ldots,\tau^{(k)}$, we define the set of permutations of length $n$ that avoid each $\tau_i$ as $$\Av_n(\tau^{(1)},\ldots,\tau^{(k)})=\{\sigma\in S_n: \sigma\in\Av_n(\tau^{(i)})\,\forall i\in[k]\}.$$
	\end{definition}
	
	Throughout this work, we will primarily be working with words that are not permutations. We can extend all of these ideas to the case where both $\sigma$ and $\tau$ are words instead of permutations. 
	
	\begin{example}
		The word $24665347$ contains the pattern $1221$, since it contains the subword $4664$, which has the same relative order as $1221$. See the diagram below.
		\begin{center}
			\begin{tikzpicture}[scale=0.2, baseline=-\the\dimexpr\fontdimen22\textfont2\relax]
			\draw (0,0)--(10,0)--(10,9)--(0,9)--(0,0);
			\fill[black] (1.5,2.5) circle (0.25cm);
			\fill[black] (2.5,4.5) circle (0.25cm);
			\fill[black] (3.5,6.5) circle (0.25cm);
			\fill[black] (4.5,6.5) circle (0.25cm);
			\fill[black] (5.5,5.5) circle (0.25cm);
			\fill[black] (6.5,3.5) circle (0.25cm);
			\fill[black] (7.5,4.5) circle (0.25cm);
			\fill[black] (8.5,7.5) circle (0.25cm);
			\draw (2.5,4.5) circle (0.5cm);
			\draw (3.5,6.5) circle (0.5cm);
			\draw (4.5,6.5) circle (0.5cm);
			\draw (7.5,4.5) circle (0.5cm);
			\end{tikzpicture}
		\end{center}
	\end{example}
	
	Notationally, we denote the set of words with content $\kUnd$ that avoid a pattern $\tau$ by $\Av_{\kUnd}(\tau)$, and similarly for a collection of patterns.
	
	There are two further variants of pattern avoidance that we will define. The first is that of barred patterns. For a more comprehensive study of barred patterns, see \cite{barred_patterns}.
	
	\begin{definition}
		Let $\tau$ be a word where some letters are barred and others are unbarred. We call $\tau$ a \textbf{barred pattern}. We say that $\sigma$ \textbf{contains} $\tau$ if $\sigma$ has a subword with the same relative order as the non-barred letterns of $\tau$ that does \textit{not} extend to a subword with the same relative order as all of $\tau$. Otherwise, $\sigma$ \textbf{avoids} $\tau$. 
		\label{def:barred}
	\end{definition}
	
	\begin{example}
		The word $\sigma=231456$ contains the barred pattern $\tau=23\overline{1}$. Although the length-2 subword $23$ can extend to the subword 231, the subword $45$ (among others) cannot, meaning it is an instance of a $23\overline{1}$ pattern.
		
		On the other hand, the word $\sigma=234561$ avoids the barred pattern $\tau=23\overline{1}$, since any subword with relative order $23$ can be extended, by adding the letter $1$ at the end, to a subword with relative order $231$.
		\begin{center}
			\begin{tikzpicture}[scale=0.2, baseline=-\the\dimexpr\fontdimen22\textfont2\relax]
			\draw (0,0)--(8,0)--(8,8)--(0,8)--(0,0);
			\fill[black] (1.5,2.5) circle (0.25cm);
			\fill[black] (2.5,3.5) circle (0.25cm);
			\fill[black] (3.5,1.5) circle (0.25cm);
			\fill[black] (4.5,4.5) circle (0.25cm);
			\fill[black] (5.5,5.5) circle (0.25cm);
			\fill[black] (6.5,6.5) circle (0.25cm);
			\end{tikzpicture}
			\hspace{2cm}
			\begin{tikzpicture}[scale=0.2, baseline=-\the\dimexpr\fontdimen22\textfont2\relax]
			\draw (0,0)--(8,0)--(8,8)--(0,8)--(0,0);
			\fill[black] (1.5,2.5) circle (0.25cm);
			\fill[black] (2.5,3.5) circle (0.25cm);
			\fill[black] (3.5,4.5) circle (0.25cm);
			\fill[black] (4.5,5.5) circle (0.25cm);
			\fill[black] (5.5,6.5) circle (0.25cm);
			\fill[black] (6.5,1.5) circle (0.25cm);
			\end{tikzpicture}
		\end{center}
		
	\end{example}
	
	Finally, we can also impose adjacency conditions on some entries of our pattern. Such patterns are called \textit{vincular} patterns. Vincular patterns were first introduced in \cite{Babson2000} as generalized permutations, and a more detailed history on them can be found in \cite{Steingrimsson_2010}.
	
	\begin{definition}
		Let $\tau$ be a word of length $k$ with dashes between some entries. We call $\tau$ a \textbf{vincular pattern}. A word $\sigma$  \textbf{contains} $\tau$ if $\sigma$ contains a subword $\sigma'$ that has the same relative order as $\tau$, and for each $i\in[k-1]$, if the en entries $\tau_i,\tau_{i+1}$ in $\tau$ are not separated by a dash, then $\sigma'_i$ and $\sigma'_{i+1}$ come from adjacent entries $\sigma_j,\sigma_{j+1}$ in $\sigma$. Otherwise, we say $\sigma$ \textbf{avoids} the vincular pattern $\tau$.
		\label{def:vincular}
	\end{definition}
	
	In other words, to contain  a vincular pattern, we must have a consecutive subword with the relative order of the pattern, except that the dashes in the pattern indicate entries that need come from consecutive letters of our word.
	
	\begin{example}
		The word $\sigma=32541$ contains the pattern $\tau=23{-}1$, since it contains the subword $251$, where the 2 and 5 are adjacent, and 251 has the relative order 231.
		
		On the other hand, although $\sigma=43152$ contains the classical pattern $\tau=231$ (consider the subword $352$), $\sigma$ avoids the vincular pattern $\tau=23{-}1$, since the 3 and 5 in $352$ (as well as the 4 and 5 of 452) are not adjacent.
		\begin{center}
			\begin{tikzpicture}[scale=0.2, baseline=-\the\dimexpr\fontdimen22\textfont2\relax]
			\draw (0,0)--(7,0)--(7,7)--(0,7)--(0,0);
			\fill[black] (1.5,3.5) circle (0.25cm);
			\fill[black] (2.5,2.5) circle (0.25cm);
			\fill[black] (3.5,5.5) circle (0.25cm);
			\fill[black] (4.5,4.5) circle (0.25cm);
			\fill[black] (5.5,1.5) circle (0.25cm);
			
			\draw (2.5,2.5) circle (0.5cm);
			\draw (3.5,5.5) circle (0.5cm);
			\draw (5.5,1.5) circle (0.5cm);
			\end{tikzpicture}
			\hspace{2cm}
			\begin{tikzpicture}[scale=0.2, baseline=-\the\dimexpr\fontdimen22\textfont2\relax]
			\draw (0,0)--(7,0)--(7,7)--(0,7)--(0,0);
			\fill[black] (1.5,4.5) circle (0.25cm);
			\fill[black] (2.5,3.5) circle (0.25cm);
			\fill[black] (3.5,1.5) circle (0.25cm);
			\fill[black] (4.5,5.5) circle (0.25cm);
			\fill[black] (5.5,2.5) circle (0.25cm);
			\end{tikzpicture}
		\end{center}
	\end{example}
	
	\section{Preliminaries on combinatorics of slide trees}
	\label{sec:preliminaries}
	
	In this section we define several useful characteristics of slide trees and summarize notions that we will use throughout the rest of this work.
	
	\begin{definition}
		For $\kUnd$ a composition of $n$, define $\maxzero(\kUnd)$ to be the largest $z\in[n]$ such that $k_z=0$. If $\kUnd=(1,1,\ldots,1)$, set $\maxzero(1,1,\ldots,1)=c$ or $\maxzero(1,1,\ldots,1)=0$ as appropriate in the given context.
		Let $\kUnd$ be a composition of $n$. Define $\maxzero(\kUnd)$ to be the largest $j\in[n]$ such that $k_j=0$, if such a $j$ exists. Otherwise, define $\maxzero(\kUnd)=c$.
		\label{def:maxzero}
	\end{definition}
	
	Note that in \cite{CGM}, $\maxzero(\kUnd)$ was called $i_{\boldsymbol{k}}$.
	
	\begin{definition}
		For a reverse-Catalan composition $\kUnd$, define $\z(i):= \#\{j>i\,|\,k_j=0\}$.
		\label{def:z(i)}
	\end{definition}
	
	\begin{notation}
		In order to maintain clarity between edge and leaf labels, we use bolded labels for the edges of a tree and nonbolded labels for the leaves of a tree, like $\mbf x$ vs $x$. 
	\end{notation}
	
	Recall that in a trivalent tree, every vertex either has degree 1 (a leaf) or degree 3. We call an edge an \textbf{internal edge} if it connects two non-leaf vertices.     A leaf of a slide tree will always be adjacent to exactly one internal vertex. It is useful, however, to consider when two leaves are as close to each other in a tree as is possible. For a leaf $i$, call its unique neighbor $v_i$. We say two leaves $i$ and $j$ are \textbf{adjacent} if $v_i=v_j$. That is, $i$ and $j$ are adjacent (in the traditional sense) to the same internal vertex. For a leaf $i$ in a trivalent tree, one of the following two cases will always occur:
	\begin{itemize}
		\item The leaf $i$ is adjacent to another leaf $j$, and $v_i$ has one internal edge.
		\item  The leaf $i$ is adjacent to no other leaves and $v_i$ has two internal edges.
	\end{itemize}
	
	In a slide tree, the leaves $a$ and $b$ are always adjacent.  We treat the collection of $a$, $b$, $v_a=v_b$, and the two edges between them as the \textbf{root} of a tree. 
	
	\begin{notation}
		We standardize our drawings of trees by always drawing them with the root on the left, with the internal edge coming out of $v_a$ pointing to the right. This way, when we say something is \textit{left} of a leaf/edge/etc, we mean `towards the root', and when we say \textit{right}, we mean `away from the root'.
	\end{notation}
	
	\begin{definition}
		Let $T$ be a trivalent tree whose set of internal edges of $T$ form a path.
		We call $T$ a \textbf{caterpillar tree}.  We define the subset of slide trees that are also caterpillar trees by $\Cat^\omega(\kUnd)\subseteq\Slwk$ (and equivalently for $\Cat^\psi(\kUnd)$).
		\label{def:Caterpillar}
	\end{definition}
	
	With the root on the left, all the internal edges of a caterpillar tree are drawn horizontally. It is then natural to treat the edge labels of a caterpillar tree as a word, read off from left to right. 
	
	\begin{example}
		The tree below is a tree in $\Slide^\omega(0,0,1,2,1,2)$, drawn along with its edge labels from the slide algorithm. With the root drawn on the left, it is natural to read off the word 546643 from the edge labels.
		\begin{center}
			\includegraphics[scale=0.8]{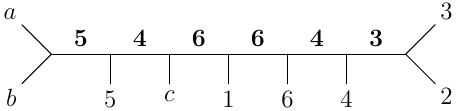}
		\end{center}
	\end{example}

	We say `the branch starting at edge $\mbf e$' to refer to the collection of all edges and vertices (including $\mbf e$ itself) on the opposite side of $\mbf e$ as the root.
	\begin{example}
		Consider the tree $T$ below on the left, and let $\mbf e$ be the bolded edge. Then, the branch starting at $\mbf e$ is the branch below on the right.
		\begin{center}
			\begin{tikzpicture}
			\draw (0,0)--(3,0);
			\draw[very thick] (1,0)--(2,0);
			\draw (1,0)--(1,-1);
			\draw (1,-1)--(0.5,-1.5);
			\draw (1,-1)--(1.5,-1.5);
			\draw (2,0)--(2,-0.6);
			\draw (3,0)--(3.5,0.5);
			\draw (3,0)--(3.5,-0.5);
			\draw (0,0)--(-0.5,0.5);
			\draw (0,0)--(-0.5,-0.5);
			
			\node[anchor=south east] at (-0.5,0.5) {$a$};
			\node[anchor=north east] at (-0.5,-0.5) {$b$};
			\node[anchor=north east] at (0.5,-1.5) {$c$};
			\node[anchor=north west] at (1.5,-1.5) {$1$};
			\node[anchor=north] at (2,-0.6) {$2$};
			\node[anchor=south west] at (3.5,0.5) {$4$};
			\node[anchor=north west] at (3.5,-0.5) {$3$};
			\end{tikzpicture}
			\hspace{2cm}
			\begin{tikzpicture}
			\draw (1,0)--(3,0);
			\draw[very thick] (1,0)--(2,0);
			\draw (2,0)--(2,-0.6);
			\draw (3,0)--(3.5,0.5);
			\draw (3,0)--(3.5,-0.5);
			
			\node[anchor=north] at (2,-0.6) {$2$};
			\node[anchor=south west] at (3.5,0.5) {$4$};
			\node[anchor=north west] at (3.5,-0.5) {$3$};
			\node[white,anchor=north east] at (0.5,-1.5) {$c$};
			\node[white,anchor=north west] at (1.5,-1.5) {$1$};
			\end{tikzpicture}
		\end{center}
		\label{ex:branch}
	\end{example}
	
	Similarly, we can consider the maximal branch that contains some leaf $j$ but not some other leaf $i$. For example, in Example \ref{ex:branch} above, the branch $B$ on the right is the maximal branch of $T$ that contains $3$, but not $1$, since adding any additional edges to $B$ would necessarily add the edge to the left of $\mbf e$, which would also add the leaves c and 1.
	
	\begin{definition}
		For a branch $B$ of a slide tree, let $\min(B)$ denote the minimal leaf label in $B$.
		\label{def:min}
	\end{definition}
	
	Next, we prove a number of basic facts about slide trees. We state these in terms of $\psi$-slide trees, but the statements are all also true for $\omega$-slide trees, as a result of the following lemma.
	
	\begin{lemma}
		Let $\kUnd$ be a composition of $n$. Then, $\Slwk\subseteq\Slpk$.
		\label{lem:omega<psi}
	\end{lemma}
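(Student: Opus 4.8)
The plan is to bypass the recursive constructions of Definitions \ref{def:slideTreesPsi} and \ref{def:slideTreesOmega}---whose base trees $\starTree$ and $\starThree$ and whose attach-then-slide structures differ---and instead to argue through the slide-labeling characterization (Theorem 3.14 of \cite{GGL22}). That result says a stable tree $T$ lies in $\Slwk$ (resp.\ $\Slpk$) precisely when it admits an $\omega$ (resp.\ $\psi$) $\kUnd$-slide labeling as produced by Definition \ref{def:k-slide}. Thus it suffices to show that every tree admitting an $\omega$ $\kUnd$-slide labeling also admits a $\psi$ $\kUnd$-slide labeling.

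The key observation is that the two procedures in Definition \ref{def:k-slide} are the same algorithm except for the \textbf{Verify that label is valid} step: the $\omega$ rule accepts a label when $\ell\geq m_\ell\geq m_a$, whereas the $\psi$ rule accepts it under the strictly weaker condition $m_\ell\geq m_a$. Every other step---initialization, contraction of labeled edges, selection of the next edge $e$, and the iterate/decrement logic---is identical and depends only on $T$ and the current set of labeled edges, never on which rule is in force. In particular, at any step where the $\omega$ test passes, the $\psi$ test passes as well.

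I would then run both procedures on the same $T$ and the same $\kUnd$ and argue by induction on the number of completed labeling steps that, assuming $T\in\Slwk$, the two runs stay in lockstep. At each step both procedures select the same $\ell$ and the same edge $e$ and compute the same $m_\ell$ and $m_a$ (since these depend only on $T$ and the shared set of already-assigned labels). Because the $\omega$ run succeeds by hypothesis, its test $\ell\geq m_\ell\geq m_a$ passes at this step, so the weaker $\psi$ test $m_\ell\geq m_a$ passes and the $\psi$ run assigns the identical label. This keeps the two label sets equal, hence the contracted trees $T'$ and all subsequent choices equal, and the induction proceeds. Therefore the $\psi$ procedure never fails a validity check and terminates with the same labeling, giving $T\in\Slpk$.

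The only genuine content is the implication $(\ell\geq m_\ell\geq m_a)\Rightarrow(m_\ell\geq m_a)$; I do not expect a real obstacle. The one point meriting care is the bookkeeping that the two runs really do proceed identically step for step, which is exactly the inductive claim that the assigned labels agree---and this rests on the fact, immediate from Definition \ref{def:k-slide}, that edge selection and contraction reference only the leaf labels and the already-labeled edges, not the $\omega$/$\psi$ distinction.
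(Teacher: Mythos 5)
Your proof is correct and takes essentially the same route as the paper: the paper's own proof consists of the single remark that the lemma is ``an immediate consequence of the definition of the $\psi$ and $\omega$ $\kUnd$-slides'' (Definition \ref{def:k-slide}), which is exactly the observation you elaborate, namely that the two labeling procedures are identical and deterministic except that the $\omega$ validity test $\ell\geq m_\ell\geq m_a$ is stronger than the $\psi$ test $m_\ell\geq m_a$. Your lockstep induction just makes explicit the bookkeeping the paper leaves implicit.
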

	
	\begin{proof}
		This is an immediate consequence of the definition of the $\psi$ and $\omega$ $\kUnd$-slides. (See Definition \ref{def:k-slide}.)
	\end{proof}
	
	Although a slide tree, as defined, only has labels on its leaves, there is a unique edge label for each internal edge as given by the $\kUnd$-slide algorithm. Thus, throughout this work we will view a slide tree as consisting of the data of the underlying graph, along with both leaf and edge labels. 
	
	\begin{lemma}
		Let $i$ be a leaf of a tree $T\in\Slpk$. Then, all edges labeled $\mbf i$ must lie on the path from $i$ to $a$.
		\label{lem:pathtoa}
	\end{lemma}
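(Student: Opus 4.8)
The plan is to read the conclusion directly off the $\kUnd$-slide labeling algorithm of Definition~\ref{def:k-slide}. The only place the algorithm ever assigns the label $\mbf i$ is in its ``verify'' step, and then only to the edge $e$ that was selected in the preceding ``identify the next edge'' step while $\ell = i$. By the wording of that step, $e$ is the first unlabeled internal edge on the path from the leaf $\ell$ to $a$, so it suffices to confirm that $e$ genuinely lies on the path from $i$ to $a$ in $T$ itself.

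The one subtlety is that $e$ is selected only after all internal labeled edges have been contracted to form $T'$, so a priori the relevant path is measured in $T'$ rather than in $T$. I would handle this with a standard observation about paths in trees: since only internal edges are contracted, the leaves $i$ and $a$ persist in $T'$, and for a single contracted internal edge $f$, contracting $f$ deletes $f$ from the unique $i$--$a$ path if $f$ lay on it and leaves that path unchanged as an edge set otherwise. Iterating over all contracted labeled edges shows that the edge set of the $i$--$a$ path in $T'$ is exactly the set of unlabeled edges of the $i$--$a$ path in $T$.

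Combining the two observations completes the argument: the edge $e$ labeled $\mbf i$ lies on the $i$--$a$ path in $T'$, hence, being an unlabeled edge, on the $i$--$a$ path in $T$. As $e$ ranges over every edge that receives the label $\mbf i$, this shows all such edges lie on the path from $i$ to $a$. The main---and essentially only---obstacle is the bookkeeping of the previous paragraph, matching the edge chosen in $T'$ to an actual path edge of $T$; once the path-contraction fact is granted, the remainder is immediate from reading the definition.
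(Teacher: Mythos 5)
Your proof is correct and takes the same route as the paper, which simply declares the lemma immediate from Definition~\ref{def:k-slide}; the label $\mbf i$ is only ever assigned to an edge chosen on the path from $i$ to $a$. Your extra bookkeeping about contracted edges (that contracting labeled internal edges turns the $i$--$a$ path of $T$ into the $i$--$a$ path of $T'$, preserving unlabeled edges) just makes explicit what the paper leaves implicit.
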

	
	\begin{proof}
		This is immediate from Definition \ref{def:k-slide}.
	\end{proof}

	\begin{lemma}
		Let $v$ be a vertex of $T\in\Slpk$ that is adjacent to three internal vertices $\mbf x$, $\mbf y$, and $\mbf z$. Let $\mbf x$ be the edge towards $a$, and suppose (without loss of generality) that $\mbf{y}\geq\mbf{z}$. Then, the slide labeling algorithm  will always label the three edges in the order $\mbf y$, then $\mbf x$, and then $\mbf z$.
		\label{lem:slideorderyxz}
	\end{lemma}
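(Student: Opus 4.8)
My plan is to read the labeling order directly off the algorithm of Definition~\ref{def:k-slide}, which processes the labels $\ell=n,n-1,\dots,1$ in decreasing order and, within the phase for a fixed $\ell$, repeatedly labels the \emph{first} unlabeled internal edge on the path from $\ell$ to $a$. Thus within a phase the edge nearest $\ell$ is labeled first, and across phases an edge with a larger label is labeled earlier. Write $x,y,z$ for the labels eventually carried by $\mbf x,\mbf y,\mbf z$. By Lemma~\ref{lem:pathtoa} the leaf labeling $\mbf y$ lies in the branch $B_y$ beyond $\mbf y$ and the leaf labeling $\mbf z$ in the disjoint branch $B_z$ beyond $\mbf z$, so these labels are genuine and distinct; since $\mbf y\ge\mbf z$ we have $\mbf y>\mbf z$. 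Reading the asserted order as the order in which the \emph{values} are assigned in time (with the understanding that if two of the three labels coincide they are produced within a single phase), it is then equivalent to the chain $z\le x\le y$, and this is what I would prove.

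The bound $x\le y$ is the easy half. By Lemma~\ref{lem:pathtoa} the leaf $\lambda$ that labels $\mbf x$ lies beyond $v$, i.e.\ in one of the two outer branches, and at the moment $\mbf x$ is labeled the edge separating $\lambda$ from $v$ (its \emph{gateway}, either $\mbf y$ or $\mbf z$) is already labeled, since otherwise that nearer edge, not $\mbf x$, would be the first unlabeled edge on $\lambda$'s path. Hence if $x>y\ge z$ then both gateways are still unlabeled during phase $x$, and $\lambda$'s gateway blocks $\mbf x$, a contradiction. The crux is therefore the reverse inequality $x\ge z$: I must rule out the possibility that $\mbf x$ is labeled strictly after $\mbf z$ with a strictly smaller label.

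For this I would argue by contradiction, assuming $x<z$. Then $\mbf x$ is still unlabeled when $\mbf z$ is labeled in phase $z$, while $\mbf y$ (labeled in phase $y>z$) has already been contracted. Applying the validity test $m_\ell\ge m_a$ of Definition~\ref{def:k-slide} to the labelings of $\mbf y$ and of $\mbf z$ — using that $B_z$ hangs as a full branch at the central vertex $v$ when $\mbf y$ is labeled, while after $\mbf y$ is contracted the branch $B_y$ is exposed at $v$ when $\mbf z$ is labeled — yields $\min(B_y\setminus B')\ge\min(B_z)$ and $\min(B_z\setminus B'')\ge\min(B_y)$, where $B'$ and $B''$ are the sub-branches carrying the leaves $y$ and $z$. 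Now let $p$ be the smallest leaf of $B_y\cup B_z$. Since the leaf labeling $\mbf x$ lies in $B_y\cup B_z$, we have $p\le x<z<y$, so $p$ is neither $y$ nor $z$; and the two displayed inequalities force $p$ to lie in the same sub-branch as the gateway leaf of whichever outer branch contains it. I expect the decisive and most delicate step to be the following: that gateway leaf must slide past $p$ in order to label its gateway edge, and at the crossing that separates them $p$ is stranded on the $v_\ell$-side, so $m_\ell\le p$, whereas every branch on the $v_a$-side lies inside $B_y\cup B_z$ (the entire $a$-side, which contains all the remaining small leaves, being excluded) and hence consists of leaves strictly larger than $p$, giving $m_a>p$. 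This contradicts the inequality $m_\ell\ge m_a$ that had to hold for that slide since $T\in\Slpk$, and completes the argument. Notably this uses only the local validity conditions, with no appeal to the reverse-Catalan hypothesis, so it applies uniformly to $\Slpk$.
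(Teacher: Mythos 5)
Your overall strategy is the same as the paper's: get $\mbf{y}>\mbf{z}$ from Lemma \ref{lem:pathtoa}, rule out $\mbf{x}$ being labeled before both $\mbf{y}$ and $\mbf{z}$ because its leaf sits behind an unlabeled gateway, and rule out $\mbf{x}$ being labeled after $\mbf{z}$ by playing the two validity tests at $v$ against each other. Those steps are correct, and your record of the two tests, $\min(B_y\setminus\{y\})\ge\min(B_z)$ and $\min(B_z\setminus\{z\})\ge\min(B_y)$, is actually \emph{more} precise than the paper's version (which writes them loosely as $\min(B)>\min(C)$ and $\min(C)>\min(B)$); likewise your use of the leaf labelling $\mbf{x}$ to place a small leaf $p$ in $B_y\cup B_z$ is exactly what is needed to finish that case, and your ``delicate step'' is just the first test applied at the gateway edge, so it goes through.

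The genuine gap is the reduction of the lemma to the chain $z\le x\le y$: that equivalence fails at ties. If $x=z$, both edges are labeled in the same phase, and within a phase the algorithm labels the first unlabeled edge on the path from the leaf to $a$; since the leaf carrying this common label lies in $B_z$ (Lemma \ref{lem:pathtoa}), $\mbf{z}$ is labeled \emph{before} $\mbf{x}$ --- the opposite of the asserted order. So the lemma needs $z<x$ strictly, and your contradiction argument, which assumes $x<z$, never touches the tie; nor do your two inequalities exclude it (with $y=4$, $x=z=1$, $B_y=\{2,4\}$, $B_z=\{1,3\}$ they read $2\ge1$ and $3\ge2$, both true). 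This is not a removable technicality: for the tree with root leaves $a,b$, then a vertex carrying $c$, then the vertex $v$ carrying the two cherries $\{2,4\}$ and $\{1,3\}$, the $\psi$-rule of Definition \ref{def:k-slide} produces the $(3,0,0,1)$-labeling $\mbf{y}=4$, then $\mbf{z}=1$, then $\mbf{x}=1$, then the root edge $=1$, every step passing $m_\ell\ge m_a$ (namely $2\ge1$, $3\ge2$, $2\ge c$, $c\ge b$). So the tie is realizable under the stated $\psi$-rule, and excluding it requires more than the two local comparisons (the $\omega$-condition $\ell\ge m_\ell$ does rule it out, but the $\psi$-condition does not). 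For what it is worth, the paper's own proof is phrased in terms of labeling order, so the tie formally falls under its ``$\mbf{x}$ labeled last'' case, but the inequality claimed there --- that the $\mbf{z}$-slide forces $\min(C)>\min(B)$ --- is exactly what fails when the sliding leaf $z$ is the minimum of its branch; your cleaner bookkeeping exposes the same soft spot rather than repairs it.
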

	
	\begin{proof}
		By Lemma \ref{lem:pathtoa}, we cannot have $\mbf y=\mbf z$, so we must have that $\mbf y>\mbf z$. Thus, since the slide labeling algorithm uses labels in decreasing order, $\mbf y$ is labeled before $\mbf z$. For the rest, it suffices to show that $\mbf x$ is labeled second.
		
		We can never label $\mbf x$ first, since with neither $\mbf y$ and $\mbf z$ contracted, there is no leaf for the label $x$ to come from. Suppose instead that $\mbf x$ were labeled last. Let $B$ and $C$ be the branches starting at $\mbf y$ and $\mbf z$, respectively. When $\mbf y$ slides, since $\mbf x$ is still not contracted, the algorithm will compare $\min(B)$ against $\min(C)$, so we must have $\min(B)>\min(C)$. Similarly, if $\mbf x$ is still not contracted when $\mbf z$ slides, we will also require that $\min(C)>\min(B)$. These cannot possibly both be true at once, so we have a contradiction. Thus, $\mbf x$ also cannot be labeled last. 
	\end{proof}
	
	So, regardless of the exact labels involved, there is only one order we can label the three edges around an internal vertex. If we additionally consider the relative order of the three labels, we see that $\mbf z$ must always be a unique label, while $\mbf y$ can either be equal to or larger than $\mbf x$. The two options are shown in the figure below. As a consequence, we have the following corollary, which says that these are the only two types of internal vertices.
	
	\begin{center}
		\begin{tikzpicture}
		\draw (-.25,0)--(.4,0);
		\draw (1,0)--(3,0);
		\draw (2,0)--(2,-1);
		\draw (-.25,0)--(-0.75,0.5);
		\draw (-.25,0)--(-0.75,-0.5);
		
		\node[anchor=west] at (2,-0.5) {$1$};
		\node[anchor=south east] at (-0.75,0.5) {$a$};
		\node[anchor=north east] at (-0.75,-0.5) {$b$};
		\node[anchor=south] at (1.5,0) {$2$};
		\node[anchor=south] at (2.5,0) {$2$};
		\node at (.7,0) {$\ldots$};
		\node at (3.3,0) {$\ldots$};
		\node at (2,-1.15) {$\vdots$};
		\end{tikzpicture}
		\hspace{1.5cm}
		\begin{tikzpicture}
		\draw (-.25,0)--(.4,0);
		\draw (1,0)--(3,0);
		\draw (2,0)--(2,-1);
		\draw (-.25,0)--(-0.75,0.5);
		\draw (-.25,0)--(-0.75,-0.5);
		
		\node[anchor=west] at (2,-0.5) {$1$};
		\node[anchor=south east] at (-0.75,0.5) {$a$};
		\node[anchor=north east] at (-0.75,-0.5) {$b$};
		\node[anchor=south] at (1.5,0) {$2$};
		\node[anchor=south] at (2.5,0) {$3$};
		\node at (.7,0) {$\ldots$};
		\node at (3.3,0) {$\ldots$};
		\node at (2,-1.15) {$\vdots$};
		\end{tikzpicture}
	\end{center}
	
	\begin{corollary}
		Let $v$ be a vertex of $T\in\Slpk$ that is adjacent to three internal vertices $\mbf x$, $\mbf y$, and $\mbf z$. Let $\mbf x$ be the edge towards $a$, and suppose (without loss of generality) that $\mbf{y}\geq\mbf{z}$. Then, either $\mbf{x}=\mbf{y}>\mbf{z}$, or $\mbf{y}>\mbf{x}>\mbf{z}$.
		\label{cor:only212or231}
	\end{corollary}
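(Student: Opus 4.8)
The plan is to combine Lemma \ref{lem:slideorderyxz}, which fixes the \emph{order in time} in which the three edges at $v$ get labeled, with the elementary observation that the slide labeling assigns labels in weakly decreasing order, and then to rule out a single degenerate equality using Lemma \ref{lem:pathtoa}.

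First, recall from Lemma \ref{lem:slideorderyxz} that the three edges are labeled in the order $\mbf y$, then $\mbf x$, then $\mbf z$, and that $\mbf y > \mbf z$. Since the $\kUnd$-slide labeling of Definition \ref{def:k-slide} runs over $\ell = n, n-1, \ldots, 1$ and assigns the label $\ell$ during the stage indexed by $\ell$, the labels produced over the run of the algorithm form a weakly decreasing sequence. Consequently an edge labeled earlier carries a label that is at least the label of any edge labeled later, so the labeling order immediately yields $\mbf y \geq \mbf x \geq \mbf z$.

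It then remains to upgrade $\mbf x \geq \mbf z$ to the strict inequality $\mbf x > \mbf z$; together with $\mbf y \geq \mbf x$ this gives exactly the two stated cases ($\mbf x = \mbf y > \mbf z$ when $\mbf y = \mbf x$, and $\mbf y > \mbf x > \mbf z$ when $\mbf y > \mbf x$). I would argue by contradiction: suppose $\mbf x = \mbf z = \mbf i$ for some label $i$. By Lemma \ref{lem:pathtoa}, both edges lie on the path from the leaf $i$ to $a$. Since $\mbf x$ is the edge at $v$ pointing toward $a$ while $\mbf z$ points away from $a$, this simple path must pass through $v$ using $\mbf z$ and then $\mbf x$; in particular the leaf $i$ lies in the branch starting at $\mbf z$, and $\mbf z$ occurs strictly before $\mbf x$ along the path from $i$ to $a$.

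The contradiction then comes from the rule in Definition \ref{def:k-slide} that always labels the first unlabeled internal edge on the path from $\ell$ to $a$: during the stage $\ell = i$, at the moment the algorithm is about to label $\mbf x$ the edge $\mbf z$ is still unlabeled (since $\mbf x$ is labeled before $\mbf z$), yet $\mbf z$ precedes $\mbf x$ on the path from $i$ to $a$. Thus the algorithm would be forced to label $\mbf z$, or an even earlier edge, before $\mbf x$, contradicting the order established in Lemma \ref{lem:slideorderyxz}. Hence $\mbf x \neq \mbf z$, so $\mbf x > \mbf z$, which finishes the proof. The only genuinely delicate point is this last step: one must check that contracting the already-labeled edges never causes $\mbf z$ to be skipped or to stop preceding $\mbf x$ on the path. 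This holds because contraction only identifies endpoints of \emph{labeled} edges and never deletes the unlabeled edge $\mbf z$, so $\mbf z$ survives on the (possibly shortened) path from $i$ to $a$ still ahead of $\mbf x$. Note also that the asymmetry with the harmless case $\mbf x = \mbf y$ is exactly what one expects: there the shared-label branch edge $\mbf y$ is labeled \emph{before} $\mbf x$, consistent with its leaf lying closer along the path, whereas $\mbf x = \mbf z$ would demand the opposite temporal order.
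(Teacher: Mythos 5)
Your proof is correct and follows essentially the same route as the paper: the paper also deduces the corollary directly from Lemma \ref{lem:slideorderyxz} (the forced labeling order $\mbf y$, $\mbf x$, $\mbf z$) combined with the fact that the algorithm assigns labels in weakly decreasing order, so that $\mbf y \geq \mbf x \geq \mbf z$ and only the strictness $\mbf x > \mbf z$ remains. Your contradiction argument for that strictness (if $\mbf x = \mbf z = \mbf i$, then $\mbf z$ precedes $\mbf x$ on the path from $i$ to $a$, so the algorithm would have to label $\mbf z$ first) is exactly the justification the paper leaves implicit when it asserts that ``$\mbf z$ must always be a unique label,'' and your handling of the contraction subtlety is sound.
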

	
	Intuitively, these two types of internal vertices correspond to $2{-}1{-}2$ patterns and $23{-}1$ patterns, respectively, appearing in the words corresponding to a tree.

	\begin{lemma}
		Let $v$ be a vertex of $T\in\Slpk$ that is adjacent to three internal vertices $\mbf x$, $\mbf y$, and $\mbf z$. Let $\mbf x$ the edge towards $a$, and $\mbf z$ the unique smallest edge given by Corollary \ref{cor:only212or231}. Let $B$ be the branch starting at $\mbf x$, and $B'$ the branch starting at $\mbf z$. Then, $\min(B)\in B'$.
		\label{lem:minleaf}
	\end{lemma}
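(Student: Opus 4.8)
The plan is to reduce the statement to the single inequality $\min(B')<\min(C)$, where $C$ denotes the branch starting at $\mbf y$, and then to extract this inequality from one well-chosen step of the $\psi$ $\kUnd$-slide labeling. Since $v$ is adjacent to three internal edges, it has no adjacent leaf, so the leaves of $B$ are exactly the disjoint union of the leaves of $C$ and of $B'$; hence $\min(B)=\min(\min(C),\min(B'))$, and the conclusion $\min(B)\in B'$ is equivalent to $\min(B')<\min(C)$. By Lemma \ref{lem:omega<psi} it suffices to argue in the $\psi$ case.

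The key observation is to examine the iteration of Definition \ref{def:k-slide} in which the edge $\mbf y$ receives its label. By Lemma \ref{lem:slideorderyxz} the three edges are labeled in the order $\mbf y,\mbf x,\mbf z$, so at the moment $\mbf y$ is labeled neither $\mbf x$ nor $\mbf z$ has yet been labeled, and therefore neither has been contracted. Thus $v$ is still a genuine degree-$3$ vertex of the contracted tree $T'$, and it plays the role of $v_a$ for the edge $e=\mbf y$, with $v_\ell$ the endpoint toward the leaf $y$. The three directions out of $v_a=v$ are: toward $a$ along $\mbf x$, toward $\ell=y$ along $\mbf y$, and into $B'$ along $\mbf z$. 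The first two contain $a$ and $\ell$ respectively, so the only branch of $v_a$ counted in the definition of $m_a$ is $B'$ itself, giving $m_a=\min(B')$. Since $\mbf y$ is in fact labeled, the validity condition in the $\psi$ case forces $m_\ell\ge m_a=\min(B')$.

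It then remains to chase inequalities. The value $m_\ell$ is a leaf label lying in $C$ while $\min(B')$ lies in $B'$; as $C$ and $B'$ are disjoint branches these labels are distinct, so $m_\ell\ge\min(B')$ improves to $m_\ell>\min(B')$. On the other hand, Corollary \ref{cor:only212or231} gives $\mbf z<\mbf y$, and by Lemma \ref{lem:pathtoa} the leaf $z$ lies in $B'$, so $\min(B')\le z=\mbf z<\mbf y=y$. Finally, because $v_\ell$ is adjacent to the leaf $y$ after contraction, the leaves of $C$ are precisely $\{y\}$ together with the branches of $v_\ell$ whose minimum is $m_\ell$, so $\min(C)=\min(y,m_\ell)$. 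Combining $\min(B')<y$ with $\min(B')<m_\ell$ yields $\min(B')<\min(C)$, which is what I want.

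The main obstacle is the bookkeeping in the middle step: one must check that contractions of labeled internal edges never remove leaves, so that the minima of $B'$ and $C$ are unchanged when passing to $T'$, and, crucially, that $v$ has not yet been absorbed into a larger contracted vertex at the instant $\mbf y$ is labeled. This is exactly what Lemma \ref{lem:slideorderyxz} guarantees, and it is what makes the unique remaining branch at $v$ equal to $B'$ on the nose; without the ordering of the labels, the identification $m_a=\min(B')$ would break down.
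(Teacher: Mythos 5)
Your proof is correct and follows essentially the same route as the paper: both invoke Lemma \ref{lem:slideorderyxz} to place the labeling of $\mbf y$ before that of $\mbf x$ and $\mbf z$, and then extract the inequality $\min(B')<\min(C)$ from the validity of that single labeling step. Your version is in fact slightly more careful than the paper's, which compares ``$\min(C)$'' to $\min(B')$ directly and elides the distinction between $m_\ell$ and $\min(C)$ (the case $\min(C)=y$), a gap you close via $\min(B')\le z<y$.
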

	
	\begin{proof}
		By Lemma \ref{lem:slideorderyxz}, $\mbf y$ slides first out of $\mbf x$, $\mbf y$, and $\mbf z$. Let $C$ be the branch starting at $\mbf y$. When $\mbf y$ slides, it will compare then minimal element of $C$ to that of $B'$. So, we must have $\min(B')<\min(C)$, which must mean that $\min(B)=\min(B')$.
	\end{proof}
	
	\begin{remark}
		Given the edge labels of a slide tree, we can find the location of the leaf $c$. Starting at the root, walk down the internal edges until the labels weakly increase or we reach a branch. If we reach an ascent in the edge labels, the leaf between them is $c$, or else the larger edge (or the right one in the case of a tie) cannot slide. If we reach a branching vertex first, then by Lemma \ref{lem:minleaf} it must be down the branch with the smaller edge. Then, we can repeat this process with that branch until we reach a weak ascent across a leaf, or the end of a branch.
		\label{rmk:whereC}
	\end{remark}
	
	Our last result for this section is only for the case of $\omega$-slide trees.
	
	\begin{lemma}
		Let $i$ be a leaf of a tree $T\in\Slwk$ such that $v_i$ has two internal edges. Call these edges $\mbf x$ and $\mbf y$, with $\mbf x$ the edge towards $a$. If $i=\mbf x$, then $\mbf x>\mbf y$.
		\label{lem:descentleaf}
	\end{lemma}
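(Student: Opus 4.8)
The plan is to argue by contradiction using the $\omega$ $\kUnd$-slide labeling algorithm of Definition \ref{def:k-slide}, the key being the extra inequality $\ell\geq m_\ell$ that distinguishes the $\omega$ case from the $\psi$ case. First I would observe that $\mbf y$ points away from $a$, so it does not lie on the path from the leaf $i$ to $a$; by Lemma \ref{lem:pathtoa} this rules out $\mbf y$ carrying the label $i$. Hence it suffices to rule out $\mbf y$ being labeled with some $j>i$, and I would assume this for contradiction. Let $L$ denote the set of leaf labels in the branch starting at $\mbf y$ (the side of $\mbf y$ away from the root). Note that $i\notin L$ while $j\in L$, and that all leaf labels are distinct.

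Next I would examine the two relevant steps of the algorithm. Since labels are assigned in decreasing order, the edge $\mbf y$ is labeled (at step $\ell=j$) strictly before $\mbf x$ is labeled (at step $\ell=i$). At step $j$ the edge $\mbf x$, labeled $i<j$, is still unlabeled and hence not contracted, so the $a$-side endpoint of $\mbf y$ is exactly $v_i$; its only branch avoiding both $a$ and $j$ is the single leaf $i$, giving $m_a=i$. The labeling condition $\ell\geq m_\ell\geq m_a$ then forces $m_\ell\geq i$, i.e. every leaf of $L\setminus\{j\}$ is at least $i$. At step $i$, by contrast, $\mbf y$ has already been contracted, merging $v_i$ with the far endpoint of $\mbf y$; the branches of this merged vertex other than $\{i\}$ and the branch towards $a$ are precisely those whose leaves comprise $L$, so here $m_\ell=\min(L)$. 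The $\omega$-specific inequality $\ell\geq m_\ell$ now gives $\min(L)\leq i$, and since $i\notin L$ this forces $\min(L)<i$.

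Combining the two steps yields the contradiction: step $i$ produces a leaf $m=\min(L)<i$ lying in $L$, and since $m<i<j$ we have $m\neq j$, so $m\in L\setminus\{j\}$; but step $j$ asserted that every element of $L\setminus\{j\}$ is at least $i$, contradicting $m<i$. Therefore $\mbf y$ cannot be labeled with any $j>i$, and together with the earlier observation that it is not labeled $i$ we conclude $\mbf y<i=\mbf x$, as desired. The step I expect to be the crux is the bookkeeping of the contracted tree: verifying that at step $j$ the $a$-side endpoint of $\mbf y$ really is $v_i$ (so that $m_a=i$), and that at step $i$ the contraction of $\mbf y$ makes $m_\ell$ equal to $\min(L)$. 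It is essential that the inequality $\ell\geq m_\ell$ be available at step $i$, which is exactly why the statement holds only for $\omega$-slide trees and fails in the $\psi$ setting.
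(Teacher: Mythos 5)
Your proof is correct and follows essentially the same route as the paper's: rule out $\mbf y=i$ via Lemma \ref{lem:pathtoa}, then suppose $\mbf y=j>i$, use the validity of the slide labeling $\mbf y$ (where $m_a=i$) to conclude every leaf beyond $\mbf y$ exceeds $i$, and finally contradict the $\omega$-specific condition $\ell\geq m_\ell$ when $i$ labels $\mbf x$. Your version merely makes the contraction bookkeeping and the identification of $m_\ell$ and $m_a$ at each step more explicit than the paper does.
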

	
	\begin{center}
		\begin{tikzpicture}
		\draw (-.25,0)--(.4,0);
		\draw (1,0)--(3,0);
		\draw (2,0)--(2,-0.6);
		\draw (-.25,0)--(-0.75,0.5);
		\draw (-.25,0)--(-0.75,-0.5);
		
		\node[anchor=north] at (2,-0.6) {$i$};
		\node[anchor=south east] at (-0.75,0.5) {$a$};
		\node[anchor=north east] at (-0.75,-0.5) {$b$};
		\node[anchor=south] at (1.5,0) {$\mbf i$};
		\node[anchor=south] at (2.5,0) {$\mathbf{< i}$};
		\node at (.7,0) {$\ldots$};
		\node at (3.3,0) {$\ldots$};
		\end{tikzpicture}
	\end{center}
	
	\begin{proof}
		We can not have $\mbf y=\mbf x=i$, since that would contradict Lemma \ref{lem:pathtoa}. So, we must show that if $\mbf x=i$, we can not have $\mbf y>\mbf x$. So, let $\mbf x=i$ and suppose instead that $\mbf y$ is larger than $\mbf x$. Then, $\mbf y$ slides before $\mbf x$ in the slide labeling. Let $B$ be the branch starting at $\mbf y$. When $\mbf y$ slides, since $\mbf x$ has not yet slid, $\mbf y$ will compare the minimal leaf of $B$ to $i$. For $\mbf y$ to be a valid slide, we must have $\min(B)>i$. However, then when $i$ later slides down edge $\mbf x$, it will need to compare $\min(B)$ with something further down the tree. Since $\min(B)>i$, this is an invalid slide, since it violates the definition of an $\omega$-slide in Definition \ref{def:k-slide}. Thus, for $T$ to be a valid $\omega$-slide tree, we must have $\mbf y<\mbf x$.
	\end{proof}

	\section{The main bijection}
	
	In this section, we now answer Question \ref{q:mainQuestion}. We do so by constructing an explicit bijection between $\Slwk$ and the set of CPF words. 
	
	Recall the \textit{asymmetric multinomial coefficients} $\multichoose{n}{\kUnd}$. They satisfy the recurrence relation given in Definition \ref{def:asymb}, which is discussed in more detail in \cite{CGM}. We now show combinatorially that the sets $\Slwk$ satisfy this same recursion.
	
	We will show this as follows. In \cite{CGM}, it was shown that the coefficient $\multichoose{n}{\kUnd}$ is equal to a sum of terms $\multichoose{n-1}{\kUnd^{(j)}}$ for compositions $\kUnd^{(j)}$ of $n-1$. We will define maps $\sigmaT_{i,j}$ and $\sigmaT_i$ from $\Slide^\omega(\kUnd^{(j)})$ to $\Slwk$, along with inverses $\piT_{i,j}$ and $\piT_i$. Then, we will show that these maps are injective and none of their images overlap, so the collection of these maps together defines a bijection between $\Slwk$ and the union of the sets $\Slide^\omega(\kUnd^{(j)})$.
	Then, we can find the word corresponding to a tree $T$ recursively as follows. Knowing that $T$ is in the image of exactly one map $\sigmaT_{\bullet}$, one can find which edge of $T$ was added by $\sigmaT_{\bullet}$, and find its label under the $\omega$ $\kUnd$-slide labeling algorithm. Then, the word for $T$ is that edge label appended to the end of the word for $\piT_{\bullet}(T)$.

	\subsection{Preliminaries}
	
	This technique relies on having a means of determining which image of a map $\sigmaT_\bullet$ a given tree $T$ is in. This is done by the map $\last(T)$, which we now define.
	
	\begin{definition}
		Let $T\in\Slwk$, and $B$ be a branch of $T$. Let $i$ and $j$ be the smallest and second smallest leaves of $B$, respectively.  Then, define $\mintwo(B)$ to be the largest branch of $B$ containing $j$ but not $i$.
	\end{definition}
	
	\begin{definition}
		Define the map $\last:\Slwk\to[n]$ as follows.
		\begin{enumerate}
			\item For a given $T\in\Slwk$, let $B$ be the largest branch of $T$ that has $\maxzero(\kUnd)$ as its smallest leaf.
			\item If $B$ has at least two leaves, replace $B$ with $\mintwo(B)$.
			\item Repeat Step 2 until $B$ has a single leaf.
			\item Define $\last(T)$ to be the single leaf of $B$.
		\end{enumerate}
		\label{def:last}
	\end{definition}
	
	\begin{remark}
		Note that since $T$ is a finite tree, this process terminates after a finite number of steps. Secondly, since the new $B$ must always contain the leaf $j$, every step gives a nonempty branch of $T$. Thus, we will always end up with a branch containing a single leaf, so the map $\last(T)$ is well-defined. 
	\end{remark}
	
	\begin{example}
		Consider the following tree $T\in\Slide^\omega(0,0,1,1,2,2)$.
		\begin{center}
			\includegraphics{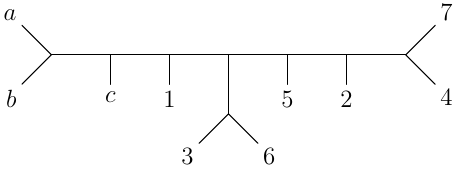}
		\end{center}
		We will compute $\last(T)$. First, $\maxzero(0,0,1,1,2,2)=2$, so our initial $B$ is the branch below on the left, since making it any larger would add the leaf $1<2$.
		\begin{center}
			\includegraphics{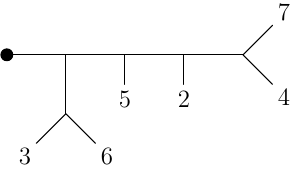}
			\hspace{3cm}
			\includegraphics{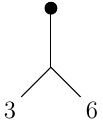}
		\end{center}
		Then, we apply $\mintwo$ to $B$ once to get the largest branch containing $3$ but not $2$, shown above on the right. Finally, applying $\mintwo$ a second time yields just the leaf $6$, and so $\last(T)= 6$.
	\end{example}
	
	\begin{lemma}
		For any $T\in\Slwk$, $\last(T)>\maxzero(\kUnd)$.
		\label{lem:larger_than_zero}
	\end{lemma}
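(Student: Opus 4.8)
The plan is to show that the iterated $\mintwo$ process of Definition \ref{def:last} discards the leaf $z:=\maxzero(\kUnd)$ at its very first step and never afterward sees a leaf that small. Write $B_0$ for the branch produced in Step 1, namely the largest branch of $T$ whose smallest leaf is $z$. Since $\min(B_0)=z$, every \emph{other} leaf of $B_0$ is automatically larger than $z$. So if I can show $B_0$ has at least two leaves, then $B_1:=\mintwo(B_0)$ drops $z=\min(B_0)$ while staying inside $B_0$; as the branches only shrink from there, every subsequent branch---and in particular the single leaf surviving at the end---contains only leaves exceeding $z$, giving $\last(T)>z$. Thus the whole statement reduces to proving that $B_0$ has more than one leaf.

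To do that I will exhibit a branch strictly larger than the pendant $\{z\}$ that still has minimum leaf $z$. Let $v_z$ be the vertex adjacent to $z$, and let $\mbf x$ be the edge at $v_z$ pointing toward the root; its far endpoint lies on the path from $v_z$ to the root and is not the leaf $a$, hence is an internal vertex, so $\mbf x$ is an internal edge and carries a slide label. The leaf $z$ never slides (if $z\in[n]$ this is because $k_z=0$, and $z=c$ never slides), so no edge of $T$ is labeled $\mbf z$ and, by Lemma \ref{lem:pathtoa}, the label of $\mbf x$ is some away-side leaf different from $z$. The branch rooted at $\mbf x$ consists of $z$ together with the rest of the away-side of $v_z$, so it has minimum leaf $z$---and hence certifies that $B_0$ has at least two leaves---exactly when every leaf on that away-side is larger than $z$. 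This inequality is what I will prove.

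Here I split into the two shapes a trivalent vertex can take, as in Section \ref{sec:preliminaries}. If $v_z$ is adjacent to a second leaf $j'$, then $\mbf x$ is the unique internal edge at $v_z$ and must be labeled $j'$. Reading off the $\omega$-condition $\ell\ge m_\ell\ge m_a$ of Definition \ref{def:k-slide} at the step that labels $\mbf x$: the vertex $v_z$ is still uncontracted, and its only away-branch besides $\{j'\}$ is the pendant $\{z\}$, so $m_\ell=z$ and therefore $j'\ge z$, i.e.\ $j'>z$. If instead $v_z$ carries two internal edges, let $\mbf w$ be the away edge into the subtree $S$ and let $\beta$ be the leaf labeling $\mbf w$. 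Because $\beta$ climbs $\mbf w$ (from below) before anything can reach $\mbf x$, the edge $\mbf x$ is unlabeled---hence uncontracted---at the moment $\mbf w$ is labeled, so $v_z$ is not yet merged with its parent and its only away-branch avoiding $\beta$ is again the pendant $\{z\}$; thus $m_a=z$. The $\omega$-condition then yields $\beta\ge m_\ell\ge z$, and since $m_\ell$ equals $\min(S\setminus\{\beta\})$, every leaf of $S$ is at least $z$. As $z\notin S$, this forces $\min(S)>z$.

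In either case the away-side of $v_z$ has all leaves larger than $z$, so the branch rooted at $\mbf x$ has minimum $z$ and at least two leaves, establishing that $B_0$ has more than one leaf and closing the argument. I expect the main obstacle to be the bookkeeping in the two-internal-edge case: one must be sure that when $\mbf w$ is labeled the vertex $v_z$ has not been absorbed into its parent by contraction of $\mbf x$, since otherwise the pendant $\{z\}$ need not be the relevant away-branch and the key equality $m_a=z$ could fail. This is precisely why the argument slides along $\mbf w$, which is labeled first, rather than along $\mbf x$.
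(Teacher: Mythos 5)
Your proof is correct and takes essentially the same approach as the paper's: reduce the lemma to showing that the initial branch $B$ of Definition \ref{def:last} has at least two leaves, then split into the two cases ($z$ adjacent to another leaf, or $z$ between two internal edges) and invoke validity of the $\omega$-slide labeling at the step labeling the away edge to conclude that everything on the away side of $v_z$ exceeds $z$. Your write-up is somewhat more explicit than the paper's---e.g., you justify via Lemma \ref{lem:pathtoa} that the away edge is labeled before the rootward edge, where the paper simply asserts $\mbf{x}\leq\mbf{y}$, and you spell out why iterating $\mintwo$ preserves the bound---but the substance is identical.
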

	
	\begin{proof}
		Let $z=\maxzero(\kUnd)$. It is clear that $\last(T)\geq z$. We show that $\last(T)\neq z$. To do so, it is enough to demonstrate that the initial branch $B$ contains at least two leaves. 
		
		If $z$ is adjacent to another leaf, then since $z$ does not slide, the other leaf $x$ must label the adjacent edge, and for $x$ to slide we must have $z<x$. So, at a minimum, $B$ contains both of the leaves $z$ and $x$. Alternatively, suppose $z$ is between two edges $\mbf{x}$ and $\mbf{y}$, with $\mbf{x}$ on the path towards $a$. Since $z$ does not slide, $\mbf{x}\leq\mbf{y}$, so $\mbf{y}$ slides before $\mbf{x}$. For this to be a valid slide, $z$ must be smaller than everything on the branch starting at $\mbf{y}$, and so $B$ will include all of this branch. 
		
		Thus, the initial branch $B$ will contain at least two leaves. So, $\mintwo$ will be applied at least once, and thus $\last(T)>z$.
	\end{proof}
	
	\begin{lemma}
		The map $\last(T)$ returns a leaf $j$ adjacent to another leaf $i$ with $j>i$.
		\label{lem:largeleaf}
	\end{lemma}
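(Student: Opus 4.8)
The plan is to unwind the iterative definition of $\last$ in Definition \ref{def:last} and then force the desired adjacency at the terminal step using the $\omega$-slide validity inequality from Definition \ref{def:k-slide}.

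First I would track the running branch. Writing $B_0$ for the initial branch (whose smallest leaf is $\maxzero(\kUnd)$) and $B_t=\mintwo(B_{t-1})$, observe that $\mintwo(B_{t-1})$ contains $\min_2(B_{t-1})$ but not $\min(B_{t-1})$, so the smallest leaf of $B_t$ is exactly $\min_2(B_{t-1})$. Hence the running minimum strictly increases, and at the terminating step the branch $B=B_{k-1}$ has smallest leaf $i:=\min(B)$ and second-smallest leaf $j:=\min_2(B)=\last(T)$, with $i<j$. This already yields the inequality $j>i$; it remains to produce the adjacency. Next I would decode the termination condition. Since $B_k=\mintwo(B)$ is a single leaf, the largest branch of $B$ containing $j$ but not $i$ must be $\{j\}$ itself, which happens exactly when the path from $j$ to $i$ inside $B$ leaves $v_j$ immediately; that is, $i$ lies in the subtree $S_0$ hanging off the unique edge at $v_j$ that is neither the leaf-edge of $j$ nor the edge toward $a$. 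Because $i=\min(B)\in S_0$ while $j=\min_2(B)\notin S_0$, we get $i=\min(S_0)$ and every other leaf of $S_0$ exceeds $j$. The lemma is now equivalent to $S_0=\{i\}$, i.e. to the assertion that this edge is the leaf-edge of $i$, so that $i$ is adjacent to $j$.

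The main step, and the crux, is to rule out $|S_0|\ge 2$. Here I would examine the internal edge $\mbf a$ emanating from $v_i$ toward $a$. A trivalent tree on the $n+3$ leaves $\{a,b,c,1,\dots,n\}$ has exactly $n$ internal edges, and the $\omega$-$\kUnd$-slide labeling assigns $\sum_\ell k_\ell=n$ labels; since $T\in\Slwk$ the labeling completes, so every internal edge—in particular $\mbf a$—is labeled. By Lemma \ref{lem:pathtoa} its label is a leaf of the branch below $\mbf a$, hence either $i$ or a leaf of $S_0$ exceeding $j$. I would then test each possibility against the $\omega$-slide inequality $\ell\ge m_\ell\ge m_a$ of Definition \ref{def:k-slide} evaluated at $\mbf a$. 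Using that the only leaf of $S_0$ which is $\le j$ is $i$, and, in the boundary configuration where the parent of $v_i$ is $v_j$, that the surviving sibling branch at $v_j$ is the single leaf $j>i$, each case forces $m_\ell>i$ or $m_a>i$, contradicting the requirement that the relevant quantity be $\le i$.

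The key design choice, which I expect to carry the real difficulty, is to argue via the forced labeling of $\mbf a$ rather than via a slide performed by $i$ itself: it is possible that $i=\maxzero(\kUnd)$, so that $k_i=0$ and $i$ never slides, and an argument phrased in terms of "$i$ must slide'' would break in precisely this boundary case. Routing everything through the validity of whatever label $\mbf a$ receives makes the argument uniform across the cases $k_i=0$ and $k_i\ge 1$ and across the one- versus two-internal-edge shapes at $v_i$. Concluding $S_0=\{i\}$, we obtain that $\last(T)=j$ is adjacent to $i$ with $j>i$, as desired.
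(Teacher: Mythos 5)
Your reduction is sound and is in fact equivalent to the paper's: showing that $\last(T)=j$ is adjacent to a smaller leaf is exactly the same as ruling out the configuration where the third edge at $v_j$ is internal, i.e.\ where your $S_0$ has two or more leaves, and your bookkeeping (the running minimum increases, $i=\min(B)$ lies in $S_0$, and every other leaf of $S_0$ exceeds $j$) matches the paper's setup. Your Case~1 (the edge $\mbf{a}$ at $v_i$ is labeled $i$) is also correct: the $\omega$-inequality $\ell\geq m_\ell$ fails because every leaf below $\mbf{a}$ other than $i$ exceeds $j$. The genuine gap is in Case~2, where $\mbf{a}$ carries a label $y>j$. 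There the slide of $y$ across $\mbf{a}$ has $m_\ell=i$, so it is invalid precisely when $m_a>i$, and your only justification for $m_a>i$ is the assertion that ``the surviving sibling branch at $v_j$ is the single leaf $j$.'' But $m_a$ is computed in the tree with all previously labeled edges contracted: if the edge $\mbf{x}$ from $v_j$ toward $a$ (and, in the non-boundary configuration, the base edge $e_0$ of $S_0$) had already been labeled by a larger label and contracted before $y$ slides, then the contracted class of $v_a$ absorbs vertices above $v_j$, whose pendant branches can contain leaves smaller than $i$ (for instance $c$), giving $m_a\leq i$ and a perfectly valid slide---no contradiction. So the claim that only $\{j\}$ ``survives'' at $v_j$ is precisely the nontrivial point, and nothing in your proposal proves it.

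Closing that gap is the actual content of the paper's proof, and it is carried out at $e_0$ rather than at your $\mbf{a}$. By Lemma \ref{lem:pathtoa}, the label of $\mbf{x}$ is either $j$ or a leaf of $S_0$. The possibility that $\mbf{x}$ is labeled $j$ is eliminated by Lemma \ref{lem:descentleaf}: it would force the label of $\mbf{x}$ to exceed the label of $e_0$, which (being a leaf of $S_0$ and, by the $\omega$-inequality, not equal to $\min(S_0)$) would then be a leaf of $B$ strictly between $\min(B)$ and $j$, contradicting that $j$ is the second-smallest leaf of $B$. Hence the label of $\mbf{x}$ lies in $S_0$, so its path to $a$ crosses $e_0$ first, forcing $e_0$ to be labeled \emph{before} $\mbf{x}$. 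Only with this ordering in hand does one know that when the label of $e_0$ slides, the only branch at $v_j$ is $\{j\}$, so $m_\ell\leq i<j=m_a$ and the contradiction appears already at $e_0$---making your detour through $\mbf{a}$ unnecessary. Note also that your stated design rationale (avoiding an argument through ``$i$ must slide'' because possibly $k_i=0$) guards against a problem the paper's route never has, since its contradiction concerns the labels of $e_0$ and $\mbf{x}$, not a slide performed by $i$. As written, your proof is missing the labeling-order argument via Lemmas \ref{lem:pathtoa} and \ref{lem:descentleaf}, which is the heart of the matter.
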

	
	\begin{proof}
		We first show that $\last$ cannot return a leaf that is between two edges.  Suppose instead that $\last(T)=j$ is a leaf between two edges $\mbf{x}$ and $\mbf{y}$, where $\mbf{x}$ is the edge towards $a$. As shown in the previous proof, the initial branch $B$ has more than one leaf, so $\mintwo$ is applied at least once. Consider the state of $B$ just before the last time $\mintwo$ is applied. Since $B$ is a branch, and $B$ contains $j$ and at least one other leaf, $B$ must include the whole branch starting at $\mbf{x}$.
		Then, since $\mintwo(B)$ is just $j$, $j$ must be the second smallest leaf of $B$, and the smallest leaf $l$ must be in the branch starting at $\mbf{y}$. Since $\mbf{y}$ slides, we have $\mbf{y}>l$. By Lemma \ref{lem:descentleaf}, if $j=\mbf{x}$, then $j=\mbf{x}>\mbf{y}>l$, which contradicts that $j$ is the second smallest leaf of $B$. So, $j\neq\mbf{x}$.
		Then, $\mbf{x}\leq\mbf{y}$, so $\mbf{y}$ slides before $\mbf{x}$. However, $\mbf{y}$ cannot slide from $l$ to $j$ since $l<j$. Thus, we have a contradiction, so $\last(T)=j$ does not lie between two edges, and instead is adjacent to another leaf $i$.
		
		Finally, we must show that the leaf $i$ that is adjacent to $\last(T)=j$ is smaller than $j$. This follows immediately from the fact that the only way for $\mintwo$ to separate $j$ from $i$ is if for some branch, $i$ is the smallest leaf and $j$ is the second smallest leaf. Thus, $j>i$. 
	\end{proof}

	\subsection{The map $\sigmaT_{i,j}$}
	
	In the next two subsections, we define the two maps $\sigmaT_{i,j}$ and $\sigmaT_{j}$. 
	These maps take a slide tree $T$ and add an additional edge $j$ to create a larger slide tree. The map $\sigmaT_j$ corresponds to the case in Definition \ref{def:asymb} where we delete a 1 in position $j$ of $\kUnd$, and $\sigmaT_{i,j}$ corresponds to the case where we subtract 1 from an entry greater than 1 in position $j$, and delete the rightmost zero in position $i<j$. We will also show that $\last(\sigmaT_\bullet(T))$ will always return the label of the edge added to $T$ by $\sigmaT_\bullet$.
	
	\begin{definition}
		Let $\kUnd$ be a reverse-Catalan composition of $n$, $j$, and $i$ be integers such that $\maxzero(\kUnd)<i<j\leq n+1$, and $\kUnd'$ be the composition of $n+1$ obtained from $\kUnd$ by inserting a zero between $k_{i-1}$ and $k_i$, and then increasing the $j$th entry of the result by 1. Note that using the notation from Definition \ref{def:asymb}, $\kUnd=\kUnd'^{(j)}$. We define the map $\sigmaT_{i,j}: \Slwk\to\Slide^\omega(\kUnd')$ as follows.
		\begin{enumerate}
			\item Given a tree $T\in\Slwk$, add 1 to all leaf (and edge) labels greater than or equal to $i$.
			\item On the path from $a$ to $j$, consider the maximal length decreasing sequences of edge labels. 
			\item Let $B_1,B_2,\ldots,B_l$ be the branches of $T$ off of this path that lie between these maximal length decreasing sequences, and $B_l$ the branch immediately next to leaf $j$. (See Figure \ref{fig:sigmaEx} for an example.) Note that some (or all) of these branches may consist of a single leaf, and there may be additional branches that connect to this path in the middle of a decreasing sequence.
			\item For $r\in[l]$, let $m_r:=\min(B_r)$.
			\item As we will show in Lemma \ref{lem:3cases}, we need only consider the following three cases for the ordering of $m_1,\ldots,m_l$, $i$, and $j$. For each case, we say how to get $\sigmaT_{i,j}(T)$:
			\begin{itemize}[topsep=4pt,itemsep=0pt]
				\item $m_l<i<j$ \textit{or} $m_{l-1}<i<j<m_l$: Replace the leaf $j$ by an edge labeled $\mbf{j}$ with leaves $j$ and $i$.
				\item $m_1<\cdots<m_{d-1}<i<m_d<\cdots<m_l<j$: Replace the leaf $m_d$ by $i$, $m_{d+1}$ by $m_d$, and so on, replace $m_l$ by $m_{l-1}$, and replace leaf $j$ by an edge $\mbf{j}$ with leaves $j$ and $m_l$.
				\item $m_1<\cdots<m_{d-1}<i<m_d<\cdots<j<m_l$: Replace the leaf $m_d$ by $i$, $m_{d+1}$ by $m_d$, and so on up to replacing $m_{l-1}$ with $m_{l-2}$, then replace leaf $j$ by an edge $\mbf{j}$ with leaves $j$ and $m_{l-1}$.
			\end{itemize}
			Note that the first case is actually subsumed by the second two cases, but we write it out separately for clarity and to make the proofs clearer.
		\end{enumerate}
		\label{def:sigmaIJ}
	\end{definition}
	
	\begin{remark}
		In this definition, and as necessary throughout this section, we use the abuse of notation of using the same label to refer to a tree $T\in\Slwk$ as to the tree after having some of its leaf and edge labels changed in Step 1 of Definition \ref{def:sigmaIJ}.
	\end{remark}
	
	\begin{figure}[bt]
		\centering
		\includegraphics{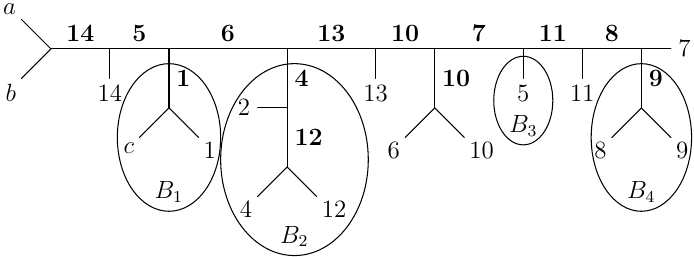}
		\caption{A slide tree in $\Slide^\omega(1,0,1,1,1,1,1,1,2,1,1,1)$ with the labels $\geq3$ incremented by 1. In this case, $j=7$.}
		\label{fig:sigmaEx}
	\end{figure}
	
	\begin{example}
		Consider the tree $T\in\Slide^\omega(1,0,1,1,1,1,1,1,2,1,1,1)$ depicted in Figure \ref{fig:sigmaEx}. We will demonstrate how we construct the tree $\sigmaT_{3,7}(T)$.
		The figure depicts $T$ after we have already incremented all the leaf and edge labels greater than or equal to 3 by 1. Then, the maximal decreasing sequences of edge labels from $a$ to $7$ are $(14,5)$, $(6)$, $(13,10,7)$, and $(11,8)$. Then, $l=4$ and the branches $B_1,B_2,B_3, B_4$ are as depicted in the figure. Their minimal leaves are $m_1=c$, $m_2=2$, $m_3=5$, and $m_4=8$. We have $c<2<i=3<5<i=7<8$, so $d=3$ and we are in the third case of Step 5. Thus, we replace $5$ with $3$ and $7$ with an edge $\mbf{7}$ with leaves $7$ and $5$ to form $\sigmaT_{3,7}(T)\in \Slide^\omega(1,0,0,1,1,1,2,1,1,2,1,1,1)$:
		\begin{center}
			\includegraphics{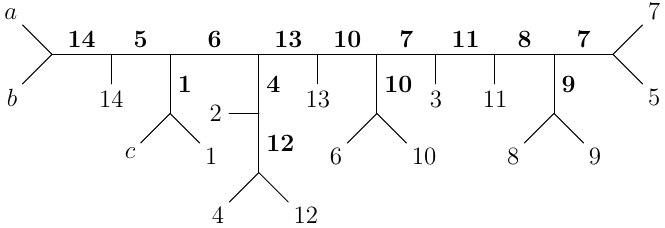}
		\end{center}
	\end{example}
	
	To show that our map $\sigmaT_{i,j}$ is well defined, we first must demonstrate a few facts about the leaves $m_1,\ldots,m_l$, which we do in the next two lemmas.

	\begin{lemma}
		Let $j$ be a leaf of $T\in\Slwk$, and let $B_1,\ldots,B_l$ and $m_1,\ldots,m_l$ be as in Definition \ref{def:sigmaIJ}. Then,
		\begin{enumerate}[(1),noitemsep, nolistsep]
			\item $m_1=c$,
			\item $m_1<m_2<\cdots<m_l$, and
			\item $m_{l-1}<j$.
		\end{enumerate}
		\label{lem:3cases}
	\end{lemma}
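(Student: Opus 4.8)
The plan is to reorganize the data along the path $P$ from $a$ to $j$ in terms of its maximal decreasing runs of edge labels and the valley vertices between them, and then to read off each $m_r$ as the minimum leaf of a suitable subtree using Lemma~\ref{lem:minleaf}.

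First I would set up the run/valley correspondence. Reading the edge labels of $P$ from the root toward $j$ gives a word $\mbf{e}_1,\ldots,\mbf{e}_k$, which decomposes into maximal strictly decreasing runs. For $r=1,\ldots,l-1$ let $v^{(r)}$ be the vertex of $P$ incident to the last edge of run $r$ and the first edge of run $r+1$, and set $v^{(l)}=v_j$; these are exactly the vertices off which $B_1,\ldots,B_l$ hang. At such a valley $v^{(r)}$ the incoming edge $\mbf{x}$ (toward $a$), the continuing path edge $\mbf{p}$ (the first edge of run $r+1$), and the branch edge $\mbf{t}$ of $B_r$ are the three edges at $v^{(r)}$. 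Since a run breaks exactly at a weak ascent, $\mbf{p}\ge\mbf{x}$, so Corollary~\ref{cor:only212or231} (with $\mbf{y}\ge\mbf{z}$ the two forward edges) forces $\mbf{p}=\mbf{y}$ and $\mbf{t}=\mbf{z}<\mbf{x}$. Thus $B_r$ hangs on the unique strictly smallest of the three edges; in particular the branches that attach in the \emph{middle} of a decreasing run carry the larger label $\mbf{y}\ge\mbf{x}$ and are correctly excluded from the list $B_1,\ldots,B_l$.

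The key step is the claim that for each $r\le l-1$, $m_r$ equals the minimum leaf label among all leaves lying beyond $v^{(r)}$ (on the side away from the root). When $\mbf{t}$ is an internal edge this is immediate from Lemma~\ref{lem:minleaf}: with $\mbf{x}$ toward $a$ and $\mbf{t}=\mbf{z}$ the smallest edge, the minimum of the branch beyond $\mbf{x}$ lies in $B_r$, so $\min(B_r)$ is that global minimum. Granting this claim, all three conclusions follow quickly. For (1), Remark~\ref{rmk:whereC} places $c$ at the first weak ascent of $P$, i.e.\ at $v^{(1)}$, so $c\in B_1$; since $c$ is smaller than every label in $\{1,\ldots,n\}$ and $B_1$ avoids $a,b$, we get $m_1=\min(B_1)=c$. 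For (2) and (3), the regions beyond $v^{(1)},\ldots,v^{(l-1)}$ are strictly nested, and the region beyond $v^{(l-1)}$ contains both $B_l$ and the leaf $j$; since $m_r=\min(\text{beyond }v^{(r)})$ lies in $B_r$, which branches off $P$ at $v^{(r)}$ and hence is disjoint from everything beyond $v^{(r+1)}$ (and from $B_l$ and $\{j\}$), each $m_r$ fails to lie in the next, smaller region. Monotonicity of the minima over the nested regions then gives $m_1<m_2<\cdots<m_{l-1}<m_l$ and $m_{l-1}<j$.

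The main obstacle is the degenerate case of the key claim in which $B_r$ is a single leaf $x$, so that $v^{(r)}$ has only two internal edges and Lemma~\ref{lem:minleaf} does not apply verbatim. For $r=1$ this is harmless, since $x=c$ is the global minimum. For $r\ge 2$ I would argue directly: the weak ascent $\mbf{p}\ge\mbf{x}$ together with Lemma~\ref{lem:descentleaf} forces $x\ne\mbf{x}$, so $x$ is a non-sliding leaf, and then the $\omega$-slide validity inequality $\ell\ge m_\ell\ge m_a$ from Definition~\ref{def:k-slide}, applied as labels are slid in from the run-$(r+1)$ side across $v^{(r)}$, shows that every leaf beyond $v^{(r)}$ on that side exceeds $x$; hence $m_r=x$ is again the minimum beyond $v^{(r)}$. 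Carefully propagating this lower bound down the far side is the one genuinely technical point; everything else is bookkeeping with the run/valley decomposition.
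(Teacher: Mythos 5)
Your proof is correct, and for parts (2) and (3) it takes a genuinely different route from the paper's. Part (1) is essentially the same in both treatments: Remark \ref{rmk:whereC} (with Lemma \ref{lem:slideorderyxz}) traps $c$ inside $B_1$. For (2) and (3), the paper argues purely locally: if $m_d>m_{d+1}$ (or $m_{l-1}>j$), then the edge of the path immediately to the right of $B_d$ cannot perform a valid slide, since its comparison pits $m_d$, the minimum on its $a$-side, against something at most $m_{d+1}$ on the other side; note this argument is insensitive to whether $B_d$ is a single leaf or a larger branch, since either way the $a$-side minimum is $m_d$. You instead prove the stronger structural claim that for $r\le l-1$ the label $m_r$ is the minimum over \emph{all} leaves beyond $v^{(r)}$, by applying Lemma \ref{lem:minleaf} at each valley, and then deduce (2) and (3) from nestedness and disjointness of the regions. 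This buys something real: your key claim essentially anticipates Lemma \ref{lem:m_sSmallest}, which the paper proves immediately afterwards by a separate induction that \emph{uses} Lemma \ref{lem:3cases}, whereas your derivation rests only on the Section \ref{sec:preliminaries} lemmas, so there is no circularity and the two statements could be streamlined into one. The cost is the case split forced by Lemma \ref{lem:minleaf}'s hypothesis of three internal edges; notably, your single-leaf fallback is in substance exactly the paper's uniform argument, so the split could have been avoided entirely by running that argument in both cases.

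Two further remarks on the degenerate case. First, the ``genuinely technical point'' you flag dissolves in one step, so there is no gap: by the contraction step of Definition \ref{def:k-slide}, when the first edge $\mbf{p}$ of run $r+1$ receives its label $\ell$, the quantity $m_\ell$ in that comparison already ranges over \emph{all} leaves beyond $\mbf{p}$ other than $\ell$ itself (every such leaf hangs off the contracted path from $\ell$ to $\mbf{p}$), while $m_a=x$ because the lone leaf $x$ is the only branch at $v^{(r)}$ away from $a$ (and $\mbf{x}\le\mbf{p}$ guarantees $\mbf{x}$ is still uncontracted at that moment). The $\omega$-inequality $\ell\ge m_\ell\ge m_a=x$, combined with distinctness of leaf labels and $\ell\ne x$ (Lemma \ref{lem:pathtoa}), then bounds every leaf beyond $v^{(r)}$ strictly above $x$ all at once; no propagation down the far side is needed. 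Second, your intermediate assertion that $x$ ``is a non-sliding leaf'' does not actually follow from Lemma \ref{lem:descentleaf} alone (a priori $x$ could label an edge closer to $a$ after $\mbf{x}$ has been contracted by a larger label), but nothing in your argument uses that assertion, so this is harmless.
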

	
	\begin{proof}
		For part (1), consider the path from $a$ to $B_1$. Since the sequence of edge labels decreases, any leaves here must be the same as the adjacent edge label. For any internal vertex $v$ between $a$ and $B_1$, the edge going towards $B_1$ must be the smallest edge adjacent to $v$, so $c$ cannot be down any branches that split off here by Remark \ref{rmk:whereC}.
		Then, where $B_1$ splits off, if $B_1$ has any edges, the edge at the base of $B_1$ must be smaller than the two edges on the path from $a$ to $j$ by Lemma \ref{lem:slideorderyxz}, 
		so it must contain $c$ by Remark \ref{rmk:whereC}. If $B_1$ is just a leaf, then it must be $c$ anyway, or else the edge to its right (which slides before the one to the left) is unable to slide. This proves Lemma \ref{lem:3cases}(1).
		
		For claim (2), suppose instead that there is some $d$ for which $m_d>m_{d+1}$. Then, the edge on the path from $a$ to $j$ immediately to the right of $B_d$ cannot slide, as it would be comparing something to its right that is at most $m_{d+1}$ to $m_d$, but we assumed that $m_d>m_{d+1}$. Thus, we must instead have that $m_1<\cdots<m_l$.
		
		Finally, for (3), the proof follows identically to that of (2): if $m_{l-1}>j$, then the edge to the right of $B_{l-1}$ cannot make a valid slide, so we do not have a valid slide tree, and we must have that $m_{l-1}<j$.
	\end{proof}
	
	\begin{corollary}
		As a consequence, we must have either $m_l<i<j$, $m_{l-1}<i<j<m_l$, $m_1<\cdots<m_{d-1}<i<m_d<\cdots<m_l<j$, or $m_1<\cdots<m_{d-1}<i<m_d<\cdots<j<m_l$. Thus, the three cases in Step 5 of Definition \ref{def:sigmaIJ} are the only three cases to consider.
		\label{cor:3cases}
	\end{corollary}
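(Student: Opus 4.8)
The plan is to read Corollary \ref{cor:3cases} as a pure insertion problem and deduce it directly from Lemma \ref{lem:3cases}. That lemma already tells us the branch minima form an increasing chain $m_1 = c < m_2 < \cdots < m_l$ with $m_{l-1} < j$, so everything reduces to deciding where the two distinguished values $i$ and $j$ can sit inside this chain.

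First I would establish the strictness facts that force $i$ and $j$ into the open gaps of the chain rather than onto its entries. Since distinct leaves carry distinct labels and the branch $B_l$ adjacent to $j$ does not contain $j$ itself, we get $j \neq m_r$ for every $r$; combined with $m_{l-1} < j$ from Lemma \ref{lem:3cases}(3), this pins $j$ strictly into one of exactly two positions, namely $m_{l-1} < j < m_l$ or $m_l < j$. For $i$, the incrementing of all labels $\geq i$ in Step 1 of Definition \ref{def:sigmaIJ} guarantees that no leaf of the modified tree equals $i$, so $i \neq m_r$ for all $r$; moreover $i > \maxzero(\kUnd) \geq c = m_1$ and $i < j$ by the hypotheses of Definition \ref{def:sigmaIJ}.

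With these strict inequalities secured, the conclusion is a two-way case split on the location of $j$, followed by insertion of $i < j$ into the resulting totally ordered chain. If $j > m_l$, the chain is $m_1 < \cdots < m_l < j$, and placing $i$ gives either $m_l < i < j$ or $m_1 < \cdots < m_{d-1} < i < m_d < \cdots < m_l < j$ for the unique gap index $d$. If $m_{l-1} < j < m_l$, then since $i < j$ the value $i$ lands strictly below $m_l$, yielding either $m_{l-1} < i < j < m_l$ or $m_1 < \cdots < m_{d-1} < i < m_d < \cdots < j < m_l$. These four outcomes are precisely the four cases claimed, so the list is exhaustive.

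I expect the substantive work to be the strictness bookkeeping rather than the case split: the main point to verify carefully is that $i$ and $j$ genuinely cannot equal any $m_r$, so that each inequality is strict and the four templates are matched without overlap, and that the degenerate configurations (such as $l = 1$, where only $m_l < i < j$ can occur) remain consistent with the stated cases.
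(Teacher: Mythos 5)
Your proposal is correct and follows the same route the paper intends: the paper states this corollary with no written proof, treating it as immediate from Lemma \ref{lem:3cases}, and your argument simply fills in those details by inserting $i$ and $j$ into the chain $m_1<\cdots<m_l$. Your added strictness checks (that $i\neq m_r$ because Step 1 of Definition \ref{def:sigmaIJ} vacates the label $i$, and $j\neq m_r$ because the branches $B_r$ lie off the path from $a$ to $j$) are exactly the right bookkeeping to make the case split exhaustive.
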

	
	\begin{lemma}
		Let $j$ be a leaf of $T\in\Slwk$, let $B_1,\ldots,B_l$ and $m_1,\ldots,m_l$ be as in Definition \ref{def:sigmaIJ}, and let $s\in\{2,3,\ldots,l\}$. If $s<l$, or $s=l$ and $j>m_l$, then $m_s$ is the smallest leaf on the maximal branch containing $m_s$ but not $m_{s-1}$.
		\label{lem:m_sSmallest}
	\end{lemma}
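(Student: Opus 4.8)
The plan is to first pin down exactly which branch $M_s$ the statement is about, and then locate its minimum leaf by following the maximal decreasing runs of edge labels along the path from $a$ to $j$.

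First I would identify $M_s$, the maximal branch containing $m_s$ but not $m_{s-1}$, explicitly. Recall from Definition \ref{def:sigmaIJ} that $B_{s-1}$ and $B_s$ hang off the $a$-to-$j$ path at the two ends of the $s$th maximal decreasing run of edge labels. Since a tree has a unique path between any two leaves, excluding $m_{s-1}$ while retaining $m_s$ forces the root of $M_s$ onto the $j$-side of the vertex where $B_{s-1}$ attaches; maximality then makes $M_s$ the branch rooted at the first (hence largest) edge of the $s$th run. In particular $M_s$ consists of the remainder of that run, the vertex where $B_s$ attaches, the branch $B_s$ itself, and everything further along the path toward $j$. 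Thus its leaves are exactly those of $B_s, B_{s+1}, \ldots, B_l$, the leaf $j$, and whatever single leaves or side branches hang off in the interior of the runs $s, \ldots, l$.

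Next I would show every such leaf is at least $m_s$, with equality only inside $B_s$. The leaves of $B_{s+1}, \ldots, B_l$ exceed $m_{s+1} > m_s$ by Lemma \ref{lem:3cases}(2). For the leaf $j$ this is exactly where the hypothesis enters: when $s < l$ we have $m_s \le m_{l-1} < j$ using Lemma \ref{lem:3cases}(2),(3), while when $s = l$ the needed bound $m_l < j$ is precisely the assumption $j > m_l$ (which can genuinely fail otherwise, explaining why that subcase is excluded). The remaining, and most delicate, leaves are the interior ones. Here I would argue that the minimum of $M_s$ is found by the greedy descent of Remark \ref{rmk:whereC}: starting from the root edge of $M_s$ and walking toward $j$, the labels strictly decrease throughout the $s$th run, so no interior leaf sitting at a descent can be the minimum, and at each interior branching vertex Corollary \ref{cor:only212or231} together with Lemma \ref{lem:minleaf} shows the continuing path edge is the unique smallest edge, so the minimum remains on the path rather than escaping into a side branch. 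Formally this is a downward induction along the run: writing $R_t$ for the part of $M_s$ beyond the $t$th interior vertex, the branch case of Lemma \ref{lem:minleaf} and the descent condition in the leaf case give $\min(R_{t-1}) = \min(R_t)$, so the minimum is unchanged until the walk reaches the vertex carrying $B_s$, where it drops into $B_s$.

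Combining these observations, the descent terminates inside $B_s$ and returns $\min(M_s) = \min(B_s) = m_s$, which is the claim. I expect the main obstacle to be the interior case of the third paragraph, namely ruling out a side leaf or side branch in the middle of a run that is smaller than $m_s$: the branching-vertex subcase is handled cleanly by Lemma \ref{lem:minleaf}, but the single-leaf-at-a-descent subcase needs care, and I would lean on Remark \ref{rmk:whereC} (equivalently Lemma \ref{lem:descentleaf}) to certify that a leaf sitting at a strict descent cannot be the minimum of the subtree hanging below it.
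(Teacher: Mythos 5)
Your identification of the branch $M_s$ and your handling of the leaves of $B_{s+1},\ldots,B_l$ and of $j$ are correct, and your walk down the $s$th run is sound at its interior vertices: at a branching vertex the continuing path edge is the unique smallest of the three (Corollary \ref{cor:only212or231} plus the fact that the run is decreasing), so Lemma \ref{lem:minleaf} keeps the minimum on the path, and at a strict descent the intervening leaf is not the minimum of the subtree below it. The genuine gap is in the terminal step, ``the walk reaches the vertex carrying $B_s$, where [the minimum] drops into $B_s$.'' That step is justified only when $B_s$ contains an internal edge: then the terminal vertex has three internal edges, maximality of the run forces the continuing path edge to be at least the incoming one, so the base edge of $B_s$ is the unique smallest and Lemma \ref{lem:minleaf} places the minimum of everything beyond inside $B_s$ (note this alone would subsume your direct bounds). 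But Definition \ref{def:sigmaIJ} explicitly allows $B_s$ to be a single leaf. In that case the terminal vertex has only two internal edges, so neither Corollary \ref{cor:only212or231} nor Lemma \ref{lem:minleaf} applies there, and the leaf $m_s$ sits at a weak ascent, so the descent-leaf argument does not apply either. You then need every leaf beyond the continuing path edge to exceed $m_s$; your direct bounds give this for the leaves of $B_{s+1},\ldots,B_l$ and for $j$, but not for the side leaves and side branches in the interiors of the runs $s+1,\ldots,l$ --- exactly the leaves your walk never visits. Those cannot be bounded below by $m_s$ run-by-run in isolation (a descent leaf there is only comparable to leaves farther right), so some recursion from the far end is unavoidable. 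This is what the paper does: it first shows every off-path leaf between $B_{s-1}$ and $B_s$ exceeds $\min(m_s,x)$, where $x$ is the smallest leaf right of $B_s$, and then runs a downward induction on $s$ whose invariant is that $m_s$ is the smallest leaf in or to the right of $B_s$. Your argument would be repaired either by adding that induction, or by continuing your walk through all later runs rather than stopping at $B_s$, carrying the single-leaf branches $m_s<m_{s+1}<\cdots$ (Lemma \ref{lem:3cases}(2)) as candidate minima until Lemma \ref{lem:minleaf} or the leaf $j$ terminates the walk.

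A smaller but real issue: Lemma \ref{lem:descentleaf} is the converse of the fact you need at a descent. What is needed is that a strict descent forces the intervening leaf to carry the same label as the edge on its left, whence that leaf is larger than the leaf, lying beyond the right edge, which supplies the right edge's smaller label; this follows from labels being assigned in decreasing order together with Lemma \ref{lem:pathtoa} (it is the inline argument in the paper's proof), not from Lemma \ref{lem:descentleaf}, which asserts the reverse implication, nor formally from Remark \ref{rmk:whereC}, which is stated only for locating $c$ from the root. You flagged this step as delicate, which is right, but the citations as given would not close it.
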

	
	\begin{proof}
		Let $s\in\{2,3,\ldots,l\}$ as above, and let $x$ be the smallest leaf right of $B_s$. We first show that any leaf off of the path from $B_{s-1}$ to $B_s$ is larger than $\min(m_s,x)$.
		Let $\mbf{e_1}>\cdots>\mbf{e_{r+1}}$ be the edges from $B_{s-1}$ to $B_{s}$, and $E_1,\ldots,E_r$ the branches between them. If $E_r$ is a leaf, then that leaf must be labeled $e_r$ since $\mbf{e_r}>\mbf{e_{r+1}}$. Otherwise, consider the edge $\mbf f$ of $E_r$ adjacent to $\mbf{e_r}$ and $\mbf{e_{r+1}}$. Since $\mbf{e_r}>\mbf{e_{r+1}}$, $\mbf f\geq \mbf{e_r}$ by Lemma \ref{lem:slideorderyxz}. 
		So, $\mbf f$ slides before $\mbf{e_r}$, and $\min(E_r)$ must be larger than the smallest leaf to the right of $E_r$. Similarly, we can show that $$\min(E_{r-1})>\min(m_s,x,\min(E_r))=\min(m_s,x)$$ by the same argument. So, continuing this process, we have that every leaf off of the path between $B_{s-1}$ and $B_s$ is larger than $\min(m_s,x)$.
		
		Next, we proceed by downwards induction on $s$. We have two base cases to consider: either $s=l$ when $j>m_l$, or $s=l-1$ when $j<m_l$.  In the first case, the only leaf right of $B_l$ is $j$, and $j>m_l$, so $m_l$ is the smallest leaf in or right of $B_s$. So, by the previous argument, $m_l$ is the smallest leaf on the maximal branch containing $m_l$ but not $m_{l-1}$.  For the second case, by the above argument everything between $B_{l-1}$ and $B_l$ is larger than $j=\min(m_l,j)$. By Lemma \ref{lem:3cases}(3), $m_{l-1}<j$, so $m_{l-1}$ is the smallest leaf in or right of $B_{l-1}$, so by the previous paragraph $m_{l-1}$ is the smallest leaf in the maximal branch containing $m_{l-1}$ but not $m_{l-2}$. 
		
		By induction, we assume this is true for $m_{s+1}$. Then, by an identical argument to $m_{l-1}$, using the fact that $m_1<m_2<\cdots<m_l$ by Lemma \ref{lem:3cases}(2) we can show the same is true for $m_s$.
	\end{proof}
	
	\begin{theorem}
		The map $\sigmaT_{i,j}$ is well-defined.
		\label{thm:sigmaIJwelldef}
	\end{theorem}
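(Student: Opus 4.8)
The plan is to show that $\sigmaT_{i,j}(T)$ is a genuine $\omega$-slide tree in $\Slide^\omega(\kUnd')$, i.e., that the tree produced in each of the three cases of Step 5 admits a valid $\omega$ $\kUnd'$-slide labeling. Since Corollary \ref{cor:3cases} guarantees that the three listed cases are exhaustive, it suffices to verify each case separately. The core of the verification is checking two things: first, that the relabeling of leaves $m_d,\ldots,m_{l-1}$ (shifting each down by one position in the chain) together with the new edge $\mbf{j}$ produces a tree whose leaf set is correctly that of $\kUnd'$; and second, that every edge on the modified path from $a$ to $j$ can still slide validly under the $\omega$ rule from Definition \ref{def:k-slide}.

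First I would dispense with the bookkeeping. Step 1 increments all labels $\geq i$, so inserting $i$ as a fresh leaf is consistent with $\kUnd'$ having a zero in position $i$. The content check then amounts to confirming that after the cyclic shift $m_d\mapsto i$, $m_{d+1}\mapsto m_d,\ldots$ and the creation of the edge $\mbf{j}$ (which adds one to $k_j$), the resulting leaf multiset is exactly $\{a,b,c,1,\ldots,n+1\}$ and the edge-label increment on $\mbf{j}$ matches the $+1$ in position $j$ of $\kUnd'$. This is routine and I would state it briefly.

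The substantive step is the slide-validity check, and here is where I would lean on Lemma \ref{lem:m_sSmallest} and Lemma \ref{lem:3cases}. The key point is that in each case, $i$ is inserted precisely at the branch $B_d$ where it fits into the increasing chain $m_1<\cdots<m_l$, and the downward shift of the $m_s$ means that each branch $B_s$ (for $s\geq d$) now has as its minimal leaf the value that previously sat one step earlier in the chain. By Lemma \ref{lem:m_sSmallest}, each $m_s$ was already the minimum of the maximal branch containing it but not $m_{s-1}$, so after the shift the minima along the path still form an increasing sequence, which is exactly the condition needed for each edge on the path to slide (the $\omega$ comparison $\ell \geq m_\ell \geq m_a$ reduces, along a decreasing run of edge labels, to comparing successive branch minima). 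For the newly created edge $\mbf{j}$ with leaves $j$ and the shifted value (either $m_l$, $m_{l-1}$, or $i$ depending on the case), I would check that $j$ is larger than its sibling leaf and that the $\omega$-condition $\ell\geq m_\ell\geq m_a$ holds when $\mbf{j}$ slides, using $j>m_{l-1}$ from Lemma \ref{lem:3cases}(3) and Lemma \ref{lem:descentleaf} to control the descent across the new leaf.

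The main obstacle I expect is the third case, $m_1<\cdots<m_{d-1}<i<m_d<\cdots<j<m_l$, where $j$ itself is \emph{not} the largest among the $m_s$: here the new edge $\mbf{j}$ is sibling to $m_{l-1}$ rather than to $m_l$, and one must verify that the branch $B_l$ (whose minimum $m_l$ now exceeds $j$) does not obstruct any slide — in particular that the edge on the path just right of $B_{l-1}$ still slides given that the smallest leaf to its right is now $j$ rather than $m_{l-1}$. I would handle this by re-running the argument of Lemma \ref{lem:m_sSmallest} with $\min(m_l,j)=j$ playing the role it does in that lemma's second base case, confirming that $j$ serves as the relevant comparison value and that $j>$ all the shifted minima to its left keeps every slide valid. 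Once all three cases check out, well-definedness follows, since the output is independent of any choices (the construction is deterministic given $T$, $i$, and $j$).
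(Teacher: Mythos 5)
There is a genuine gap: you only verify slide-validity for edges \emph{on} the path $P$ from $a$ to $j$ (plus the new edge $\mbf{j}$), but the relabeling of the leaves $m_d,\ldots,m_l$ also changes the comparisons made by edges \emph{off} that path. The paper's proof spends most of its effort exactly there: edges in a branch hanging off $P$ between two of the $B_s$'s, edges inside some $B_{s+1}$ that slide down to the path, and edges whose slides stay entirely inside a branch. The delicate situation is an off-path edge $\mbf e$ in $B_{s+1}$ that, in $T$, slides \emph{from} the leaf $m_{s+1}$ to some $m_r$ with $r\le s$: after the shift, the position of $m_{s+1}$ carries a strictly smaller label (either $i$ or $m_s$), so the ``from'' side of that comparison has decreased, and one must check case by case that it is still larger than whatever now sits at the position of $m_r$ (which may itself have been replaced). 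This is not implied by the increasing-minima argument along $P$, and your proposal never addresses it; the paper's Cases 2--4 (especially Subcase 2(b)) exist precisely to close this.

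A second, smaller issue: you dismiss as ``routine bookkeeping'' the claim that the output has the right content, but the real content statement is about \emph{edge} labels, not the leaf multiset --- one must show that after relocating $m_d,\ldots,m_l$, the slide-labeling algorithm still assigns the same label to every old edge, so that the labeling has content $\kUnd'$. This is the first substantive application of Lemma \ref{lem:m_sSmallest} in the paper's proof: each relocated leaf $m_s$ is minimal in a branch containing both its old and new positions, hence every edge of that branch carries a larger label and $m_s$ ends up labeling the same edges as before. Your proposal invokes Lemma \ref{lem:m_sSmallest} only for the increasing chain of minima along $P$, which is a different (and by itself insufficient) use of it. The overall plan --- reduce to the three cases via Corollary \ref{cor:3cases}, then check validity of slides --- is the right frame, but as written the argument would not survive the off-path edges.
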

	
	\begin{proof}
		The algorithm described in Definition \ref{def:sigmaIJ} clearly gives a unique output, so showing well-definedness amounts to showing that the result is an element of $\Slide^\omega(\kUnd')$. We do this by first showing that the leaves in $T'=\sigmaT_{i,j}(T)$ will label the same edges as in $T$ (except for the new edge that was created), and then show that every slide is valid.
		
		The only leaves in different positions between $T$ and $T'$ (other than $i$, which does not slide at all) are $m_d,\ldots,m_l$ (or $m_d,\ldots,m_{l-1}$ when $j<m_l$). However, by Lemma \ref{lem:m_sSmallest}, each such $m_s$ is the smallest leaf in a branch containing both $m_s$ and the position $m_s$ is moved to by $\sigmaT_{i,j}$. So, every edge in that branch is labeled by something larger than $m_s$, and moving $m_s$ within that branch does not change which edge it will end up labeling. Thus, assuming that every slide is valid, the edges of $T'$ will be labeled in the same order (and thus get the same labels) as in $T$. 
		
		Next, we show that every edge $\mbf e$ in $T'$ is a valid slide, given that $T$ is a valid slide tree. We do this by case of where $\mbf e $ is in $T$ in relation to $P$, where $P$ is the path from $a$ to $j$. 
		
		\textbf{Case 1:} Suppose $\mbf e$ is on the path $P$. In $T$, $\mbf e$ slides from $m_{r+1}$ to $m_{r}$ for some $r<l$, by Lemma \ref{lem:m_sSmallest} and the fact that the edges on $P$ decrease from $B_r$ to $B_{r+1}$. Let $v_1$ and $v_2$ be the leaves labeled by $m_r$ and $m_{r+1}$ in $T$, respectively. Although $\sigmaT_{i,j}$ might relabel one or both of $v_1$ and $v_2$, in all cases of Definition \ref{def:sigmaIJ}, $v_1$ will still have a smaller label than $v_2$, and, so long as $r\neq l{-}1$ or $j>m_l$, both leaves will still have the smallest labels in their respective branches mentioned in Lemma \ref{lem:m_sSmallest}. So in $T'$, $\mbf e$ will slide from $v_2$ to $v_1$, which is a valid slide.
		
		We also must consider the case where $\mbf e$ is to the right of $B_{l-1}$ and $j<m_l$. By Lemma \ref{lem:descentleaf}, $j$ is strictly smaller than the first edge to the left of $B_l$ on $P$. 
		So, $\mbf e$ slides from $j$ to $m_{l-1}$ in $T$. As $m_{l-2}<m_{l-1} <j<m_l$, $\mbf e$ will slide from $m_{l-1}$ to $m_{l-2}$ in $T'$, and this is a valid slide.
		
		\textbf{Case 2:} Suppose $\mbf e$ slides to $P$, and we are not in the case where $\mbf e$ is in $B_l$ and $j<m_l$. Let $s$ be the largest index such that $B_s$ is to the left of $\mbf e$. Then $\mbf e$ is either on $B_{s+1}$ or on a branch between $B_s$ and $B_{s+1}$.
		
		\textbf{Subcase 2(a):} Consider when $\mbf e$ is on a branch $B'$ between $B_s$ and $B_{s+1}$. Then, the larger minimal leaf used in the slide labeling algorithm is on $B'$, and so is some not $m_r$. So, the smaller leaf used in the comparison of the slide algorithm is either unchanged or made smaller, and the larger leaf is unchanged. So, since $\mbf e$ is a valid slide in $T$, $\mbf e$ is a valid slide in $T'$.
		
		\textbf{Subcase 2(b):} Consider when $\mbf e$ is on $B_{s+1}$. For any $\mbf f$ on $P$ between $B_s$ and $B_{s+1}$, $\mbf f>\mbf e$, since the edges of $P$ decrease from $B_s$ to $B_{s+1}$. So, $\mbf e$ slides in $T$ to some $m_r$ with $r\leq s$. If $\mbf e$ slides in $B_{s+1}$ from a leaf other than $m_{s+1}$, then the larger label in the slide algorithm comparison made by $\mbf e$ is unchanged by $\sigmaT_{i,j}$. The smaller label, $m_r$, is either unchanged or is replaced by something smaller, so $\mbf e$ is still a valid slide. 
		Otherwise, $\mbf e$ slides from $m_{s+1}$. If $\sigmaT_{i,j}$ does not change $m_{s+1}$, then $\mbf e$ is still valid by the previous argument. If $m_{s+1}$ is changed, it is replaced with either $i$ or $m_s$. If $m_{s+1}$ is replaced by $i$, then $\mbf e$ will slide from $i$ to $m_r$. Since $i>m_r$, $\mbf e$ is still a valid slide. If $m_{s+1}$ is replaced by $m_s$, then either $m_r$ in unchanged and is smaller than $m_s$, or $m_r$ is replaced by something smaller than itself. In either case, $\mbf e$ can slide from $m_s$ or $i$ to the label in the position of $m_r$ in $T$. Thus, $\mbf e$ is a valid slide. 
		
		\textbf{Case 3:} Suppose $\mbf e$ slides to $P$, is in $B_l$, and $j<m_l$. Then, since $B_l$ is unchanged by $\sigmaT_{i,j}$ in this case, the larger leaf used by $\mbf e$ in the slide algorithm is unchanged. If the smaller leaf label is changed (such as changing from $j$ to the new leaf $m_{l-1}$ added next to $j$), it is changed to something smaller. So, the comparison is still valid, and $\mbf e$ is a valid slide.
		
		\textbf{Case 4:} Suppose $\mbf e$ does not slide to $P$. Let $B$ be the branch off of $P$ containing $\mbf e$. Then in $T$, $\mbf e$ slides from some $v_1$ within $B$ to some $v_2$ also within $B$. If $\sigmaT_{i,j}$ changes $v_2$, it will replace $v_2$ with a smaller leaf label. Meanwhile, $\sigmaT_{i,j}$ does not change $v_1$. So, in $T'$ $\mbf e$ will slide from $v_1$ to either $v_2$ or something smaller, so $\mbf e$ is still a valid slide.
		
		Finally, we consider the new edge $\mbf j$. In Case 1 of Step 5, $j$ slides from $i$ to either $m_l$ or $m_{l-1}$. In Case 2, $j$ slides from $m_l$ to $m_{l-1}$, and in Case 3, $j$ slides from $m_{i-1}$ to $m_{i-2}$. In all three cases, the leaf slid from is larger than the leaf slide to, so the edge $\mbf j$ is a valid slide. Thus, every edge of $T'$ is a valid slide, and so $T'$ is an element of $\Slide^\omega(\kUnd')$, meaning that $\sigmaT_{i,j}$ is well-defined.
	\end{proof}

	\begin{lemma}
		For $\kUnd$ a composition of $n$ and $T\in\Slwk$, $\last(\sigmaT_{i,j}(T))=j$.
		\label{lem:lastreturnslastIJ}
	\end{lemma}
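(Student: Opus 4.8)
The plan is to run the algorithm defining $\last$ directly on $T':=\sigmaT_{i,j}(T)$ and show it terminates at the leaf $j$. First I would pin down the starting branch. Since $\maxzero(\kUnd)<i<j$ and $\kUnd'$ is obtained from $\kUnd$ by inserting a zero in position $i$ and then incrementing position $j$, every entry of $\kUnd'$ in a position $>i$ is positive (those positions all came from nonzero entries of $\kUnd$, as $\maxzero(\kUnd)<i$), while the entry in position $i$ is zero. Hence $\maxzero(\kUnd')=i$, so the computation of $\last(T')$ begins at the largest branch of $T'$ whose smallest leaf is $i$.

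Next I would record the relevant structure of $T'$. By Definition \ref{def:sigmaIJ} and Lemma \ref{lem:3cases}, after the relabeling the branches hanging off the path $P$ from $a$ to $j$ have minima that, read from the root outward, form the strictly increasing sequence obtained by inserting $i$ into $m_1<\cdots<m_{l-1}$ in its sorted position (with the remaining $m$'s shifted exactly as prescribed by the relevant case), and $j$ sits at the very tip of $P$ in a cherry whose sibling leaf $x$ is $i$, $m_l$, or $m_{l-1}$ according to the case; in all three cases $x<j$. Crucially, by Lemma \ref{lem:m_sSmallest}, the non-minimal leaves of each such branch, as well as every leaf on the decreasing runs between consecutive branches, exceed the minimum of the next branch out along $P$.

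Then I would iterate $\mintwo$ inductively. The initial branch $B^{(0)}$ is the tail of $P$ beginning just left of the branch whose minimum is $i$: its minimum is $i$ and it contains $j$, since extending one edge further toward the root would annex a branch with a smaller minimum. I claim each application of $\mintwo$ replaces the current tail by the next-shorter tail of $P$. Indeed, if $\mu$ is the current minimum (the min of the leftmost branch in the tail), the structural fact above forces the second-smallest leaf of the tail to be the minimum of the next branch outward, and the largest sub-branch containing that leaf but not $\mu$ is precisely the tail starting one branch further out, which still contains $j$. This continues until the current tail's smallest leaf equals the cherry-sibling $x$ of $j$; at that point $j$ is the second-smallest leaf, $x$ and $j$ are adjacent, so the largest sub-branch containing $j$ but not $x$ is the single leaf $\{j\}$, giving $\last(T')=j$. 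A short check — using $m_l<j$ in the second case, $j<m_l$ together with Lemma \ref{lem:descentleaf} in the third, and direct inspection of the cherry $\{i,j\}$ in the first — confirms that the tip is reached in exactly this configuration in each case.

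The main obstacle is the structural claim that the second-smallest leaf of each intermediate tail is always realized along $P$ (as the minimum of the next branch), rather than by an interior leaf of the current leftmost branch or by a leaf on one of the decreasing runs. Establishing this cleanly — so that every $\mintwo$ step genuinely advances one branch outward and never dips into a branch — is where Lemma \ref{lem:m_sSmallest} does the real work, and is the step I would write most carefully. The companion fact that ``the largest branch containing the second-smallest leaf but not the smallest'' equals the next tail is then a routine cut-edge argument, once the minima are known to increase monotonically out to the cherry.
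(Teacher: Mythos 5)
Your proposal is correct and follows essentially the same route as the paper's proof: establish $\maxzero(\kUnd')=i$, identify the initial branch as the tail of the path whose minimum is $i$ (via Lemma \ref{lem:m_sSmallest}), then show each application of $\mintwo$ advances one branch outward along the increasing sequence of branch minima until the cherry containing $j$ is reached, with the same case split at the tip. The structural point you flag as needing care (that the second-smallest leaf of each tail is the next branch minimum) is exactly the step the paper also rests on Lemma \ref{lem:m_sSmallest}.
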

	
	\begin{proof}
		If $T'=\sigmaT_{i,j}(T)$, then $\maxzero(\kUnd')=i$. Let $l$, $d$, $B_1,\ldots,B_l$, and $m_1,\ldots,m_l$ be as in Definition \ref{def:sigmaIJ}. We first show that the maximal branch of $T'$ containing $i$ as its smallest leaf (i.e. the branch $B$ in Step 1 of Definition \ref{def:last}) is the maximal branch not containing $B_{d-1}$. Clearly it can be no larger than this, as $i>m_{d-1}$. By Lemma \ref{lem:m_sSmallest}, $m_d$ is the smallest leaf in the corresponding branch in $T$. So, since $i<m_d$, $i$ is the smallest leaf on this branch in $T'$.
		
		Next, we iteratively apply $\mintwo$ to $B$. Since $m_d$ is smaller than every leaf of $B$ except $i$, $\mintwo(B)$ is the largest branch of $T'$ containing $m_d$ but not $i$. Then, since for $s\geq d$, the leaf $m_s$ in $T'$ is labeled $m_{s+1}$ in $T$, the same process will repeat, until we are left with the maximal branch with minimal leaf either $m_l$ (in Case 2 of Step 5 above) or $m_{l-1}$ (in Case 3 of Step 5 above). 
		If we are in Case 2, then $j$ and $m_l$ are the only two leaves left in $B$, so applying $\mintwo$ one last time leaves just the leaf $j$. In Case 3, applying $\mintwo$ one more time leaves us with just $j$, since $j<m_l$. Note that we do not include Case 1 here, since it is covered by Cases 2 and 3. In any case, we get that $\last(\sigmaT_{i,j}(T))=j$.
	\end{proof}
	
	Next, we show that $\sigmaT_{i,j}$ is injective by building an inverse map $\piT_{i,j}$ that undoes $\sigmaT_{i,j}$. From the definition of $\sigmaT_{i,j}$, in $\sigmaT_{i,j}(T)$ the leaf $j$ is adjacent to some other leaf $v$ and an edge labeled $\mbf j$. Secondly, $i$ is the largest leaf label of $\sigmaT_{i,j}(T)$ that does not slide. With that in mind:
	\begin{definition}
		Define $$\piT_{i,j}:\sigmaT_{i,j}(\Slwk)\to\Slwk$$ as follows. For $T\in\sigmaT_{i,j}(\Slwk)$, $\last(\sigmaT_{i,j}(T))=j$ by Lemma \ref{lem:lastreturnslastIJ}. Let $v$ be the leaf adjacent to leaf $j$. 
		
		If $v=i$, then replace the branch consisting of the edge $\mbf j$ and leaves $j$ and $i$ with the leaf $j$. Then, subtract 1 from all labels in $T$ greater than $j$. Define $\piT_{i,j}(T)$ to be the result.
		
		Otherwise, $v\neq i$, so define branches $B_1,\ldots,B_l$ and leaves $m_1,\ldots,m_l$ as in Definition \ref{def:sigmaIJ}. (Note: Since $\sigmaT_{i,j}$ does not change any edges, but does shuffle the leaf labels, the branches $B_1,\ldots,B_l$ are the same in $T$ and $\sigmaT_{i,j}^{-1}(T)$, but the leaves $m_1,\ldots,m_l$ may be different.) By definition of $\sigmaT_{i,j}$, there is some leaf $m_d$ such that $m_d=i$. If $m_l<j$, replace leaf $m_l$ with $v$ and $m_{l-1}$ with $m_l$. Otherwise, replace $m_{l-1}$ with $v$. Then, regardless, replace $m_{l-2}$ with $m_{l-1}$, and so on, until we replace $m_d=i$ with $m_{d+1}$. Next, replace the branch consisting of the edge $\mbf j$ and leaves $j$ and $v$ with a leaf $j$. Then, subtract 1 from all labels in $T$ greater than $i$. Define $\piT_{i,j}(T)$ to be the result.
	\end{definition}
	
	\begin{lemma}
		For $T\in\Slwk$, $\piT_{i,j}(\sigmaT_{i,j}(T))=T$.
		\label{lem:pisigmaIJ}
	\end{lemma}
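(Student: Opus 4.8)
The plan is to verify directly that the explicit reversal prescribed in the definition of $\piT_{i,j}$ undoes each step of $\sigmaT_{i,j}$. Fix $T\in\Slwk$ and set $T'=\sigmaT_{i,j}(T)$. By Lemma \ref{lem:lastreturnslastIJ} we have $\last(T')=j$, so $\piT_{i,j}$ begins by examining the correct leaf $j$ and its neighbor $v$. The first point I would check is that the value of $v$ faithfully records which case of Step 5 of Definition \ref{def:sigmaIJ} produced $T'$: in the first case the new edge $\mbf j$ was given leaves $j$ and $i$, so $v=i$; in the remaining cases $v$ is the leaf $m_l$ or $m_{l-1}$ attached to $j$, and since $m_{l-1}<j$ and $m_l\neq i$ there, we have $v\neq i$. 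Hence $\piT_{i,j}$ enters the correct branch of its own definition.

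The decisive structural observation, which I would isolate before the casework, is that $\sigmaT_{i,j}$ never alters the underlying graph away from the single new cherry at $j$, and (by the argument in the proof of Theorem \ref{thm:sigmaIJwelldef}) it assigns the internal edges the same labels as in $T$, up to the uniform Step 1 increment. Consequently the path from $a$ to $j$, its maximal decreasing runs, and the branch decomposition $B_1,\ldots,B_l$ are literally the same subgraphs in $T'$ as in $T$; only the leaf labels $m_1,\ldots,m_l$ may differ, exactly as flagged in the parenthetical note of the definition of $\piT_{i,j}$. This is what allows $\piT_{i,j}$ to reconstruct from $T'$ the same list $B_1,\ldots,B_l$ that $\sigmaT_{i,j}$ used on $T$.

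With this in hand I would treat the two branches of $\piT_{i,j}$ in turn. When $v=i$, deleting the cherry restores the single leaf $j$ at the base of the new edge, returning the tree to its post-Step-1 state, and reversing the Step 1 label increment then recovers $T$; this is a short bookkeeping check. When $v\neq i$, the work is to see that the chain of relabelings in $\sigmaT_{i,j}$ — which sent $m_d\mapsto i$, $m_{d+1}\mapsto m_d$, and so on, and attached $m_l$ (or $m_{l-1}$) to $j$ — is inverted by the reverse shift prescribed in $\piT_{i,j}$. Here I would use that $i=\maxzero(\kUnd')$ is the largest non-sliding leaf of $T'$, so the index $d$ with $m_d=i$ is located unambiguously, and that by Lemma \ref{lem:m_sSmallest} each shifted leaf is the minimum of the branch within which it moves, so sliding it back changes no edge labels. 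Composing the shift with its reverse is then the identity on leaf labels, and removing the cherry together with reversing the increment yields exactly $T$.

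I expect the main obstacle to be the $v\neq i$ case: specifically, confirming that the reverse relabeling returns every $m_s$ to its original branch and value even though the two base cases $j>m_l$ and $j<m_l$ (the second and third cases of Definition \ref{def:sigmaIJ}) shift the list by slightly different amounts, so the reversal in $\piT_{i,j}$ must be matched to the correct case. Keeping careful track of which leaf lands adjacent to $j$ in each case, and invoking Lemma \ref{lem:m_sSmallest} to guarantee that these moves are invisible to the edge labeling, is the crux of the verification.
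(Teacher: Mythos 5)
Your proposal is correct and follows the same route as the paper: the paper's proof of Lemma \ref{lem:pisigmaIJ} is a one-line assertion that $\piT_{i,j}$ visibly undoes the construction of $\sigmaT_{i,j}$, and your argument is exactly that direct verification, spelled out case by case using Lemma \ref{lem:lastreturnslastIJ}, the preservation of the branch decomposition $B_1,\ldots,B_l$, and Lemma \ref{lem:m_sSmallest}. The added detail (in particular matching the $v=i$ versus $v\neq i$ branches and the $j>m_l$ versus $j<m_l$ shifts) is a faithful expansion of what the paper leaves implicit.
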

	
	\begin{proof}
		It is clear from the construction of $\sigmaT_{i,j}$ and $\piT_{i,j}$ that $\piT_{i,j}$ undoes the changes made by $\sigmaT_{i,j}$ to $T$.
	\end{proof}
	
	As an immediate consequence, we get the following corollary.

	\begin{corollary}
		The map $\sigmaT_{i,j}$ is injective.
		\label{cor:ijInj}
	\end{corollary}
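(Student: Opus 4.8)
The plan is to deduce injectivity directly from the left-inverse property established in Lemma \ref{lem:pisigmaIJ}. The guiding principle is the elementary fact that any map possessing a left inverse is injective, so essentially all the content of the corollary is already contained in the preceding lemma; the corollary merely records this logical consequence.

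Concretely, I would argue as follows. Suppose $T_1,T_2\in\Slwk$ satisfy $\sigmaT_{i,j}(T_1)=\sigmaT_{i,j}(T_2)$. Applying the map $\piT_{i,j}$ to both sides and invoking Lemma \ref{lem:pisigmaIJ}, which asserts that $\piT_{i,j}(\sigmaT_{i,j}(T))=T$ for every $T\in\Slwk$, one obtains
$$T_1=\piT_{i,j}(\sigmaT_{i,j}(T_1))=\piT_{i,j}(\sigmaT_{i,j}(T_2))=T_2.$$
Hence distinct inputs cannot share an image, and $\sigmaT_{i,j}$ is injective.

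There is no genuine obstacle to overcome here: the substantive work was carried out earlier, first in constructing the candidate inverse $\piT_{i,j}$ and then in Lemma \ref{lem:pisigmaIJ} in verifying that it reverses every step of $\sigmaT_{i,j}$. The only point meriting a brief remark is that $\piT_{i,j}$ is defined only on the image $\sigmaT_{i,j}(\Slwk)$ rather than on all slide trees, but this is harmless for the argument above, since $\piT_{i,j}$ is applied solely to elements of that image. I therefore expect this to be a one-line consequence of Lemma \ref{lem:pisigmaIJ}, with the serious content residing in the well-definedness (Theorem \ref{thm:sigmaIJwelldef}) and inversion lemmas that precede it.
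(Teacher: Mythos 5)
Your proof is correct and matches the paper exactly: the paper also derives this corollary as an immediate consequence of Lemma \ref{lem:pisigmaIJ}, using the left-inverse $\piT_{i,j}$ to conclude injectivity. Your added remark that $\piT_{i,j}$ need only be defined on the image $\sigmaT_{i,j}(\Slwk)$ is a fair observation and does not change the argument.
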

	
	\subsection{The map $\sigmaT_{j}$}
	
	In this subsection, we define the other map from $\Slwk$ into $\Slide^\omega(\kUnd')$, along with its inverse. This one corresponds to the case in the asymmetric multinomial recursion where we decrement a 1 in position $j$, thus removing the 0 that then appears in that position. Hence, this map uses only one subscript.
	
	\begin{definition}
		Let $\kUnd$ be a reverse-Catalan composition of $n$, $j$ an integer such that $\maxzero(\kUnd)<j\leq n+1$, and $\kUnd'$ be the composition of $n+1$ obtained from $\kUnd$ by inserting a 1 between $k_{j-1}$ and $k_j$. We define the map $\sigmaT_{j}: \Slwk\to\Slide^\omega(\kUnd')$ as follows.
		\begin{enumerate}
			\item Given a tree $T\in\Slwk$, add 1 to all leaf (and edge) labels greater than or equal to $j$.
			\item Consider the leaf $v=\last(T)$. By Lemma \ref{lem:largeleaf}, it is at the end of an edge along with some other leaf $i<v$. There are three cases to  consider:
			\begin{itemize}[topsep=4pt,itemsep=0pt]
				\item $v<j$: Replace the leaf $v$ by an edge $\mbf{j}$ with leaves $j$ and $v$. The result is $\sigmaT_j(T)$.
				\item $i<j<v$: Replace the leaf $i$ by an edge $\mbf{j}$ with leaves $j$ and $i$. The result is $\sigmaT_j(T)$.
				\item $j<i$: Continue on to Step 3.
			\end{itemize}
			\item On the path from $a$ to $v$, consider the maximal length decreasing sequences of edge labels. 
			\item Let $B_1,B_2,\ldots,B_l$ be the branches away from this path between the maximal decreasing sequences, with $B_l$ the branch immediately next to leaf $v$. (See Example \ref{ex:sigmaJ}.)
			\item For $s\in[l]$, let $m_s:=\min(B_s)$.
			\item As already shown in Lemma \ref{lem:3cases}, $c=m_1<m_2<\cdots<m_l$. So, there is some $d$ such that $m_{d}<j<m_{d+1}$.
			\item Replace the leaf $m_d$ by an edge $\mbf{j}$ with leaves $j$ and $m_d$. The result is $\sigmaT_{j}(T)$.
		\end{enumerate}
		\label{def:sigmaJ}
	\end{definition}
	
	\begin{example}
		Consider the tree $T\in\Slide^\omega(1,0,1,1,1,2,1,1)$ below on the left. We will compute $\sigmaT_3(T)$.
		\begin{center}
			\includegraphics[scale=0.8]{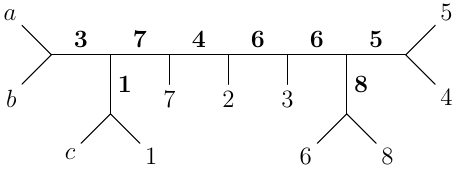}
			\hspace{1cm}
			\includegraphics[scale=0.8]{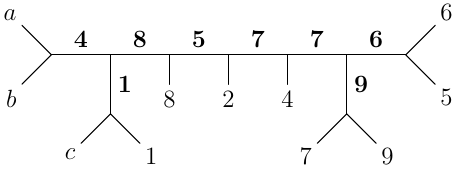}
		\end{center}
		The first step is to increment every label greater than or equal to 3, which gives us the tree above on the right.
		
		Then, $\last(T)=5$, so on the relabeled tree $v=6$ and $i=5$. Since $3<5<6$, we are in the third case, so we consider the maximal length decreasing subsequences from $a$ to $6$, which are $(4)$, $(8,5)$, $(7)$, and $(7,6)$. So, the branches $B_1,\ldots,B_4$ are as shown below.
		
		\begin{center}
			\includegraphics{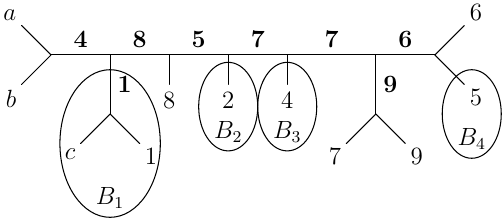}
		\end{center}
		
		So, $m_1=c$, $m_2=2$, $m_3=4$, and $m_4=5$. Since $2<3<4$, we replace the leaf 2. Thus, we get the tree $\sigmaT_3(T)$ below.
		
		\begin{center}
			\includegraphics{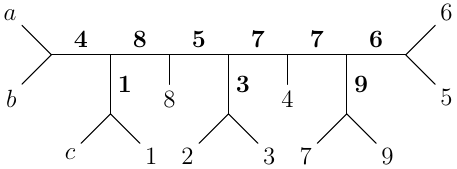}
		\end{center}
		\label{ex:sigmaJ}
	\end{example}
	
	\begin{theorem}
		The map $\sigmaT_j$ is well-defined.
	\end{theorem}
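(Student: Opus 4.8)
The plan is to follow the same template as the proof of Theorem \ref{thm:sigmaIJwelldef}. Since the algorithm in Definition \ref{def:sigmaJ} produces a single output, well-definedness reduces to verifying that $T'=\sigmaT_j(T)$ is a genuine element of $\Slide^\omega(\kUnd')$. As in the $\sigmaT_{i,j}$ case, I would split this into two tasks: first, showing that every leaf of $T'$ labels the same edge it did in $T$ (apart from the newly created edge $\mbf j$), and second, showing that every edge of $T'$ is a valid $\omega$-slide.

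For the first task, note that $\sigmaT_j$ is gentler than $\sigmaT_{i,j}$, since no leaves are permuted. The increment in Step 1 only shifts labels that are $\geq j$, and therefore preserves the relative order of all original leaves while inserting $j$ as a new value; because the $\omega$-slide comparisons depend only on relative order, this step cannot change which edges get labeled. In each of the three cases a single leaf (namely $v$, $i$, or $m_d$) is replaced by a two-leaf cherry whose minimal leaf is exactly that same leaf, since $v<j$, $i<j$, and $m_d<j$ respectively; hence the minimum of the affected branch is unchanged. In the third case I would invoke Lemma \ref{lem:m_sSmallest}, applied with $v$ in the role of the leaf $j$, to conclude that $m_d$ is the smallest leaf of the maximal branch containing it; pushing it one level deeper past the new edge $\mbf j$ (which is labeled $j\neq m_d$) therefore leaves untouched the set of edges that $m_d$ slides. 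Consequently all old edges receive their old labels.

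For the second task, I would argue edge by edge according to the position of an edge $\mbf e$ relative to the cherry, mirroring Cases 1--4 of the proof of Theorem \ref{thm:sigmaIJwelldef}: any edge whose slide comparison does not touch the modified branch is valid because its two compared minima are unchanged, and any edge whose comparison does touch it is valid because the relevant minimum was preserved or only decreased. The crux is the new edge $\mbf j$ itself, for which I would check the condition $\ell\geq m_\ell\geq m_a$ of Definition \ref{def:k-slide} directly. Here $m_\ell$ is the cherry partner ($v$, $i$, or $m_d$), which is smaller than $j$, giving $j\geq m_\ell$, while $m_a$ is the minimal leaf reachable on the $a$-side. In Cases 1 and 3 one sees $m_a\leq i<v=m_\ell$ and $m_a\leq m_{d-1}<m_d=m_\ell$ (using Lemma \ref{lem:3cases}(2)), so $m_\ell\geq m_a$ holds immediately.

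The main obstacle will be Case 2, where the partner $i$ satisfies $i<v$ yet $v$ remains at the same vertex as the new cherry, so that a naive computation of $m_a$ for $\mbf j$ picks up the larger leaf $v$ and appears to violate $m_\ell\geq m_a$. The resolution, which I would spell out carefully, is that $v>j$ forces the internal edge out of $v_v$ toward $a$ to be labeled (by $v$) and contracted before $\mbf j$ is processed; validity of that earlier slide already requires its own $a$-side minimum to be at most $i$, and after its contraction this smaller value, rather than $v$, becomes the quantity $m_a$ for $\mbf j$. Chasing this chained inequality through the contraction step is the only genuinely delicate point. Once it is settled, the new edge is valid in all three cases, every edge of $T'$ is a valid slide, and hence $T'\in\Slide^\omega(\kUnd')$, establishing that $\sigmaT_j$ is well-defined.
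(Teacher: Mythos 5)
Your overall plan coincides with the paper's proof of this theorem: in all three cases the only change to $T$ is the replacement of a single leaf $x$ by a cherry with leaves $j$ and $x$ where $j>x$, so no other label's slide is affected, and the whole burden is to check that the new edge $\mbf j$ is a valid $\omega$-slide case by case. Your Case 1 is correct, and your resolution of Case 2 (the edge at the old cherry must be labeled $\mbf v$, it is contracted before $j$ is processed, and the validity of that earlier slide forces its $a$-side minimum to be at most $i$) is exactly the two-line argument the paper gives for that case.

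The genuine gap is in Case 3, which you dispatch with ``one sees $m_a\leq m_{d-1}<m_d=m_\ell$.'' Lemma \ref{lem:3cases}(2) only gives $m_{d-1}<m_d$; the inequality $m_a\leq m_{d-1}$ is the entire content of this case, and it is not immediate. The far endpoint $v_a$ of $\mbf j$ sits where the leaf $m_d$ used to be, i.e.\ \emph{inside} the branch $B_d$, not on the path from $a$ to $v$. When the label $j$ is finally processed, $m_a$ is the minimum over branches at $v_a$ in the \emph{contracted} tree; if any internal edge of $B_d$ between $v_a$ and the path were still unlabeled at that point, every branch visible from $v_a$ (other than the excluded one toward $a$) would lie inside $B_d$, and since $m_d=\min(B_d)$ has been moved into the cherry, all of their minima exceed $m_d$, giving $m_a>m_\ell$ and an invalid slide. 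So one must prove that every internal edge of $B_d$ is contracted before $j$ slides, equivalently that every leaf of $B_d$ other than $m_d$ exceeds $j$ (edges of $B_d$ can only be labeled by leaves of $B_d$, and never by $m_d$ itself in an $\omega$-tree). This is precisely what the paper extracts from the hypothesis $v=\last(T)$: since $\maxzero(\kUnd)<j<m_{d+1}$, the initial branch in the computation of $\last(T)$ contains $B_d,\ldots,B_l$, so $m_{d+1}$ must be smaller than every leaf of $B_d$ except $m_d$ (otherwise $\last(T)$ would be a leaf of $B_d$), and as $j<m_{d+1}$ all of $B_d$'s edges, and then the decreasing run of edges back toward $B_{d-1}$, are contracted before $j$ slides; only at that point is $m_d$ compared against something at most $m_{d-1}$. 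In short, the delicate case is Case 3, not Case 2, and your proposal omits the $\last(T)$ argument that makes it work.
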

	
	\begin{proof}
		To show that $\sigmaT_j$ is well-defined, we need to show that the result $T'$ is an element of $\Slide^\omega(\kUnd')$. We consider the three separate cases of Step 2 in Definition \ref{def:sigmaJ}. For all three cases, the only change done to $T$ is replacing one leaf $i$ with an edge $\mbf j$ and leaves $j$ and $i$, where $j>i$. So, this does not affect the ability for any other label to slide, and we need only check that $\mbf j$ is a valid slide.
		
		In the first case, $j>l>i$, so $j$ can successfully slide from $l$ to $i$. In the second case, $l$ slides from $i$ to some $d<i$ in $T$. As $j<l$ slides after $l$, $j$ will compare $i$ to something that is at most $d$, so $j$ is a valid slide. 
		
		Finally, in the third case, $\maxzero(\kUnd)<j<m_{d+1}$, so when we found $\last(T)$, the branch $B$ in Step 1 included all of $B_d,\ldots,B_l$. So, $m_{d+1}$ must be smaller than every leaf in $B_d$ except $m_d$, or else $\last(T)$ would be a leaf in $B_d$. Thus, since $j<m_{d+1}$ (by Step 6 of Definition \ref{def:sigmaJ}), every edge in $B_d$ slides before $j$, so $j$ will compare $m_d$ to some leaf further down the tree, that can be at most $m_{d-1}$. Therefore, $j$ is a valid slide, and in all cases $\sigmaT_j(T)$ is a valid slide tree.
	\end{proof}
	
	\begin{lemma}
		For $\kUnd$ a composition of $n$ and $T\in\Slwk$, $\last(\sigmaT_{j}(T))=j.$
		\label{lem:lastreturnslastI}
	\end{lemma}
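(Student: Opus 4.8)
The plan is to mirror the proof of Lemma \ref{lem:lastreturnslastIJ}. First I would record the bookkeeping fact that $\maxzero(\kUnd')=\maxzero(\kUnd)=:z$: since $\sigmaT_j$ inserts the \emph{nonzero} entry $1$ at a position $j>\maxzero(\kUnd)$, no new zero is created and no existing zero is displaced, so the largest zero stays at position $z<j$. Consequently the $\last$ algorithm on $T'=\sigmaT_j(T)$ begins, exactly as on $T$, from the leaf $z$, which is untouched by Step 1 of Definition \ref{def:sigmaJ} (that step only increments labels $\ge j>z$, an order-preserving relabeling that inserts the value $j$ at its rank). The unifying observation driving all three cases is that $\sigmaT_j$ replaces a single leaf $w$ by a cherry consisting of the edge $\mbf j$ together with the leaves $j$ and $w$, where $w<j$; here $w=v$, $w=i$, or $w=m_d$ according to the three cases of Step 2. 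I would then run Definition \ref{def:last} on $T'$ and show that the nested sequence of branches produced by repeated $\mintwo$ isolates $j$.

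For the first two cases ($v<j$ and $i<j<v$) the modification is purely local, occurring at the terminal cherry $\{i,v\}$ where, by Lemma \ref{lem:largeleaf}, the leaf $v=\last(T)$ sits adjacent to a smaller leaf $i$. Because the relabeling is order-preserving and $z$ is fixed, the $\mintwo$-descent on $T'$ is identical to that on $T$ until it reaches this terminal region, after which a direct computation finishes the case. When $w=v$ (Case 1) the descent reaches the branch $\{i,j,v\}$ with smallest leaf $i$ and second smallest $v$, so $\mintwo$ first returns the subcherry $\{j,v\}$ and then, since $v<j$, the singleton $\{j\}$. When $w=i$ (Case 2) the descent reaches a branch whose smallest leaf is $i$ and whose second smallest is $j$ (as $i<j<v$), so $\mintwo$ returns $\{j\}$ immediately. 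In both cases $\last(T')=j$.

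The substantive case is the third ($j<i$), where $\sigmaT_j$ replaces the leaf $m_d$ by the cherry $\{j,m_d\}$ for the index $d$ with $m_d<j<m_{d+1}$ fixed in Step 6. Here I would argue that the $\mintwo$-descent for $\last(T')$ coincides with that for $\last(T)$ up to the moment it first reaches the branch $D_0$, defined as the maximal branch containing $m_d$ but not $m_{d-1}$; this is legitimate because every $\mintwo$ made before that moment compares the two smallest leaves of the current branch, both of which are $\le m_d$, and the inserted leaf $j>m_d$ plays no role in such comparisons. By Lemma \ref{lem:m_sSmallest} the smallest leaf of $D_0$ is $m_d$, and by the fact established in the well-definedness proof of $\sigmaT_j$---that $m_{d+1}$ is smaller than every leaf of $B_d$ other than $m_d$, which together with $m_1<\cdots<m_l$ (Lemma \ref{lem:3cases}) and $m_{l-1}<v$ shows that no leaf of $D_0$ lies in the open interval $(m_d,m_{d+1})$---the second smallest leaf of $D_0$ in $T$ is $m_{d+1}$. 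Since $m_d<j<m_{d+1}$, in $T'$ the leaf $j$ becomes the second smallest leaf of $D_0$, so $\mintwo(D_0)$ is the maximal branch containing $j$ but not $m_d$, which is the singleton $\{j\}$. Therefore $\last(T')=j$.

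I expect the main obstacle to be the bookkeeping in Case 3 showing that the descent lands \emph{exactly} on $D_0$ with $m_d$ as its smallest leaf---neither stopping short nor overshooting past it. I would handle this by using that the descent minima $z=p_0<p_1<\cdots$ are strictly increasing, each successive minimum being the second smallest leaf of the current branch: since $v=\last(T)$ lies in $D_0$ (it is down-path from $B_d$, while $m_{d-1}$ is up-path) and the descent must reach $v$, it must pass through $D_0$, and the increasing-minima structure forces the current branch at minimum $m_d$ to be $D_0$ itself. The remaining delicate point is ruling out any stray leaf of $D_0$ in $(m_d,m_{d+1})$---in particular checking that $m_{d-1}\notin D_0$---which follows because $m_d$ is the second smallest leaf of the branch one step earlier in the descent, so nothing lies strictly between the previous minimum and $m_d$ there.
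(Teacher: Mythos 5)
Your proposal is correct and follows essentially the same route as the paper's proof: the first two cases are handled by locating the penultimate descent branch (smallest leaf $i$, second smallest $v$) and computing $\mintwo$ directly, and the third case by showing the descent on $T$ reaches a branch with smallest leaf $m_d$ and second smallest $m_{d+1}$, so that in $T'$ the inserted leaf $j$ (with $m_d<j<m_{d+1}$) becomes the new second smallest there and $\mintwo$ isolates $\{j\}$. Your write-up supplies more detail than the paper's (which simply cites the well-definedness proof of $\sigmaT_j$ for the facts that the initial branch contains $B_d,\ldots,B_l$ and that every leaf of $B_d$ other than $m_d$ exceeds $m_{d+1}$), but the underlying argument is the same.
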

	
	\begin{proof}
		Let $i$ and $v$ be as in Definition \ref{def:sigmaJ}. We first consider the first two cases in Step 2. We are given that $\last(T)=v$. So, there is some branch $B$ with smallest leaf $i$ and second smallest leaf $v$ that is obtained at some step in computing $\last(T)$. Adding a leaf $j>i$ to this branch (as we do in these two cases) does not change any steps up to this point in computing $\last(\sigmaT_j(T))$. So, in the first case, applying $\mintwo$ to this branch $B$ gives the branch with only the leaves $j$ and $v$, since $j>v$. Then, applying $\mintwo$ a second time leaves us with just $j$. Alternatively, in the second case, $i<j<v$, so applying $\mintwo$ to $B$ separates $j$ from $i$, and gives just the leaf $j$.
		
		Next, we consider the third case. As in the proof of the previous lemma, the initial branch $B$ in Step 1 of computing $\last(T)$ includes all of $B_d,\ldots,B_l$. So, at some point during that process we have a $B$ with smallest leaf $m_d$ and second smallest leaf $m_{d+1}$. Adding an edge and leaf $j$ to $m_d$ does not change any of the steps up to this point when computing $\last(\sigmaT_j(T))$, but $j<m_{d+1}$, so $j$ is the new second smallest leaf of this branch. So, applying $\mintwo$ to this $B$ gives just the leaf $j$. Thus, in each case $\last(\sigmaT_{j}(T))=j$.
	\end{proof}
	
	We will now show that $\sigmaT_j$ is injective by building an inverse map $\piT_j$ that undoes $\sigmaT_j$. Notice that in all three cases of Definition \ref{def:sigmaJ}, all we do to $T$ is replace some leaf $i$ by an edge $\mbf j$ with leaves $j$ and $i$, where $j$ occurs as exactly one edge label in $\sigmaT_j(T)$. With that in mind:
	
	\begin{definition}
		Define $$\piT_j:\sigmaT_j(\Slwk)\to\Slwk$$ as follows. For $T\in\sigmaT_j(\Slwk)$, $\last(\sigmaT_{j}(T))=j$ by Lemma \ref{lem:lastreturnslastI}. There is exactly one edge in $T$ with label $\mbf j$, and it has leaves $j$ and $l$. Replace the branch consisting of this edge and these two leaves by the leaf $l$. Then, subtract 1 from all labels in $T$ greater than $j$. Let the resulting tree be $\piT_j(T)$.
	\end{definition}
	
	\begin{lemma}
		For $T\in\Slwk$, $\piT_j(\sigmaT_j(T))=T$.
		\label{lem:pisigmaJ}
	\end{lemma}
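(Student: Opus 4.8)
The plan is to verify that $\piT_j$ reverses each operation performed by $\sigmaT_j$, treating the three cases of Step 2 of Definition \ref{def:sigmaJ} uniformly. First I would record that $\piT_j$ is applied to $\sigmaT_j(T)$ with the correct value of $j$: by Lemma \ref{lem:lastreturnslastI} we have $\last(\sigmaT_j(T))=j$, so the subscript $j$ used by $\piT_j$ matches the one used by $\sigmaT_j$. Next I would observe that in the composition $\kUnd'$ the new entry in position $j$ equals $1$, so the slide algorithm has the leaf $j$ label exactly $k'_j=1$ edge. Hence $\sigmaT_j(T)$ contains a \emph{unique} edge labeled $\mbf j$, and the well-definedness theorem guarantees that the edge created by $\sigmaT_j$ is precisely this $\mbf j$-edge. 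Therefore the edge located by $\piT_j$ is exactly the one inserted by $\sigmaT_j$.

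The second step is to identify the leaf $l$ that $\piT_j$ reads off as the partner of $j$ across this edge. In all three cases of Step 2, $\sigmaT_j$ builds the edge $\mbf j$ with leaves $j$ and the replaced leaf --- $v$ in the first case, $i$ in the second, and $m_d$ in the third --- and in each of these cases that replaced leaf is strictly smaller than $j$ (we have $v<j$, $i<j$, and $m_d<j$ respectively). Thus $l$ equals the replaced leaf, and contracting the branch consisting of the $\mbf j$-edge together with the leaves $j$ and $l$ literally undoes the insertion step, returning the leaf $l$ to the spot it occupied before $\sigmaT_j$ acted.

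Finally I would check the relabeling. Step 1 of $\sigmaT_j$ adds $1$ to every label $\geq j$, so after the insertion of the new leaf $j$ the images of the original labels $\geq j$ occupy the values $>j$, while every label $<j$ (including $l$) is untouched. The map $\piT_j$ subtracts $1$ from all labels $>j$, which decrements exactly those shifted labels back to their original values and leaves $l<j$ fixed, thereby inverting Step 1. Combining the three steps gives $\piT_j(\sigmaT_j(T))=T$.

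The part requiring the most care is the uniqueness claim for the $\mbf j$-edge together with the assertion, in the third case, that the leaf paired with $j$ is genuinely $m_d$ sitting in the structural position where $\sigmaT_j$ inserted it; once this is pinned down, the inversion of both the contraction and the relabeling is a direct bookkeeping check, parallel to the argument for $\piT_{i,j}$ in Lemma \ref{lem:pisigmaIJ}.
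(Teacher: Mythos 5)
Your proof is correct and takes essentially the same route as the paper, which simply asserts that it is clear from the constructions of $\sigmaT_j$ and $\piT_j$ that the latter undoes the former; you have merely written out the bookkeeping (uniqueness of the $\mbf j$-edge since $k'_j=1$, identification of the partner leaf $l<j$ in all three cases, and the inverse relabelings) that the paper leaves implicit.
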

	
	\begin{proof}
		It is clear from the construction of $\sigmaT_j$ and $\piT_j$ that $\piT_j$ undoes the changes made to $T$ by $\sigmaT_j$.
	\end{proof}
	
	\begin{corollary}
		The map $\sigmaT_j$ is injective.
		\label{cor:jInj}
	\end{corollary}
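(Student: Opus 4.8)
The plan is to deduce injectivity directly from the fact that $\piT_j$ is a left inverse of $\sigmaT_j$, which is precisely the content of Lemma \ref{lem:pisigmaJ}. A map that admits a left inverse is automatically injective, so essentially no new work is required; this mirrors exactly the way Corollary \ref{cor:ijInj} was obtained from Lemma \ref{lem:pisigmaIJ}.

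Concretely, I would argue as follows. Suppose $T_1, T_2 \in \Slwk$ satisfy $\sigmaT_j(T_1) = \sigmaT_j(T_2)$. Applying $\piT_j$ to both sides and using Lemma \ref{lem:pisigmaJ}, which asserts $\piT_j(\sigmaT_j(T)) = T$ for every $T \in \Slwk$, yields
$$
T_1 = \piT_j(\sigmaT_j(T_1)) = \piT_j(\sigmaT_j(T_2)) = T_2.
$$
Hence $\sigmaT_j$ is injective.

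I do not anticipate any genuine obstacle: all of the substantive content, namely that $\piT_j$ is well-defined on $\sigmaT_j(\Slwk)$ (which relies on Lemma \ref{lem:lastreturnslastI} to locate the added edge via $\last$) and that it correctly reverses each of the three cases of Definition \ref{def:sigmaJ}, has already been absorbed into the statement and proof of Lemma \ref{lem:pisigmaJ}. The only thing the corollary adds is the standard observation that possessing a left inverse implies injectivity, so the proof is a single line once the lemma is invoked.
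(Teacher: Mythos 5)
Your proposal is correct and matches the paper's proof exactly: the paper also deduces injectivity of $\sigmaT_j$ immediately from Lemma \ref{lem:pisigmaJ}, i.e., from the fact that $\piT_j$ is a left inverse. You merely spell out the standard one-line argument that the paper leaves implicit.
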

	
	\begin{proof}
		This follows immediately from \ref{lem:pisigmaJ}.
	\end{proof}
	
	\subsection{Constructing the full bijection}
	
	So far, we have defined two maps $\sigmaT_{i,j}$ and $\sigmaT_{j}$, and shown that for any tree in their image, the function $\last()$ returns the value of the added edge. That is:
	
	\begin{theorem}
		Let $T\in \Slwk$. If $T\in\sigmaT_{i,j}(\Slide^\omega(\kUnd'))$ or $T\in\sigmaT_{j}(\Slide^\omega(\kUnd'))$ for some composition $\kUnd'$, then $\last(T)=j$.
		\label{thm:lastadded}
	\end{theorem}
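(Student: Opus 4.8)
The plan is to obtain this statement as an immediate corollary of the two lemmas already proved, Lemma \ref{lem:lastreturnslastIJ} and Lemma \ref{lem:lastreturnslastI}, which between them carry all of the substantive content. The theorem merely repackages those two results into a single assertion phrased in terms of membership in the images of the maps $\sigmaT_{i,j}$ and $\sigmaT_j$, so the proof amounts to unwinding the definition of ``image'' and invoking the appropriate lemma.

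First I would split into the two cases according to which image contains $T$. If $T\in\sigmaT_{i,j}(\Slide^\omega(\kUnd'))$, then by definition of the image there is a tree $S\in\Slide^\omega(\kUnd')$ with $T=\sigmaT_{i,j}(S)$. Applying Lemma \ref{lem:lastreturnslastIJ} to $S$ (taking $\kUnd'$ in the role of the lemma's starting composition) gives $\last(T)=\last(\sigmaT_{i,j}(S))=j$. In the remaining case $T\in\sigmaT_j(\Slide^\omega(\kUnd'))$, write $T=\sigmaT_j(S)$ for some $S\in\Slide^\omega(\kUnd')$ and apply Lemma \ref{lem:lastreturnslastI} in the same way to conclude $\last(T)=j$. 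These two cases exhaust the hypotheses, completing the argument.

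I do not expect any genuine obstacle here, precisely because the difficult work was already done: the real content is tracking how the iterated $\mintwo$ operations in the computation of $\last$ interact with the leaf relabelings performed inside each $\sigma$ map, and that case analysis lives in the proofs of the two lemmas. The only point requiring care is the shift in viewpoint between the lemmas and the theorem. In the lemmas the domain tree lies in $\Slwk$ and the enlarged composition is $\kUnd'$, whereas in the theorem $T$ is regarded as the output and $\kUnd'$ names the smaller composition of its preimage $S$. Reconciling these conventions is purely bookkeeping and introduces no new difficulty.
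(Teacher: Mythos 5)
Your proposal is correct and matches the paper's proof exactly: the paper also derives Theorem \ref{thm:lastadded} as an immediate consequence of Lemmas \ref{lem:lastreturnslastIJ} and \ref{lem:lastreturnslastI}, split by which image contains $T$. Your explicit unwinding of the image condition and the composition-naming conventions is just a more verbose rendering of the same one-line argument.
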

	
	\begin{proof}
		This follows immediately from Lemmas \ref{lem:lastreturnslastIJ} and \ref{lem:lastreturnslastI}.
	\end{proof}
	
	Our ultimate goal is to combine these maps into a bijection from a disjoint union of slide sets for compositions of $n-1$ to $\Slwk$. We will next define this desired disjoint union, recalling Definition \ref{def:asymb}. 
	
	\begin{definition}
		Let $i=\maxzero(\kUnd)$. Then, define $$D^\omega(\kUnd):=\bigsqcup_{j=i+1}^n\Slide^\omega(\kUnd^j).$$
	\end{definition}
	
	Then, we can piece together our maps $\sigmaT_{i,j}$ and $\sigmaT_j$ into one large map from $D^\omega(\kUnd)$ to $\Slwk$:
	
	\begin{definition}
		Let $i=\maxzero(\kUnd)$. Define $\Sigma_{\kUnd}:D^\omega(\kUnd)\to\Slwk$ by $$\Sigma_{\kUnd}(T):=\begin{cases}\sigmaT_{i,j}(T) & \text{if }T\in\Slide^\omega(\kUnd^j)\text{ for }k_j>1\\ \sigmaT_j(T) & \text{if }T\in\Slide^\omega(\kUnd^j)\text{ for }k_j=1 \end{cases}.$$
		\label{def:BigSigma}
	\end{definition}
	
	We now prove Theorem \ref{thm:bijectionExists}, which we restate below.
	
	\begin{bijectionExists}
		The map $\Sigma_{\kUnd}$ is a bijection.
	\end{bijectionExists}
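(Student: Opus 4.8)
The plan is to realize $\Sigma_{\kUnd}$ as a genuine bijection by producing an explicit inverse $\Pi_{\kUnd}:\Slwk\to D^\omega(\kUnd)$ assembled from the maps $\piT_{i,j}$ and $\piT_j$, with the statistic $\last$ serving as the bookkeeping that records which block of the disjoint union a tree belongs to. First I would note that $\Sigma_{\kUnd}$ does land in $\Slwk$: identifying $\kUnd^j$ with the smaller composition and $\kUnd$ with the larger composition $\kUnd'$ in Definitions~\ref{def:sigmaIJ} and~\ref{def:sigmaJ}, Theorem~\ref{thm:sigmaIJwelldef} and the well-definedness of $\sigmaT_j$ guarantee that each restriction carries $\Slide^\omega(\kUnd^j)$ into $\Slwk$.

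The easy half is $\Pi_{\kUnd}\circ\Sigma_{\kUnd}=\mathrm{id}$. Because $\last$ is a function $\Slwk\to[n]$ whose values exceed $i=\maxzero(\kUnd)$ by Lemma~\ref{lem:larger_than_zero}, its fibers are indexed exactly by $j\in\{i+1,\dots,n\}$, matching the index set of $D^\omega(\kUnd)$. Given $T'\in\Slide^\omega(\kUnd^j)$, Theorem~\ref{thm:lastadded} tells us $\last(\Sigma_{\kUnd}(T'))=j$, so defining $\Pi_{\kUnd}$ to apply $\piT_{i,j}$ (when $k_j>1$) or $\piT_j$ (when $k_j=1$) according to the value $j=\last$ selects the same block, and Lemmas~\ref{lem:pisigmaIJ} and~\ref{lem:pisigmaJ} give $\piT_\bullet(\sigmaT_\bullet(T'))=T'$. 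Together with Corollaries~\ref{cor:ijInj} and~\ref{cor:jInj}, this already shows $\Sigma_{\kUnd}$ is injective, since trees coming from different blocks have different $\last$-values and hence disjoint images.

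The substantive half is $\Sigma_{\kUnd}\circ\Pi_{\kUnd}=\mathrm{id}$, which simultaneously requires showing that $\Pi_{\kUnd}$ is well-defined, i.e.\ that the inverse construction applied to an \emph{arbitrary} $T\in\Slwk$ produces a valid tree in the correct block $\Slide^\omega(\kUnd^j)$. Given $T$, set $j=\last(T)$; by Lemma~\ref{lem:largeleaf} the leaf $j$ is adjacent to a strictly smaller leaf, which is exactly the input shape required to run $\piT_{i,j}$ or $\piT_j$. I would then have to check two things: (a) that deleting the edge $\mbf j$ and decrementing labels yields a bona fide $\omega$-slide tree whose composition is $\kUnd^j$ rather than merely some caterpillar, and (b) that re-inserting the edge via $\sigmaT_\bullet$ recovers $T$ on the nose. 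Claim (b) is the mirror image of the well-definedness arguments: one re-runs the case analysis of Theorem~\ref{thm:sigmaIJwelldef}, using Lemma~\ref{lem:m_sSmallest} to ensure that each shuffled leaf stays minimal on its branch so that no edge label changes, and checking that the relabelling prescribed by $\piT_\bullet$ is undone exactly by $\sigmaT_\bullet$.

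The hard part will be claim (a): unlike the forward maps, whose validity was verified slide-by-slide, surjectivity demands that the inverse operation never escapes the class of valid $\omega$-slide trees. This is where the structural lemmas proved earlier do the real work — the chain $c=m_1<m_2<\cdots<m_l$ with $m_{l-1}<j$ from Lemma~\ref{lem:3cases}, the descent condition at a leaf flanked by two edges from Lemma~\ref{lem:descentleaf}, and the placement of minimal leaves from Lemma~\ref{lem:minleaf} are precisely what force the tree obtained after deleting $\mbf j$ to admit a valid $\kUnd^j$-slide labelling. Once (a) and (b) are established, $\Pi_{\kUnd}$ is a well-defined two-sided inverse of $\Sigma_{\kUnd}$, so $\Sigma_{\kUnd}$ is a bijection.
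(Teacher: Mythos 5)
Your first half --- injectivity --- is sound and is essentially the paper's own argument: Theorem \ref{thm:lastadded} forces the block index to be $j=\last(T)$ (with $i=\maxzero(\kUnd)$ in the two-subscript case), the value of $k_j$ decides between $\sigmaT_j$ and $\sigmaT_{i,j}$, and Corollaries \ref{cor:ijInj} and \ref{cor:jInj} give at most one preimage within that block. The genuine gap is in your second half. Under your route, the entire content of the theorem sits in your claim (a): that for an \emph{arbitrary} $T\in\Slwk$ the pruning construction produces a valid element of $\Slide^\omega(\kUnd^{(j)})$. The paper never proves this and never needs to: $\piT_{i,j}$ and $\piT_j$ are defined only on the images $\sigmaT_{i,j}(\Slwk)$ and $\sigmaT_j(\Slwk)$, where validity is automatic because the input is $\sigmaT_\bullet$ of a known slide tree. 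For arbitrary $T$ it is not even clear the recipe for $\piT_{i,j}$ can be executed --- it requires the leaf adjacent to $j$ to be $i$, or else $i$ to occur among the branch minima $m_1,\ldots,m_l$ --- and Lemmas \ref{lem:3cases}, \ref{lem:descentleaf}, and \ref{lem:minleaf}, which you cite as doing ``the real work,'' are all statements about trees already known to be slide trees; none of them verifies, slide by slide, that the pruned-and-relabeled tree admits an $\omega$ $\kUnd^{(j)}$-slide labeling. So claim (a), which you yourself flag as the hard part, is asserted rather than proven, and the same is true of claim (b).

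The step the paper uses instead, and which you never invoke, is a counting argument: by the recursion \eqref{eq:recursion} defining the asymmetric multinomial coefficients together with the known enumeration $|\Slwk|=\multichoose{n}{\kUnd}$, the finite sets $D^\omega(\kUnd)$ and $\Slwk$ have equal cardinality, so injectivity alone already implies bijectivity. Adding that single observation to your injectivity argument closes the gap immediately. Alternatively, actually completing your claims (a) and (b) would produce something stronger than the paper's proof --- a self-contained combinatorial argument for the recursion satisfied by $|\Slwk|$ that does not lean on the previously known (geometrically derived) count --- but that would require a case analysis on the scale of Theorem \ref{thm:sigmaIJwelldef}, which your proposal sketches but does not carry out.
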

	
	\begin{proof}
		By the definition of the asymmetric multinomial coefficients (see \cite{CGM} or Definition \ref{def:asymb}), $|D^\omega(\kUnd)|=|\Slwk|$. So, $\Sigma_{\kUnd}$ is a bijection if and only if it is injective.
		
		Let $T\in\Slwk$. Define $j=\last(T)$.
		By Lemma \ref{lem:larger_than_zero}, $j>\maxzero(\kUnd)$, so $j$ is a leaf that slides in $T$. By Theorem \ref{thm:lastadded}, if $T$ is in the image of either $\sigmaT_j$ or $\sigmaT_{i,j}$ for some $i$, then $\mbf j$ is the edge added by the map in question. In the latter case, $i$ must be $\maxzero(\kUnd)$.
		In particular, $T$ cannot be in the image of $\sigmaT_{j'}$ or $\sigmaT_{i,j'}$ for any $j'\neq j$.
		
		Note that if $T$ is in the image of $\sigmaT_j$ then $j$ slides only once, and if it is in the image of some $\sigmaT_{i,j}$ then $j$ slides more than once. So we can determine which case $T$ lies in by counting the number of edges labeled $\mbf j$.
		By Corollaries \ref{cor:ijInj} and \ref{cor:jInj}, both $\sigmaT_i$ and $\sigmaT_{i,j}$ are injective, so $T$ has at most one preimage under that particular $\sigmaT_j$ or $\sigmaT_{i,j}$, and none under any of the others. Thus, $\Sigma_{\kUnd}$ also is an injection, and so is a bijection.
	\end{proof}

	Finally, we define the inverse map $\Pi_{\kUnd}$ to $\Sigma_{\kUnd}$ where $\kUnd$ is a composition of $n$.
	
	\begin{definition}
		Let $i=\maxzero(\kUnd)$. Define $\Pi_{\kUnd}:\Slwk\to D^\omega(\kUnd)$ by $$\Pi_{\kUnd}(T):=\begin{cases}\piT_{i,\last(T)}(T) & \text{if }k_{\last(T)} >1\\ \piT_{\last(T)}(T) & \text{if }k_{\last(T)}=1 \end{cases}.$$
	\end{definition}
	
	It follows from the definitions of $\sigmaT_j$, $\piT_j$, $\sigmaT_{i,j}$, and $\piT_{i,j}$ that $\Sigma_{\kUnd}$ and $\Pi_{\kUnd}$ are inverses.
	
	\subsection{An explicit example of the bijection}
	
	So far, we have shown that $\Slwk$ satisfies the the asymmetric multinomial recursion \eqref{eq:recursion}, meaning we can define a bijection between $\Tour(\kUnd)$ and $\Slwk$ by unwinding both recurrences in parallel. However, it would be more satisfying to have a way of going between $\Tour(\kUnd)$ and $\Slwk$ directly. We will illustrate what this bijection looks like through a running example we will use throughout this subsection. This bijection will run through column-restricted parking functions and words as intermediate steps. We now describe how to read off a word from a slide tree.
	
	\begin{definition}
		First, for a word $w$, define $\bar{\sigma}_{i,j}(w)$ as the word obtained from $w$ by adding one to every entry $i$ or greater, then appending $j$ to the end, and $\bar{\sigma}_{j}(w)$ as the word obtained from $w$ by adding one to every entry $j$ or greater, then appending $j$ to the end. Then, we can define a map $$\word:\Slwk\to\text{words of content }\kUnd$$ as follows.
		$$\word(T)=\begin{cases} \emptyset &\text{ if }T=T_0\\ \bar{\sigma}_{i,j}(\word(T')) &\text{ if }T=\sigmaT_{i,j}(T')\\ \bar\sigma_{j}(\word(T')) &\text{ if }T=\sigmaT_{j}(T')\\
		\end{cases},$$ where $T_0$ denotes the unique slide tree in $\Slwk$ for $\kUnd$ the empty composition.
		\label{def:word()}
	\end{definition}
	
	In other words, the word for a tree $T$ is the slide labels of the internal edges, read off in the order the edges were added to the tree under the $\Sigma_{\kUnd}$ recursion.
	
	Meanwhile, for parking functions, a natural way to read off a word $w$ from a parking function $P$ is to let the value of $w_i$ be the column of $P$ that the number $i$ occurs in. As it turns out, there is a nice relationship between the words that can be obtained from slide trees and the words obtained from column-restricted parking functions.
	
	\begin{proposition}
		Let $\rev(w)=w_nw_{n-1}\cdots w_2w_1$ denote the reverse of a word $w$. Then, $w$ comes from an $\omega$ slide tree if and only if $\rev(w)$ comes from a column-restricted parking function.
		\label{prop:CPFrevSlide}
	\end{proposition}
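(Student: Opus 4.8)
The plan is to prove the set equality $\{\word(T):T\in\Slwk\}=\{\rev(\mathrm{cw}(P)):P\in\CPF(\kUnd)\}$ by induction on $n$, by exhibiting a recursion on column-restricted parking functions that, after reversing words, matches the slide-tree recursion of Definition~\ref{def:word()} term by term. Here $\mathrm{cw}(P)$ denotes the word whose $x$th letter is the column of $P$ containing the label $x$. Since $\kUnd$ is reverse-Catalan, the inequalities $k_1+\cdots+k_\ell\le\ell$ hold for all $\ell$, so the underlying Dyck path is the unique one of content $\kUnd$; hence $\mathrm{cw}$ is a bijection from $\PF(\kUnd)$ onto the words of content $\kUnd$, restricting to an injection on $\CPF(\kUnd)$. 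The base case is the empty composition.

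The recursion I would use on the parking-function side deletes the smallest label. Fix $P\in\CPF(\kUnd)$ and let $j$ be the column containing the label $1$. Because $1$ is the global minimum, $d_P(1)$ counts exactly the empty columns lying to the right of column $j$, so the restriction $d_P(1)<1$ is equivalent to $j>\maxzero(\kUnd)$; this is the parking-function analogue of Lemma~\ref{lem:larger_than_zero}, and it matches the fact that the last letter of a slide word exceeds $\maxzero(\kUnd)$. I would then delete the label $1$, decrement the remaining labels, and delete a single empty column, namely the column at $\maxzero(\kUnd)$ when $k_j>1$, or column $j$ itself when $k_j=1$. This yields a parking function $P'$ of content $\kUnd^{(j)}$, and I claim the assignment $P\mapsto(j,P')$ is a bijection $\CPF(\kUnd)\to\bigsqcup_{j>\maxzero(\kUnd)}\CPF(\kUnd^{(j)})$ realizing the recursion~\eqref{eq:recursion}.

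Granting this, the bookkeeping on words is immediate. Rebuilding $P$ from $P'$ raises every label by one and inserts one column, so the column indices of the old labels are incremented on all entries $\ge\maxzero(\kUnd)$ (when $k_j>1$) or $\ge j$ (when $k_j=1$), while the new label $1$, being smallest, contributes the column $j$ as the first letter of $\mathrm{cw}(P)$. Thus $\mathrm{cw}(P)$ is obtained from $\mathrm{cw}(P')$ by incrementing the appropriate entries and prepending $j$; reversing converts the prepend into an append and gives $\rev(\mathrm{cw}(P))=\bar\sigma_{i,j}(\rev(\mathrm{cw}(P')))$ with $i=\maxzero(\kUnd)$ when $k_j>1$, and $\rev(\mathrm{cw}(P))=\bar\sigma_{j}(\rev(\mathrm{cw}(P')))$ when $k_j=1$. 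These are exactly the operations defining $\word$ in Definition~\ref{def:word()} for the maps $\sigmaT_{i,j}$ and $\sigmaT_j$, whose domains are the sets $\Slide^\omega(\kUnd^{(j)})$ and whose bijective assembly is $\Sigma_{\kUnd}$ (Theorem~\ref{thm:bijectionExists}). Applying the inductive hypothesis to each $\kUnd^{(j)}$ then yields the desired equality at level $n$, which is precisely the asserted equivalence.

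The one substantive step, and the main obstacle, is the claim in the second paragraph: that deleting the smallest label carries $\CPF(\kUnd)$ into $\CPF(\kUnd^{(j)})$ and, run backwards, sends $\CPF(\kUnd^{(j)})$ back into $\CPF(\kUnd)$ whenever $j>\maxzero(\kUnd)$. I would prove this by comparing dominance indices directly: for a surviving label $x\ge 2$ one has $d_{P'}(x{-}1)=d_P(x)-\delta$, where $\delta=1$ precisely when the column of $x$ lies to the left of the deleted column and $\delta=0$ otherwise, since deleting an empty column (resp.\ the singleton column $\{1\}$) removes only a column dominated by labels to its left. When $\delta=1$ the inequality $d_{P'}(x{-}1)<x{-}1$ follows at once from $d_P(x)<x$. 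When $\delta=0$ the only way it could fail is $d_P(x)=x-1$, which would force the $x-1$ columns right of $x$ dominated by $x$ to be nonempty and to partition the labels $1,\ldots,x-1$ one apiece; but then label $1$ would sit alone in a column to the right of $x$, contradicting $k_j>1$ in the first case and contradicting that $1$ occupies the deleted column $j<\mathrm{col}(x)$ in the second. The converse direction is the same computation read in reverse, using the equivalence $d_P(1)=0\iff j>\maxzero(\kUnd)$ to control the newly inserted smallest label. Once this dominance-index matching is in place, the parking-function recursion lines up with $\word$ under reversal and the induction closes.
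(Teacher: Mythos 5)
Your strategy is, at bottom, the same as the paper's: run the slide-tree recursion $\Sigma_{\kUnd}$ of Definition~\ref{def:BigSigma} in parallel with a recursion on $\CPF(\kUnd)$ realizing \eqref{eq:recursion}, and observe that the slide side appends the new letter $j$ to the end of the word while the parking-function side prepends it (label $1$ sits in column $j$, which is the first letter of $\mathrm{cw}(P)$), whence the reversal. The difference is one of self-containment: the paper simply cites the map $\varphi$ from Section~5 of \cite{CGM} for the parking-function recursion, whereas you rebuild that recursion explicitly (delete label $1$, decrement, delete the empty column at $\maxzero(\kUnd)$ when $k_j>1$ or column $j$ when $k_j=1$) and verify the dominance bookkeeping yourself. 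That bookkeeping is correct: the identity $d_{P'}(x{-}1)=d_P(x)-\delta$ holds, and your pigeonhole argument ruling out $d_P(x)=x-1$ when $\delta=0$ works precisely because every column to the right of $\maxzero(\kUnd)$ is nonempty, so the $x-1$ dominated columns would have to contain the labels $1,\ldots,x-1$ one apiece, placing label $1$ alone in a column where it cannot be.

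There is, however, one genuine gap: membership in $\CPF(\kUnd^{(j)})$ requires two things, and you verify only one. Besides the dominance inequalities, $P'$ must be a parking function at all, i.e.\ the lattice path of content $\kUnd^{(j)}$ must stay weakly above the diagonal (equivalently, $\kUnd^{(j)}$ must be reverse-Catalan); you assert this ("this yields a parking function $P'$ of content $\kUnd^{(j)}$") without proof. It is not automatic. In the case $k_j>1$, deleting the empty column at $i=\maxzero(\kUnd)$ gives new partial sums $S'_\ell=S_{\ell+1}$ for $i\le\ell\le j-2$, so you need $S_{\ell+1}\le\ell$, i.e.\ the original path must not touch the diagonal at any $x=m$ with $i<m<j$. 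This does follow from the CPF structure, by an argument in the same spirit as your pigeonhole: a touch at such an $m$ would mean columns $m+1,\ldots,n$ hold exactly $n-m$ labels, and since all of these columns lie right of $\maxzero(\kUnd)$ they are all nonempty, hence each holds exactly one label; in particular column $j>m$ would hold only label $1$, contradicting $k_j>1$. (When $k_j=1$ the deleted column is $j$ itself and the partial-sum check is immediate; the analogous check for your inverse map, inserting a column, is also easy.) With that paragraph supplied, your proof closes.
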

	
	\begin{proof}
		If a word $w$ comes from a tree $T\in\Slwk$, the last letter $j$ of $w$ must be greater than $\maxzero(\kUnd)$, since $T$ is in the image of some $\sigmaT_{i,j}$ or $\sigmaT_j$. Similarly, the dominance condition on column-restricted parking functions forces the 1 to be left of all empty columns, which in turn means the first entry in the corresponding CPF-word must be greater than $\maxzero(\kUnd)$. From here, it is clear from the recursions $\Sigma_{\kUnd}$ above in Definition \ref{def:BigSigma} and the map $\varphi$ in Section 5 of \cite{CGM} result in words that are reverses of each other.
	\end{proof}
	
	We now illustrate an example of the full sequence of maps from $\Tour(\kUnd)$ to $\Slwk$.
	
	\begin{example} 
		
		Let $T\in\Tour(0,0,1,1,2,0,3,1)$ be the tournament tree below. 
		
		\begin{center}
			\includegraphics[scale=0.8]{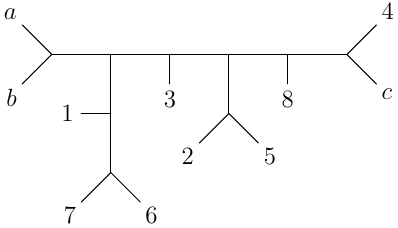}
		\end{center}
		
		Using the map $\tau:\Tour(\kUnd)\to\CPF(\kUnd)$ from \cite{GGL23}, this corresponds to the following column-restricted parking function.
		\begin{center}
			\begin{tikzpicture}[scale=0.6]
			\draw (0,0)--(8,-8);
			\draw[thick] (0,0)--(3,0)--(3,-1)--(4,-1)--(4,-2)--(5,-2)--(5, -4)--(7,-4)--(7,-7)--(8,-7)--(8,-8);
			\node() at (2.75,-0.5) {$\mbf7$};
			\node() at (3.75,-1.5) {$\mbf4$};
			\node() at (4.75,-2.5) {$\mbf6$};
			\node() at (4.75,-3.5) {$\mbf2$};
			\node() at (6.75,-4.5) {$\mbf8$};
			\node() at (6.75,-5.5) {$\mbf3$};
			\node() at (6.75,-6.5) {$\mbf1$};
			\node() at (7.75,-7.5) {$\mbf5$};
			\end{tikzpicture}
		\end{center}
		
		Next, reading off from this parking function, we get the word $75748537$. Taking the reverse of this results in the word $73584757$, which is what we will use to construct our slide tree.  To do this, we have
		\begin{align*}
		\word^{-1}(73584757) &= \sigmaT_{6,7}(\word^{-1}(6357465))= \sigmaT_{6,7}(\sigmaT_{2,5}(\word^{-1}(524635)))\\ &= \sigmaT_{6,7}(\sigmaT_{2,5}(\sigmaT_{1,5}(\word^{-1}(41352)))) = \sigmaT_{6,7}(\sigmaT_{2,5}(\sigmaT_{1,5}(\sigmaT_2(\word^{-1}(3124 )))))\\ &= \sigmaT_{6,7}(\sigmaT_{2,5}(\sigmaT_{1,5}(\sigmaT_2( \sigmaT_4(\word^{-1}(312))))))\\ &= \sigmaT_{6,7}(\sigmaT_{2,5}(\sigmaT_{1,5}(\sigmaT_2( \sigmaT_4(\sigmaT_2(\word^{-1}(21)))))))\\&= \sigmaT_{6,7}(\sigmaT_{2,5}(\sigmaT_{1,5}(\sigmaT_2( \sigmaT_4(\sigmaT_2(\sigmaT_1(\word^{-1}(1))))))))\\&= \sigmaT_{6,7}(\sigmaT_{2,5}(\sigmaT_{1,5}(\sigmaT_2( \sigmaT_4(\sigmaT_2(\sigmaT_1(\sigmaT_1(T_0)))))))).
		.\end{align*}
		
		So, we start with adding the edge $\mbf1$ to the empty tree. Then, the next two steps, $\sigmaT_1$ and $\sigmaT_2$, do not require us to move where we add an edge, so we add the edges $\mbf1$ and $\mbf2$ to the right of the previous steps, respectively. These first three steps are shown below.
		
		\begin{center}
			\includegraphics[scale=0.8]{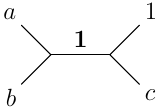}
			\hspace{1cm}
			\includegraphics[scale=0.8]{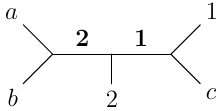}
			\hspace{1cm}
			\includegraphics[scale=0.8]{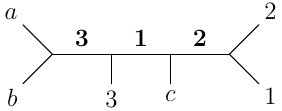}
		\end{center}
		
		Similarly, since $4>2$, $\sigmaT_4$ can again add 4 to the end with no complications.
		
		\begin{center}
			\includegraphics[scale=0.8]{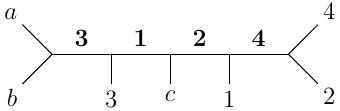}
		\end{center}
		
		However since $2<3<5$, $\sigmaT_{2}$ is in the third case of Step 2 of Definition \ref{def:sigmaJ}. So, we consider the decreasing sequences $(3,1)$, $(2)$, and $(4)$ from $a$ to $4$, and conclude that we add the edge $\mbf2$ off of the leaf 1.
		
		\begin{center}
			\includegraphics[scale=0.8]{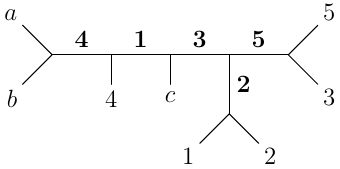}
		\end{center}
		
		Next, $\sigmaT_{1,5}$ adds an edge $\mbf 5$ and leaf 1. Since the leaf 4 only has one internal edge between it and $a$, there is only one branch $B_1$ to consider, which has minimal leaf $c$. Thus, we are in Case 1 of Step 5 of Definition \ref{def:sigmaIJ}. Thus, we add the new edge and leaf right where the old leaf 5 (relabeled from 4) was.
		
		\begin{center}
			\includegraphics[scale=0.8]{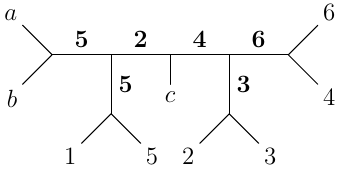}
		\end{center}
		
		For $\sigmaT_{2,5}$, we have decreasing sequences $(6,3)$, $(5)$, and $(7)$ (after relabeling). So, since $c<2<3<7$, we are in Case 3, so replace the leaf 3 with the leaf 2, and move the three to the end of the new edge $\mbf5$. Note that the relabeling of the leaves makes it look look like the leaf 2 does not move, but the new 3 is actually the relabeled 2.
		
		\begin{center}
			\includegraphics[scale=0.8]{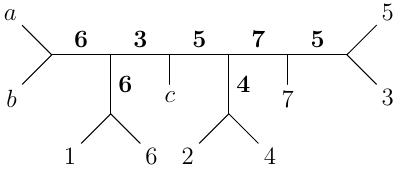}
		\end{center}
		
		Finally, we apply $\sigmaT_{6,7}$. Since $6$ is larger than $c$ and $1$, we are again in Case 1. So, we add the leaf 6 to the end of the new edge $\mbf7$.
		
		\begin{center}
			\includegraphics[scale=0.8]{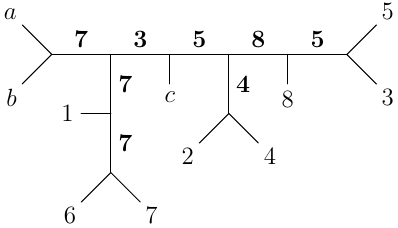}
		\end{center}
		
		Thus, the above tree $T'\in\Slide^\omega(0,0,1,1,2,0,3,1)$ is the slide tree in bijection with the tournament tree $T$ we started with.
	\end{example}

	\section{Caterpillar trees and pattern avoidance}
	\label{sec:cats}
	
	In this section, we discuss specific case of  caterpillar trees.  Recall the definition of caterpillar trees from Definition \ref{def:Caterpillar}.
	
	As restated in Proposition \ref{prop:catAvoid23-1}, it was found in \cite{GGL22} that in the case where $\kUnd=(1,1,\ldots,1)$, the words obtained from caterpillar trees are precisely the set of $23{-}1$ avoiding permutations. In other words, they found a bijection $\Cat^\psi(\kUnd)\leftrightarrow\Av_n(23{-}1)$. We wish to describe a similar correspondence in terms of pattern avoidance for any composition $\kUnd$. 
	
	If we consider what the map $\word()$ in Definition \ref{def:word()} does to caterpillars, we see that it will always read off the slide labels from left to right, in order. This coincides with the notion of reading edge labels off of a tree from the root to the right that we discussed in Section \ref{sec:preliminaries}.
	
	We first show that every word comes from at most one caterpillar slide-tree, and define a map from words to trivalent, leaf-labeled trees that outputs this unique tree for a particular tree, if such a tree exists. Then, we give such a correspondence for the case where $\kUnd$ is right-justified, as that is precisely when the caterpillar words can be described solely by a pattern avoidance condition. After that, we give the more general characterizations for all cateprillar slide trees, first for $\Cat^\psi(\kUnd)$, and then for $\Cat^\omega(\kUnd)$.
	
	\subsection{Preliminaries on caterpillar slide trees}
	
	Before we describe how to go between caterpillars and words, we first show that such a correspondence is well-defined. That is, we first show that each word $w$ can come from at most one caterpillar tree, and then describe which words do arise as edge words of caterpillar trees. We define an algorithm that takes a word $w$ and constructs the only caterpillar tree $\tree(w)$ that could have $w$ as its word of edge labels. That is, $\tree(w)$ is always a trivalent caterpillar tree with labeled leaves, and if $\tree(w)$ is a slide tree, then $w$ is its edge word.
	
	\begin{definition}
		Given a word $w$ of content $\kUnd$ of length $n$, we construct the caterpillar tree $\tree(w)$ as follows.
		\begin{enumerate}
			\item Construct a path graph of length $n$, and add leaf edges to it to make it trivalent.
			\item By convention, we treat the left end as the root and label the two leaves on the left end by $a$ and $b$.
			\item Label the internal edges from left to right using the letters of $w$ in order.
			\item For each pair of adjacent internal edges, let $\mbf x$ be the label of the left edge and $\mbf y$ the label of the right edge (that is, $\mbf x$ is closer to the root). We call the leaf between them a \textbf{descent} leaf if $\mbf x>\mbf y$, and a \textbf{nondescent} leaf if $\mbf x\leq \mbf y$. For the two leaves on the right end, we define one to be a descent leaf and the other a nondescent leaf.
			\item Label each descent leaf with the label of the edge immediately to its left.
			\item Create a list of the remaining unused labels among $c,1, \ldots,n$.
			\item From left to right, label each nondescent leaf by the smallest remaining unused label, unless that label labels the edge immediately to the right of that leaf. In that case, instead label the leaf with the second smallest unused label.
			\item Forget the edge labels.
		\end{enumerate}
		This forms a leaf-labeled trivalent caterpillar tree $\tree(w)$.
		\label{def:tr}
	\end{definition}
	
	\begin{example}
		Consider when $w=666224$. After Step 3, our tree looks like this:
		\begin{center}
			\includegraphics[scale=0.8]{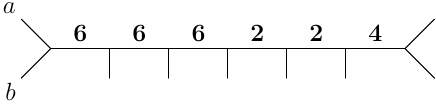}
		\end{center}
		Then, we label only the descent vertices. The only descent is after the last 6, and there is also a descent leaf on the end, so we label the 6 and the 4:
		\begin{center}
			\includegraphics[scale=0.8]{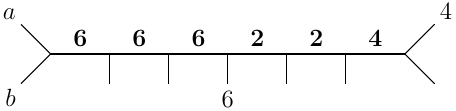}
		\end{center}
		Then, we work from left to right, labeling the nondescent leaves:
		\begin{center}
			\includegraphics[scale=0.8]{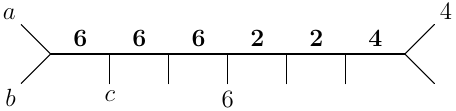}
			\hspace{0.25cm}
			\includegraphics[scale=0.8]{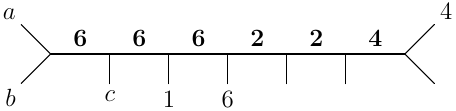}
		\end{center}
		Up until now, we have been able to use the smallest unused label at each step. Now however, the smallest unused label is $2$, but since $2$ also labels the edge to the right of the next leaf, we skip 2 and instead use 3, the second smallest remaining label:
		\begin{center}
			\includegraphics[scale=0.8]{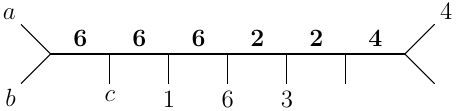}
		\end{center}
		Now we are free to use the 2 for the following leaf, and can finish off the leaves:
		\begin{center}
			\includegraphics[scale=0.8]{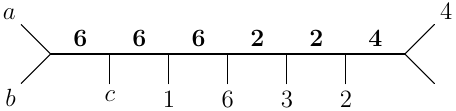}
			\hspace{0.25cm}
			\includegraphics[scale=0.8]{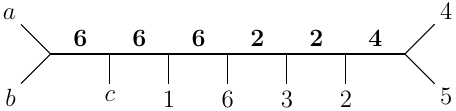}
		\end{center}
		Finally, we remove the edge labels to form $\tree(w)$:
		\begin{center}
			\includegraphics[scale=0.8]{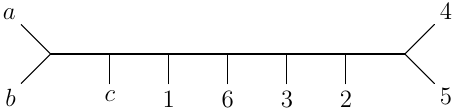}
		\end{center}
		Note that in this case, $\tree(666224)\in\Slide^\psi(0,2,0,1,0,3)\setminus\Slide^\omega(0,2,0,1,0,3)$.
		
	\end{example}
	
	We wish to characterize when $\tree(w)$ is a slide tree. One necessary condition for being a slide tree is that $\tree(w)$ must use each leaf label $a,b,c,1,2,\ldots,n$ exactly once. This happens precisely when $w$ avoids $2{-}1{-}2$, which we show in  the following lemma.
	
	\begin{lemma}
		When $w\in\Av_{\kUnd}(2{-}1{-}2)$, Definition \ref{def:tr} produces a tree that uses each leaf label $a,b,c,1,2,\ldots,n$ exactly once.
	\end{lemma}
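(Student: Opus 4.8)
The plan is to partition the leaves of $\tree(w)$ into three groups and track which labels each group can receive. The root vertex always carries $a$ and $b$, each used once. Since $\tree(w)$ is a trivalent caterpillar with $n$ internal edges, it has $n+1$ internal vertices and hence $n+3$ leaves; discarding $a$ and $b$ leaves exactly $n+1$ leaves — the $n-1$ one-leaf intermediate vertices together with the two leaves at the right-hand vertex — to be filled by the $n+1$ labels $c,1,\ldots,n$. Writing $D$ for the number of descent leaves, there are $n+1-D$ nondescent leaves, and I would reduce the statement to two claims: (i) the descent leaves (labeled in Step~5) receive $D$ \emph{distinct} labels, all lying in $\{1,\ldots,n\}$; and (ii) Step~7 assigns the remaining $n+1-D$ labels bijectively to the nondescent leaves.

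The hypothesis $w\in\Av_{\kUnd}(2{-}1{-}2)$ enters precisely in claim (i). Each internal descent leaf sits between edges $w_p>w_{p+1}$ and is labeled $w_p$, while the right-hand descent leaf is labeled $w_n$. Suppose two descent leaves carried the same label $v$, and order them so that $L_1$ is the left one; since there is a unique right-hand descent leaf, $L_1$ must be internal, sitting between edges with $w_p=v>w_{p+1}$. Write $q$ for the position of the edge just left of the other leaf $L_2$, so $w_q=v$ (with $q=n$ when $L_2$ is the right-hand leaf). Then $p<q$ and $w_{p+1}<v$; since $w_{p+1}\ne v=w_q$ we cannot have $p+1=q$, so $p+1<q$ and the triple $(p,p+1,q)$ realizes $w_p=w_q=v>w_{p+1}$, a $2{-}1{-}2$ pattern, contradicting the hypothesis. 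Hence the $D$ descent labels are distinct, and being edge labels they lie in $\{1,\ldots,n\}$, so $c$ is never a descent label and the list formed in Step~6 is exactly $\{c,1,\ldots,n\}$ with these $D$ labels removed, of size $n+1-D$.

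For claim (ii) I would note that Step~7 removes exactly one label from this list at each nondescent leaf and never returns one, so as we proceed left to right the numbers of still-unlabeled nondescent leaves and of unused labels stay equal. The only way the procedure can fail is to reach a nondescent leaf whose smallest unused label equals its right edge while no second-smallest label exists; but a singleton unused list forces this to be the \emph{final} nondescent leaf, and in left-to-right order that leaf is the nondescent leaf at the right-hand vertex, which has no edge to its right and therefore never triggers the skip clause. Every earlier nondescent leaf has at least two labels remaining, so a valid second-smallest is always available. Thus the process completes, giving a bijection between the $n+1-D$ unused labels and the $n+1-D$ nondescent leaves; together with claim (i) and the root labels $a,b$, each of $a,b,c,1,\ldots,n$ is used exactly once.

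The main obstacle is the bookkeeping around the skip clause of Step~7: one must simultaneously rule out stranding a deferred label and demanding a nonexistent second-smallest label. The clean resolution is the counting identity supplied by claim (i) — which is exactly where $2{-}1{-}2$ avoidance is needed — since it makes the leaf and label counts agree throughout and thereby pins the only dangerous configuration to the terminal right-hand leaf, where the skip clause is vacuous.
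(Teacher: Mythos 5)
Your proof is correct and follows essentially the same approach as the paper's: both use $2{-}1{-}2$ avoidance to show the descent leaves receive distinct labels, observe that Step~7 never repeats a label, and close the argument by matching the leaf and label counts together with the fact that the final nondescent leaf has no edge to its right, so the skip clause is vacuous there. Your version simply makes the paper's terse bookkeeping explicit (the exhibited $2{-}1{-}2$ occurrence in claim~(i), and the running equality of unused labels and unlabeled nondescent leaves in claim~(ii)), which is a careful elaboration rather than a different route.
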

	
	\begin{proof}
		If $w$ contains no $2{-}1{-}2$ pattern, then no label $\mbf i$ can be used to label two different descent leaves in Step 5, since all edges between two edges with label $\mbf i$ must be weakly larger than $\mbf i$, meaning only the rightmost $\mbf i$ can possibly be left of a descent leaf. So, Step 5 uses each label at most once. Then, Step 7 clearly never uses any label for a second time. There are $n+3$ leaves and $n+3$ labels total, and the last nondescent leaf on the end of the tree can always use the smallest (and only) remaining label, as there is no edge to its right. So, we use every leaf label $a,b,c,1, \ldots,n$ exactly once.
	\end{proof}
	
	In other words, $\tree(w)$ is well-defined as a map from $\Av_{\kUnd}(2{-}1{-}2)$ to the set of leaf-labeled trivalent caterpillar trees using the labels $a,b,c,1, 2,\ldots,n$.
	
	Given a particular caterpillar slide tree, one can also read off the word formed along its edges by the slide-labeling algorithm.
	
	\begin{theorem}[Uniqueness of slide trees]
		Each word comes from at most one caterpillar $\psi$-slide tree.
	\end{theorem}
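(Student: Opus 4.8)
The plan is to show that $\tree(w)$ is the \emph{only} caterpillar tree that can be a $\psi$-slide tree with edge word $w$; since $\tree$ is a deterministic algorithm producing a single tree, this immediately yields that at most one caterpillar $\psi$-slide tree has word $w$. Concretely, I would take an arbitrary $T\in\Cat^\psi(\kUnd)$ whose slide-edge word is $w$ and argue that $T=\tree(w)$. The underlying caterpillar graph is determined by $|w|=n$, and by hypothesis the internal-edge labels of $T$ are exactly $w$ read from the root, so everything reduces to showing that the \emph{leaf} labels of $T$ are forced to agree with Steps 4--7 of Definition \ref{def:tr}.

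First I would handle the descent leaves (Steps 4--5). Let $v$ be a strict-descent vertex of $T$ with left internal edge $\mbf x$ of value $x$ (toward $a$) and right internal edge $\mbf y$ of value $y$, so $x>y$. I would show that the leaf $i$ at $v$ satisfies $i=x$. Running the $\psi$ $\kUnd$-slide labeling of Definition \ref{def:k-slide}: since labels are assigned in decreasing order and $x>y$, the edge $\mbf x$ is labeled at step $\ell=x$ while $\mbf y$ is still unlabeled. If the leaf labeled $x$ lay to the right of $v$, its path to $a$ would meet the unlabeled edge $\mbf y$ before $\mbf x$, so it would label $\mbf y$ rather than $\mbf x$, a contradiction; if it lay strictly to the left of $v$, then $\mbf x$ would not be on its path to $a$ at all, contradicting Lemma \ref{lem:pathtoa}. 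Hence the leaf labeled $x$ is exactly the leaf $i$ at $v$, matching Step 5. (The descent leaf at the right end is handled identically using the last letter of $w$.) Thus the descent leaves of $T$ coincide with those of $\tree(w)$, and the two trees use the same complementary label set on their nondescent leaves.

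The substance is in the nondescent leaves (Steps 6--7), which I would pin down by induction from left to right, peeling off the smallest unused label in the spirit of Remark \ref{rmk:whereC}. Two forces do the work. The \emph{skip} rule is forced by Lemma \ref{lem:pathtoa}: a nondescent leaf at $v$ can never carry the value of its right-hand edge $\mbf y$, since any edge labeled with that value must lie on the path from its leaf to $a$, whereas $\mbf y$ lies on the far side of $v$ from $a$. The \emph{minimality} (smallest-available-first) is forced by a branch-minimum monotonicity argument of the same type used in the proof of Lemma \ref{lem:3cases}(2): if a label smaller than the greedy choice were placed further to the right, then along the path some internal edge would be made to slide from a leaf to a strictly larger leaf, which is invalid. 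Formally I would run an exchange argument, comparing $T$ to $\tree(w)$ at the first nondescent position where they disagree; the tree placing the smaller label further right would then contain an invalid slide, contradicting that both are valid $\psi$-slide trees.

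The main obstacle is this last order-forcing step, because the naive guess that ``nondescent leaves increase from left to right'' is false: the skip rule can reverse two adjacent nondescent labels, as in $\tree(666224)$, whose nondescent leaves read $c,1,3,2,5$. Consequently Lemma \ref{lem:3cases}(2) only controls the valley branches between maximal decreasing runs, and I cannot apply it uniformly to every nondescent leaf; the within-run (flat-spot) leaves and their interaction with skipped labels must be treated separately. Getting a single induction to account cleanly for flats, valleys, and skipped labels at once is the delicate part, and I would organize it by processing the labels in increasing order, showing that each is forced into the leftmost admissible nondescent slot once all smaller labels have been placed.
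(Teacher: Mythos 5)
Your skeleton is the same as the paper's: reduce the theorem to showing that any $T\in\Cat^\psi(\kUnd)$ whose edge word is $w$ must coincide with $\tree(w)$, forcing the descent leaves first and the nondescent leaves second. Your descent-leaf argument and your skip-rule observation (both resting on Lemma \ref{lem:pathtoa}) are correct and complete. But the step that actually carries the theorem --- that each nondescent leaf is forced to take the \emph{smallest} admissible unused label --- is not proved in your proposal. You sketch an exchange argument, then concede you cannot see how to run it uniformly across flat spots, valleys, and skipped labels, and substitute an undeveloped plan (``process the labels in increasing order into the leftmost admissible slot''). That unproved step is exactly where the content of uniqueness lies, so this is a genuine gap. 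A second, smaller defect: as phrased, your exchange argument derives a contradiction from ``both are valid $\psi$-slide trees,'' but validity of $\tree(w)$ is not a hypothesis of the theorem; only the validity of $T$ may be used.

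What closes the gap --- and is the substance of the paper's short proof --- is one uniform observation about the comparison in Definition \ref{def:k-slide}. Let $v$ be a nondescent leaf whose right edge is labeled $\mbf i$. Its left edge has label at most $i$, and even in the flat case the left edge is labeled \emph{after} the right one: within step $i$ the algorithm labels the first unlabeled edge on the path from leaf $i$, i.e.\ rightmost-first, and leaf $i$ lies to the right of every edge labeled $i$. So at the moment the right edge of $v$ is labeled, the left edge of $v$ is still uncontracted, and the branches at the left endpoint avoiding $a$ and $\ell$ consist of the single leaf $v$. Hence $m_a$ is literally the label of $v$, and $m_\ell$ is the minimum of all leaf labels to the right of that edge other than $i$: validity forces every nondescent leaf to be smaller than everything to its right except possibly the slide label. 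No monotonicity of nondescent labels is needed anywhere --- your (correct) computation that $\tree(666224)$ has nondescent labels $c,1,3,2,5$ is a critique of an approach the paper never takes. With this constraint in hand, your exchange argument becomes immediate and needs only $T$'s validity: at the first nondescent position where $T$ deviates from the greedy choice, the smaller available label (which differs from $i$, by hypothesis or by the skip rule) would sit strictly to the right of $v$ in $T$, so $T$ itself contains an invalid slide, a contradiction.
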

	
	\begin{proof}
		We show that given any word $w$, if there is a corresponding slide tree, it must be $\tree(w)$.
		
		First, we observe that if a tree $\tree(w)$ is a slide tree, the descent leaves are the positions where the edge to the left is contracted before the edge to the right in the $\kUnd$-slide labeling algorithm, while the nondescent leaves are the positions where the right edge is contracted before the left edge.
		
		Given a word $w$, any corresponding slide tree must have its descent leaves labeled as in Definition \ref{def:tr}. Otherwise, if $\mbf x$ is the edge label on the left of a descent leaf not labeled this way, then the leaf label $x$ will not label that edge $\mbf x$.
		
		Finally, for each nondescent leaf $v$, when the edge on its right is labeled by some label $\mbf i$, the $\kUnd$-slide algorithm compares the label of $v$ against the set of all leaf labels to the right of $v$ except for $i$. So, the nondescent labels must be labeled as in Step 7, so that it is possible that each nondescent leaf is smaller than everything to its right, expect possibly for the label doing the slide.
	\end{proof}
	
	\begin{corollary}
		Due to Lemma \ref{lem:omega<psi}, this also gives us uniqueness for $\omega$-slide trees.   
	\end{corollary}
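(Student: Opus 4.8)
The plan is to obtain uniqueness in the $\omega$ case for free from the $\psi$ case, using the containment $\Slwk\subseteq\Slpk$ of Lemma \ref{lem:omega<psi}. First I would note that any caterpillar $\omega$-slide tree is, by that lemma, also a caterpillar $\psi$-slide tree. I would then observe that the edge word read off a tree depends only on the labeling procedure of Definition \ref{def:k-slide}, and that this procedure is identical in the $\omega$ and $\psi$ cases apart from the validity check in its Step 2: the $\omega$ case imposes the extra requirement $\ell\geq m_\ell$ beyond the condition $m_\ell\geq m_a$ used in the $\psi$ case. Consequently, once a tree passes the $\omega$ version of the algorithm it automatically passes the $\psi$ version, and in doing so it receives exactly the same edge labels; hence its $\omega$-edge word and its $\psi$-edge word coincide.

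With this in hand the corollary is immediate. Suppose a word $w$ arose from two distinct caterpillar $\omega$-slide trees. By the previous paragraph these would be two distinct caterpillar $\psi$-slide trees both producing $w$ as their edge word, contradicting the preceding uniqueness theorem for $\psi$-slide trees. Hence $w$ comes from at most one caterpillar $\omega$-slide tree.

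I expect no genuine obstacle here: the entire content is that uniqueness descends from the larger set $\Slpk$ to its subset $\Slwk$. The only point that truly requires verification is that the edge labels of a tree do not depend on whether one runs the $\psi$ or the $\omega$ slide-labeling algorithm, once the tree is already known to be $\omega$-valid, and this is clear by direct inspection of Definition \ref{def:k-slide}.
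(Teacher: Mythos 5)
Your proposal is correct and is essentially the paper's own argument: the corollary is deduced directly from the containment $\Slwk\subseteq\Slpk$ of Lemma \ref{lem:omega<psi} together with the uniqueness theorem for caterpillar $\psi$-slide trees. Your extra verification that the $\omega$ and $\psi$ labeling procedures assign identical edge labels (since the $\omega$ validity check is strictly stronger) is a sound way of making explicit what the paper leaves implicit.
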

	
	\begin{lemma}
		Let $v$ be a nondescent leaf of a caterpillar slide tree $T\in\Slide^\psi(\kUnd)$, and $\mbf e$ the edge on its right. If $v>\mbf e$, there is no edge labeled $v$.
		\label{lem:EverSlide}
	\end{lemma}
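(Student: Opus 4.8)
The plan is to combine the locality statement of Lemma \ref{lem:pathtoa} with a direct reading of the slide-labeling algorithm at the moment it processes $\ell=v$. First I would pin down the local picture at the leaf $v$. Because $v$ has an internal edge $\mbf e$ on its right, the vertex $v_v$ to which $v$ attaches is an interior vertex of the caterpillar path, so it also carries a second internal edge $\mbf x$ on the side toward the root, i.e. on the path from $v$ to $a$. Since $v$ is a \emph{nondescent} leaf, by definition its left edge is weakly smaller than its right edge, that is $\mbf x\leq\mbf e$; combined with the hypothesis $v>\mbf e$ this yields $\mbf x\leq\mbf e<v$, and in particular $\mbf x<v$.

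Next I would use Lemma \ref{lem:pathtoa} to localize where a hypothetical edge labeled $v$ could appear: every such edge lies on the path from the leaf $v$ to $a$, which with the root drawn on the left is exactly the set of edges strictly to the left of $v$, the first of these being $\mbf x$. So it suffices to show that the slide algorithm never assigns the label $v$ to any edge on this leftward path, and the key is to show it cannot assign $v$ to $\mbf x$.

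The heart of the argument is to run the $\kUnd$-slide labeling algorithm of Definition \ref{def:k-slide} up to the step $\ell=v$. At that stage the only contracted internal edges are those carrying a label strictly greater than $v$; since both $\mbf x$ and $\mbf e$ carry labels strictly less than $v$, neither is contracted, so $v_v$ still retains all three of its edges and $\mbf x$ is the first unlabeled internal edge on the path from $v$ toward $a$. Hence, if $v$ were to slide even once, that first slide would be forced to label $\mbf x$ with $v$. But $\mbf x$ already carries the label $\mbf x<v$, and each internal edge receives exactly one slide label, a contradiction. Therefore $v$ performs no slides at all, i.e. $k_v=0$, and no edge is labeled $v$.

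I expect the only delicate point to be the bookkeeping in the last paragraph: one must verify that contracting the labeled (larger) edges does not merge $v_v$ with a neighbor in a way that changes which edge is ``first'' on the leftward path. This is resolved by noting that $\mbf x$, $\mbf e$, and the leaf edge at $v$ are all uncontracted at step $\ell=v$, so $v_v$ survives intact and $\mbf x$ genuinely remains the first unlabeled internal edge toward $a$. Everything else is an immediate consequence of Definition \ref{def:k-slide} and Lemma \ref{lem:pathtoa}.
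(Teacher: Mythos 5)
Your proposal is correct and is essentially the paper's own argument: both use the nondescent condition together with $v>\mbf e$ to conclude the left edge $\mbf x$ satisfies $\mbf x<v$, then suppose $v$ slides and derive a contradiction because $v$'s first slide would be forced onto the still-uncontracted edge $\mbf x$, which must instead carry the smaller label $\mbf x$. Your extra bookkeeping about contractions at step $\ell=v$ (and the appeal to Lemma \ref{lem:pathtoa}) just makes explicit what the paper's proof leaves implicit.
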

	
	\begin{proof}
		Let $\mbf d$ be the edge to the left of $v$, and assume that $v>\mbf e$. Then, since $v$ is a nondescent leaf, $\mbf d\leq\mbf e$, so $\mbf d<v$. Suppose that $v$ labels some edge during the slide algorithm. Since $v>\mbf d$, $v$ would label edges before $\mbf d$, and so the edge on the left of $v$ would be labeled by something at least as large as $v$, and thus would already be contracted when $\mbf d$ did its labeling, meaning $\mbf d$ could not label it. This contradicts our assumption that $\mbf d$ labels the edge on the left, so $v$ must label no edges.
	\end{proof}
	
	Recall Definitions \ref{def:barred} and \ref{def:vincular}. For a word $w$ to avoid the pattern $23{-}\overline{2}{-}1$, then whenever there are indices $i,j$ such that $i+1<j$ and $w_j<w_i<w_{i+1}$, there must be some $i+1<k<j$ such that $w_k=w_i$. Intuitively, this means that every $23{-}1$ pattern must extend to a $23{-}2{-}1$ pattern.
	
	\begin{example}
		The word $w=35432$ avoids the pattern $23{-}\overline{2}{-}1$. Even though the subword $352$ forms a $23{-}1$ pattern, it extends to the subword $3532$, so this is not an instance of a $23{-}\overline{2}{-}1$ pattern.
	\end{example}
	
	\begin{lemma}
		Let $T$ be a tree in $\Cat^\psi(\kUnd)$. Then, the word $w$ formed by reading off the edges starting from the root avoids the patterns $2{-}1{-}2$ and $23{-}\overline{2}{-}1$.
		\label{lem:avoids}
	\end{lemma}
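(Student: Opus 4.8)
The plan is to read the word $w$ as the list of slide labels along the spine of the caterpillar $T$ (writing $e_1,\ldots,e_n$ for the internal edges from the root outward, so that $e_p$ carries the letter $w_p$), and to extract each forbidden pattern from the mechanics of the $\kUnd$-slide labeling in Definition \ref{def:k-slide}.

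First, $2{-}1{-}2$. Fix a value $p$ and consider the moment the labeling algorithm processes $p$. By then every edge with a larger label has been placed and contracted, and each time $p$ labels an edge that edge is contracted before the next slide; hence the edges that receive the label $\mbf p$ are met consecutively as one walks from the leaf $p$ toward $a$ (the relevant edges lying on that path by Lemma \ref{lem:pathtoa}). Translated back to positions in $w$, this says that between any two occurrences of the letter $p$ every letter is strictly larger than $p$. Since a $2{-}1{-}2$ pattern is precisely two equal letters with a strictly smaller letter between them, $w$ avoids $2{-}1{-}2$.

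For $23{-}\overline{2}{-}1$ I argue by contradiction and locate the leaf playing the role of the ``$2$''. Suppose $w$ contains the pattern, so there are positions with $w_i = x < w_{i+1} = y$ and a later $w_j = z < x$, with no letter equal to $x$ strictly between positions $i{+}1$ and $j$; choose such an occurrence with $j$ minimal, so that $w_{i+1},\ldots,w_{j-1}$ are all larger than $x$. The $2{-}1{-}2$ result forces $e_i$ to be the rightmost edge labeled $\mbf x$ (a letter $z<x$ could otherwise not sit between two copies of $x$). Every $\mbf x$-edge is then immediately followed by a larger letter (the rightmost one by $y$, the others by the $2{-}1{-}2$ property just proved), so no $\mbf x$-edge tops a descent; as descent leaves carry the label of the edge on their left, the leaf $x$ must be a nondescent leaf. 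By Lemma \ref{lem:3cases}, read along the spine the nondescent leaves take strictly increasing values beginning with $c$, so the value $x$ is pinned to one specific nondescent leaf.

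The contradiction comes from comparing this forced position with where the leaf $x$ must lie for the labeling to reproduce $w$. For $e_i$ to receive the label $\mbf x$ the leaf $x$ must slide down $e_i$, which is possible only if the first unlabeled edge it meets on the way to $a$ is $e_i$; since the letter $z<x$ at position $j$ is not yet contracted when $x$ is processed, the leaf $x$ cannot lie at or beyond position $j$, and, being nondescent, it cannot lie inside the maximal decreasing run $w_c>\cdots>w_j$ ending at the ``$1$''. Thus the leaf $x$ would have to be squeezed strictly between the ascent and that run, among positions $i{+}1,\ldots,c{-}1$. When the run begins immediately after the ``$3$'' (so $c=i{+}1$) this window is empty and we are done. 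The technical heart of the argument, and the step I expect to be the main obstacle, is the remaining case $c>i{+}1$: I must show that the strictly increasing assignment of values to nondescent leaves (Lemma \ref{lem:3cases}), together with the labels consumed by the descent leaves of the run $w_c>\cdots>w_{j-1}$ and by the ``$1$'', forces the leaf $x$ to be placed at or beyond position $j$ rather than in the window $i{+}1,\ldots,c{-}1$. This is a positional/counting estimate on how many nondescent leaves can precede the run, and it is exactly the point at which purely local slide-validity checks are insufficient, consistent with the finer inequalities that appear in Theorem \ref{thm:psiAvoidance}.
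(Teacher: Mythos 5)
Your argument for $2{-}1{-}2$ avoidance is correct and is essentially the paper's own: between two edges labeled $\mbf p$, every intervening edge must already be contracted when $p$ slides, hence carries a strictly larger label. The $23{-}\overline{2}{-}1$ half, however, has a genuine gap, and you flag it yourself. After correctly establishing that $e_i$ is the rightmost $\mbf x$-edge, that the leaf $x$ is a nondescent leaf, and that it must lie strictly left of $\mbf{w_j}$ and outside the final decreasing run, you reduce everything to a positional/counting claim about where the increasing sequence of nondescent labels can place the leaf $x$ --- and you leave exactly that claim (your case $c>i{+}1$) unproven. There is also an off-by-one problem in the case you declare finished: when the run starts immediately after the ascent, the nondescent leaf sitting between $e_i$ and $e_{i+1}$ is still an admissible position for the leaf $x$ (nothing you have said excludes it), so your ``window'' is not empty and no contradiction follows there either. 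A smaller point: Lemma \ref{lem:3cases} is stated for trees in $\Slwk$, so invoking it for a $\psi$-caterpillar requires at least the remark that its proof of parts (1)--(2) uses only $\psi$-validity.

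The missing idea --- and the reason your expectation that ``purely local slide-validity checks are insufficient'' is mistaken --- is the paper's combination of Lemma \ref{lem:EverSlide} with a single slide-validity check. Since the leaf $x$ is nondescent and labels an edge, Lemma \ref{lem:EverSlide} forces $x<\mbf e$, where $\mbf e$ is the edge immediately to its right. Now consider the moment $\mbf e$ is labeled in Definition \ref{def:k-slide}. Because the leaf $x$ is nondescent, the edge on its left has label at most that of $\mbf e$, so it is not yet contracted at that moment; hence the leaf $x$ is the \emph{only} branch at the near vertex not containing $a$, giving $m_a=x$, while $m_\ell$ is the minimum over all leaves to the right of $\mbf e$ other than the sliding leaf itself. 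Validity of this one slide ($m_\ell\geq m_a$) therefore forces every leaf to the right of $\mbf e$ to exceed $x$ --- a global conclusion extracted from one local comparison, valid no matter which nondescent position the leaf $x$ occupies, so no case analysis on its position is needed. Since the leaf $z<x$ must lie to the right of $\mbf{w_j}$, hence to the right of $\mbf e$, this is the contradiction. The counting inequalities you anticipate do appear in the paper, but only later, in Theorem \ref{thm:psiAvoidance}; moreover their proofs rely on Lemma \ref{lem:avoids}, so completing your argument by appealing to statements of that type would risk circularity.
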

	
	\begin{proof}
		Clearly, $w$ can not contain a $2{-}1{-}2$ pattern, since in each iteration in the $\kUnd$-slide algorithm, after contracting the already labeled edges, all labels of the same value must be consecutive.
		
		For $23{-}\overline{2}{-}1$, suppose instead that $w_i=x$, $w_{i+1}=y$, and $w_j=z$ form a $23{-}\overline{2}{-}1$ pattern. Then, $w_i$ is the rightmost instance of $x$ in $w$, since this assumes there is no $x$ between $w_i$ and $w_j$, and we already showed that $w$ avoids $2{-}1{-}2$, so there is no $x$ to the right of $w_j$. Next we have $x<y$, so the leaf labeled $x$ is not a descent leaf, and is instead a nondescent leaf. Since $x$ does slide at some point in $T$, the leaf $x$ must be smaller than the edge to its right by Lemma \ref{lem:EverSlide}. Then, since that edge on the right of $x$ is a valid slide, $x$ must be smaller than every leaf to its right. However, the leaf $x$ must be to the left of $\mbf{w_j}$ in order for it to label $\mbf{w_i}$, and the leaf $z$ must be to the right of $\mbf{w_j}$. This is a contradiction, so $w$ avoids $23{-}\overline{2}{-}1$.
	\end{proof}
	
	\begin{lemma}
		Let $\kUnd$ be a reverse-Catalan composition of $n$ with $m$ zeros, such that $$w\in\Av_{\kUnd}(2{-}1{-}2,\ 23{-}\overline{2}{-}1).$$ Let $i_1<\ldots<i_l$ be the indices of the entries in $w$ that are strictly smaller than all entries to their right. Then, in $\tree(w)$, each edge $\mbf{w_{i_j}}$ has a nondescent leaf immediately to its right, and there are exactly $m$ additional nondescent leaves.
		\label{lem:numb2}
	\end{lemma}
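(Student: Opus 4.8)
The plan is to classify the nondescent leaves of $\tree(w)$ into two disjoint families and count each. I would first dispose of the easy inclusion: for each index $i_j$, the edge $\mbf{w_{i_j}}$ has a nondescent leaf on its right. When $i_j<n$, the entry $w_{i_j}$ is strictly smaller than everything to its right, in particular $w_{i_j}<w_{i_j+1}$, so the leaf between edges $\mbf{w_{i_j}}$ and $\mbf{w_{i_j+1}}$ sits at an ascent and hence is a nondescent leaf by Step 4 of Definition \ref{def:tr}. The largest index $i_l$ is always $n$, since the final entry is vacuously smaller than everything to its right, and here the designated nondescent leaf on the right end of the caterpillar plays the role of the leaf immediately to the right of $\mbf{w_n}$. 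These $l$ leaves are pairwise distinct because the $i_j$ are.

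Next I would show that the remaining (\emph{additional}) nondescent leaves are exactly the non-final occurrences of letters in $w$, that is, those positions $p$ which are occurrences of a value $v$ that has another $v$ somewhere to their right. One direction is immediate from $2{-}1{-}2$-avoidance: if $p$ is a non-final occurrence of $v$, then every entry strictly between $p$ and the next occurrence of $v$ is $\ge v$, so $w_p\le w_{p+1}$ and the leaf to the right of $\mbf{w_p}$ is nondescent; moreover the next $v$ witnesses that $w_p$ is not strictly smaller than everything to its right, so $p$ is not any $i_j$.

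The crux is the reverse inclusion: that every nondescent internal leaf not among the first family is a non-final occurrence. Suppose the leaf at internal position $i$ is nondescent, so $w_i\le w_{i+1}$, but $i\neq i_j$ for all $j$, meaning some entry to the right of $i$ is $\le w_i$. If $w_{i+1}=w_i$, or if some entry to the right equals $w_i$, then $w_i$ is patently a non-final occurrence. Otherwise $w_i<w_{i+1}$ and there is some $w_k<w_i$ with $k>i+1$, so $(w_i,w_{i+1},w_k)$ is a $23{-}1$ pattern; the avoidance of $23{-}\overline{2}{-}1$ then forces a copy of $w_i$ at some position strictly between $i+1$ and $k$, again exhibiting $w_i$ as a non-final occurrence. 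This is precisely the step that relies on the barred pattern hypothesis, and I expect it to be the main obstacle to write carefully.

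Finally I would count: exactly $n-m$ of the values $1,\dots,n$ occur in $w$, and each contributes exactly one final occurrence, so there are $n-(n-m)=m$ non-final occurrences. These are disjoint from the first family, whose $l$ leaves sit at the final occurrences of $w_{i_1},\dots,w_{i_{l-1}}$ together with the right-end leaf for $i_l=n$. Since, by the two previous paragraphs, every nondescent leaf lies in exactly one of the two families, the additional nondescent leaves number exactly $m$, completing the proof.
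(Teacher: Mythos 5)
Your proposal is correct and takes essentially the same approach as the paper: both identify the nondescent leaves of $\tree(w)$ as exactly the positions $i_1,\ldots,i_l$ together with the repeated (non-final) occurrences of letters, using $2{-}1{-}2$-avoidance for the descent/equality cases and $23{-}\overline{2}{-}1$-avoidance for the crucial ascent step, and both conclude by counting the $m$ repeats. The only cosmetic difference is organizational (your forward/reverse inclusions versus the paper's case analysis on consecutive pairs), so there is nothing to add.
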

	
	\begin{example}
		Let $w=73152587$. Then, $l=4$, $i_1=3$, $i_2=5$, $i_3=6$, $i_4=8$, and $w_{i_1}=1$, $w_{i_2}=2$, $w_{i_3}=5$, and $w_{i_4}=7$.
		\begin{center}
			\begin{tikzpicture}[scale=0.3]
			\draw (0.5,0.5)--(9.5,0.5)--(9.5,9.5)--(0.5,9.5)--(0.5,0.5);
			\fill[black] (1.5,7.5) circle (0.25cm);
			\fill[black] (2.5,3.5) circle (0.25cm);
			\fill[black] (3.5,1.5) circle (0.25cm);
			\fill[black] (4.5,5.5) circle (0.25cm);
			\fill[black] (5.5,2.5) circle (0.25cm);
			\fill[black] (6.5,5.5) circle (0.25cm);
			\fill[black] (7.5,8.5) circle (0.25cm);
			\fill[black] (8.5,7.5) circle (0.25cm);
			\draw (3.5,1.5) circle (0.5cm);
			\draw (5.5,2.5) circle (0.5cm);
			\draw (6.5,5.5) circle (0.5cm);
			\draw (8.5,7.5) circle (0.5cm);
			\draw (9,7)--(7,7)--(7,5)--(6,5)--(6,2)--(4,2)--(4,1)--(1,1);
			\end{tikzpicture}
		\end{center}
	\end{example}
	
	\begin{proof}[Proof (of Lemma \ref{lem:numb2})]
		First, note that there are exactly $m$ repeats in $w$ (that is, a position where the letter also occurs again later in $w$). We show that the nondescent leaves of $\tree(w)$ correspond precisely to the position of these repeats along with the positions $w_{i_1},\ldots,w_{i_{l-1}}$.
		
		Consider a pair of consecutive entries $x=w_j$ and $y=w_{j+1}$ of $w$. If $x$ and $y$ form a descent, then the leaf between them in $\tree(w)$ is a descent leaf. Then, $j$ is not some $i_s$ and $x$ is not a repeat, since otherwise $x$ and $y$ would form the first two entries of a $2{-}1{-}2$ pattern.
		
		Otherwise, if $x$ and $y$ form a nondescent, then the leaf between them in $\tree(w)$ is a nondescent leaf. If $x=y$, then clearly $x$ is a repeat and is not some $w_{i_s}$. If $x$ and $y$ form an ascent, there are three cases: $j\in\{i_1,\ldots,i_l\}$, $x=w_{i_s}$ for some $i_s>j$, or $x>w_{i_s}$ for some $i_s>j$. In the second case, clearly $x$ is a repeat of $w_{i_s}$. In the third case, $x$ must also be a repeat, or else $x$, $y$, and $w_{i_s}$ form a $23{-}\overline{2}{-}1$ pattern. 
		
		Finally, there is a nondescent leaf to the right of $w_{i_l}$ by construction. Thus, we have shown that the nondescent leaves are on the right of each edge corresponding to some $w_{i_j}$ and each repeated entry of $w$.
	\end{proof}
	
	\begin{corollary}
		Let $\kUnd$, $w$, and $i_1,\ldots,i_l$ be as in Lemma \ref{lem:numb2}. Then for each $j$, $i_j$ has at most $m+j-1$ nondescent leaves to its left.
		\label{cor:MAXm+j-1}
	\end{corollary}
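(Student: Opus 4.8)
The plan is to read this off directly from Lemma \ref{lem:numb2}, which already classifies all the nondescent leaves of $\tree(w)$. That lemma tells us that every nondescent leaf of $\tree(w)$ sits immediately to the right either of an edge $\mbf{w_{i_s}}$ for some $s\in[l]$, or of a repeated entry of $w$. Crucially, these two families are disjoint: a letter $w_{i_s}$ is by definition strictly smaller than everything to its right, so it can never equal a later letter and hence is never a repeat. Thus, to bound the nondescent leaves lying to the left of the edge $\mbf{w_{i_j}}$, it suffices to count how many leaves of each of the two types occur there.

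First I would handle the leaves of the first type. Since the indices satisfy $i_1<i_2<\cdots<i_l$, the edges $\mbf{w_{i_s}}$ lying strictly to the left of $\mbf{w_{i_j}}$ are precisely those with $s<j$, and each contributes exactly one nondescent leaf immediately to its right, which is still to the left of $\mbf{w_{i_j}}$ since $i_{j-1}<i_j$. This gives exactly $j-1$ nondescent leaves of the first type; note that the nondescent leaf attached immediately to the right of $\mbf{w_{i_j}}$ itself is correctly excluded, as it lies to the \emph{right} of that edge. For the second type, recall from the proof of Lemma \ref{lem:numb2} that $w$ has exactly $m$ repeats (equivalently, $\kUnd$ has $m$ zeros, so the $n$ letters split into $n-m$ distinct values, leaving $m$ repeated occurrences). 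Each repeat contributes a single nondescent leaf, and at most all $m$ of these can lie to the left of $\mbf{w_{i_j}}$, so the leaves of the second type number at most $m$. Adding the two contributions yields at most $(j-1)+m = m+j-1$ nondescent leaves to the left of $i_j$, as claimed.

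I do not expect a genuine obstacle here, since all of the substance is contained in Lemma \ref{lem:numb2}; the only care needed is in the bookkeeping of the word ``left.'' Concretely, one must confirm that the nondescent leaf immediately right of $\mbf{w_{i_{j-1}}}$ is still counted as left of $\mbf{w_{i_j}}$ (true because $i_{j-1}<i_j$) while the one immediately right of $\mbf{w_{i_j}}$ is not, and that the two families of nondescent leaves established in Lemma \ref{lem:numb2} do not overlap (true by the strict-minimality of the $w_{i_s}$). With these two points noted, the count is immediate.
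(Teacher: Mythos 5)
Your proof is correct and matches the paper's intent exactly: the paper states this corollary without proof, treating it as immediate from the classification of nondescent leaves in Lemma \ref{lem:numb2}, and your argument simply spells out that counting ($j-1$ leaves from the edges $\mbf{w_{i_s}}$ with $s<j$, plus at most $m$ from repeat positions). The two points you flag for care (disjointness of the two families, and which side of $\mbf{w_{i_j}}$ its adjacent nondescent leaf falls on) are exactly the right bookkeeping details and are handled correctly.
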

	
	\subsection{Right-justified compositions}
	\label{subsec:right-justified}
	
	Now, we consider the case of compositions for which the caterpillar trees are characterized solely by a pattern avoidance criterion: That of right-justified compositions.
	
	\begin{definition}
		A composition $\kUnd$ of $n$ is \textbf{right-justified} if all zeros occur before all non-zero entries.
	\end{definition}
	
	\begin{remark}
		When performing the $\psi$-slide algorithm on a caterpillar tree $\tree(w)$, in every comparison, the left element compared will be a nondescent leaf. This is because every descent leaf has its left edge labeled before its right and the left edge is only labeled if there is a smaller leaf to its left. Thus, a descent leaf can not be compared in this way.
	\end{remark}
	
	\begin{rjust}
		Let $\kUnd$ be right-justified with $m$ zeros, and let $w\in\Av_{\kUnd}(2{-}1{-}2,\ 23{-}\overline{2}{-}1)$. Then, $\tree(w)$ is a valid slide tree.
	\end{rjust}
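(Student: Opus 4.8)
The plan is to show that the $\psi$ $\kUnd$-slide labeling algorithm of Definition~\ref{def:k-slide} runs to completion on $\tree(w)$; since a successful run is forced, by the construction in Definition~\ref{def:tr} and the uniqueness of slide trees, to recover the edge word $w$, this exhibits $\tree(w)$ as an element of $\Slpk$. Because $\kUnd$ is right-justified, its asymmetric multinomial coefficient equals the ordinary one, so $|\Slwk|=|\Slpk|$ by Proposition~\ref{prop:combinedResults}; combined with the inclusion $\Slwk\subseteq\Slpk$ of Lemma~\ref{lem:omega<psi}, this gives $\Slwk=\Slpk$, so it is harmless to work with the simpler $\psi$ rule and conclude membership in $\Slwk$ as well. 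Thus the whole theorem reduces to verifying that the slide algorithm never terminates early, i.e.\ that every comparison it makes is valid.

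Next I would reduce this to a single inequality on leaf labels. As noted in the remark preceding the statement and in the proof of the uniqueness theorem, on a caterpillar the only comparisons the $\psi$-slide algorithm performs occur when the edge $\mbf e$ lying immediately to the right of a nondescent leaf $v$ is labeled by its value $i$: at that moment the left (root-side) quantity $m_a$ equals the label of $v$ itself, while the right quantity $m_\ell$ equals the smallest leaf label strictly to the right of $v$ other than leaf $i$. Hence the run succeeds if and only if, for every nondescent leaf $v$ whose right edge carries label $i$, the label of $v$ is at most every leaf label to its right except that of leaf $i$. So the whole proof comes down to establishing this leaf inequality for $\tree(w)$.

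To prove the inequality I would exploit right-justification together with the counting results already in hand. Since all zeros of $\kUnd$ come first, the $m+1$ labels $c,1,\dots,m$ appear on no edge, every edge value lies in $\{m+1,\dots,n\}$, and every descent value (being a letter immediately followed by a strictly smaller one) is at least $m+2$. By Lemma~\ref{lem:numb2} the nondescent leaves sit exactly to the right of each right-to-left minimum $w_{i_j}$ and at each of the $m$ repeats, and $23{-}\overline{2}{-}1$-avoidance forces the non-descent edge values to be precisely the minima values $w_{i_1}<\dots<w_{i_l}$. Consequently the pool of labels available to Step~7 of Definition~\ref{def:tr}, in increasing order, is $c,1,\dots,m,w_{i_1},\dots,w_{i_{l-1}}$, the value $w_{i_l}=w_n$ being consumed by the terminal descent leaf. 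I would then induct on the nondescent leaves from left to right, with inductive hypothesis that the skip clause has not yet fired, so the leaves already treated carry the smallest available labels in increasing order. For a minimum leaf to the right of $w_{i_j}$, Corollary~\ref{cor:MAXm+j-1} bounds the number of nondescent leaves to its left by $m+j-1$, so the next available label is at most the $(m+j)$-th pool entry, namely $m$ when $j=1$ and $w_{i_{j-1}}<w_{i_j}<w_{i_j+1}$ when $j\geq 2$; in either case this is strictly below its right edge, so the skip clause does not fire, the assigned label lies below the right edge and, since $w_{i_j}$ is a right-to-left minimum, below every edge value and hence every descent-leaf value to the right, while monotonicity of the already-assigned nondescent labels handles the remaining nondescent leaves. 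Combining the cases yields $v<$ everything to its right, which is stronger than the required inequality.

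The main obstacle will be the bookkeeping for the nondescent leaves that come from repeats rather than from minima, since Corollary~\ref{cor:MAXm+j-1} is stated only for the minima positions $i_j$. For such a repeat leaf at position $p$ with $w_p=w_{p+1}=x$, I must show that when Step~7 reaches it the smallest available label is still strictly below $x$, so that its label stays below $x$, no skip occurs, and Lemma~\ref{lem:EverSlide} is respected. This amounts to bounding the number of nondescent leaves left of $p$ by the number of pool labels below $x$, and here the nested structure of the occurrences of each repeated value forced by $2{-}1{-}2$-avoidance is what makes the count close up; this is exactly where right-justification is essential, since it is the $\z(i)=0$ regime in which the extra inequality of Theorem~\ref{thm:psiAvoidance} becomes automatic. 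I expect this counting step to be the technical heart of the argument, with everything else following structurally from the lemmas already proved.
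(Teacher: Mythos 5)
Your skeleton is essentially the paper's: identify the pool of Step~7 labels as $c,1,\ldots,m,w_{i_1},\ldots,w_{i_{l-1}}$, induct on nondescent leaves from left to right to show the skip clause of Definition \ref{def:tr} never fires (so nondescent labels come out increasing and below the edges to their right), and deduce that every comparison in the slide algorithm succeeds, using Corollary \ref{cor:MAXm+j-1} for the leaves sitting just right of the minima $\mbf{w_{i_j}}$. But the case you yourself flag as ``the main obstacle'' --- the nondescent leaves arising from repeats rather than from right-to-left minima --- is a genuine, unfilled gap: you state what must be proved for such a leaf and then only conjecture that some new counting argument based on ``the nested structure of the occurrences of each repeated value'' will close it. It will not close itself, and it is not a corner case: by Lemma \ref{lem:numb2}, a right-justified $\kUnd$ with $m>0$ zeros produces exactly $m$ such leaves. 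The resolution is short and needs no new counting: if $v$ is a repeat-type nondescent leaf and $w_{i_j}$ is the rightmost minimum to its left, then $v$ lies strictly left of the nondescent leaf attached to $\mbf{w_{i_{j+1}}}$ (this next minimum exists because $i_l=n$), so Corollary \ref{cor:MAXm+j-1} applied to $i_{j+1}$ --- not to $i_j$ --- bounds the nondescent leaves strictly left of $v$ by $m+j-1$; the edge to the right of $v$ exceeds $w_{i_j}$ because $w_{i_j}$ is a right-to-left minimum, and then your minima-case argument repeats verbatim. In short, you were one application of your own cited corollary away, aimed at the wrong index.

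Two secondary problems with your reduction. First, the ``if and only if'' in your second paragraph is not correct as stated: after contractions, the left-hand quantity $m_a$ in Definition \ref{def:k-slide} can be a nondescent leaf several positions to the left of the edge currently being labeled, and the leaf your inequality excepts (the one matching \emph{that} leaf's right-edge label) need not be the label currently sliding; a tree could satisfy your condition verbatim and still fail a comparison. Your argument survives only because you ultimately prove the stronger statement that each nondescent leaf is smaller than everything to its right, which is what the paper uses. Second, you never verify that the leaves are positioned so that the algorithm labels the edges that $w$ prescribes --- i.e.\ that every label which labels edges has its leaf to the right of the rightmost such edge, with only larger edges in between (the last paragraph of the paper's proof); without this the algorithm can terminate at the ``no such edge exists'' step or produce a labeling other than $w$. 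Finally, your detour through $\Slwk=\Slpk$ via Proposition \ref{prop:combinedResults} is not needed (the statement concerns $\Cat^\psi(\kUnd)$, and the paper's proof concludes membership in $\Slpk$), and it rests on the equality of asymmetric and symmetric multinomials for right-justified $\kUnd$, a fact the paper asserts only parenthetically in an example.
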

	
	\begin{proof}
		Let $i_1,\ldots,i_l$ be as before. Since $\kUnd$ is right-justified, the smallest letter present in $w$ is $w_{i_1}=m+1$, so in particular, $c,1,\ldots,m$ are not used as descent leaf labels, and thus are used as nondescent labels. We first prove the claim that during Step 7 of Definition \ref{def:tr}, we will always use the smallest remaining label for each nondescent leaf. In particular, this results in the nondescent leaf labels being in increasing order, as read from left to right.
		
		We proceed by strong induction from left to right in the tree. For the base case, clearly we can always use $c$ first, since there is never an edge labeled $c$. For the induction step, assume this is true for all nondescent leaves to the left of a nondescent leaf $v$. 
		
		We first consider the case where $v$ is just to the right of an edge $\mbf{w_{i_j}}$. Let $\mbf y$ be the edge on the right of $v$. Note that $\mbf {w_{i_j}}<\mbf y$ by definition of $w_{i_j}$. We wish to show that $v<\mbf y$. In fact, we will show that in the step of Definition \ref{def:tr} where we label $v$, $w_{i_{j-1}}$ (or $m$ in the case where $j=1$) has not yet been used as a leaf label. By Corollary \ref{cor:MAXm+j-1}, $v$ has at most $m+j-1$ nondescent leaves strictly to its left. We also know that  $c,1,2,\ldots,m,w_{i_1}, \ldots,w_{i_{j-1}}$ are all used as nondescent labels. This means that there are at least $m+j$ nondescent labels smaller than $w_{i_j}$. By the inductive hypothesis, all nondescent labels thus far have been used in increasing order, so in particular, the largest such label, $w_{i_{j-1}}$ (or $m$ when $j=1$), has not yet been used. Since additionally $w_{i_{j-1}}\neq y$ ($m\neq y$), $w_{i_{j-1}}$ ($m$) can be used to label $v$. This completes the inductive step for the case where $v$ is just to the right of some $w_{i_j}$.
		
		Otherwise, $v$ is a nondescent leaf not on the right side of an edge labeled $w_{i_j}$. Let $w_{i_j}$ be the rightmost among the $w_{i_r}$'s to the left of $v$ (if such a $w_{i_j}$ exists). Again, the edge $\mbf y$ to the right of $v$ is larger than $w_{i_j}$ by the definition of $w_{i_j}$. Notice that since $w_{i_{j+1}}$ (which exists because $i_l=n$) cannot have more than $m+j$ nondescent leaves to its left, $v$ has no more than $m+j-1$ nondescent leaves to its left. The argument then follows identically to the previous case. This completes the inductive step. Thus, we have shown that during Step 7 of Definition \ref{def:tr}, we will always use the smallest remaining label for each nondescent leaf.
		
		Next, we show that for any leaf in the tree, the smallest leaf to its right is a nondescent leaf. To do this, it suffices to show that each descent leaf has a smaller nondescent leaf to its right. So, let $d$ be some descent leaf. The edge on its right is smaller than $d$, so $d$ is not any of the $w_{i_j}$. Let $j$ be the smallest index such that $\mbf{w_{i_j}}$ is to the right of $d$, and $e$ the leaf on the right of $\mbf{w_{i_j}}$. If $d<w_{i_j}$, then since $d$ is not a $w_{i_r}$, there must be another one between $d$ and $w_{i_j}$, which contradicts the assumption that $w_{i_j}$ is the closest one to the left. So, $d>\mbf{w_{i_j}}\geq e$, where $e$ is a nondescent leaf label by Lemma \ref{lem:numb2}. Thus, for any internal edge, the smallest leaf label to its right is a nondescent leaf.
		
		Thus far, we have shown that for any internal edge, the smallest leaf label to its right is a nondescent leaf, and the nondescent leaves are in increasing order from left to right. So, during the slide algorithm, each comparison compares a nondescent leaf somewhere to the right against a nondescent leaf to the left, which must be smaller it. So, assuming that the leaf label is in the correct position to slide, the comparison made for that edge is valid, and so the slide is successful. 
		
		To show that leaf labels are always in the correct positions to slide, it suffices to show that every nondescent leaf label is to the right of the rightmost edge with that label (if any), and that any edges between them are larger. The set of nondescent labels that are also edge labels are precisely $w_{i_1},\ldots,w_{i_{l-1}}$. So, by Corollary \ref{cor:MAXm+j-1}, each such label labels a leaf strictly to the right of the rightmost edge with that label, and all the intermediate edges are larger by the definition of the $w_{i_j}$'s. Thus, every slide is a valid $\psi$-slide, and $\tree(w)\in\Slide^\psi(\kUnd)$.
	\end{proof}
	
	Then, as an immediate consequence of Theorem \ref{thm:rjust} and Lemma \ref{lem:avoids}, we get our desired result.
	
	\begin{corollary}
		Let $\kUnd$ be a right-justified composition. Then, the map $$\tree: \Av_{\kUnd}(2{-}1{-}2,\ 23{-}\overline{2}{-}1)\to\Cat^\psi(\kUnd)$$ is a bijection.
	\end{corollary}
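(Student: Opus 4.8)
The plan is to establish the three ingredients of a bijection — well-definedness (landing in $\Cat^\psi(\kUnd)$), injectivity, and surjectivity — each of which follows quickly from the machinery already assembled in this subsection, so the word ``immediate'' in the surrounding text is justified.

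First, for well-definedness I would check that $\tree$ actually takes values in $\Cat^\psi(\kUnd)$. Given $w\in\Av_{\kUnd}(2{-}1{-}2,\ 23{-}\overline{2}{-}1)$, Theorem \ref{thm:rjust} guarantees that $\tree(w)$ is a valid $\psi$-slide tree. By construction (Definition \ref{def:tr}) $\tree(w)$ is a trivalent caterpillar whose internal edges, read from the root, carry the letters of $w$, so its edge word has content $\kUnd$. Hence $\tree(w)\in\Cat^\psi(\kUnd)$, and the map is well-defined.

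For injectivity I would invoke the defining property of $\tree$ recorded at the start of the subsection: whenever $\tree(w)$ is a valid slide tree, reading its slide labels off from the root recovers exactly $w$. Since Theorem \ref{thm:rjust} ensures $\tree(w)$ is a valid slide tree for \emph{every} $w$ in the domain, the assignment ``$w\mapsto$ edge word of $\tree(w)$'' is the identity on $\Av_{\kUnd}(2{-}1{-}2,\ 23{-}\overline{2}{-}1)$. Consequently $\tree(w)=\tree(w')$ forces $w=w'$. For surjectivity I would begin with an arbitrary $T\in\Cat^\psi(\kUnd)$ and read off its edge word $w$ from the root; this $w$ has content $\kUnd$, and by Lemma \ref{lem:avoids} it avoids both $2{-}1{-}2$ and $23{-}\overline{2}{-}1$, so $w$ lies in the domain of $\tree$. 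The uniqueness theorem for caterpillar $\psi$-slide trees then says that $T$ is the only caterpillar slide tree with edge word $w$ and that this tree must be $\tree(w)$; therefore $T=\tree(w)$ is in the image.

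The only point demanding care — and it is a bookkeeping point rather than a genuine obstacle — is reconciling the two directions of the correspondence, namely confirming that ``build the tree from $w$'' and ``read the word off the tree'' are honest inverses. This rests entirely on the uniqueness theorem (a caterpillar slide tree is determined by its edge word, which must equal $\tree(w)$) together with Theorem \ref{thm:rjust} (the construction never fails on the avoidance class). Once both are in hand, no further argument is required.
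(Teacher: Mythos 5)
Your proposal is correct and matches the paper's intended argument: the paper states the corollary as an immediate consequence of Theorem \ref{thm:rjust} (well-definedness) and Lemma \ref{lem:avoids} (surjectivity), with the uniqueness theorem for caterpillar slide trees supplying injectivity and the inverse correspondence exactly as you describe. You have simply written out the routine details the paper leaves implicit.
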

	
	\subsection{Pattern avoidance conditions for $\Cat^\psi(\kUnd)$}
	
	In this subsection, we characterize all the caterpillar trees in $\Slpk$, including when $\kUnd$ is not right-justified.

	\begin{definition}
		For a word $w$ with content $\kUnd$, let $k(i,j)$ denote the number of $j$'s in the subword of $w$ right of the rightmost $i$. Then, define the total number of repeats right of the rightmost $i$ by $$\TRep_w(i)=\sum_{j\neq i}\max((k(i,j)-1),0).$$
	\end{definition}
	
	\begin{definition}
		For a word $w$, let $\ell_w(i)$ denote the total number of consecutive $i$'s in the right-most consecutive sequence of $i$'s in $w$. Let $\mathbf{i_l}$ and $\mathbf{i_r}$ denote the leftmost and rightmost such entries, respectively.
	\end{definition}
	
	\begin{example}
		Let $w=313321$. Then, $\ell_w(1)=1$, $\ell_w(2)=1$, and $\ell_w(3)=2$. Additionally, $\mathbf{3_l}$ is in position 3 and $\mathbf{3_r}$ is in position 4.
	\end{example}

	\begin{lemma}
		Let $\kUnd$ be a reverse-Catalan composition, and let $w$ be a word of content $\kUnd$. If $\TRep_w(i)+\ell_w(i)<\z(i)$ for some $i$, then there is some $j$ for which $\TRep_w(j)+\ell_w(j)<\z(j)$ and the subword of $w$ right of $\mathbf{j_r}$ contains no entries smaller than $j$.
		\label{lem:rightmost_i_Psi}
	\end{lemma}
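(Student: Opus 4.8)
The plan is to rephrase $\TRep_w(i)+\ell_w(i)$ as a statistic of the suffix of $w$ to the right of the last $i$, and then to perform a single descent step. Write $R_i$ for the block of letters strictly to the right of $\mathbf{i_r}$, and let $\delta(R_i)$ be the number of distinct letters occurring in $R_i$. Since $R_i$ contains no copy of $i$, the sum defining $\TRep_w(i)$ simply counts every letter of $R_i$ beyond the first occurrence of its value, so $\TRep_w(i)=|R_i|-\delta(R_i)$. In particular the hypothesis $\TRep_w(i)+\ell_w(i)<\z(i)$ becomes $|R_i|-\delta(R_i)+\ell_w(i)<\z(i)$, and since $\ell_w(i)\ge 1$ it yields $\TRep_w(i)\le \z(i)-1-\ell_w(i)$.

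Given the index $j$ of the hypothesis, first dispose of the easy case: if no letter smaller than $j$ appears in $R_j$ (in particular if $R_j$ is empty), then $j$ itself is the required index. Otherwise set $i'=\min R_j$, so $i'<j$. I claim $i'$ works. The extra condition is automatic for $i'$: every letter of $R_j$ is at least $i'$, and every letter of $R_{i'}\subseteq R_j$ differs from $i'$ (being right of the last $i'$), so every letter to the right of the last $i'$ is strictly larger than $i'$. It therefore remains only to verify the failing inequality $\TRep_w(i')+\ell_w(i')<\z(i')$.

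For this, decompose $R_j$ from left to right as $R_j=A\sqcup(\text{rightmost run of }i')\sqcup R_{i'}$, where the run has length $\ell_w(i')$ and lies inside $R_j$ (a letter of this run cannot equal $j$). Counting distinct values gives $\delta(R_j)=\delta(R_{i'})+1+a$, where the $+1$ accounts for the value $i'$ itself and $a$ is the number of distinct values of $A$ that are neither $i'$ nor present in $R_{i'}$; note $a\le|A|$. Combining this with $|R_j|=|A|+\ell_w(i')+|R_{i'}|$ and the identity of the first paragraph produces
$$\TRep_w(i')+\ell_w(i')=\big(|R_{i'}|-\delta(R_{i'})\big)+\ell_w(i')=\TRep_w(j)-|A|+1+a.$$
Feeding in $\TRep_w(j)\le\z(j)-1-\ell_w(j)$, the bound $a\le|A|$, and $\z(i')\ge\z(j)$ (valid since $i'<j$), the right-hand side is at most $\z(j)-\ell_w(j)+(a-|A|)\le\z(j)-1<\z(i')$, as desired.

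The delicate point is the counting in the last paragraph: one must verify the identity $\TRep_w(i)=|R_i|-\delta(R_i)$, which relies on $R_i$ containing no $i$'s, and must track the distinct-value increment $\delta(R_j)=\delta(R_{i'})+1+a$ precisely, so that the nonnegative slack $|A|-a$ absorbs exactly the terms lost in passing from $R_j$ to the shorter suffix $R_{i'}$. Everything else is immediate; in particular the reverse-Catalan hypothesis on $\kUnd$ enters only to make $\z$ well-defined.
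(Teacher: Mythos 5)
Your proposal is correct. It shares the paper's overall skeleton---a single descent step: replace the failing index by one smaller witness whose right-suffix automatically contains nothing smaller, then check that the failing inequality survives---but both the witness and the accounting are genuinely different. The paper's witness is the value $j$ of the \emph{rightmost} entry of $w$ weakly smaller than the failing index $i$, so its suffix condition is automatic by positional maximality; the inequality is then transferred by a one-sided containment estimate: the suffix right of the last $j$ sits inside the suffix right of the last $i$, so every repeat counted by $\TRep_w(j)$ is also counted by $\TRep_w(i)$, while the rightmost run of $j$'s contributes a further $\ell_w(j)-1$, giving $\TRep_w(j)+\ell_w(j)\leq \TRep_w(i)+\ell_w(i)<\z(i)\leq \z(j)$. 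Your witness is instead the \emph{minimum} letter $i'$ of that suffix---a different choice in general (if $w$ ends $\cdots 3\,1\,2$ and $3$ is the failing index, the paper picks $2$ while you pick $1$)---so your suffix condition is automatic by value-minimality; and you transfer the inequality through the exact identity $\TRep_w(i)=|R_i|-\delta(R_i)$ together with the block decomposition $R_j=A\sqcup(\text{run of }i')\sqcup R_{i'}$, which yields the equality $\TRep_w(i')+\ell_w(i')=\TRep_w(j)+1+a-|A|$ rather than a bound. What each buys: your $|R|-\delta(R)$ reformulation makes the bookkeeping mechanical and exact (the slack $|A|-a$ is explicitly visible), whereas the paper's argument is shorter and isolates the cleaner, reusable monotonicity statement $\TRep_w(j)+\ell_w(j)\leq\TRep_w(i)+\ell_w(i)$. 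Both proofs implicitly assume the failing index occurs in $w$ (so that $\ell_w\geq 1$ and the suffix is defined), which matches how the lemma is invoked in the paper (only for $i$ with $k_i>0$).
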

	
	\begin{proof}
		For such an $i$, let $j$ be the rightmost entry in $w$ (weakly) smaller than $i$. If $i=j$, we are done. Otherwise, since $j<i$, $\z(j)\geq \z(i)$. In addition, since $j$ is right of every $i$ by definition, every repeat right of all $j$'s is also right of all $i$'s. All but one of the $\ell_w(j)$ copies of $j$ is counted by the term $\TRep(i)$, so $\TRep_w(j)+\ell_w(j) \leq \TRep_w(i)+\ell_w(i)$. Thus, we have $$\TRep_w(j)+\ell_w(j)\leq \TRep_w(i)+\ell_w(i)<\z(i)\leq \z(j).$$ So, $j$ is an index for which $\TRep_w(j)+\ell_w(j)<\z(j)$, and by construction the subword of $w$ right of $\mathbf{j_r}$ contains no entries smaller than $j$.
	\end{proof}
	
	\begin{lemma}
		Let $T=\tree(w)$, and let $i$ be a nondescent leaf for which $k_i\neq0$. Then, the edge next to $\mbf{i_r}$ on the right is larger than $i$.
		\label{lem:descEdgeDesc}
	\end{lemma}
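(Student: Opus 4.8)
The plan is to argue by contradiction directly from the construction of $\tree(w)$ in Definition \ref{def:tr}, carefully tracking which step of that algorithm assigns each leaf its label. The starting observation is that, since $\mathbf{i_r}$ is by definition the rightmost entry of the rightmost maximal block of $i$'s, it is in fact the globally rightmost occurrence of $i$ in $w$: any $i$ appearing further right would either extend that block or form a later one, contradicting the choice of $\mathbf{i_r}$. In particular no edge strictly to the right of $\mathbf{i_r}$ carries the label $i$. I would also note at the outset that the hypothesis $k_i\neq 0$ is exactly what guarantees that $\mathbf{i_r}$ exists.

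The main step is then the following contradiction. Let $\mathbf{e}$ be the edge immediately to the right of $\mathbf{i_r}$ and suppose toward a contradiction that its label is at most $i$. Equality is impossible by the observation above, so the label of $\mathbf{e}$ is strictly less than $i$. Since $\mathbf{i_r}=i>\mathbf{e}$, the leaf sitting between $\mathbf{i_r}$ and $\mathbf{e}$ is a descent leaf, and Step 5 of Definition \ref{def:tr} labels every descent leaf with the label of the edge immediately to its left, namely $i$. Thus $i$ is consumed as a descent-leaf label in Step 5, so when Step 6 forms the list of remaining unused labels, $i$ is absent from it, and Step 7 never assigns $i$ to any nondescent leaf. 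This contradicts the hypothesis that $i$ labels a nondescent leaf, forcing the label of $\mathbf{e}$ to exceed $i$.

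The one place that needs care, and what I would flag as the main obstacle, is the boundary case in which $\mathbf{i_r}$ is the last internal edge of the caterpillar, so that a priori there is no interior leaf to its right. Here I would invoke the convention in Step 4 that, of the two leaves on the right end, one is declared a descent leaf; by Step 5 that end descent leaf receives the label of the last edge, again $i$, so once more $i$ is used up before Step 7 and cannot label a nondescent leaf. This simultaneously rules out the degenerate configuration, showing that under the hypothesis the edge to the right of $\mathbf{i_r}$ genuinely exists, and reuses the identical Step 5 versus Step 7 consumption argument, so no separate machinery is needed. I would emphasize that the whole argument is purely combinatorial about the labeling algorithm of Definition \ref{def:tr} and invokes neither the slide-validity of $\tree(w)$ nor $2{-}1{-}2$-avoidance.
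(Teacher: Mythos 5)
Your proof is correct and is essentially the paper's argument in contrapositive form: the paper observes that since $i$ labels a nondescent leaf it was not consumed in Step 5, so the leaf just right of $\mbf{i_r}$ cannot be a descent leaf (only $i$ could have labeled it) and hence is nondescent, forcing the next edge to exceed $i$. Your explicit handling of the equality case (via the rightmost-occurrence observation) and of the boundary case where $\mbf{i_r}$ is the last internal edge supplies details the paper leaves implicit, but the underlying mechanism --- the descent/nondescent dichotomy at that leaf together with the Step 5/Step 7 label-consumption bookkeeping --- is identical.
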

	
	\begin{proof}
		Since $i$ is a nondescent leaf, that means that it did not label a leaf in Step 5 of Definition \ref{def:tr}. More specifically, the leaf $v$ on the right of $\mbf{i_r}$ was not labeled at this step, since only $i$ could have labeled it at this stage. So, $v$ is a nondescent leaf too, leaning the edge on its right is larger than $i$.
	\end{proof}

	\begin{definition}
		Consider a tree $T=\tree(w)$. Fix an $i$ for which $k_i\neq0$.
		
		Let $\rnd$ (Right Non-Descents) denote the number of nondescent leaves strictly to the right of $\mathbf{i_r}$, and $\lnd$ (Left Non-Descents) the number of nondescent leaves to the left of $\mathbf{i_l}$. 
		
		Let $z_-$ denote the number of $j$ for which $k_j=0$ and $j<i$, and $z_+$ the number of $j$ for which $k_j=0$ and $j>i$. 
		
		Then, let $p_+$ be the number of nondescent leaves $j$ which do occur as edge labels in $T$ and $j\geq i$. Similarly, let $p_-$ be the number of nondescent leaves $j$ which do occur as edge labels in $T$ and $j<i$. 
	\end{definition}
	
	Note that in the above definition, $z_+=z(i)$ from Definition \ref{def:z(i)}.
	
	Using these quantities, we will count the number of nondescent leaves of a tree, relative to a particular choice of $i$, in two different ways.
	
	\begin{lemma}
		Let $\kUnd$ be a reverse-Catalan composition, $w$ be a word of composition $\kUnd$, and $T=\tree(w)$. Fix an $i\in[n]$ such that $k_i\neq0$. Then,
		\begin{equation}
		\lnd+\rnd+\ell_w(i)-1=z_++z_-+p_+ +p_-+1.
		\label{eq:leaves}
		\end{equation}
		
		\label{lem:2leaves}
	\end{lemma}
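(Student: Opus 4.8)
The plan is to read both sides of \eqref{eq:leaves} as two different counts of a single quantity: the total number $N$ of nondescent leaves of $T=\tree(w)$. The left-hand side will arise from counting nondescent leaves according to their \emph{position} relative to the rightmost run of $i$'s, and the right-hand side from counting them according to their \emph{label}. The only structural inputs needed are Definition \ref{def:tr} (descent leaves take their left-edge label in Step 5; nondescent leaves are labeled in Step 7) together with the earlier lemma guaranteeing that, for a $2{-}1{-}2$-avoiding word, $\tree(w)$ uses each of $a,b,c,1,\ldots,n$ exactly once.

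For the positional count I would partition the nondescent leaves by whether they lie to the left of $\mathbf{i_l}$, strictly between $\mathbf{i_l}$ and $\mathbf{i_r}$, or to the right of $\mathbf{i_r}$. Since $\mathbf{i_l}$ and $\mathbf{i_r}$ bound a maximal block of $\ell_w(i)$ consecutive edges all labeled $i$, the $\ell_w(i)-1$ leaves strictly between them each sit between two edges of equal label, hence are nondescent by Step 4 of Definition \ref{def:tr}. The nondescent leaves outside this block are exactly those counted by $\lnd$ (left of $\mathbf{i_l}$) and $\rnd$ (right of $\mathbf{i_r}$), with the root leaves $a,b$ correctly excluded since they are not nondescent. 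This yields $N=\lnd+(\ell_w(i)-1)+\rnd$, the left-hand side of \eqref{eq:leaves}.

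For the label count I would use that each of the $n+1$ non-root leaves carries a distinct label from $\{c,1,2,\ldots,n\}$. By Step 5 of Definition \ref{def:tr}, every descent leaf is labeled by the edge immediately to its left, so any label that never appears as an edge cannot occur on a descent leaf. Hence $c$ is always a nondescent leaf, contributing the $+1$; each $j\in[n]$ with $k_j=0$ is forced to be nondescent, contributing $z_-+z_+$ leaves; and every remaining nondescent leaf has a label $j$ with $k_j\neq0$, i.e.\ one that does occur as an edge, and these are exactly the $p_-+p_+$ leaves. As these three families are disjoint and exhaust the nondescent leaves, $N=1+(z_-+z_+)+(p_-+p_+)$, the right-hand side. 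Equating the two expressions for $N$ gives \eqref{eq:leaves}.

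I do not expect a serious obstacle here; the work is the bookkeeping of confirming that both the three positional regions and the three label families partition the nondescent leaves with no overlap or omission. The delicate points to verify are that the middle region contributes precisely $\ell_w(i)-1$ (no stray descent leaf and no missed nondescent leaf between equal $i$-edges), that $a$ and $b$ are excluded from all four terms $\lnd,\rnd,z_\pm,p_\pm$, and that $p_-+p_+$ captures exactly the nondescent leaves whose label also appears as an edge. The main conceptual step, rather than any computation, is simply recognizing that both sides count $N$; once that is in view, each count follows directly from Definition \ref{def:tr} and the single-use-of-labels lemma.
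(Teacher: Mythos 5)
Your proposal is correct and takes essentially the same approach as the paper: the paper's proof also double-counts the nondescent leaves of $\tree(w)$, obtaining the left-hand side by position (left of $\mathbf{i_l}$, the $\ell_w(i)-1$ leaves inside the rightmost run of $\mathbf{i}$'s, and right of $\mathbf{i_r}$) and the right-hand side by label value ($c$, the labels $j$ with $k_j=0$, and the nondescent labels that occur as edges). Your added verifications (the middle leaves are nondescent because equal adjacent edge labels give a weak ascent, and $a,b$ lie outside all the counts) are just slightly more explicit bookkeeping than the paper records.
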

	
	\begin{proof}
		We first will count the nondescent leaves of $T$ in terms of their position in the tree. 
		
		First, $\ell_w(i)$ as defined above is the number of edges in the rightmost consecutive group of $\mathbf{i}$'s. So, there are $\ell_w(i)-1$ nondescent leaves between $\mathbf{i_l}$ and $\mathbf{i_r}$. Then, $\lnd$ and $\rnd$ denote the number of nondescent leaves left of $\mathbf{i_l}$ and right of $\mathbf{i_r}$, respectively, by definition. So, the total number of nondescent leaves is $\lnd+\rnd+\ell_w(i)-1$.
		
		Alternatively, we can count the nondescent leaves in terms of their label values. First, any $j$ for which $k_j=0$ is a nondescent leaf, which contributes $z_++z_-$. Next, we have $p_++p_-$ nondescent leaves $j$ which do occur as edge labels, split up by whether and $j\geq i$ or $j<i$. 
		Finally, $c$ is also a nondescent leaf not included in the above categories. Since every leaf label either does label an edge or it does not, we have that the total number of nondescent leaves is $z_++z_-+p_+ +p_-+1$.
		
		Thus, we have proven \eqref{eq:leaves}.
	\end{proof}
	
	\begin{lemma}
		The $p_+$ nondescent leaves that slide in $T$ and are weakly larger than $i$ all occur to the right of $\mbf{i_r}$.
		\label{lem:largerToRight}
	\end{lemma}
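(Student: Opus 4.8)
The plan is to reduce the statement to two facts about the positions of leaves in the caterpillar $T=\tree(w)$. Throughout, ``to the right of $\mbf{i_r}$'' means at a spine position past the rightmost edge labeled $\mbf i$, with the root drawn on the left as usual. The first fact locates the leaf labeled $i$: since $k_i\neq0$ the edge $\mbf{i_r}$ exists, and by Lemma \ref{lem:pathtoa} every edge labeled $\mbf i$---in particular $\mbf{i_r}$---lies on the path from the leaf $i$ to $a$. In a caterpillar this path runs leftward from the leaf $i$, so $\mbf{i_r}$ is to the left of the leaf $i$; equivalently, \emph{the leaf $i$ lies strictly to the right of $\mbf{i_r}$}.

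The second fact is the one already extracted inside the proof of Lemma \ref{lem:avoids}: \emph{if $x$ is a nondescent leaf that slides, then $x$ is smaller than every leaf to its right}. I would re-derive it in one line. Writing $\mbf e$ for the edge immediately right of $x$, Lemma \ref{lem:EverSlide} gives $x\le\mbf e$ and Lemma \ref{lem:pathtoa} forbids $x=\mbf e$, so $x<\mbf e$. Then, at the step of the $\kUnd$-slide labeling of Definition \ref{def:k-slide} where $\mbf e$ is labeled, the leaf $x$ is the unique branch on the $a$-side of $\mbf e$ other than the one containing $a$, so $m_a=x$, and the validity inequality $m_\ell\ge m_a=x$ together with the distinctness of labels forces every leaf to the right of $x$ to exceed $x$.

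With both facts in hand, let $j\ge i$ be a nondescent leaf that slides, and suppose toward a contradiction that the leaf $j$ is \emph{not} to the right of $\mbf{i_r}$, i.e.\ it lies to the left of $\mbf{i_r}$. If $j=i$ this contradicts the first fact outright, since the leaf $i$ lies to the right of $\mbf{i_r}$. If $j>i$, then by the first fact the leaf $i$ lies to the right of $\mbf{i_r}$ and hence to the right of the leaf $j$; but the second fact then forces $i>j$, contradicting $j>i$. In either case we reach a contradiction, so every nondescent sliding leaf $j\ge i$ lies to the right of $\mbf{i_r}$---which is exactly the assertion about the $p_+$ such leaves.

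The only delicate point is the second fact, whose derivation hinges on correctly identifying, after the edge contractions in Definition \ref{def:k-slide}, that $x$ is the sole non-$a$ branch on the $a$-side of its right edge at the moment that edge slides (checking in particular that the edge to the left of $x$ has not yet been contracted, which holds because it is weakly smaller than $\mbf e$). This bookkeeping is routine in a caterpillar and is precisely the computation already carried out inside the proof of Lemma \ref{lem:avoids}, so no new ingredient is required; everything else is the short positional contradiction above.
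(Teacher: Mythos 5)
Your proof is correct, but it reaches the conclusion by a genuinely different mechanism than the paper. The paper's proof is word-level: assuming the leaf $j>i$ sits left of $\mbf{i_r}$, it places the rightmost edge $\mbf{j}$ left of $\mbf{i_r}$, invokes Lemma~\ref{lem:descEdgeDesc} to get an ascent immediately after that edge, and then exhibits a forbidden $23{-}\overline{2}{-}1$ occurrence ending at $\mbf{i_r}$ --- so the contradiction comes from the pattern-avoidance hypothesis on $w$. Your proof is tree-level: you never touch the patterns, and instead derive the contradiction from slide-validity, via the fact that a nondescent leaf that slides is smaller than every leaf to its right (your Fact 2, obtained from Lemma~\ref{lem:EverSlide}, Lemma~\ref{lem:pathtoa}, and one comparison in Definition~\ref{def:k-slide}); this is the same fact the paper asserts, with comparable brevity, inside the proof of Lemma~\ref{lem:avoids}, so you are re-deriving an ingredient the paper uses elsewhere rather than the one it uses here. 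The trade-off is which implicit standing hypothesis gets consumed: your argument needs $T$ to actually lie in $\Cat^\psi(\kUnd)$ (Lemmas~\ref{lem:pathtoa} and \ref{lem:EverSlide} require a valid slide tree), whereas the paper's needs $w\in\Av_{\kUnd}(2{-}1{-}2,\,23{-}\overline{2}{-}1)$; at both places Lemma~\ref{lem:largerToRight} is invoked (the forward direction of Theorem~\ref{thm:psiAvoidance}, and Case 3 of its converse, where validity holds by induction), both hypotheses are available, so either proof serves. Two small points in your Fact 2 deserve the care you flagged: the conclusion that \emph{every} leaf right of $x$ exceeds $x$ does follow from the single comparison, because at the moment $\mbf e$ is labeled the entire portion of the caterpillar right of the contracted region forms one branch at $v_\ell$ counted by $m_\ell$; and in the case $\mbf d=\mbf e$ (equal labels) the uncontractedness of $\mbf d$ rests not on "weakly smaller" alone but on the fact that Definition~\ref{def:k-slide} labels equal-labeled edges from right to left. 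Neither is a gap, and the positional contradiction that finishes your argument is sound.
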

	
	\begin{proof}
		For $j=i$, clearly the leaf $i$ appears right of $\mbf{i_r}$ (regardless of whether $i$ is a nondescent leaf). Then, note that for all such $j>i$, the leaf $j$ must be to the right of $\mbf{i_r}$, or else there is an edge $\mbf j$ left of $\mbf{i_r}$, and the rightmost such edge $\mathbf{j}$ and the edge on its right (which is larger than $j$ by Lemma \ref{lem:descEdgeDesc}) would form a $23{-}\overline{2}{-}1$ pattern with $\mathbf{i_r}$.
	\end{proof}
	
	Now, we prove the $\psi$ version of Theorem \ref{thm:psiAvoidance}.
	
	\begin{theorem}
		Let $\kUnd$ be a reverse-Catalan composition of $n$, and let $w$ be a word of composition $\kUnd$. Then, $\tree(w)\in\Cat^\psi(\kUnd)$ if and only if $w\in\Av_{\kUnd}(2{-}1{-}2, 23{-}\overline{2}{-}1)$ and 
		\begin{equation} 
		\TRep_w(i)+\ell_w(i)\geq\z(i)
		\label{eq:psiCat}
		\end{equation}
		for all $i$ such that $k_i>0$.
	\end{theorem}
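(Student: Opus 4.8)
\emph{The plan is} to first dispense with the two avoidance conditions and then treat the inequality \eqref{eq:psiCat} as the genuine content. By Lemma \ref{lem:avoids}, if $\tree(w)\in\Cat^\psi(\kUnd)$ then $w$ avoids both $2{-}1{-}2$ and $23{-}\overline{2}{-}1$; conversely, if $w$ fails either avoidance, both sides of the claimed equivalence are false. Hence I may assume throughout that $w\in\Av_{\kUnd}(2{-}1{-}2,\,23{-}\overline{2}{-}1)$, so that $\tree(w)$ is a well-defined leaf-labeled caterpillar, and it remains to show that $\tree(w)$ is a valid $\psi$-slide tree if and only if \eqref{eq:psiCat} holds for every $i$ with $k_i>0$. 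The governing principle, exactly as in the proof of Theorem \ref{thm:rjust}, is that in the $\psi$-slide algorithm on a caterpillar every comparison pits a nondescent leaf on the left against the minimal leaf to its right; thus the tree is valid precisely when each such nondescent leaf is weakly smaller than everything to its right at the moment it is compared, equivalently when the nondescent leaves appear in weakly increasing order and the smallest leaf to the right of each internal edge is always a nondescent leaf.

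For the implication assuming \eqref{eq:psiCat}, I would adapt the inductive labeling argument of Theorem \ref{thm:rjust}. There, right-justification guaranteed a surplus of small labels so that Step 7 of Definition \ref{def:tr} always selects the smallest available label, forcing the nondescent leaves into increasing order. In the general case this surplus is exactly what \eqref{eq:psiCat} supplies. Fixing $i$ with $k_i>0$, the two-sided count of Lemma \ref{lem:2leaves}, namely \eqref{eq:leaves}, records the nondescent leaves both by position ($\lnd+\rnd+\ell_w(i)-1$) and by label value ($z_++z_-+p_++p_-+1$), while Lemma \ref{lem:largerToRight} places the $p_+$ sliding leaves weakly larger than $i$ to the right of $\mbf{i_r}$. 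I would feed $\TRep_w(i)+\ell_w(i)\geq\z(i)=z_+$ into this count, together with the bound of Corollary \ref{cor:MAXm+j-1}, to show inductively that whenever a nondescent leaf is reached whose right edge exceeds it, a label smaller than that edge remains unused. This produces the weakly increasing order, and then Lemmas \ref{lem:descEdgeDesc} and \ref{lem:EverSlide} give that the smallest leaf to the right of each edge is a nondescent leaf, so every slide is valid and $\tree(w)\in\Slide^\psi(\kUnd)$.

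For the remaining implication, that validity forces \eqref{eq:psiCat}, I would argue the contrapositive: if \eqref{eq:psiCat} fails for some $i$, then $\tree(w)$ is not a slide tree. Lemma \ref{lem:rightmost_i_Psi} lets me replace $i$ by an index $j$ for which $\TRep_w(j)+\ell_w(j)<\z(j)$ and, crucially, no entry smaller than $j$ occurs to the right of $\mbf{j_r}$, which removes the scattered small entries that would otherwise complicate the analysis. Applying \eqref{eq:leaves} at this $j$ and comparing the number $\rnd$ of nondescent leaves right of $\mbf{j_r}$, which by Lemma \ref{lem:numb2} is $\TRep_w(j)$ plus the number of right-minima edges lying there, against the $z_+=\z(j)$ large zeros that a valid tree would be forced to place right of $\mbf{j_r}$, the deficiency $\TRep_w(j)+\ell_w(j)<\z(j)$ leaves too few admissible positions. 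Consequently some leaf smaller than $j$ must be pushed right of $\mbf{j_r}$ and ahead of the leaf $j$, so the slide along $\mbf{j_r}$ compares $j$ against a strictly smaller leaf and fails, giving $\tree(w)\notin\Slide^\psi(\kUnd)$.

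The hard part will be the bookkeeping that ties the geometric quantity $\rnd$ to the word statistics $\TRep_w(i)$ and $\ell_w(i)$ and the zero-count $\z(i)$ uniformly in both directions: \eqref{eq:leaves} must be combined with the exact count of right-minima positions from Lemma \ref{lem:numb2} and the localization of large labels from Lemma \ref{lem:largerToRight} so that the single scalar inequality \eqref{eq:psiCat} captures validity without leaving slack in the wrong place. The reduction in Lemma \ref{lem:rightmost_i_Psi} to an index with nothing smaller to its right is what makes the forward analysis tractable, since otherwise one must control leaves below $i$ sitting on both sides of $\mbf{i_r}$. I expect the inductive step of the converse, verifying that the inequality always leaves an admissible small label free at each nondescent leaf whose right edge is large, to demand the most careful accounting.
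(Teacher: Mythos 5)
Your reduction to the avoidance-free setting via Lemma \ref{lem:avoids} matches the paper, and your sketch of the direction ``validity forces \eqref{eq:psiCat}'' is close in spirit to the paper's: both use Lemma \ref{lem:rightmost_i_Psi} to pass to an index with nothing smaller to its right and both run the two-sided count \eqref{eq:leaves} of Lemma \ref{lem:2leaves} together with Lemma \ref{lem:largerToRight}. But the other direction, where you assume \eqref{eq:psiCat} and must produce a valid tree, rests on a false principle. Your ``governing principle'' --- that $\tree(w)$ is valid precisely when the nondescent leaves appear in weakly increasing order (equivalently, when Step 7 of Definition \ref{def:tr} always takes the smallest unused label) --- is the \emph{right-justified} characterization and does not hold for general $\kUnd$. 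The paper's own example after Definition \ref{def:tr} is a counterexample: $w=666224$ has content $(0,2,0,1,0,3)$, which is reverse-Catalan; $w$ avoids both patterns and satisfies \eqref{eq:psiCat} (for $i=2$: $0+2\geq 2$; for $i=4$: $0+1\geq 1$; for $i=6$: $1+3\geq 0$), and indeed $\tree(w)\in\Slide^\psi(0,2,0,1,0,3)$; yet Step 7 is forced to use the \emph{second} smallest label at one point, so the nondescent leaves read $c,1,3,2,5$ --- not weakly increasing. So no induction can show that \eqref{eq:psiCat} yields increasing nondescent labels; the implication you plan to prove is simply false, and Lemmas \ref{lem:2leaves}, \ref{lem:largerToRight} and Corollary \ref{cor:MAXm+j-1} cannot rescue it. The paper's actual argument for this direction is structurally different: it inducts on $\#\{(i<j)\mid k_i\neq 0,\ k_j=0\}$, with Theorem \ref{thm:rjust} as the base case, passing from $\kUnd=(\ldots,l,0,\ldots)$ to $\kUnd'=(\ldots,0,l,\ldots)$ by replacing every letter $i-1$ of $w$ with $i$ (noting \eqref{eq:psiCat} becomes strict for that $i$), and then showing $\tree(w)$ agrees with $\tree(w')$ up to possibly swapping the leaves $i-1$ and $i$, with a three-case analysis verifying every slide survives the swap. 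That swap mechanism is exactly what accommodates out-of-order nondescent labels, and it is entirely missing from your proposal.

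A secondary problem sits at the end of your other direction. After the counting, the paper's contradiction lives on the \emph{left} of $\mbf{i_l}$: the inequality $z_-+p_-+1<\lnd$ forces a nondescent leaf $d\geq i$ to the left of $\mbf{i_l}$, and a two-case analysis on the edge immediately left of $\mbf{i_l}$ exhibits an invalid slide. You instead conclude that a leaf smaller than $j$ is pushed right of $\mbf{j_r}$, so that ``the slide along $\mbf{j_r}$ compares $j$ against a strictly smaller leaf and fails.'' That is the $\omega$-failure mode, not the $\psi$ one: in a $\psi$-slide the sliding label $\ell$ is never compared against other leaves (the condition $\ell\geq m_\ell$ is exactly what is dropped in the $\psi$ case of Definition \ref{def:k-slide}), and a leaf smaller than $j$ sitting right of $\mbf{j_r}$ is perfectly compatible with a valid $\psi$-tree --- in $\tree(666224)$ the leaves $3$ and $2$ both lie to the right of the rightmost edge labeled $\mbf 6$. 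So this final step would also have to be replaced, e.g.\ by the paper's left-side case analysis.
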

	
	\begin{proof}
		($\implies$) The pattern avoidance condition is given by Lemma \ref{lem:avoids}.
		
		We will prove the second condition by the contrapositive. That is, suppose $\tree(w)\in\Cat^\psi(\kUnd)$, $w\in\Av_{\kUnd}(2{-}1{-}2, 23{-}\overline{2}{-}1)$, but there is some edge label $i$ for which $w$ does not satisfy \eqref{eq:psiCat}. By Lemma \ref{lem:rightmost_i_Psi}, we may assume that no edges smaller than $i$ occur to the right of $\mathbf{i_r}$.
		
		By definition, we have $z(i)=z_+$. Next, consider the branch $B$ starting at $\mathbf{i_r}$, and recall that any branch of a tree has one more leaf than internal edge.  Let $|B|$ denote the number of internal edges in this branch.  Then if $D$ is the number of descent leaves in $B$, we have $$\rnd+D=|B|+1.$$
		
		Each descent leaf $x$ is just to the right of the rightmost $\bf{x}$ by definition.  So, rewriting the above equation as $\rnd-1=|B|-D$ we find that there are $\rnd-1$ remaining edges, which all are either repeats of edge labels to their right, or are the rightmost edges with a label, where the label is a nondescent leaf to the right of $i$.  We also assumed that no edges smaller than $\mathbf{i}$ occur right of $\mathbf{i_r}$, so $\rnd-1$ equals the total number of repeats right of $\mathbf{i_r}$ plus the number of nondescent leaves in $\tree(w)$ larger than $i$ that label an edge (which by Lemma \ref{lem:largerToRight} all occur right of $\mathbf{i_r}$). Symbolically, $\rnd-1=\TRep_w(i)+p_+$, or equivalently, $\TRep_w(i)=\rnd-p_+-1$. Using our assumption that \eqref{eq:psiCat} is not satisfied, and applying Lemma \ref{lem:2leaves}, we have: \begin{align*} \TRep_w(i)+\ell_w(i) &< z(i)\\ \rnd-p_+-1+\ell_w(i) &< z_+\\ \rnd+\ell_w(i)-z_+-p_+ &< 1\\ z_-+p_--\lnd+2 &< 1\\ z_-+p_-+1 &< \lnd. \end{align*} 
		
		Thus, the total number of nondescent leaves left of $\mathbf{i_l}$ is greater than the number of nondescent leaf labels smaller than $i$ (including $c$). Since the caterpillar labeling algorithm labels nondescent leaves from left to right, and always chooses one of the two smallest unused labels, there is a nondescent leaf $d\geq i$ to the left of $\mathbf{i_l}$. If $d=i$, then $i$ cannot label the edges $\mathbf{i_l}$ through $\mathbf{i_r}$, which contradicts our assumption. So, we may assume $d>i$. Let $\mathbf{j}$ be the edge immediately to the left of $\mathbf{i_l}$. By definition of $\mbf{i_l}$, $\mathbf{j}\neq i$. We now consider the two cases of whether or not $\mathbf{j}$ is larger than $i$:
		
		\textbf{Case 1:} $\mathbf{j}>i$. In this case, $\mathbf{j}$ must be a descent leaf, which in particular means that $d$ is to the left of $\mathbf{j}$ as well. Let $\mathbf{e}$ be the edge to the right of $d$. If $\mathbf{e}\neq i$, then in the slide algorithm $\mathbf{e}$ will compare $i$ (or something smaller if a smaller leaf than $i$ exists right of $\mathbf{e}$) to $d$, and since $d>i$ this would be an invalid slide. So, we must have $\mathbf{e}=i$. Then, since $\mathbf{e}<\mathbf{j}$, there must be a nondescent leaf $f$ between $\mathbf{e}$ and $\mathbf{j}$. Consider the step where $d$ was labeled in the caterpillar labeling algorithm. At that step, $d$ and $i$ were the two smallest remaining labels, since $d$ was the one chosen by the algorithm, $i<d$, and neither had yet been used. So, $f>i$. Thus, we can repeat the above argument, replacing $d$ with $f$. Since the tree is finite, we will eventually reach a step where $\mathbf{e}\neq i$, since there is a $\mathbf{j}\neq i$ between $d$ and $\mathbf{i_l}$. Thus, we do not have a valid slide tree.
		
		\textbf{Case 2:} $\mathbf{j}<i$. Moreover, we have $\mathbf{j}<i<d$. We again let $\mathbf{e}$ be the edge to the right of $d$. Since $\mathbf{e}$ cannot be both $i$ and $\mathbf{j}$, when $\mathbf{e}$ slides in the slide algorithm it will see at least one of the leaves $i$ and $j$ to its right, but since it is sliding to $d$ it will compare something smaller than $d$ to $d$, which is not allowed. Thus, we again do not have a valid slide tree.
		
		We have now shown that in both cases we cannot have a valid slide tree, which contradicts our original assumption. Thus, if $\tree(w)\in\Cat^\psi(\kUnd)$, $w$ must satisfy \eqref{eq:psiCat} for all $i$.
		
		($\impliedby$) We induct on how far $\kUnd$ is from being right-justified. More precisely, we induct on $\#\{(i<j)\,|\,k_i\neq 0, k_j=0\}$. The base case is the right justified case given in Theorem \ref{thm:rjust}. For the inductive step, it suffices to show that if the Proposition is true for $\kUnd'=(\ldots,0,l,\ldots )$, then it is also true for $\kUnd=(\ldots,l,0,\ldots)$.
		
		Let $w\in\Av_{\kUnd}(2{-}1{-}2,23{-}\overline{2}{-}1)$, for some $w$ satisfying \eqref{eq:psiCat} for all $i$ where $k_i\neq0$, and where $\kUnd$ is some non-right justified composition with $k_i=0$ and $k_{i-1}\neq0$ for some $i$. Let $\kUnd'=(k_1,\ldots,k_{i-2},0,k_{i-1},k_{i+1},\ldots,k_n)$, and let $w'=(i,i-1)w$ be the word obtained from $w$ by replacing all $i-1$'s with $i$'s. Then, since the letters of $w$ and $w'$ have all the same relative orders, and the only change in \eqref{eq:psiCat} for $w'$ is that for our chosen $i$, the right-hand side is 1 smaller. So, $w'$ satisfies the necessary condition, and moreover, \eqref{eq:psiCat} is strict for the chosen $i$. Thus, by the inductive hypothesis, $T'=\tree(w')\in\Cat^\psi(\kUnd')$.
		
		Next, we will use this to show that $T=\tree(w)\in\Cat^\psi(\kUnd)$. We will first show that the same leaf labels are arranged the same way in $T$ as in $T'$, except possibly for swapping the leaves $i$ and $i-1$. So, consider what could go wrong in the slide algorithm by either leaving the leaves alone or swapping $i-1$ and $i$. If we do not swap the leaves $i$ and $i-1$, then every comparison made in the slide algorithm is the same in both trees, and so all slides are valid. The only possible issue is that $i-1$ may not be able to label the appropriate edges. That is, in $T'$, $i-1$ might be to the left of an edge $\mathbf{i}$, or there may be an edge between the leaf $i-1$ and the rightmost edge $\mathbf{i}$ that is smaller than $i-1$. Alternatively, if we do swap $i$ and $i-1$, then $i-1$ is in the correct position, and every comparison using at most one of $i$ and $i-1$ is the same comparison. So, the only possible issue is if some edge in $T'$ compares $i$ to $i-1$, then $T$ would not be valid caterpillar tree. We consider different cases on where leaf $i-1$ is relative to leaf $i$ and $\mathbf{i_r}$ in $T'$.
		
		\textbf{Case 1:} $i-1$ is between $i$ and $\mathbf{i_r}$ in $T'$. In this case, we do not swap $i$ and $i-1$ when forming $T$. In the slide algorithm, every edge between $\mathbf{i_r}$ and $i$ is contracted before $i$ slides, so every edge between $i-1$ and $\mathbf{i_r}$ is also contracted. Thus, $i-1$ can slide across $\mathbf{i_r}$ in $T$.
		
		\textbf{Case 2:} $i-1$ is to the right of $i$. In this case, we swap $i$ and $i-1$. Since $i-1$ is to the right of $i$ in $T'$, no edge compared the two in $T'$. So, no edge compares $i-1$ and $i$ against each other in $T$ either, and so all comparisons in $T$ are valid as well.
		
		\textbf{Case 3:} $i-1$ is to the left of $\mathbf{i_r}$. In this case, we swap $i$ and $i-1$. We will show that the furthest left that $i-1$ can be in $T'$ is immediately to the left of $\mathbf{i_l}$. By a similar argument to the one we made in the forwards direction of this proof, the number of nondescent leaves right of $\mathbf{i_r}$ is at least the total number of repeats right of $\mathbf{i_r}$, plus the number of nondescent edge labels larger than $i$ (which by Lemma \ref{lem:largerToRight} all occur to the right of $\mathbf{i_r}$), plus one (because there is one more leaf than edge). That is, $\rnd\geq\TRep_w(i)+p_++1$. As noted above, for $w'$ and our chosen $i$, \eqref{eq:psiCat} is strict. So, combining these two inequalities and using Lemma \ref{lem:2leaves}, we have: \begin{align*} \rnd-p_+-1+\ell_w(i) &\geq \TRep_w(i)+\ell_w(i) \geq z_++1\\ \rnd+\ell_w(i) -z_+-p_+ &\geq 2\\ z_-+p_--\lnd+2 &\geq 2\\ z_-+p_- &\geq \lnd. \end{align*} 
		Thus, the number of nondescent leaves left of $\mathbf{i_l}$ is at most the total number of nondescent leaves (excluding $c$) less than $i$. We again note that in our $\tree()$ algorithm, we label the nondescent leaves from left to right, and at each step, we chose one of the two smallest remaining unused labels. So, the first time $i-1$ (the largest nondescent label smaller than $i$) would appear as an option would be the rightmost nondescent leaf left of $\mathbf{i_l}$. If the leaf immediately to the left of $\mathbf{i_l}$ is a nondescent leaf, then we are done. Otherwise, let $v$ be the rightmost nondescent leaf left of $\mbf{i_l}$. Since any leaves between $v$ and $\mathbf{i_l}$ are descent leaves, the edges between them (including $\mbf{i_l}$ itself) are strictly descreasing from left to right. Thus, the edge on the right of $v$ is strictly larger than $i-1$. So, when we label $v$ in Step 7 of Definition \ref{def:tr}, our options are two labels, the larger of which is at most $i-1$, both of whom are smaller than the edge to the right. So, we will choose the smaller of the two, and not label $v$ by $i-1$.
		
		So, we have shown that if $i-1$ is left of $\mathbf{i_r}$, then the edge to the immediate right of $i-1$ is an edge labeled $\mathbf{i}$. So, the (first) label that will compare something to $i-1$ is $i$ itself, which will compare some other label $l\notin\{i-1,i\}$ to $i-1$. After this point, every edge between $i-1$ and $i$ is contracted, so no other label can compare one to the other. If we swap $i-1$ and $i$ in $T$, then instead $i-1$ will compare $l$ to $i$, which is fine for the $\psi$-slide algorithm. Thus, swapping the leaves $i-1$ and $i$ when forming $T$ does not introduce any problems.
		
		Therefore, in any case, we can take the valid caterpillar tree $T'\in\Cat^\psi(\kUnd')$ to form another tree $T$ with edge sequence $w$ without introducing any disallowed slides, meaning $T\in\Cat^\psi(\kUnd)$.
	\end{proof}
	
	\subsection{Pattern avoidance conditions for $\Cat^\omega(\kUnd)$}
	
	In this last subsection, we us our characterization of $\psi$-caterpillars to characterize the $\omega$-caterpillar trees in $\Slwk$, including when $\kUnd$ is not right-justified.

	\begin{definition}
		For a word $w$ with content $\kUnd$, let $k(i,j)$ denote the number of $j$'s in the subword of $w$ right of the rightmost $i$. Define the number of repeats larger than $i$ right of the rightmost $i$ by $$\rep_w(i)=\sum_{j=i+1}^{n}\max((k(i,j)-1),0).$$
	\end{definition}
	
	\begin{lemma}
		Let $\kUnd$ be a reverse-Catalan composition, and let $w$ be a word of content $\kUnd$. If $\rep_w(i)< \z(i)$ for some $i$, then there is some $j$ for which $\rep_w(j)< \z(j)$ and the subword  of $w$ right of the rightmost $j$ contains no entries smaller than $j$.
		\label{lem:rightmost_i}
	\end{lemma}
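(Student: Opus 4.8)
The plan is to mimic the proof of Lemma \ref{lem:rightmost_i_Psi}, but to exploit the fact that $\rep_w$ counts only repeats of values \emph{larger} than $i$, which makes the key inequality cleaner than in the $\psi$ case. Given an $i$ with $\rep_w(i)<\z(i)$ (so in particular $i$ occurs in $w$), I would let $j$ be the value of the rightmost entry of $w$ that is weakly smaller than $i$. Since $i$ itself occurs, this entry exists and $j\le i$. If $j=i$, then nothing smaller than $i$ lies to the right of the rightmost $i$, and we are immediately done with this choice. So assume $j<i$.

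First I would check the positional claims. The distinguished entry of value $j$ is in fact the rightmost $j$: any $j$ further right would be another entry $\le i$ further right, contradicting the choice of its position. Moreover every entry strictly to its right has value $>i$, again by maximality; in particular nothing smaller than $j$ (indeed nothing $\le i$) occurs to the right of the rightmost $j$, which is the second conclusion we need. Also, the rightmost $j$ lies strictly to the right of the rightmost $i$, since the rightmost $i$ has value $i\ne j$.

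The next step is the inequality $\z(j)\ge\z(i)$, which is immediate: as $j<i$, the set of zero-positions right of $i$ is contained in the set of zero-positions right of $j$. The heart of the argument is then to show $\rep_w(j)\le\rep_w(i)$. Writing $k(j,l)$ and $k(i,l)$ for the number of $l$'s to the right of the rightmost $j$ and the rightmost $i$ respectively, the inclusion of the region right of the rightmost $j$ into the region right of the rightmost $i$ gives $k(j,l)\le k(i,l)$ for every value $l$, hence $\max(k(j,l)-1,0)\le\max(k(i,l)-1,0)$ termwise. The only discrepancy between the defining sums $\rep_w(j)=\sum_{l=j+1}^{n}\max(k(j,l)-1,0)$ and $\rep_w(i)=\sum_{l=i+1}^{n}\max(k(i,l)-1,0)$ is the extra terms $j<l\le i$ in the former; but these vanish, since no entry $\le i$ lies right of the rightmost $j$, so $k(j,l)=0$ there. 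Thus $\rep_w(j)=\sum_{l=i+1}^{n}\max(k(j,l)-1,0)\le\rep_w(i)$.

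Combining these, I obtain $\rep_w(j)\le\rep_w(i)<\z(i)\le\z(j)$, so $j$ is an index with $\rep_w(j)<\z(j)$ and with no entries smaller than $j$ to its right, exactly as required. The one point requiring care—and the main difference from the $\psi$ proof, where a balancing argument with the $\ell_w$ terms was needed—is the observation that the value range $j<l\le i$ contributes nothing to $\rep_w(j)$. This collapse of the value ranges is precisely what lets the region-inclusion inequality go through directly, without the correction terms present in the $\TRep_w+\ell_w$ version.
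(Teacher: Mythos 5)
Your proposal is correct and takes essentially the same route as the paper's proof: both choose $j$ to be the rightmost entry of $w$ weakly smaller than $i$, observe that everything to its right exceeds $i$ (giving the positional conclusion), and chain the inequalities $\rep_w(j)\leq\rep_w(i)<\z(i)\leq\z(j)$. Your write-up merely spells out in more detail the termwise comparison $k(j,l)\leq k(i,l)$ and the vanishing of the terms with $j<l\leq i$, which the paper compresses into one sentence.
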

	
	\begin{proof}
		For such an $i$, let $j$ be the rightmost entry in $w$ (weakly) smaller than $i$. Since $j\leq i$, $\z(j)\geq \z(i)$. Then, since $j$ is the rightmost entry right of $i$, there are no entries $l$ with $j<l<i$ right of $j$. So, everything after $j$ is also larger than $i$, so $\rep_w(j)\leq\rep_w(i)$. Thus, we have $\rep_w(j)\leq \rep_w(i)<\z(i)\leq \z(j)$. The second condition is satisfied by definition of how we chose $j$.
	\end{proof}
	
	Finally, we prove the full classification for the $\omega$ caterpillar trees.
	
	\begin{lemma}
		Let $w$ be a word of content $\kUnd$ and $T=\tree(w)\in\Slpk$. If in Step 7 of Definition \ref{def:tr}, the second smallest label was ever used for a leaf, then $T\notin\Slwk$.
		\label{lem:2nd_smallest_not_omega}
	\end{lemma}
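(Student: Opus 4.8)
The plan is to exhibit a single internal edge of $T$ whose label is forced by the (unique) $\psi$-slide labeling but which fails the extra inequality $\ell\ge m_\ell$ that distinguishes the $\omega$-slide rule from the $\psi$-slide rule in Definition \ref{def:k-slide}. Since $T\in\Slpk$ has a valid $\psi$-labeling realizing $w$ on its edges, and since the edge labeling of a slide tree is unique, it suffices to locate one edge at which that forced labeling violates the $\omega$-condition; this rules out any valid $\omega$-labeling and hence shows $T\notin\Slwk$.

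First I would unpack the hypothesis. Saying that the second smallest label was used in Step 7 of Definition \ref{def:tr} means there is a nondescent leaf $v$ at which the smallest unused label, say $i$, labels the edge immediately to the right of $v$; since the algorithm instead assigns the second smallest unused label $i'$, we have $v=i'>i$. Write $\mbf d$ for the edge immediately to the left of $v$ and $e=\mbf i$ for the edge immediately to its right. Because $v$ is a nondescent leaf, $\mbf d\le\mbf i=i<i'$. In particular $v$ sits between two internal edges, and in the edge word $w$ of $T$ the edge $e$ on its right carries the label $i$.

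Next I would analyze the moment the $\kUnd$-slide algorithm labels $e$. Let $u=v_a$ be the endpoint of $e$ towards $a$; the leaf $v$ hangs directly off $u$. The key computation is that $m_a=v=i'$ at this step. The vertex $u$ is incident only to $\mbf d$, the leaf $v$, and $e$, so once we know $u$ is not merged to the left by a contraction, its only branch not containing $a$ or $\ell=i$ is the single leaf $v$. To see $\mbf d$ is still uncontracted when $e$ is labeled: if $\mbf d<i$ it is labeled later, as labels are used in decreasing order; and if $\mbf d=i$ it lies farther from leaf $i$ along the path to $a$ (recall by Lemma \ref{lem:pathtoa} that all edges labeled $\mbf i$ lie on that path, with leaf $i$ at its right end), so it is labeled after $e$ within the $\ell=i$ rounds. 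Either way $u$ is not merged to the left, giving $m_a=v=i'$.

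Finally I would combine this with validity of the $\psi$-labeling. Since $T\in\Slpk$, the slide at $e$ satisfies the $\psi$-condition $m_\ell\ge m_a=i'$, so $m_\ell\ge i'>i=\ell$. But the $\omega$-rule additionally demands $\ell\ge m_\ell$, which would force $i\ge m_\ell\ge i'$, contradicting $i'>i$. Hence the forced edge labeling fails the $\omega$-condition at $e$, and as this labeling is the unique one, no $\omega$-slide labeling of $T$ exists, so $T\notin\Slwk$. The main obstacle is the bookkeeping of the third paragraph: confirming that $m_a$ is exactly the leaf $v$, and not a smaller leaf pulled in by a contraction, which is precisely why the verification that $\mbf d$ remains uncontracted at the relevant step is the crux of the argument.
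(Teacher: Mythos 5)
Your proposal is correct and takes essentially the same route as the paper's proof: both single out the edge $\mbf{e}$ immediately to the right of the exceptional nondescent leaf $v$, note that $\mbf{e}<v$ and that $\mbf{e}$ is labeled before the edge on $v$'s left (so the comparison at that step has $m_a=v$), and conclude that the $\omega$-condition $\ell\geq m_\ell\geq m_a$ fails there, so $T\notin\Slwk$. Your write-up merely makes explicit what the paper leaves implicit, namely the case analysis ($\mbf{d}<i$ versus $\mbf{d}=i$, via Lemma \ref{lem:pathtoa}) showing the left edge is still uncontracted, and the observation that a single failed check in the deterministic labeling procedure rules out any $\omega$-labeling.
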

	
	\begin{proof}
		Let $v$ be a leaf that was labeled by the second smallest remaining unused label in Step 7 of Definition \ref{def:tr}, and $\mbf e$ the edge on its right. For this to have happened, we must have $\mbf{e}<v$. However, since $v$ is a nondescent leaf, $\mbf e$ slides before the edge to the left of $v$, and so $\mbf e$ slides to $v$. Since $\mbf e<v$, $\mbf e$ is not a valid $\omega$-slide, and thus $T\notin\Slwk$.
	\end{proof}
	
	As a consequence of this, we know that in any $\omega$ caterpillar tree, the nondescent leaves always increase from left to right.
	
	\begin{lemma}
		Let $T\in\Cat^\psi(\kUnd)$, and $i$ a label such that $k_i\neq0$. Then the number of leaves right of $\mbf{i_r}$ that do not label any edges weakly right of $\mbf{i_r}$ is $\TRep_w(i)+1$.
		\label{lem:label_out_of_branch}
	\end{lemma}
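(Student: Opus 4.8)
The plan is to translate everything into the linear (caterpillar) structure and then count leaves by their \emph{label value} rather than by their position. Write the internal edges of $T$ as $e_1,\ldots,e_n$ from the root outward, so that $w=w_1\cdots w_n$ with $w_q$ the label of $e_q$, and let $p$ be the position of the rightmost $i$, i.e.\ $\mbf{i_r}=e_p$. The leaves lying right of $\mbf{i_r}$ are precisely the leaves of the branch $B$ starting at $\mbf{i_r}$, which contains the edges $e_p,\ldots,e_n$. Since any branch has exactly one more leaf than internal edge, $B$ has $(n-p+1)+1=n-p+2$ leaves. Thus it suffices to count how many of these leaves \emph{do} label some edge among $e_p,\ldots,e_n$ and subtract from $n-p+2$.

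First I would count the leaves of $B$ that label an edge weakly right of $\mbf{i_r}$ by organizing them according to their value. Each internal edge carries a unique slide label, and every edge whose label value is $s$ is slid by the single leaf $s$; conversely, by Lemma \ref{lem:pathtoa} every edge labeled $\mbf s$ lies on the path from the leaf $s$ to $a$, so if leaf $s$ labels some $e_q$ with $q\ge p$, then $e_q$, and hence the leaf $s$, lies on the far side of $e_p$, i.e.\ inside $B$. Consequently the leaves of $B$ that label an edge in $\{e_p,\ldots,e_n\}$ are in bijection with the distinct label values occurring on $e_p,\ldots,e_n$. Writing $d$ for the number of distinct values among $e_{p+1},\ldots,e_n$, and noting that $i$ itself occurs on $e_p$ but on none of the later edges (as $e_p=\mbf{i_r}$ is the rightmost $i$), this set of distinct values has size $d+1$. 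Hence exactly $d+1$ leaves of $B$ label an edge weakly right of $\mbf{i_r}$.

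It then remains to identify $d$ with $\TRep_w(i)$. The $n-p$ edges $e_{p+1},\ldots,e_n$ all carry values $\neq i$, so $\sum_{j\neq i}k(i,j)=n-p$, while the number of values $j$ with $k(i,j)\ge 1$ is $d$; therefore $\TRep_w(i)=\sum_{j\neq i}\max(k(i,j)-1,0)=(n-p)-d$. Subtracting the count of labeling leaves from the total then gives $(n-p+2)-(d+1)=(n-p+1)-d=\TRep_w(i)+1$, as claimed. The only delicate point is the bijection in the second paragraph: a single leaf may slide across several of the edges $e_p,\ldots,e_n$, so it must be counted only once. This is exactly why grouping by distinct label values, each value corresponding to a single leaf via Lemma \ref{lem:pathtoa}, rather than counting edge by edge, is the crucial move that makes the count come out cleanly.
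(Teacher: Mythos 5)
Your proposal is correct and follows essentially the same route as the paper's proof: count the leaves of the branch $B$ starting at $\mbf{i_r}$ as (number of internal edges of $B$) $+\,1$, identify the leaves of $B$ that label an edge of $B$ with the distinct label values occurring in $B$, and observe that the excess (edges minus distinct values) is exactly $\TRep_w(i)$. The paper states these steps more tersely; your version merely makes explicit the bijection via Lemma \ref{lem:pathtoa} and the positional bookkeeping, which is a welcome clarification but not a different argument.
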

	
	\begin{proof}
		Let $B$ be the branch starting at $\mbf{i_r}$. First, any branch has one more leaf than internal edges. Each distinct edge label in $B$ corresponds to exactly one leaf label. So, the number of leaves that do not label any edges in $B$ is the number of repeated edges in $B$, plus one. That is, there are $\TRep_w(i)+1$ such leaves.
	\end{proof}
	
	Now, we prove the $\omega$ version of Theorem \ref{thm:psiAvoidance}.
	
	\begin{theorem}
		Let $\kUnd$ be a reverse-Catalan composition, and let $w$ be a word of composition $\kUnd$. Then, $\tree(w)\in\Cat^\omega (\kUnd)$ if and only if $w\in\Av_{\kUnd}(2{-}1{-}2, 23{-}\overline{2}{-}1)$ and \begin{equation}
		\rep_w(i)\geq \z(i)
		\label{eq:omegaCat}
		\end{equation}
		for all $i$ such that $k_i>0$.
		\label{thm:omegaAvoidance}
	\end{theorem}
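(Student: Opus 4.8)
The plan is to piggyback on the $\psi$-case of Theorem \ref{thm:psiAvoidance} (established in the previous subsection) together with the inclusion $\Slwk\subseteq\Slpk$ of Lemma \ref{lem:omega<psi}, so that the pattern-avoidance half of the statement is automatic and only the \emph{extra} inequality $\ell\geq m_\ell$ from the $\omega$-case of Definition \ref{def:k-slide} needs analysis. I first record the elementary comparison $\TRep_w(i)\geq\rep_w(i)$ together with $\ell_w(i)\geq 1$ whenever $k_i>0$; hence $\rep_w(i)\geq\z(i)$ forces $\TRep_w(i)+\ell_w(i)\geq\z(i)$, so \eqref{eq:omegaCat} implies the $\psi$-inequality and, by the $\psi$-theorem, $\tree(w)\in\Cat^\psi(\kUnd)$. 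Thus in both directions I may assume $w$ avoids the two patterns and that $\tree(w)$ is already a valid $\psi$-slide tree, reducing the problem to deciding when this fixed $\psi$-tree also obeys the $\omega$-rule. The principal tool is Lemma \ref{lem:label_out_of_branch}, which counts the leaves of the branch $B$ starting at $\mbf{i_r}$ that label no edge of $B$ as exactly $\TRep_w(i)+1$.

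For the forward direction I argue the contrapositive: assuming $\tree(w)\in\Cat^\psi(\kUnd)$ but $\rep_w(i)<\z(i)$ for some $i$ with $k_i>0$, I show $\tree(w)\notin\Slwk$. By Lemma \ref{lem:rightmost_i} I may replace $i$ by an index for which additionally no letter smaller than $i$ lies to the right of $\mbf{i_r}$; then $\TRep_w(i)=\rep_w(i)$, every edge of $B$ has value $\geq i$, and every labeling leaf of $B$ other than leaf $i$ (which labels $\mbf{i_r}$) has value $>i$. So $B$ has $\rep_w(i)+1$ non-labeling leaves, while there are $\z(i)\geq\rep_w(i)+1$ zero-leaves of value exceeding $i$, each nondescent and labeling no edge. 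I split into two cases. If every non-labeling leaf of $B$ has value $>i$, then leaf $i$ is the unique minimal leaf of $B$, and its first slide along $\mbf{i_r}$ compares $\ell=i$ against $m_\ell=\min(B\setminus\{i\})>i$, violating the $\omega$-rule outright. Otherwise some non-labeling leaf of $B$ has value $\leq i$, so at most $\rep_w(i)$ of the large zero-leaves lie in $B$; since there are at least $\rep_w(i)+1$ of them, some zero-leaf $j>i$ lies left of $\mbf{i_r}$. This $j$ is nondescent and lies to the left of the nondescent leaf of value $\leq i$ in $B$, so a larger nondescent label precedes a smaller one; Step 7 of Definition \ref{def:tr} must then have used the second-smallest available label, and Lemma \ref{lem:2nd_smallest_not_omega} gives $\tree(w)\notin\Slwk$. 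Either way $\tree(w)\notin\Cat^\omega(\kUnd)$.

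For the converse I must show that $\rep_w(i)\geq\z(i)$ for all $i$ with $k_i>0$ guarantees that every slide of the already-valid $\psi$-tree $\tree(w)$ meets $\ell\geq m_\ell$. Since enlarging the $\ell$-side only lowers $m_\ell$, the binding constraint for each label is its first slide along $\mbf{\ell_r}$, where $m_\ell=\min(B_\ell\setminus\{\ell\})$; it therefore suffices to preclude both failure modes above, namely leaf $\ell$ being the minimum of its own branch $B_\ell$ and the use of a second-smallest label. I intend to prove this by induction on the distance of $\kUnd$ from being right-justified, paralleling the backward direction of the $\psi$-theorem: the base case is the right-justified case, where $\z(i)\equiv 0$ and the label-placement of Theorem \ref{thm:rjust} already keeps the small leaves positioned so that neither failure occurs. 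For the inductive step I pass from $\kUnd=(\dots,l,0,\dots)$ to $\kUnd'=(\dots,0,l,\dots)$ through the word $w'=(i,i{-}1)w$, apply the inductive hypothesis to obtain $\tree(w')\in\Cat^\omega(\kUnd')$, and then transfer validity back to $\tree(w)$ by swapping the leaves $i{-}1$ and $i$, checking the additional $\omega$-inequality (with the aid of Lemma \ref{lem:descentleaf} for descent leaves) on top of the $\psi$-checks already made.

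The main obstacle is exactly this backward direction. Whereas the forward direction turns a single counting inequality into one of two concrete obstructions, the converse must certify $\ell\geq m_\ell$ at \emph{every} slide, and the delicate point is that Lemma \ref{lem:2nd_smallest_not_omega} detects only one of the two failure modes: there are words, such as the paper's $666224$ and also $55533$, for which Step 7 uses only smallest labels yet the resulting $\psi$-caterpillar still fails the $\omega$-rule because a descent leaf $\ell$ is minimal in its branch. Consequently the inductive leaf-swap must be shown to preserve the condition that $\ell$ is not the minimum of $B_\ell$ as well, and the heart of the argument is verifying that the single hypothesis $\rep_w(i)\geq\z(i)$, which counts only the repeats strictly larger than $i$, is precisely calibrated to this geometric requirement.
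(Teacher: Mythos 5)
Your forward direction is essentially the paper's argument, reorganized as a contrapositive: after reducing via Lemma \ref{lem:rightmost_i}, both you and the paper count the $\TRep_w(i)+1=\rep_w(i)+1$ leaves of the branch at $\mbf{i_r}$ that label no edge weakly right of $\mbf{i_r}$ (Lemma \ref{lem:label_out_of_branch}), play them off against the $\z(i)$ zero-labels, and invoke Lemma \ref{lem:2nd_smallest_not_omega}; your two-case split is a finer partition of the same contradiction, and it is correct.

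The backward direction, however, contains a genuine gap, and it is the one you flag yourself but do not close. The paper needs no induction here: for each label $j$ it applies Lemma \ref{lem:label_out_of_branch} directly to $w$, observes that any of the $\TRep_w(j)+1$ non-labeling leaves right of $\mbf{j_r}$ that exceed $j$ must be among the $\z(j)$ labels that slide nowhere (a leaf of that branch labeling an edge left of $\mbf{j_r}$ is forced to be smaller than $j$ by $\psi$-validity), and concludes from $\TRep_w(j)+1\geq\rep_w(j)+1>\z(j)$ that some leaf smaller than $j$ lies right of $\mbf{j_r}$; hence $\mbf{j_r}$ is a valid $\omega$-slide, which together with $\psi$-validity finishes the proof in a few lines. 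Your proposed induction on the distance from right-justification cannot be completed as described, because the inductive hypothesis is too weak for the transfer step. Concretely, passing from $w'=(i,i{-}1)w$ back to $w$, the run of edges labeled $i$ in $\tree(w')$ becomes labeled $i-1$ in $\tree(w)$, and what must be verified is that the branch at the rightmost such edge contains a leaf smaller than $i-1$; but $\tree(w')\in\Cat^\omega(\kUnd')$ only guarantees a leaf smaller than $i$ there, and that witness may be the leaf $i-1$ itself. This failure is real: for $\kUnd'=(0,0,2,2)$ and $w'=4433$ one checks $\tree(w')\in\Cat^\omega(\kUnd')$, yet the swapped word $w=4422$ of content $\kUnd=(0,2,0,2)$ gives $\tree(w)\in\Cat^\psi(\kUnd)\setminus\Cat^\omega(\kUnd)$, since the branch at $\mbf{2_r}$ has leaves $\{2,3\}$ only. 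So $\omega$-validity simply does not transfer through the leaf swap; what rules this out for words actually satisfying your hypothesis is that \eqref{eq:omegaCat} holds \emph{strictly} at $i$ for $w'$, and converting that strict inequality into a second small leaf in the branch requires precisely the counting argument with Lemma \ref{lem:label_out_of_branch} that the paper runs directly on $w$. The induction therefore buys nothing — its key step is the direct proof — and as written, with that verification deferred to ``the heart of the argument,'' the converse implication is not established.
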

	
	\begin{proof}
		($\implies$) The avoidance condition is satisfied by Lemma \ref{lem:avoids} and the fact that $\Cat^\omega(\kUnd)\subseteq\Cat^\psi (\kUnd)$. 	Next, assume that $\tree(w)$ is a valid $\omega$-slide tree. Suppose, aiming for contradiction, that $\rep_w(i)< \z(i)$ for some $i$ with $k_i\neq0$. Then by Lemma \ref{lem:rightmost_i}, there is some $j$ for which $\rep_w(j)< \z(j)$ and there are no entries smaller than $j$ to the right of $j$ in $w$. We will show that $j$ cannot perform an $\omega$ $\kUnd$-slide.
		
		For $j$ to be able to perform an $\omega$ slide, it is necessary for there to be a leaf $m$ to the right of the edge $\mbf j$ with $m<\mbf j$. 
		Such an $m$ would be a nondescent leaf, since there is no edge $m<\mbf j$ to the right of $j$. Note that for any $l$, if $k_l=0$, then $l$ is a nondescent leaf. 
		This means that at least $\z(j)$ nondescent leaf labels are larger than $j$. By Lemma \ref{lem:2nd_smallest_not_omega}, this means the $z(j)$ rightmost nondescent leaves are all larger than $j$. Since $\mbf{j_r}$ has no edges smaller than $j$ to its right, $\rep_w(j)=\TRep_w(j)$. So, by Lemma \ref{lem:label_out_of_branch}, there are $\rep_w(j)+1$ leaves right of $\mbf{j_r}$ that do not label an edge weakly right of $\mbf{j_r}$. Since we assumed that $\z(j)\geq \rep_w(j)+1$, there is no room for a nondescent label $m<j$ to the right of $\mbf{j_r}$, and so $j$ cannot perform an $\omega$-slide. Thus, $\tree(w)\notin\Cat^\omega(\kUnd)$, and this concludes the forwards implication. 
		
		($\impliedby$) Suppose that $w\in\Av_{\kUnd}(2{-}1{-}2, 23{-}\overline{2}{-}1)$ and $\rep_w(i)\geq \z(i)$ for all $i$ where $k_i\neq0$. Since \eqref{eq:omegaCat} is a stronger condition than \eqref{eq:psiCat}, Theorem \ref{thm:psiAvoidance} gives us that $\tree(w)\in\Cat^\psi(\kUnd)$. 
		We want to show that $\tree(w)\in\Cat^\omega(\kUnd)$ as well. To do this, it suffices to show that for each $j$ that labels an edge, $\mbf{j_r}$ is a valid $\omega$-slide, since that ensures that $j$ is larger than the largest thing it ever compares in the slide algorithm.
		
		By Lemma \ref{lem:label_out_of_branch}, there are $\TRep_w(j)+1$ leaf labels right of $\mbf{j_r}$ that do not label an edge weakly right of $\mbf{j_r}$. If any of these leaves label an edge to the left of $j$, they must be smaller than $j$, since $\tree(w)\in\Slpk$. So, any of these $\TRep_w(j)+1$ leaves that are larger than $j$ must not label any edges at all. Since there are exactly $\z(j)$ of these in the whole tree, and $\TRep_w(j)+1\geq\rep_w(j)+1>\z(j)$, there is at least one leaf to the right of $j$ smaller than $j$, so $j$ is a valid $\omega$-slide. Thus, $\tree(w)\in\Cat^\omega(\kUnd)$.
	\end{proof}

	\section{The case $\kUnd=(1,1,\ldots,1)$}
	
	We now show that in the case $\kUnd=(1,1,1,\ldots,1)$, we can express the iterative bijection more directly. Recall Proposition \ref{prop:catAvoid23-1}, in which \cite{GGL22} define a bijection between $\Cat(1,1,\ldots,1)$ and the set of $23{-}1$-avoiding permutations. Our work in this section will extend this map to a full bijection between $\Slwl$ and $S_n$.
	
	\begin{definition}
		Let $\tau\in S_n\setminus\Av_n(23{-}1)$. We call $x,y,z$ the \textbf{earliest} instance of $23{-}1$ if $\tau=wxyuzv$, where $w$, $u$, and $v$ are (possibly empty) words, $x$, $y$, and $z$ form a $23{-}1$ pattern, and $\tau$  has no instances of $23{-}1$ that either use letters from $w$, or use $x$, $y$, and a letter from $u$.
	\end{definition}
	
	\begin{lemma}
		Let $B$ be a branch of a slide tree $T\in\Slwl$. Then, all leaves of $B$ except $\min(B)$ slide within $B$.
		\label{lem:minNotSlide}
	\end{lemma}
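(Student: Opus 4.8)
The plan is to combine a local edge-counting argument with the inequality that is special to the $\omega$-slide rule. Since $\kUnd=(1,1,\ldots,1)$, each leaf in $\{1,\dots,n\}$ labels exactly one internal edge under the slide algorithm, so the sliding leaves are in bijection with the internal edges of $T$; I would apply this bijection locally to $B$.

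First I would set up the count. A branch $B$ with $p$ leaves has exactly $p-1$ internal edges, namely its base edge $\mbf e$ together with the internal edges of the two sub-branches at the far endpoint of $\mbf e$. Moreover, every internal edge $\mbf g$ of $B$ is labeled by a leaf of $B$: by Lemma \ref{lem:pathtoa} the label of $\mbf g$ is the leaf that slides across $\mbf g$, and such a leaf has $\mbf g$ on its path to $a$, which forces it to lie on the non-$a$ side of $\mbf g$, i.e.\ inside $B$. Since the global leaf-to-edge correspondence is a bijection, the $p-1$ internal edges of $B$ receive $p-1$ distinct leaf labels, all drawn from $B$. Hence, if $c\notin B$ then all $p$ leaves of $B$ slide and exactly one of them labels an edge outside $B$; if $c\in B$ then $\min(B)=c$ does not slide, and the remaining $p-1$ leaves exactly fill the $p-1$ internal edges of $B$, so the statement already holds. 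It thus remains to identify the unique escaping leaf as $\min(B)$ in the case $c\notin B$.

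The key step is to show that $\mu:=\min(B)$ cannot label any edge of $B$, and here the $\omega$ rule is essential. Suppose toward a contradiction that $\mu$ slid across some edge $\mbf g\in B$ (the base $\mbf e$ or an internal edge of $B$). In the contracted tree used at that stage of Definition \ref{def:k-slide}, the vertex $v_\ell$ adjacent to $\mu$ is an internal endpoint of $\mbf g$; because contracting internal edges can only raise vertex degrees, $v_\ell$ still has degree at least $3$, so besides the leaf $\mu$ and the edge $\mbf g$ toward $a$ it carries at least one further branch. That branch lies on $\mu$'s side of $\mbf g$, hence inside $B$, so the quantity $m_\ell$ of Definition \ref{def:k-slide} is a leaf of $B$ distinct from $\mu$; as $\mu=\min(B)$ this gives $m_\ell>\mu$. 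But the $\omega$-validity condition requires $\mu=\ell\geq m_\ell$, a contradiction. Therefore $\mu$ must label an edge outside $B$, and by the count above it is the unique such leaf, so every other leaf of $B$ labels an internal edge of $B$, i.e.\ slides within $B$.

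I expect the main obstacle to be the bookkeeping in this last step: verifying that when $\mu$ crosses an edge of $B$ the relevant sibling branch genuinely survives the contraction and contributes a leaf of $B$ strictly larger than $\mu$. This is exactly where the inequality $\ell\geq m_\ell$, which holds for $\omega$-slides but fails for $\psi$-slides, is used, explaining why the statement is restricted to $\Slwl$. The degenerate cases where $B$ consists of a single leaf (the statement is vacuous) and where $c\in B$ are handled separately as above.
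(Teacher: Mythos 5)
Your proof is correct, but the crux is argued by a genuinely different mechanism than in the paper. Both arguments share the global counting step: under $\kUnd=(1,1,\ldots,1)$ the leaves $1,\ldots,n$ biject with the internal edges, Lemma \ref{lem:pathtoa} forces every internal edge of $B$ to be labeled by a leaf of $B$, and since $B$ has one more leaf than internal edge, everything reduces to showing that $\min(B)$ labels no edge of $B$. For that key step the paper runs a \emph{second} pigeonhole argument: if $\min(B)$ labeled an edge $\mbf e$ inside $B$, then the other $i-1$ leaves of the branch $B_{\mbf e}$ (all larger, hence sliding earlier) could not cross the still-unlabeled $\mbf e$, so they would have to occupy the $i-2$ internal edges of $B_{\mbf e}$ other than $\mbf e$, which is impossible. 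You instead invoke the $\omega$-condition $\ell\geq m_\ell$ of Definition \ref{def:k-slide} at the moment of the hypothetical slide, after verifying that the contracted vertex $v_\ell$ retains a third branch, that this branch lies on $\mu$'s side of $\mbf g$ and hence inside $B$, so that $m_\ell>\min(B)$; both of these verifications (the degree bound under contraction and the containment in $B$) are sound, so the contradiction goes through. The trade-off: your argument is more local and pinpoints exactly where the $\omega$ inequality enters, but it is genuinely $\omega$-specific, whereas the paper's counting argument uses only features common to the $\psi$ and $\omega$ labeling algorithms and therefore proves the statement for $\Slide^\psi(1,1,\ldots,1)$ as well. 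In particular, your closing remark --- that the restriction to $\Slwl$ is explained by the failure of the lemma for $\psi$-slide trees --- is not justified: the inequality $\ell\geq m_\ell$ is indeed absent from the $\psi$ rule, but the lemma itself still holds for $\psi$-trees by the paper's counting argument, and the paper restricts to $\omega$ simply because that is the case it uses.
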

	
	\begin{proof}
		We first show that $\min(B)$ does not slide within $B$. Suppose instead that $\min(B)$ did slide along some edge $\mbf e$. Then, let $B_\mbf e$ be the branch starting at $\mbf e$, and $i$ be the number of leaves of $B_\mbf e$. Then, $B_\mbf e$ has $i-1$ internal edges, including $\mbf e$. For $\min(B)$ to slide along $\mbf e$, all $i-1$ larger leaf labels must slide before $\min(B)$. However, there are only $i-2$ other edges in $B_\mbf e$ available for them to slide on in order to leave $\mbf e$ unlabeled. This is an impossibility, and so $\min(B)$ cannot slide within $B$.
		
		Second, we show that all leaves other than $\min(B)$ slide in $B$. Each internal edge of $B$ must be labeled by some leaf in $B$, each leaf can can be used for at most one edge (since $\kUnd=(1,1, \ldots,1)$), and we just showed that $\min(B)$ does not slide in $B$. So, since there is one more leaf than internal edge in $B$, all leaves other than $\min(B)$ must slide within $B$.
	\end{proof}
	
	\begin{lemma}
		Let $T\in\Slwk$, where $\kUnd$ is the all 1's composition of $n$. Let $\mbf e$ be an internal edge of $T$, $B$ the branch of $T$ starting at $\mbf e$, and $i+1$ be the number of leaves of $B$. Construct $T'$ to be the tree resulting from $T$ by replacing $B$ with a single leaf $\min(B)$. Next, relabel the leaves of $T'$ so that every label $a,b,c,1,2,\ldots,n-i$ is used once, while keeping the ordering of the leaves the same. Then,  $T'\in\Slide^\omega(\kUnd')$, where $\kUnd'$ is the all 1's composition of $n-i$.
		\label{lem:removebranch}
	\end{lemma}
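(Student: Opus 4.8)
The plan is to verify directly that $T'$ admits a valid $\omega$ $\kUnd'$-slide labeling, using the characterization of Definition \ref{def:k-slide} together with Lemma \ref{lem:minNotSlide}. Since $\kUnd$ is the all $1$'s composition, $B$ has exactly $i$ internal edges, and by Lemma \ref{lem:minNotSlide} every leaf of $B$ other than $\min(B)$ slides within $B$. As each such leaf labels exactly one edge, these $i$ leaves account for precisely the $i$ internal edges of $B$, so the entire slide labeling of $B$ is internal to $B$; the leaf $\min(B)$ either equals $c$ (and never slides) or is a numbered leaf that slides along an edge outside $B$ on the path toward $a$. In either case the $n-i$ internal edges lying outside $B$ are labeled exactly by the $n-i$ remaining sliding leaves, namely the numbered leaves not in $B$ together with $\min(B)$ when it is numbered.

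First I would fix the slide labeling of $T$ guaranteed by $T\in\Slwk$ and use it to produce a candidate labeling of $T'$: keep the labels on all edges outside $B$, and then apply the order-preserving relabeling of leaves and edges onto $a,b,c,1,\ldots,n-i$. Since $a,b,c$ are the three smallest leaf labels and all survive in $T'$ (note $c$ survives either as an outside leaf or as $\min(B)$), the relabeling fixes $a,b,c$ and carries the surviving numbered leaves onto $1,\ldots,n-i$ in order. In particular $c$ still never slides and every numbered leaf of $T'$ still labels exactly one edge, matching the all $1$'s composition $\kUnd'$.

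The key step is to check that each slide for an outside edge remains valid after collapsing $B$. The point is that replacing $B$ by the single leaf $\min(B)$ preserves every minimum used in the comparisons of Definition \ref{def:k-slide}: for any internal edge outside $B$ and any branch $C$ arising at its endpoints $v_\ell,v_a$, either $C$ is disjoint from $B$ (and unchanged) or $C$ contains $\mbf e$ and hence all of $B$, in which case $\min(C)=\min\bigl(\min(C\setminus B),\min(B)\bigr)$ is unaffected by the collapse. Thus the quantities $m_\ell$ and $m_a$ are identical in $T$ and in the collapsed tree, and the order-preserving relabeling then preserves every inequality $\ell\geq m_\ell\geq m_a$. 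Since the algorithm processes labels in decreasing order and the $B$-internal slides label only edges of $B$, which never lie on the path from an outside leaf to $a$, deleting those slides leaves the outside slides occurring in the same relative order with the same outcomes. Hence the candidate labeling is exactly the labeling the algorithm produces on $T'$, so $T'\in\Slide^\omega(\kUnd')$.

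The main obstacle I anticipate is making this last paragraph fully rigorous, in particular confirming that removing the $B$-internal slides does not disturb the ``first unlabeled edge on the path to $a$'' step or the edge-contraction bookkeeping for the surviving slides. I would resolve this by observing that each internal edge of $B$ lies strictly beyond $\mbf e$, so it never appears on the path from an outside leaf to $a$, and that contracting all of $B$'s internal edges in $T$ collapses $B$ down to its attachment point exactly as the single leaf $\min(B)$ sits in $T'$; consequently the local configuration seen by each outside slide is identical in the two computations, and the simulation goes through.
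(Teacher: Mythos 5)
Your proof is correct and takes essentially the same approach as the paper's: both verify each slide outside $B$ by noting that any branch $C$ containing $B$ has its minimum preserved under the collapse (since $\min(C')=\min(\min(C\setminus B),\min(B))=\min(C)$), and both handle the slide of $\min(B)$ by using Lemma \ref{lem:minNotSlide} to see that all internal edges of $B$ are already contracted when $\min(B)$ slides, so it makes the same comparisons in $T'$ as in $T$. Your extra bookkeeping about the order-preserving relabeling and the algorithm's processing order is a more explicit rendering of the same argument, not a different method.
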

	
	\begin{proof}
		We will show that the slide labeling algorithm completes successfully for $T'$. We do this by showing that each edge is a valid slide. 
		
		First, any internal edge of $T$ in $B$ does not exist in $T'$, so there is no corresponding slide to check in $T'$. Otherwise, consider some edge $\mbf e$ in $T\setminus B$ that is not labeled $\min(B)$. In $T$, the comparison made in the slide algorithm at step $\mbf e$ either does not consider any of $B$, or considers a branch $C$ containing $B$. The corresponding branch $C'$ in $T'$ still contains $\min(B)$, so $\min(C)=\min(C')$. Thus, the slide labeling $\mbf e$ is valid in $T'$. 
		
		Finally, we consider the label $\min(B)$. If $\min(B)=c$, then there is no edge $\min(B)$, and we are done. Otherwise, $\min(B)$ slides in $T$ after all other leaves of $B$. So, it slides after the internal edges of $B$ are all contracted. So in $T'$, since $\min(B)$ is a leaf where $B$ was removed, it will label the same edge and compare the same labels as it did in $T$. Thus, every edge in $T'$ is a valid slide, so $T'\in\Slide^\omega(\kUnd')$.
	\end{proof}
	
	Next, we define the map $\phi$ that tells us how to read a word (a permutation, in fact) off from the edge labels of a $(1,1,\ldots,1)$-slide tree.
	
	\begin{definition}
		We define the map $\phi:\Slwl\to S_n$ as follows. Consider a slide tree $T\in\Slwl$ along with its edge labels given by the slide labeling algorithm. Then, let $T'$ be the tree obtained by deleting the leaves and leaf edges of $T$, rooted at the vertex  adjacent to $a$ in $T$. (In other words, we record the slide labels and then delete all leaves and non-internal edges.) Note that $T'$ is an at-most-trivalent tree. 
		
		Next, starting at the root of $T'$, read off and record the edge labels of $T'$ in order from left to right. When a degree 3 vertex is reached, recursively apply this same process to both branches $B$ and $B'$, where $B$ is the branch with larger minimal label. Then, take the words obtained from $B$ and $B'$ and concatenate them in that order to the word obtained thus far for $T'$. Since the content of the edge labels is $(1,1,\ldots,1)$, and this process reads off each edge exactly once, the resulting word is an element of $S_n$.
		\label{def:phi}
	\end{definition}
	
	\begin{lemma}
		If $T\in\Slwl$ is not a caterpillar, then $\phi(T)$ contains the pattern $23{-}1$.
		\label{lem:has23-1}
	\end{lemma}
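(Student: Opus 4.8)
The plan is to locate, inside any non-caterpillar tree, a single internal vertex whose three edge labels already witness a $23{-}1$ pattern, and then to check that the reading order imposed by $\phi$ places those three labels in the required positions.

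First I would observe that the internal edges of $T$ form a tree, namely the tree $T'$ of Definition \ref{def:phi}, and that $T$ is a caterpillar exactly when this tree is a path. Hence if $T$ is not a caterpillar, some vertex of $T'$ has degree $3$; since every internal vertex of the trivalent tree $T$ has total degree $3$, this means there is an internal vertex $v$ all three of whose incident edges $\mbf x$, $\mbf y$, $\mbf z$ are internal, with $\mbf x$ the edge towards $a$ and (without loss of generality) $\mbf y\geq\mbf z$. Because $\kUnd=(1,1,\ldots,1)$, each label $1,\ldots,n$ is used on exactly one edge by Lemma \ref{lem:pathtoa}, so all edge labels are distinct; in particular $\mbf y\neq\mbf z$ and $\mbf x\neq\mbf y$. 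Corollary \ref{cor:only212or231} then forces the ordering $\mbf y>\mbf x>\mbf z$, i.e.\ $v$ is of the ``$23{-}1$ type'' rather than the ``$2{-}1{-}2$ type.''

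Next I would trace how $\phi$ records $\mbf x$, $\mbf y$, and $\mbf z$. The edge $\mbf x$ is the last edge on the path into $v$ from the root side, so it is the final letter of the word read before $\phi$ reaches the degree-$3$ vertex $v$ and recurses on the two branches hanging off $v$ away from the root: the branch $B_y$ beginning with $\mbf y$ and the branch $B_z$ beginning with $\mbf z$. By Lemma \ref{lem:minleaf}, the minimal leaf of the whole subtree beyond $\mbf x$ lies in $B_z$, the branch at the smallest edge $\mbf z$, so $\min(B_z)<\min(B_y)$. By Definition \ref{def:phi}, $\phi$ therefore processes the larger-minimum branch $B_y$ first and the smaller-minimum branch $B_z$ second. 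Reading $B_y$ starts by recording $\mbf y$, so $\mbf y$ is written immediately after $\mbf x$; and the entire word coming from $B_z$, whose first letter is $\mbf z$, is appended only after all of $B_y$. Thus in $\phi(T)$ the labels appear in the order $\ldots\,\mbf x\,\mbf y\,\ldots\,\mbf z\,\ldots$, with $\mbf x$ and $\mbf y$ adjacent and $\mbf z$ strictly later.

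Finally I would conclude by matching the pattern: since $\mbf z<\mbf x<\mbf y$, the subword $(\mbf x,\mbf y,\mbf z)$ reduces to $231$, and because $\mbf x$ and $\mbf y$ occupy consecutive positions while $\mbf z$ comes afterward, this is precisely an occurrence of the vincular pattern $23{-}1$ in the sense of Definition \ref{def:vincular}. Hence $\phi(T)$ contains $23{-}1$. I expect the only delicate step to be the bookkeeping in the previous paragraph: one must read Definition \ref{def:phi} carefully to see that the recursion appends the larger-minimum branch first, so that $\mbf y$ lands adjacent to $\mbf x$ rather than after the whole $\mbf z$-branch. This adjacency is exactly what upgrades the classical $231$ to the vincular $23{-}1$, and it is where Lemma \ref{lem:minleaf} does the real work.
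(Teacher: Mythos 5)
Your proof is correct and follows the same skeleton as the paper's: locate a vertex $v$ adjacent to three internal edges $\mbf x,\mbf y,\mbf z$, use Corollary \ref{cor:only212or231} plus distinctness of edge labels to force $\mbf z<\mbf x<\mbf y$, show that $\phi$ reads the $\mbf y$-branch before the $\mbf z$-branch, and conclude that $\mbf x,\mbf y,\mbf z$ give a vincular $23{-}1$ occurrence. The one step where you genuinely diverge is the branch-ordering argument. The paper proves $\min(V)<\min(U)$ (with $U$, $V$ the branches at $\mbf y$, $\mbf z$) in two moves: Lemma \ref{lem:minNotSlide} forces the leaf labeling $\mbf x$ to be $\min(U)$, and validity of the slide $\mbf x$ then yields $\min(V)<\min(U)$. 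You instead get $\min(B_z)<\min(B_y)$ in one stroke from Lemma \ref{lem:minleaf}, which says exactly that the minimal leaf of everything beyond $\mbf x$ lies in the smallest branch; this is a legitimate shortcut (it applies because $\Slwl\subseteq\Slide^\psi(1,\ldots,1)$ by Lemma \ref{lem:omega<psi} and $\mbf z$ is the unique smallest edge at $v$), and it makes the proof shorter. What the paper's longer route buys is the identification $\min(U)=\mbf x$, which is what one needs if the phrase ``branch with larger minimal label'' in Definition \ref{def:phi} is read as minimal \emph{edge} label of the leafless tree $T'$ (then all edge labels in $U$ exceed $\mbf x>\mbf z$, so the two readings of the definition agree); since the paper's own proof, like yours, uses the minimal-leaf reading, this does not affect correctness. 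One small nit: the distinctness of edge labels follows from the content of the slide labeling being $\kUnd=(1,1,\ldots,1)$ (each leaf performs exactly one slide), not from Lemma \ref{lem:pathtoa}, which only constrains where repeated labels could lie.
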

	
	\begin{proof}
		Let $v$ be a vertex in $T$ adjacent to three internal edges $\mbf x$, $\mbf y$, and $\mbf z$, where $\mbf x$ is the edge towards $a$. Without loss of generality, we assume that $\mbf z<\mbf y$. By Corollary \ref{cor:only212or231}, we have $\mbf z<\mbf x<\mbf y$. Let $U$ be the branch starting at the edge $\mbf y$, and $V$ the branch starting at $\mbf z$.
		
		Next, since $\mbf x>\mbf z$, the label $\mbf x$ must come from $U$. Since by Lemma \ref{lem:minNotSlide} every leaf in $U$ slides within $U$ except for $\min(U)$, and the edge $\mbf x$ is not in $U$, $\mbf x=\min(U)$. Since $\mbf x$ is a valid slide, $\min(V)<\min(U)$, so the map $\phi$ will read off the edges of $U$ before $V$. Thus, $\phi(T)=\ldots xy\ldots z\ldots$, and $x$, $y$, and $z$ will form a $23{-}1$ pattern.
	\end{proof}
	
	\begin{definition}
		Define $EL_n$ to be the set of rooted \textbf{edge-labeled trivalent trees} with $n$ internal edges labeled by $[n]$ (and unlabeled leaves).
	\end{definition}
	
	We define the map $\nu:\Av_n(23{-}1)\to\Cat^\omega(1,1,\ldots,1)$ to be the restriction of the map $\tree()$ from Section \ref{sec:cats} to permutations that avoid the pattern $23{-}1$. Note that this map coincides with the \textit{leaf labeling algorithm} defined in Definition 6.3 of \cite{GGL22}, and that each tree in its image is a valid caterpillar slide tree.
	
	We wish to extend the map $\nu$ to a map $\rho:S_n\to\Slwl$. We first define a map $\hat\rho:S_n\to EL_n$, and then show that each tree in the image $\hat\rho$ has a unique leaf labeling that obtains its edge labels under the slide labeling algorithm, and thus $\hat{\rho}$ induces the map $\rho$.
	
	\begin{definition}
		Define $\hat\rho:S_n\to EL_n$ as follows. If $\tau\in\Av_n(23{-}1)$, we let $\hat\rho(\tau)=\nu(\tau)$ (that is, the caterpillar tree whose edge labels read from left to right as $\tau$). Otherwise, let $x,y,z$ be the \textit{earliest} $23{-}1$ pattern in $\tau$, and $w,u,v$ be words such that $\tau=wxyuzv$. Then we construct $\hat\rho(\tau)$ as follows. Start with the caterpillar subtree $\hat\rho(wx)$. Then, to the internal vertex furthest from the root, we replace the two external edges with the two subtrees $\hat\rho(yu)$ and $\hat\rho(zv)$.
		\label{def:psi}
	\end{definition}
	
	\begin{notation}
		Throughout this section, we use the abuse of notation $\hat\rho(w)$, where $w$ is a subword of a permutation, to denote the tree formed by applying $\hat\rho$ to the reduction $\red(w)$, then relabeling the result to have the same labels as the entries of $w$.
	\end{notation}
	
	\begin{definition}
		Given $\tau\in S_n$, define $\rho(\tau)$ to be the unique element of $\Slwl$ whose $(1,1,\ldots, 1)$-slide labeling admits the same edge labels as $\hat\rho(\tau)$.
		\label{def:psi2}
	\end{definition}
	
	\begin{theorem}
		For any $\tau\in S_n$, there is exactly one tree of $\Slwl$ that admits the edge labeling $\hat\rho(\tau)$. In other words, $\rho:S_n\to\Slwl$ is well-defined.
		\label{thm:welldefined}
	\end{theorem}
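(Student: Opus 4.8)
The plan is to induct on $n$, mirroring the recursive definition of $\hat\rho$ in Definition \ref{def:psi}, and to establish existence and uniqueness of the leaf-labeling simultaneously. Throughout I will use the basic structural fact that in any tree of $\Slwl$ every label $1,\dots,n$ slides exactly once, so the internal edges are in bijection with the sliding leaves, together with the key consequence of Lemma \ref{lem:minNotSlide} extracted in the proof of Lemma \ref{lem:has23-1}: at a branching vertex of $23{-}1$ type, the root-ward edge $\mbf x$ is labeled by the minimum of the \emph{larger} of its two sub-branches, and the minimum of the smaller sub-branch is strictly smaller than that.

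For the base case, suppose $\tau\in\Av_n(23{-}1)$, so that $\hat\rho(\tau)=\nu(\tau)=\tree(\tau)$ is a caterpillar. Since $\tau$ is a permutation it automatically avoids $2{-}1{-}2$, and for a permutation avoiding $23{-}\overline{2}{-}1$ is equivalent to avoiding $23{-}1$; moreover $\kUnd=(1,\dots,1)$ has no zeros, so $\z(i)=0$ for all $i$ and the inequality \eqref{eq:omegaCat} holds vacuously. Hence Theorem \ref{thm:omegaAvoidance} yields $\tree(\tau)\in\Cat^\omega(1,\dots,1)\subseteq\Slwl$ with edge word $\tau$, giving existence, while the uniqueness theorem for caterpillar slide trees (and its $\omega$-slide corollary) shows that no other tree of $\Slwl$ realizes these edge labels.

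For the inductive step, suppose $\tau\notin\Av_n(23{-}1)$ and write $\tau=wxyuzv$ with $x,y,z$ the earliest $23{-}1$ pattern, so $z<x<y$. Recall that $\hat\rho(\tau)$ consists of the caterpillar spine $\hat\rho(wx)$ together with the subtrees $\hat\rho(yu)$ and $\hat\rho(zv)$ grafted at the last spine vertex $v$, their base edges being $\mbf y$ and $\mbf z$ respectively; this is exactly a branching vertex of $23{-}1$ type in the sense of Corollary \ref{cor:only212or231}, with $\mbf x$ the last spine edge toward the root and $\mbf y>\mbf x>\mbf z$. Applying the induction hypothesis to the reductions of $wx$, $yu$, and $zv$ produces unique valid slide-tree fillings of the three pieces. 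For existence I assemble a global labeling on the alphabet $a,b,c,1,\dots,n$ by reindexing the three fillings according to the interleaving forced by $\min(\mbf z\text{-branch})<\min(\mbf y\text{-branch})$, and then verify edge by edge that every slide is a valid $\omega$-slide: slides internal to a piece are inherited from the induction hypothesis (viewing each piece as a genuine smaller slide tree via Lemma \ref{lem:removebranch}), and the only new checks are the spine edges at and above $v$, which reduce to the comparison of the two subtree minima. For uniqueness I argue that any filling of the shape realizing $\hat\rho(\tau)$ must place $c$ at the unique location prescribed by Remark \ref{rmk:whereC}, must send the root-ward edge label at each branching vertex to the minimum of its larger sub-branch (by Lemma \ref{lem:minleaf} and the proof of Lemma \ref{lem:has23-1}), and must therefore restrict to a valid filling of each of the three pieces; by induction these restrictions are unique, so the global filling is unique.

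The main obstacle is the gluing in the inductive step. Although the three pieces are individually pinned down by induction, their leaf-label alphabets overlap once embedded in the common alphabet $a,b,c,1,\dots,n$, so I must confirm that the relative order forced inside $\hat\rho(yu)$ and $\hat\rho(zv)$ is compatible with the global order --- in particular that $\min$ of the $\mbf z$-branch is smaller than $\min$ of the $\mbf y$-branch, that the edge $\mbf x$ is labeled by $\min$ of the $\mbf y$-branch, and that leaf $c$ lands where Remark \ref{rmk:whereC} dictates --- so that the grafted tree is a genuine $\omega$-slide tree whose slide labeling returns exactly the prescribed edges and admits no other labeling. Establishing these compatibility inequalities, and verifying that they leave no freedom in the assignment, is where Lemmas \ref{lem:minNotSlide} and \ref{lem:has23-1} together with Remark \ref{rmk:whereC} do the real work.
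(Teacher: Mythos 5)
Your overall induction and the identification of the key facts ($\min$ of the $\mbf y$-branch equals $\mbf x$, and $\min$ of the $\mbf z$-branch is smaller) are sound, and your base case is fine (using Theorem \ref{thm:omegaAvoidance} in place of Proposition \ref{prop:catAvoid23-1} works, since for permutations $23{-}\overline{2}{-}1$-avoidance is $23{-}1$-avoidance and $\z(i)=0$). But your inductive step decomposes $\hat\rho(\tau)$ into \emph{three disjoint} pieces $\hat\rho(wx)$, $\hat\rho(yu)$, $\hat\rho(zv)$, whereas the paper uses \emph{two overlapping} subtrees $A=\hat\rho(wxyu)$ and $B=\hat\rho(wxzv)$, each containing the entire spine plus a placeholder leaf ($d$, resp.\ $e$) where the other branch attaches; pruning them via Lemma \ref{lem:removebranch} gives caterpillars $T_A=T_B$, and caterpillar uniqueness then forces $\{d,\min(U)\}=\{e,\min(V)\}$, $e=\min(U)=\mbf x$, $d=\min(V)$. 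This overlap is not cosmetic: it is what determines how the global alphabet is partitioned among the pieces, and the data you cite does not determine it. The labels left over for the spine's nondescent leaves and for $\min(V)$ are $\{c\}$ together with the letters of $w$ that are not descent leaves, and which of these becomes $\min(V)$ is \emph{not} decided by $\min(V)<\min(U)$. Take $\tau=1342$, so $w=1$, $x=3$, $y=4$, $z=2$: the valid filling has $\min(V)=1$ with $c$ on the spine, while the alternative ($\min(V)=c$, spine leaf $1$) also satisfies $\min(V)<\min(U)=3$ yet fails the slide algorithm (when leaf $3$ slides across $\mbf 3$ it would compare $m_\ell=c$ against $m_a=1$). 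Remark \ref{rmk:whereC} happens to rescue this tiny case, but when $w$ contains several nondescent letters the leftover pool is larger, and what actually pins the assignment down is that the spine with its two end positions filled by $\min(U)$ and $\min(V)$ must be the unique caterpillar labeling of the word $wx$ --- exactly the statement $T_A=T_B$, which you never establish.

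The same disjointness breaks your verification and uniqueness steps. The claim that ``slides internal to a piece are inherited from the induction hypothesis (viewing each piece as a genuine smaller slide tree via Lemma \ref{lem:removebranch})'' misuses that lemma: it removes a branch from a tree already known to be a slide tree; it neither re-roots a branch of $T$ as a standalone slide tree nor applies to $T$, whose slide-tree status is what is being proved. Moreover, slides near the graft vertex are not internal to any piece: when the base edge $\mbf y$ of $U$ slides, its comparison value $m_a$ is the minimum over the branches at that vertex, hence involves $\min(V)$ and spine leaves --- quantities invisible to the standalone filling of $\red(yu)$, where that role is played by $b$. The paper's $A$ and $B$ are built precisely so this never arises: in $A$ the branch $V$ is replaced by the single leaf $d=\min(V)$, and since slide comparisons depend only on branch minima, every comparison in $T$ outside $V$ \emph{equals} the corresponding comparison in $A$ (and symmetrically for $B$). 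Your uniqueness argument inherits the same hole: restricting a hypothetical filling of $T$ to the spine does not yield a filling of $\hat\rho(wx)$ (its two end leaves are missing), and restricting to $U$ or $V$ would need a re-rooting lemma that neither you nor the paper proves. So as written the gluing step --- in both the existence and uniqueness directions --- is a genuine gap, and closing it essentially forces you back to the paper's overlapping decomposition or an equivalent device.
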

	
	\begin{proof}
		We proceed by strong induction on the number of branching vertices of $\hat{\rho}(\tau)$ (that is, vertices that are not adjacent to any leaves). The base case is when $\hat{\rho}(\tau)$ has no internal vertices. That is, when $\hat{\rho}(\tau)$ is a caterpillar tree. By Proposition \ref{prop:catAvoid23-1}, there is a unique leaf labeling that gives this slide labeling, so, $\rho(\tau)$ is well-defined in this case.
		
		Otherwise, suppose $\hat{\rho}(\tau)$ has $m$ branching vertices. For the inductive hypothesis, we assume that $\rho(\tau')$ is well-defined whenever $\hat{\rho}(\tau')$ has $m-1$ or fewer branching vertices. Since $\hat\rho(\tau)$ has a branching vertex, $\tau$ contains the pattern $23{-}1$. So, let $\tau=wxyuzv$, where $x,y,z$ is the earliest $23{-}1$ pattern in $\tau$. We will show that $T=\hat\rho(\tau)$ is a slide tree.
		
		In particular, let $A=\hat\rho(wxyu)$ and $B=\hat\rho(wxzv)$. Let $U$ and $V$ be the subtrees formed under $\hat\rho$ by $yu$ and $zv$, respectively. Since $x,y,z$ was the earliest $23{-}1$ pattern in $\tau$, there is no $23{-}1$ pattern in $wxyu$ that involves both $x$ and $y$, and since $z<x$, there is no $23{-}1$ pattern in $wxzv$ that involves both $x$ and $z$. This means that all $23{-}1$ patterns in $wxyu$ or $wxzv$ occur entirely within $yu$ or $zv$ respectively. So, neither $A$ nor $B$ have a branching vertex at the end of $\mbf x$, and thus $A$ (respectively $B$) have the shape of $T$ with $V$ (respectively $U$) removed. By our inductive hypothesis, both $A$ and $B$ are valid slide trees, so we consider them with their associated leaf labels. 
		
		Let $d$ be the leaf of $A$ in the position of $V$ in $T$, and $e$ the leaf of $B$ in the position of $U$ in $T$. Next, we apply Lemma \ref{lem:removebranch} to $A$ and replace the branch $U$ with $\min(U)$ to create a new slide tree $T_A$. Similarly, we apply Lemma \ref{lem:removebranch} to $B$ and replace $V$ with $\min(V)$ to form $T_B$. 
		As a consequence of Lemma \ref{lem:minNotSlide} and Lemma \ref{lem:removebranch}, $T_A$ and $T_B$ are caterpillar trees with the same edge labels. 
		Thus, their leaf labels are also the same by the inductive hypothesis. In particular, consider the leaves at the right ends of $T_A$ and $T_B$. For $T_A$, we already had $d$, and then replaced $U$ with $\min(U)$. Similarly for $T_B$, we have $e$ and $\min(V)$. So as sets, $\{d,\min(U)\}=\{e,\min(V)\}$.
		
		Next, we show that $e=\min(U)$ and $d=\min(V)$. Since $\mbf x>\mbf z$, $\mbf x$ slides before $\mbf z$ in $B$, so $\mbf x=e$ in order for $e$ to slide along edge $\mbf x$. Since $\mbf y$ is a valid slide in $A$ and $\mbf y>\mbf x$, $\min(U)>d$, and so $\min(U)$ will slide before $d$ in $A$. From Lemma \ref{lem:minNotSlide}, $\min(U)$ cannot slide within $U$, so it must slide across the edge $\mbf x$. Thus, $\min(U)=\mbf x=e$. By process of elimination, $d=\min(V)$.
		
		Thus far, we have shown that $A$ and $B$ each have leaf labelings that make them valid slide trees, these labelings agree on $T_A$ and $T_B$, $e=\min(U)$, and $d=\min(V)$. We will now use these facts to build a valid leaf labeling for $T$. For any leaf in $U$ or $V$, we label that leaf by the same label that leaf has in $A$ or $B$, respectively. The remaining leaves all lie between $a$ and $\mbf x$, so we give them the corresponding labels from $T_A=T_B$. This uses all the labels $c,1,2,\ldots,n$ exactly once.
		
		Next, we show that such a leaf labeling gives a slide labeling matching $T$. For any edge labeled neither $d=\min(V)$ nor $e=\min(U)$, and not in the branch $V$, the comparison made at that step by the slide-labeling algorithm will at most only look at a branch containing $V$, and will make the same comparison it made in $A$, where all of $V$ was replaced by $d$. So, the validity of this slide carries over from $A$. Similarly, any slide other than $d$ or $e$ that is outside $U$ is valid because it makes the same comparison as in $B$. Lastly, if there is an edge labeled $d$ (resp. $e$), then $d$ will slide only after the other leaves of $V$ (resp. $U$) have already done so. So, it will slide the same way as it did in $A$ (resp. $B$), when it started where $V$ (resp. $U$) connected to $T$. These cases cover every edge of $T$, so $T$ is a valid slide tree. 
		
		Finally, we must demonstrate uniqueness. Suppose instead that there were two slide trees $T_1,T_2\in\Slwl$ that both had the slide labeling $\hat\rho(\tau)$. Since the edge labels are the same for both $T_1$ and $T_2$, let $A_i,B_i,U_i,V_i$ be defined as above for $i=1,2$. Then, applying Lemma \ref{lem:removebranch} to $U_1$ and $U_2$, by our inductive hypothesis the results are the same tree, so $\min(U_1)=\min(U_2)$, and all other leaves of $A_1$ and $A_2$ must be the same as well. However, we can do the same for $\min(V_1)$, $\min(V_2)$, $B_1$, and $B_2$. Thus, all leaves of $T_1$ and $T_2$ are the same, so in fact $T_1=T_2$. Therefore, the tree $T$ found above is unique, and $\rho$ is well-defined.
	\end{proof}
	
	\begin{example}
		Let $\tau=853769421$. Then, $\hat\rho(\tau)$ is the tree:
		\begin{center}
			\begin{tikzpicture}[scale=0.8]
			\draw (0,0)--(1,0)--(2,0)--(3,0)--(4,0)--(5,0)--(6,0);
			\draw (1,0)--(1,-0.6);
			\draw (2,0)--(2,-0.6);
			\draw (4,0)--(4,-0.6);
			\draw (3,0)--(3,-1)--(3,-2);
			\draw (3,-1)--(2.4,-1);
			\draw (3,-2)--(2.5,-2.5);
			\draw (3,-2)--(3.5,-2.5);
			\draw (5,0)--(5,-1);
			\draw (5,-1)--(4.5,-1.5);
			\draw (5,-1)--(5.5,-1.5);
			\draw (0,0)--(-0.5,0.5);
			\draw (0,0)--(-0.5,-0.5);
			\draw (6,0)--(6.5,0.5);
			\draw (6,0)--(6.5,-0.5);
			\fill (0,0) circle (0.1);
			
			\node[] at (0.5,0.2) {$8$};
			\node[] at (1.5,0.2) {$5$};
			\node[] at (2.5,0.2) {$3$};
			\node[] at (3.5,0.2) {$7$};
			\node[] at (4.5,0.2) {$6$};
			\node[] at (5.5,0.2) {$9$};
			
			\node[] at (3.2,-0.5) {$2$};
			\node[] at (3.2,-1.5) {$1$};
			\node[] at (5.2,-0.5) {$4$};
			\end{tikzpicture}
		\end{center}
		The earliest instance of $23{-}1$ in $\tau$ is the subword $372$, so $w=85$, $x=3$, $y=7$, $u=694$, $z=2$, and $v=1$. Then, the trees $A=\hat{\rho}(txyu)$ and $B=\hat{\rho}(txzv)$ are the trees drawn below, respectively.
		\begin{center}
			\begin{tikzpicture}
			\draw (0,0)--(1,0)--(2,0)--(3,0)--(4,0)--(5,0)--(6,0);
			\draw (1,0)--(1,-0.6);
			\draw (2,0)--(2,-0.6);
			\draw (4,0)--(4,-0.6);
			\draw (3,0)--(3,-0.6);
			\draw (5,0)--(5,-1);
			\draw (5,-1)--(4.5,-1.5);
			\draw (5,-1)--(5.5,-1.5);
			\draw (0,0)--(-0.5,0.5);
			\draw (0,0)--(-0.5,-0.5);
			\draw (6,0)--(6.5,0.5);
			\draw (6,0)--(6.5,-0.5);
			\fill (0,0) circle (0.1);
			
			\node[] at (0.5,0.2) {$8$};
			\node[] at (1.5,0.2) {$5$};
			\node[] at (2.5,0.2) {$3$};
			\node[] at (3.5,0.2) {$7$};
			\node[] at (4.5,0.2) {$6$};
			\node[] at (5.5,0.2) {$9$};
			
			\node[] at (5.2,-0.5) {$4$};
			\end{tikzpicture}
			\hspace{1cm}
			\begin{tikzpicture}
			\draw (0,0)--(1,0)--(2,0)--(3,0)--(3.5,0.5);
			\draw (1,0)--(1,-0.6);
			\draw (2,0)--(2,-0.6);
			\draw (3,0)--(3,-1)--(3,-2);
			\draw (3,-1)--(2.4,-1);
			\draw (3,-2)--(2.5,-2.5);
			\draw (3,-2)--(3.5,-2.5);
			\draw (0,0)--(-0.5,0.5);
			\draw (0,0)--(-0.5,-0.5);
			\fill (0,0) circle (0.1);
			
			\node[] at (0.5,0.2) {$8$};
			\node[] at (1.5,0.2) {$5$};
			\node[] at (2.5,0.2) {$3$};
			
			\node[] at (3.2,-0.5) {$2$};
			\node[] at (3.2,-1.5) {$1$};
			\end{tikzpicture}
		\end{center}
		Then, inductively we know that $A$ and $B$ have unique leaf labels that give these edge labelings. 
		\begin{center}
			\begin{tikzpicture}
			\draw (0,0)--(1,0)--(2,0)--(3,0)--(4,0)--(5,0)--(6,0);
			\draw (1,0)--(1,-0.6);
			\draw (2,0)--(2,-0.6);
			\draw (4,0)--(4,-0.6);
			\draw (3,0)--(3,-0.6);
			\draw (5,0)--(5,-1);
			\draw (5,-1)--(4.5,-1.5);
			\draw (5,-1)--(5.5,-1.5);
			\draw (0,0)--(-0.5,0.5);
			\draw (0,0)--(-0.5,-0.5);
			\draw (6,0)--(6.5,0.5);
			\draw (6,0)--(6.5,-0.5);
			\fill (0,0) circle (0.1);
			
			\node[] at (0.5,0.2) {$8$};
			\node[] at (1.5,0.2) {$5$};
			\node[] at (2.5,0.2) {$3$};
			\node[] at (3.5,0.2) {$7$};
			\node[] at (4.5,0.2) {$6$};
			\node[] at (5.5,0.2) {$9$};
			
			\node[] at (5.2,-0.5) {$4$};
			
			\node[] at (-0.6,0.6) {$a$};
			\node[] at (-0.6,-0.6) {$b$};
			\node[] at (1,-0.8) {$8$};
			\node[] at (2,-0.8) {$5$};
			\node[] at (3,-0.8) {$c$};
			\node[] at (4,-0.8) {$7$};
			\node[] at (4.4,-1.6) {$3$};
			\node[] at (5.6,-1.6) {$4$};
			\node[] at (6.6,0.6) {$9$};
			\node[] at (6.6,-0.6) {$6$};
			\end{tikzpicture}
			\hspace{1cm}
			\begin{tikzpicture}
			\draw (0,0)--(1,0)--(2,0)--(3,0)--(3.5,0.5);
			\draw (1,0)--(1,-0.6);
			\draw (2,0)--(2,-0.6);
			\draw (3,0)--(3,-1)--(3,-2);
			\draw (3,-1)--(2.4,-1);
			\draw (3,-2)--(2.5,-2.5);
			\draw (3,-2)--(3.5,-2.5);
			\draw (0,0)--(-0.5,0.5);
			\draw (0,0)--(-0.5,-0.5);
			\fill (0,0) circle (0.1);
			
			\node[] at (0.5,0.2) {$8$};
			\node[] at (1.5,0.2) {$5$};
			\node[] at (2.5,0.2) {$3$};
			
			\node[] at (3.2,-0.5) {$2$};
			\node[] at (3.2,-1.5) {$1$};
			
			\node[] at (-0.6,0.6) {$a$};
			\node[] at (-0.6,-0.6) {$b$};
			\node[] at (1,-0.8) {$8$};
			\node[] at (2,-0.8) {$5$};
			\node[] at (2.2,-1) {$2$};
			\node[] at (3.6,0.6) {$3$};
			\node[] at (2.4,-2.6) {$c$};
			\node[] at (3.6,-2.6) {$1$};
			\end{tikzpicture}
		\end{center}
		Then, we can piece the leaf labels back together.
		\begin{center}
			\begin{tikzpicture}
			\draw (0,0)--(1,0)--(2,0)--(3,0)--(4,0)--(5,0)--(6,0);
			\draw (1,0)--(1,-0.6);
			\draw (2,0)--(2,-0.6);
			\draw (4,0)--(4,-0.6);
			\draw (3,0)--(3,-1)--(3,-2);
			\draw (3,-1)--(2.4,-1);
			\draw (3,-2)--(2.5,-2.5);
			\draw (3,-2)--(3.5,-2.5);
			\draw (5,0)--(5,-1);
			\draw (5,-1)--(4.5,-1.5);
			\draw (5,-1)--(5.5,-1.5);
			\draw (0,0)--(-0.5,0.5);
			\draw (0,0)--(-0.5,-0.5);
			\draw (6,0)--(6.5,0.5);
			\draw (6,0)--(6.5,-0.5);
			\fill (0,0) circle (0.1);
			
			\node[] at (0.5,0.2) {$8$};
			\node[] at (1.5,0.2) {$5$};
			\node[] at (2.5,0.2) {$3$};
			\node[] at (3.5,0.2) {$7$};
			\node[] at (4.5,0.2) {$6$};
			\node[] at (5.5,0.2) {$9$};
			
			\node[] at (3.2,-0.5) {$2$};
			\node[] at (3.2,-1.5) {$1$};
			\node[] at (5.2,-0.5) {$4$};
			
			\node[] at (-0.6,0.6) {$a$};
			\node[] at (-0.6,-0.6) {$b$};
			\node[] at (1,-0.8) {$8$};
			\node[] at (2,-0.8) {$5$};
			\node[] at (4,-0.8) {$7$};
			\node[] at (4.4,-1.6) {$3$};
			\node[] at (5.6,-1.6) {$4$};
			\node[] at (6.6,0.6) {$9$};
			\node[] at (6.6,-0.6) {$6$};
			\node[] at (2.2,-1) {$2$};
			\node[] at (2.4,-2.6) {$c$};
			\node[] at (3.6,-2.6) {$1$};
			\end{tikzpicture}
		\end{center}
		Thus, $\rho(\tau)\in\Slide^\omega(1,1,1,1,1,1,1,1,1)$.
		
	\end{example}
	
	\begin{theorem}
		The maps $\phi$ and $\rho$ are inverse bijections.
	\end{theorem}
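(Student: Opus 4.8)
The plan is to establish the single composite identity $\phi\circ\rho=\mathrm{id}_{S_n}$ and then finish by a counting argument. By Proposition~\ref{prop:combinedResults}, $|\Slwl|=\multichoose{n}{1,1,\ldots,1}=n!=|S_n|$, so the two sets are finite of equal size. Granting $\phi(\rho(\tau))=\tau$ for all $\tau$, the map $\rho$ has a left inverse, hence is injective, hence (being an injection between equinumerous finite sets) a bijection; composing $\phi\circ\rho=\mathrm{id}$ with $\rho^{-1}$ on the right gives $\phi=\rho^{-1}$, so $\phi$ and $\rho$ are mutually inverse bijections. Thus everything reduces to proving $\phi(\rho(\tau))=\tau$.

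I would prove this by strong induction on the number of branching vertices of $\hat\rho(\tau)$, exactly paralleling the recursion in the proof of Theorem~\ref{thm:welldefined}. In the base case $\hat\rho(\tau)$ has no branching vertex, which by Definition~\ref{def:psi} happens precisely when $\tau\in\Av_n(23{-}1)$; then $\rho(\tau)=\nu(\tau)$ is a caterpillar slide tree. On a caterpillar the recursion defining $\phi$ in Definition~\ref{def:phi} never meets a degree-$3$ internal vertex, so it merely records the edge labels from the root rightward, and by Proposition~\ref{prop:catAvoid23-1} (the defining property of $\nu=\tree$) this word is $\tau$.

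For the inductive step, write $\tau=wxyuzv$ with $x,y,z$ the earliest $23{-}1$ pattern. By Definition~\ref{def:psi} and the analysis in the proof of Theorem~\ref{thm:welldefined}, $T=\rho(\tau)$ is the caterpillar spine $\hat\rho(wx)$ run out to its first branching vertex, off of which hang the two subtrees $U=\hat\rho(yu)$ and $V=\hat\rho(zv)$ with their induced leaf labels. That same proof already supplies the two leaf-label facts I need: $\min(U)=\mbf{x}=x$ while $\min(V)\le z<x$, so $\min(U)>\min(V)$. Running $\phi$ on $T$, it first reads the spine edges from left to right, which are exactly the edge labels of the caterpillar $\hat\rho(wx)$ and therefore spell $wx$; on reaching the branching vertex it recurses into the two branches, and since $\min(U)>\min(V)$ it processes $U$ before $V$. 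Each of $U$ and $V$ has strictly fewer branching vertices than $T$, so the induction hypothesis yields that the words $\phi$ assigns to these branches are $yu$ and $zv$ respectively. Concatenating, $\phi(T)=wx\cdot yu\cdot zv=\tau$, which closes the induction.

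The genuinely new combinatorics---that $\rho(\tau)$ really has this spine-plus-two-branches shape with $\min(U)=x$ and $\min(V)<x$---is already done inside Theorem~\ref{thm:welldefined}, so the remaining obstacle is bookkeeping rather than fresh argument. The two points that need care are: first, that $\phi$ commutes with the order-preserving relabelings concealed in the notation $\hat\rho(yu)$ and $\hat\rho(zv)$, so that the induction hypothesis---phrased for honest permutations through reductions---transfers to the relabeled branches $U$ and $V$ (this holds because $\phi$ depends only on the relative order of edge labels and on comparisons of minimal leaf labels); and second, that the branch $\phi$ visits first is indeed $U$, which is exactly the inequality $\min(U)>\min(V)$ drawn from Theorem~\ref{thm:welldefined} under the reading that the ``larger minimal label'' in Definition~\ref{def:phi} refers to the minimal leaf label of the branch.
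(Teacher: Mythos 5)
Your proposal is correct and follows essentially the same route as the paper's proof: reduce to $\phi(\rho(\tau))=\tau$ by the equal-cardinality argument, induct on the branching structure of $\rho(\tau)$ with the $23{-}1$-avoiding caterpillar case as the base, and in the inductive step decompose $\tau=wxyuzv$ along the earliest $23{-}1$ pattern, use $\min(U)=x>\min(V)$ (from Theorem \ref{thm:welldefined}) to see that $\phi$ reads the branch $\hat\rho(yu)$ first, and concatenate. Your explicit attention to the relabeling/reduction bookkeeping and to phrasing the induction on branching vertices is slightly more careful than the paper's write-up, but it is the same argument.
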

	
	\begin{proof}
		Since $S_n$ and $\Slwl$ are finite sets of the same cardinality, it suffices to show that $\phi(\rho(\tau))=\tau$ for all $\tau\in S_n$.
		
		We do induction on the number of internal vertices of $\rho(\tau)$. If $\tau\in\Av_n(23{-}1)$, then $\phi(\tau)$ is a caterpillar tree with edge labels in the same order as the letters in $\tau$. So, $\phi$ starts at the root of $\rho(\tau)$ and reads off the edges in order, recovering $\tau$.
		
		Otherwise, let $\tau=wxyuzv$, such that $x,y,z$ is the earliest instance of $23{-}1$. Then, $\rho(\tau)$ can be partitioned into the subtrees $\rho(wx)$, $\rho(yu)$, and $\rho(zv)$, where $\rho(wx)$ is caterpillar and $\rho(yu)$ and $\rho(zv)$ are rooted at the end of $\rho(wx)$. Since $x$ is the minimal leaf of $\rho(yu)$ and $z<x$, we will read $\rho(yu)$ before $\rho(zv)$ when we apply $\phi$ to $\rho(\tau)$. Since $\rho(yu)$ and $\rho(zv)$ have strictly fewer internal vertices, by are inductive hypothesis $\phi(\rho(yu))=yu$ and $\phi(\rho(zv))= zv$. So, $$\phi(\rho(\tau))= wx\phi(\rho(yu))\phi(\rho(zv))=wxyuzv=\tau$$ as desired.
	\end{proof}

	\bibliography{refs}
	\bibliographystyle{plain}
	
\end{document}